\newcommand{\classt}[2]{\mathcal{C}_{}(#2)}
\newcommand{\classtt}[2]{\mathcal{C}^*(#2)}
\newcommand{\edget}[3]{\mathcal{E}(#2, #3)}
\newcommand{\edgett}[3]{\mathcal{E}^*(#2, #3)}
\newcommand{\bdryt}[3]{\mathcal{B}_{#3}(#2)}
\newcommand{\bdrytt}[3]{\mathcal{B}^*_{#1,#3}(#2)}
\renewenvironment{definition}{}{}
\title{Improved mixing for the convex polygon triangulation flip walk\footnote{This paper subsumes a previous version of the same preprint, as well as parts of another, namely~\cite{eppfrisharx}.}} %TODO Please add
\author{David Eppstein}{Department of Computer Science, University of California, Irvine, United States}{eppstein@uci.edu}{}{}
\author{Daniel Frishberg}{Department of Computer Science, University of California, Irvine, United States}{dfrishbe@uci.edu}{https://orcid.org/0000-0002-1861-5439}{}
\authorrunning{D. Eppstein and D. Frishberg} 
\keywords{associahedron, mixing time, mcmc, Markov chains, triangulations, quadrangulations, k-angulations, multicommodity flow, projection-restriction} %TODO mandatory; please add comma-separated list of keywords
\begin{document}

\maketitle

%TODO mandatory: add short abstract of the document
\begin{abstract}
We prove that the well-studied \emph{triangulation flip walk} on a convex point set mixes in time $O(n^{3}\log^3 n)$, the first progress since McShine and Tetali's $O(n^5 \log n)$ bound in 1997. In the process we give lower and upper bounds of respectively~$\Omega(1/(\sqrt n\log n))$ and~$O(1/\sqrt n)$\textemdash asymptotically tight up to an~$O(\log n)$ factor\textemdash for the \emph{expansion} of the \emph{associahedron} graph $K_n$. The upper bound recovers Molloy, Reed, and Steiger's~$\Omega(n^{3/2})$ bound on the mixing time of the walk. To obtain these results, we introduce a framework consisting of a set of sufficient conditions under which a given Markov chain mixes rapidly. This framework is a purely combinatorial analogue that in some circumstances gives better results than the \emph{projection-restriction} technique of Jerrum, Son, Tetali, and Vigoda. In particular, in addition to the result for triangulations, we show quasipolynomial mixing for the \emph{$k$-angulation} flip walk on a convex point set, for fixed $k \geq 4$.
\end{abstract}

\section{Introduction and background}
\label{sec:associntro}
The study of~\emph{mixing times}\textemdash the art and science of proving upper and lower bounds on the efficiency of Markov chain Monte Carlo sampling methods\textemdash is a well-established area of research, of interest for combinatorial sampling problems, spin systems in statistical physics, probability, and the study of subset systems. Work in this area brings together techniques from spectral graph theory, combinatorics, and probability, and dates back decades; for a comprehensive survey of classic methods, results, and open questions see the canonical text by Levin, Wilmer, and Peres~\cite{levin2017markov}. Recent breakthroughs~\cite{anarimixis, anari2, anari1, chen2020rapid, chen2020rapiduniq, chen2021optimal, kaufman2020high, levi2020improved}\textemdash incorporating techniques from the theory of abstract simplicial complexes\textemdash have led to a recent slew of results for the mixing times of combinatorial chains for sampling independent sets, matchings, Ising model configurations, and a number of other structures in graphs, injecting renewed energy into an already active area.

We focus on a class of \emph{geometric} sampling problems that has received considerable attention from the counting and sampling~\cite{anclin, kzlimit} and mixing time~\cite{mct, molloylb, Stauffer2015ALF, sinclairlambda} research communities over the last few decades, but for which tight bounds have been elusive: sampling \emph{triangulations}. A triangulation is a maximal set of non-crossing edges connecting pairs of points (see Figure~\ref{fig:octtri}) in a given~$n$-point set. Every pair of triangles sharing an edge forms a quadrilateral. A triangulation~\emph{flip} consists of removing such an edge, and replacing it with the only other possible diagonal within the same quadrilateral. Flips give a natural Markov chain (the~\emph{flip walk}): one selects a uniformly random diagonal from a given triangulation and (if possible) flips the diagonal.

McShine and Tetali gave a classic result in a 1997 paper~\cite{mct}, showing that in the special case of a convex two-dimensional point set (a convex $n$-gon), the flip walk \emph{mixes} (converges to approximately uniform) in time~$O(n^5 \log n)$, improving on the best-known prior (and first polynomial) upper bound,~$O(n^{25})$, by Molloy, Reed, and Steiger~\cite{molloylb}. McShine and Tetali applied a Markov chain comparison technique due to Diaconis and Saloff-Coste~\cite{diaconis1993comparison} and to Randall and Tetali~\cite{randall1998analyzing} to obtain their bound, using a bijection between triangulations and a structure known as \emph{Dyck paths}. They noted that they could not improve on this bound using this bijection. Furthermore, they believed that an earlier \emph{lower} bound of~$\Omega(n^{3/2})$, also by Molloy, Reed, and Steiger~\cite{molloylb}, should be tight. We show the following result (see Section~\ref{sec:prelim} for the precise definition of mixing time):
\begin{theorem}
\label{thm:triangmixub}
The triangulation flip walk on the convex $n + 2$-point set mixes in time $O(n^{3}\log^3 n).$
\end{theorem}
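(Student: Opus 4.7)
The plan is to apply the combinatorial analogue of the projection--restriction technique (the framework promised in the abstract) recursively on the associahedron graph $K_n$. The natural decomposition is induced by a distinguished boundary edge: fixing $e = v_0 v_{n+1}$, every triangulation of the convex $(n+2)$-gon contains a unique triangle having $e$ as a side, and I would partition the state space into classes $\mathcal{C}_1, \ldots, \mathcal{C}_n$ according to the apex $v_k$ of that triangle. Each class $\mathcal{C}_k$ is in bijection with a product of triangulations of the two subpolygons cut off by the diagonals $(v_0, v_k)$ and $(v_k, v_{n+1})$, of sizes $k+1$ and $n-k+2$ respectively. The restriction of the flip walk to $\mathcal{C}_k$ is therefore, up to holding probabilities, a product of two independent flip walks on strictly smaller polygons, which sets up a clean recursion.

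Next I would invoke the framework to combine, in a single mixing-time inequality for $K_n$, (i) the restriction chains on the classes, whose mixing times are controlled inductively in terms of $T(k-1)$ and $T(n-k)$, and (ii) the projection chain on apex values $1, \ldots, n$, whose stationary distribution is proportional to the Catalan product $C_{k-1} C_{n-k}$. Solving the resulting recurrence of the shape $T(n) \lesssim f(n) + \max_k \bigl(T(k-1) + T(n-k)\bigr)$, with $f(n)$ absorbing both the projection-chain mixing and the framework's composition overhead, should deliver the target $O(n^3 \log^3 n)$ bound. The three logarithms plausibly collect as: one from the $\Theta(\log n)$ depth of the recursion, one from a conductance-to-mixing conversion involving $\log|\Omega| = \Theta(n)$, and one from the projection-chain analysis itself.

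The main obstacle is bounding the mixing of the projection chain. Between-class flips arise from flipping one of the two apex-incident diagonals, and the resulting transitions can jump from class $k$ to any $\mathcal{C}_j$ with rates depending on the triangulation of the corresponding subpolygon; thus the projection chain is not a simple birth--death walk but a nontrivial weighted chain on $\{1, \ldots, n\}$ whose stationary measure is sharply concentrated near $k = n/2$. I would analyze it by constructing a multicommodity flow (or canonical paths) tailored to the Catalan-weighted distribution and bounding its congestion. A secondary challenge is that the framework must compose cleanly through $\Theta(\log n)$ recursive levels: the composition overhead must enter additively, or at worst with a single logarithmic factor per level, to avoid destroying the final bound; consequently the framework itself must be formulated in terms of a quantity (most likely congestion of a multicommodity flow, or a Poincar\'e-type constant) that behaves well under products and projections simultaneously.
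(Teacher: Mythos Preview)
Your decomposition by the apex of the triangle on a fixed boundary edge is exactly the paper's ``oriented'' partition, and the product structure of each class is correct. But there is a genuine gap in the recursion-depth analysis. With this decomposition the subpolygons have sizes $k+1$ and $n-k+2$, and nothing forces $k$ away from $1$ or $n$; the worst case of your recurrence $T(n)\lesssim f(n)+\max_k\bigl(T(k-1)+T(n-k)\bigr)$ is $T(n)\lesssim f(n)+T(n-1)$, i.e.\ linear depth, not $\Theta(\log n)$. Consequently any constant-factor composition overhead per level (and Jerrum--Son--Tetali--Vigoda gives exactly that) blows up exponentially, and you cannot simply ``solve the recurrence'' to $O(n^3\log^3 n)$.

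The paper handles this in two stages that your proposal does not anticipate. First, it proves a strengthened decomposition lemma (Lemma~\ref{lem:fwstrong}) that eliminates the multiplicative loss entirely across linear depth, by exploiting two structural facts you did not use: (i) the boundary matchings between classes are large, namely $|\edgett{}{T}{T'}|\ge |\classtt{}{T}|\,|\classtt{}{T'}|/C_n$ (Lemma~\ref{lem:matchingscard}), and (ii) each boundary set inside a class is itself the preimage of a class at the next recursive level (Lemma~\ref{lem:bdryquad}, Condition~\ref{addcondbdry}). These let one route flow by an explicit ``shuffle/concentrate/transmit/distribute'' scheme whose congestion does not grow with recursion level. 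Second, to push the expansion bound from $\Omega(1/n^2)$ down to $\Omega(1/(\sqrt{n}\log n))$, the paper \emph{combines} the oriented partition with a different partition by the \emph{central} triangle (which does give $O(\log n)$ depth but lacks the matching lower bound on its own), together with a hierarchical grouping trick on the oriented classes. Finally, the last $n$-factor saving from $O(n^4\log^2 n)$ to $O(n^3\log^3 n)$ comes not from the recursion at all but from Lov\'asz--Kannan average conductance (Lemma~\ref{lem:avgcond}), using that small sets have provably better expansion. None of these ingredients appears in your plan, and without at least the first the argument does not close.
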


Prior to the present paper, no progress had been made either on upper or lower bounds for this chain in 25 years\textemdash even as new polynomial upper bounds and exponential lower bounds were given for other geometric chains, from lattice point set triangulations~\cite{Stauffer2015ALF, sinclairlambda} to quadrangulations of planar maps~\cite{caraceni2020}, and despite many breakthroughs using the newer techniques for other problems.

In addition to this specific result, we give a general decomposition theorem\textemdash which we will state as Theorem~\ref{thm:flowprojres} once we have built up enough preliminaries, for bounding mixing times by recursively decomposing the state space of a Markov chain. This theorem is a purely combinatorial alternative to the spectral result of Jerrum, Son, Tetali, and Vigoda~\cite{jerrumprojres}.

\subsection{Decomposition framework}
To prove our result, we develop a general decomposition framework that applies to a broad class of Markov chains, as an alternative to prior work by Jerrum, Son, Tetali, and Vigoda~\cite{jerrumprojres} that used spectral methods. We obtain our new mixing result for triangulations, then generalize our technique to obtain the first nontrivial mixing result for~$k$-angulations. In a companion paper~\cite{eppfrisharx} we further generalize this work to obtain the first rapid mixing bounds for Markov chains for sampling independent sets, dominating sets, and \emph{$b$-edge covers} (generalizing edge covers) in graphs of bounded treewidth, and for maximal independent sets, $b$-matchings, and \emph{maximal $b$-matchings} in graphs of bounded treewidth and degree. In that work we also strengthen existing results~\cite{heinrich2020glauber, dyergj} for proper $q$-colorings in graphs of bounded treewidth and degree.

The key observation that unifies these chains is that, when viewing their state spaces as graphs (exponentially large graphs relative to the input), they all admit a recursive decomposition satisfying key properties. First, each such graph, called a ``flip graph,'' can be partitioned into a small number of induced subgraphs, where each subgraph is a \emph{Cartesian product} of smaller graphs that are structurally similar to the original graph\textemdash and thus can be partitioned again into even smaller product graphs. Second, at each level of recursion, pairs of subgraphs are connected by large matchings. Intuitively, we can ``slice'' a flip graph into subgraphs that are well connected to each other, then ``peel'' apart the subgraphs using their Cartesian product structure, and repeat the process recursively. Each recursive level of slicing cuts through many edges (the large matchings), and indeed the peeling also disconnects many mutually well-connected subgraphs from one another. Prior work exists applying this ``slicing'' and ``peeling'' paradigm\textemdash albeit with spectral methods instead of purely combinatorial methods\textemdash using Jerrum, Son, Tetali, and Vigoda's decomposition theorem (Theorem~\ref{thm:specprojres}) for combinatorial chains~\cite{jerrumprojres, heinrich2020glauber, dyergj}. One of our contributions is to unify these applications, along with the geometric chains, into a sufficient set of conditions under which one can apply the existing decomposition theorem: Lemma~\ref{lem:fw}.

A more substantial technical contribution is our Theorem~\ref{thm:flowprojres}, a combinatorial analogue to Jerrum, Son, Tetali, and Vigoda's Theorem~\ref{thm:specprojres}. One can use our theorem in place of theirs and, in some cases, obtain better mixing bounds. In particular, in the case of triangulations, we obtain polynomial mixing via an adaptation of our (combinatorial) technique (Lemma~\ref{lem:fwstrong})\textemdash and it is not clear how to adapt the existing spectral methods to get even a polynomial bound. In the case of~$k$-angulations, our theorem gives a bound that has better dependence on the parameter~$k$.

\iffalse We will formalize and generalize this ``slicing and peeling'' decomposition we have described by giving a set of sufficient conditions for rapid mixing (Lemma~\ref{lem:fw} and Lemma~\ref{lem:fwstrong}). \fi
\subsection{Paper organization}
In the remainder of this section we will define the Markov chains we are analyzing and summarize our main results. Then, in Section~\ref{sec:sliceintuition}, we will give intuition for the decomposition by describing its application to triangulations. In Section~\ref{sec:framework} we will present our general decomposition meta-theorems, and compare our contribution to prior work by Jerrum, Son, Tetali, and Vigoda~\cite{jerrumprojres}. In particular, we will discuss why our purely combinatorial machinery is needed for obtaining new bounds in the case of triangulations. In Appendix~\ref{sec:flowdetails} we will prove a general result that gives a coarse bound on triangulation mixing; we will then improve this bound to near tightness in Appendix~\ref{sec:combinedec}, and give a matching upper bound (up to logarithmic factors) in Appendix~\ref{sec:assocexpub}. In Appendix~\ref{sec:quadmix}, we show that general $k$-angulations admit a decomposition satisfying a relaxation (Lemma~\ref{lem:fwquasi}) of our general theorem that implies quasipolynomial-time mixing. We analyze the particular quasipolynomial bound we obtain, and show that our combinatorial technique (Theorem~\ref{thm:flowprojres}) gives a better dependence on~$k$ than one would obtain with the prior decomposition theorem. In Appendix~\ref{sec:fwpf} we prove our general combinatorial decomposition theorem, Theorem~\ref{thm:flowprojres}. In Appendix~\ref{sec:latticetri} we prove a theorem about lattice triangulations; in Appendix~\ref{sec:missingproofs} we fill in a few remaining proof details.

\subsection{Triangulations of convex point sets and lattice point sets}
Let $P_n$ be the regular polygon with $n$ vertices. Every triangulation $t$ of $P_{n+2}$ has $n - 1$ diagonals, and every diagonal can be \emph{flipped}: every diagonal $D$ belongs to two triangles forming a convex quadrilateral, so $D$ can be removed and replaced with the diagonal $D'$ lying in the same quadrilateral and crossing $D$. The set of all triangulations of $P_{n+2}$, for $n \geq 1$, is the vertex set of a graph that we denote~$K_n$ (this notation is standard), whose edges are the flips between adjacent triangulations. The graph~$K_n$ is known to be realizable as the 1-skeleton of an $n-1$-dimensional polytope \cite{loday} called the \emph{associahedron} (we also use this name for the graph itself). It is also known to be isomorphic to the rotation graph on the set of all binary plane trees with $n+1$ leaves~\cite{sleator1988rotation}, and equivalently the set of all parenthesizations of an algebraic expression with $n+1$ terms, with ``flips'' defined as applications of the associative property of multiplication.

The structure of this graph depends only on the convexity and the number of vertices of the polygon, and not on its precise geometry. That is, $P_{n+2}$ need not be regular for $K_{n}$ to be well defined.

%\begin{figure}[h]
%\includegraphics[width=8em]{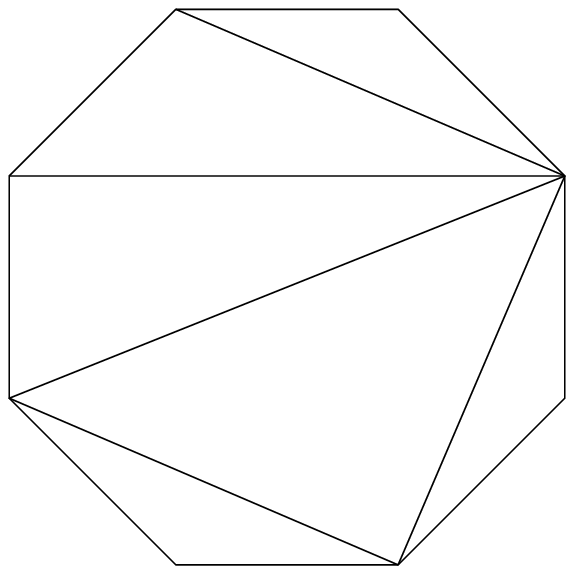}
%\caption{A triangulation of the regular octagon.}
%\label{fig:octtri}
%\end{figure}

McShine and Tetali \cite{mct} showed that the \emph{mixing time} (see Section~\ref{sec:prelim}) of the uniform random walk on $K_{3,n+2}$ is $O(n^5 \log n)$, following Molloy, Reed, and Steiger's~\cite{molloylb} lower bound of $\Omega(n^{3/2})$. These bounds together can be shown, using standard inequalities \cite{sinclair_1992}, to imply that the \emph{expansion} of $K_{3,n+2}$ is $\Omega(1/(n^4 \log n))$ and $O(n^{1/4}).$  It is easy to generalize triangulations to \emph{$k$-angulations} of a convex polygon $P_{(k-2)n+2}$, and to generalize the definition of a flip between triangulations to a flip between $k$-angulations: a $k$-angulation is a maximal division of the polygon into $k$-gons, and a flip consists of taking a pair of $k$-gons that share a diagonal, removing that diagonal, and replacing it with one of the other diagonals in the resulting $2k-2$-gon. One can then define the \emph{$k$-angulation flip walk} on the $k$-angulations of $P_{(k-2)n+2}$. An analogous graph to the associahedron is defined over the triangulations of the integer lattice (grid) point set with $n$ rows of points and $n$ columns. Substantial prior work has been done on bounds for the number of triangulations in this graph (\cite{anclin}, \cite{kzlimit}), as well as characterizing the mixing time of random walks on the graph, when the walks are weighted by a function of the lengths of the edges in a triangulation (\cite{sinclairlambda} \cite{phaseshift}).

\subsection{Convex triangulation flip walk and mixing time}
\label{sec:mixingexp}
Consider the following random walk on the triangulations of the convex $n+2$-gon:

\begin{algorithmic}
\For{$t = 1, 2, \dots$}
\State{Begin with an arbitrary triangulation $t$.}
\State{Flip a fair coin.}
\State{If the result is tails, do nothing.}
\State{Else, select a diagonal in $t$ uniformly at random, and flip the diagonal.}
\EndFor
\end{algorithmic}

(The ``do nothing'' step is a standard MCMC step that enforces a technical condition known as \emph{laziness}, required for the arguments that bound mixing time.) At any given time step, this walk induces a probability distribution $\pi$ over the triangulations of the $n+2$-gon. Standard spectral graph theory shows that $\pi$ converges to the uniform distribution in the limit. Formally, what McShine and Tetali showed~\cite{mct} is that the number of steps before $\pi$ is within \emph{total variation distance}~$1/4$ of the uniform distribution is bounded by $O(n^5\log n)$\textemdash in other words, that the \emph{mixing time} is~$O(n^5 \log n)$. Any polynomial bound means the walk \emph{mixes rapidly}.
\iffalse\subsection{Markov chains and mixing}\fi
We formally define total variation distance:

\begin{definition}
\label{def:tvd}
The \emph{total variation distance} between two probability distributions $\mu$ and $\nu$ over the same set $\Omega$ is defined as
$$d(\mu, \nu) = \frac{1}{2}\sum_{S \in \Omega}|\pi(S) - \pi^*(S)|.$$
\end{definition}

Consider a Markov chain with state space $\Omega$. Given a starting state $S \in \Omega$, the chain induces a probability distribution $\pi_t$ at each time step $t$. Under certain mild conditions, all of which are satisfied by the $k$-angulation flip walk, this distribution is known to converge in the limit to a \emph{stationary} distribution $\pi^*,$ which for the $k$-angulation flip walk is the uniform distribution on the $k$-angulations of the convex polygon. The \emph{mixing time} is defined as follows:
\begin{definition}
\label{def:mixingtime}
Given an arbitrary $\varepsilon > 0$, the \emph{mixing time}, $\tau(\varepsilon)$, of a Markov chain with state space $\Omega$ and stationary distribution $\pi^*$ is the minimum time $t$ such that, regardless of starting state, we always have
$$d(\pi_t, \pi^*) < \varepsilon.$$
Suppose that the chain belongs to a family of chains, whose size is parameterized by a value $n$. (It may be that $\Omega$ is exponential in $n$.) If $\tau(\varepsilon)$ is upper bounded by a function that is polynomial in $\log (1/\varepsilon)$ and in $n$, say that the chain is \emph{rapidly mixing}.
\end{definition} It is common to omit the parameter $\varepsilon$, assuming its value to be the arbitrary constant 1/4.

\subsection{Main results}
We show the following result for the expansion of the associahedron:
\begin{theorem}
\label{thm:assocexplb}
\label{thm:assocexpub}
The expansion of the associahedron $K_{3,n+2}$ is $\Omega(1/(\sqrt{n}\log n))$ and~$O(1/\sqrt n)$.
\end{theorem}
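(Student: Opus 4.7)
The proof splits into two parts: the upper bound $h(K_n)=O(1/\sqrt n)$, obtained by exhibiting a specific bad cut, and the lower bound $h(K_n)=\Omega(1/(\sqrt n\log n))$, obtained via a multicommodity flow argument. Notably, the lower bound does \emph{not} follow from the mixing-time bound of Theorem~\ref{thm:triangmixub}: applying Cheeger's inequality to the spectral gap forced by $\tau_{\mathrm{mix}}=O(n^{3}\log^{3}n)$ yields only $h=\Omega(1/(n^{2}\log^{3}n))$, weaker by a factor of roughly $n^{3/2}$. The expansion lower bound therefore needs its own combinatorial argument.

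For the upper bound, I would construct an explicit set $S\subseteq V(K_n)$ with $|S|\leq|V(K_n)|/2$ and comparatively few boundary flips, defined in terms of a macroscopic statistic of the triangulation. Using the bijection between triangulations of $P_{n+2}$ and Dyck paths, together with the Catalan asymptotic $C_n\sim 4^n/(n^{3/2}\sqrt\pi)$, natural candidates for such a statistic include the height of the associated Dyck path at its midpoint, or a thresholded function of the ``middle'' diagonal structure. The statistic should be chosen so that (i) it is concentrated in a window of width $\Theta(\sqrt n)$ around its mean and (ii) each flip changes its value by at most a constant. Taking $S$ as a balanced sub-level set leaves the boundary (triangulations at the exact threshold value) as a $\Theta(1/\sqrt n)$ fraction of the state space; combined with (ii) this yields $|E(S,\overline S)|/|S|=O(1/\sqrt n)$. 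Via the easy direction of Cheeger's inequality, this upper bound also recovers the Molloy--Reed--Steiger $\Omega(n^{3/2})$ lower bound on the mixing time, as promised.

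For the lower bound, I would construct a multicommodity flow $f$ on $K_n$ carrying equal flow between each pair of triangulations, using the slicing-and-peeling decomposition introduced in Section~\ref{sec:framework}. Concretely, the plan is to recursively decompose $K_n$ into induced subgraphs (``slices'') indexed by the choice of an ``anchor'' triangle at a distinguished vertex; within each slice the state space factors as a Cartesian product of two smaller associahedra, and distinct slices are joined by the large matchings guaranteed by the framework. The canonical path between a pair $(u,v)$ is built by first transferring along matchings into $v$'s slice and then recursing on the product structure; summing contributions across all pairs and recursion levels should produce a per-edge congestion of $\rho=O(n^{3/2}\log n)$. Standard flow--cut duality\textemdash the flow across any cut $(S,\overline S)$ is at least $\pi(S)\pi(\overline S)$ and at most $\rho$ times its capacity $|E(S,\overline S)|/(2(n-1)|V(K_n)|)$\textemdash then gives $h(K_n)\geq(n-1)/\rho=\Omega(1/(\sqrt n\log n))$.

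The hardest step will be tracking the flow's congestion through the recursion. Each recursive level contributes both from routing across matchings between slices and from pushing flow within the product slices themselves, and extracting the target $O(n^{3/2}\log n)$ (rather than the naive $O(n^2)$ one might fear) requires delicate, nearly tight estimates of the slice sizes and matching capacities at every level, together with a careful balanced distribution of canonical paths so that no single edge receives more than its share of flow.
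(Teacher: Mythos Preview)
Your lower-bound plan is the paper's framework, but you are missing the specific machinery that takes you from the coarse bound to the tight one. The oriented ``anchor triangle'' partition you describe has \emph{linear} recursion depth (each slice is a product $K_l\Box K_r$ with $l+r=n-1$, not $l,r\le n/2$), so even the optimized flow of Lemma~\ref{lem:fwstrong} yields only congestion $O(n^2)$ in the paper's normalization, i.e.\ $h=\Omega(1/n^2)$. Closing the gap to $O(\sqrt n\log n)$ congestion requires three additional ideas that your proposal does not contain: (a) combining the oriented partition with the \emph{central-triangle} partition so the recursion depth becomes $O(\log n)$; (b) the hierarchical pairing trick of Lemma~\ref{lem:logtrick}, which routes flow between oriented classes $\classtt{n}{T_i},\classtt{n}{T_j}$ through intermediate classes using the fact that $|\edgett{n}{T_i}{T_j}|$ decays like $|i-j|^{-3/2}$, yielding $O(\sqrt n)$ rather than $O(n)$ congestion per level; and (c) the 24-region construction that glues apex classes together with only constant overhead. ``Nearly tight estimates of slice sizes and matching capacities'' is not enough on its own; without the hierarchical grouping you will not beat linear loss per level. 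Also, your claimed ``naive $O(n^2)$'' is inconsistent with your own normalization: under $h\ge (n-1)/\rho$, a congestion of $n^2$ would already give $h=\Omega(1/n)$, better than the target.

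Your upper-bound approach is genuinely different from the paper's. The paper does not use the Dyck-path bijection at all; instead it takes as statistic the length $\ell$ of the shortest side of the \emph{central triangle} and cuts at $\ell=n/6$. The analysis then reduces to Catalan ratios of the form $C_iC_jC_k/C_n$ and a short double-integral computation. Your Lipschitz-statistic idea is natural, but your property~(ii)\textemdash that a flip changes the Dyck midpoint height by $O(1)$\textemdash is asserted, not verified, and it is not obviously true: a flip is a tree rotation, which in standard Dyck encodings can relocate an entire matched block and hence shift which step sits at position $n$. If you pursue this route you must either prove~(ii) for a specific encoding or switch to a statistic (such as the paper's geometric one) for which the Lipschitz property is immediate.
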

We will prove the lower bound in Appendix~\ref{sec:flowdetails} and Appendix~\ref{sec:combinedec} using the \emph{multicommodity flow}-based machinery we introduce in Section~\ref{sec:framework}, after giving intuition in Section~\ref{sec:sliceintuition}. Combining this result with the connection between flows and mixing~\cite{sinclair_1992}\textemdash with some additional effort in Appendix~\ref{sec:combinedec}\textemdash gives our new~$O(n^{3} \log^3 n)$ bound (Theorem~\ref{thm:triangmixub}) for triangulation mixing.

Although the expansion lower bound is more interesting for the sake of rapid mixing, the upper bound in Theorem~\ref{thm:assocexpub}\textemdash which we prove in Appendix~\ref{sec:assocexpub}\textemdash recovers Molloy, Reed, and Steiger's~$\Omega(n^{3/2})$ mixing lower bound~\cite{molloylb}. It is also the first result showing that the associahedron has combinatorial expansion $o(1)$. By contrast, Anari, Liu, Oveis Gharan, and Vinzant recently proved~\cite{anari1, anari2}, settling a conjecture of Mihail and Vazirani~\cite{mihail1989expansion}, that matroids have expansion one. (Mihail and Vazirani in fact conjectured that all graphs realizable as the 1-skeleton of a \emph{0-1 polytope} have expansion one.) Although the set of convex $n$-gon triangulations is not a matroid, it is an important subset system\textemdash and this work shows that it does not have expansion one. More generally, we give the following quasipolynomial bound for $k$-angulations:
\begin{theorem}
\label{thm:kangmix}
For every fixed~$k \geq 3$, the $k$-angulation flip walk on the convex $(k-2)n+2$-point set mixes in time $n^{O(k\log n)}.$
%$$O((k-1)^3(\log(k-1))\cdot n^{2(3k\log(k-1)+k(1+\log\pi)+3k\log n + k)+7}).$$
\end{theorem}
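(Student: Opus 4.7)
The plan is to apply the relaxed decomposition lemma Lemma~\ref{lem:fwquasi} to a recursive partition of the $k$-angulation flip graph of the convex $(k-2)n+2$-gon. At the top level, I fix a boundary edge $e$ and partition the state space according to the unique $k$-gon $F$ containing $e$ in each $k$-angulation. Since $F$ is determined by the choice of its other $k-1$ vertices, there are at most $O(n^{k-1})$ classes. For a fixed $F$, its $k-1$ non-$e$ sides split the rest of the polygon into $k-1$ independently $k$-angulated convex \emph{ears}, whose size parameters $m_1, \ldots, m_{k-1}$ satisfy $\sum_j m_j = n - 1$. Hence each partition class $C_F$ is isomorphic to a Cartesian product of $k-1$ smaller $k$-angulation flip graphs---exactly the product structure that the framework requires, and one that can be recursively decomposed in the same way.

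Next, I would verify the matching condition of Lemma~\ref{lem:fwquasi}. Two classes $C_F$ and $C_{F'}$ corresponding to $k$-gons $F, F'$ that differ by a single length-$(k-1)$ diagonal of their common $(2k-2)$-gon are joined by flips that modify only the one ear containing that diagonal, leaving the other $k-2$ ears untouched on each side. The resulting matching has size proportional to the product of the untouched ears' counts, and one can show it covers a large enough fraction of $\min(|C_F|,|C_{F'}|)$ by summing over compatible $k$-angulations of the affected ear. To control recursion depth, I would at each level choose a splitting $k$-gon (rather than merely the $k$-gon containing a fixed boundary edge) whose removal leaves all ears of size at most a constant fraction of the current polygon; such a balanced $k$-gon always exists by a centroid argument on the dual tree of any reference $k$-angulation. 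With depth $O(\log n)$ and per-level overhead $n^{O(k)}$, Lemma~\ref{lem:fwquasi} telescopes to the claimed bound $n^{O(k \log n)}$.

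The main obstacle is the per-level loss from the matchings. For triangulations, each inter-class flip changes only a single triangle, so essentially the full Cartesian product is preserved and the sharper Lemma~\ref{lem:fwstrong} yields polynomial mixing. For $k \geq 4$, every inter-class flip simultaneously disturbs both $F$ and an adjacent $k$-gon, so the matching between neighboring classes necessarily loses a $\operatorname{poly}(n)$ factor whose exponent depends on $k$; summing this loss across $O(\log n)$ recursive levels is what produces the quasipolynomial exponent. Ensuring that the final exponent is linear in $k$, rather than quadratic or worse, is the most delicate bookkeeping, and is precisely where the combinatorial flow-based Theorem~\ref{thm:flowprojres} is advantageous over the spectral Theorem~\ref{thm:specprojres}: the former aggregates congestion additively across the decomposition levels, while the latter would compound inverse spectral gaps multiplicatively at each level.
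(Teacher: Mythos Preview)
Your overall strategy---partition the state space by a distinguished $k$-gon, observe that each class is a Cartesian product of smaller $k$-angulation flip graphs, and recurse via Lemma~\ref{lem:fwquasi}---is the same as the paper's. The gap is in how you obtain logarithmic recursion depth.

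The centroid-of-the-dual-tree proposal does not give a usable partition. The dual-tree centroid is a property of a \emph{particular} $k$-angulation: two $k$-angulations that both contain a $k$-gon $T$ can have different centroids (one equal to $T$, the other lying in one of the ears). Consequently the class ``$k$-angulations whose dual-tree centroid is $T$'' is a strict subset of ``$k$-angulations containing $T$'' and does \emph{not} inherit the Cartesian product structure that Condition~\ref{multcondcart} requires. You need an invariant that picks out, for every $k$-angulation, a $k$-gon whose ears are all small \emph{and} such that the preimage of each possible value is exactly the set of $k$-angulations containing that $k$-gon. The paper's choice (Definition~\ref{def:kpart}) is the $k$-gon containing the geometric center of the regular $(k-2)n+2$-gon: every $k$-angulation has exactly one such $k$-gon, the class of $k$-angulations with central $k$-gon $T$ is precisely all $k$-angulations containing $T$ (hence a Cartesian product), and any side of a central $k$-gon of length exceeding half the perimeter would separate the center from the $k$-gon's interior, so all ears have parameter at most $n/2$.

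One smaller correction: the reason Theorem~\ref{thm:flowprojres} improves on Theorem~\ref{thm:specprojres} here is not that congestion accumulates additively across levels---both recursions are multiplicative---but that the flow construction lets you drop the $\gamma\Delta=\Theta(k)$ factor from each level (see Remark~\ref{rmk:deletegamma} and Lemma~\ref{lem:nonhierexact}); that saving is what keeps the final exponent linear in $k$.
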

\iffalse 
To prove these results, we introduce a \emph{multicommodity flow}-based framework that is an analogue of Jerrum, Son, Tetali, and Vigoda's~\cite{jerrumprojres} meta-theorem for Markov chain decomposition. We state our theorem in Section~\ref{sec:framework}.\fi

\iffalse \subsection{Additional results for triangulations}\fi
\iffalse We also show the following upper bound on the expansion of the associahedron:
\begin{theorem}
\label{thm:assocexpub}
The expansion of the associahedron $K_{3,n+2}$ is $O(n^{-1/8}).$
\end{theorem}
\fi
In Appendix~\ref{sec:latticetri}, we give a lower bound on the \emph{treewidth} of the $n \times n$ integer lattice point set triangulation flip graph:
\begin{restatable}{theorem}{thmlatticetw}
\label{thm:latticetw}
The treewidth of the triangulation flip graph $F_n$ on the $n \times n$ integer lattice point set is $\Omega(N^{1-o(1)})$, where $N = |V(F_n)|$.
\end{restatable}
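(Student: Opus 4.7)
The plan is to exhibit a large Cartesian-product subgraph within $F_n$ and lower-bound its treewidth. I would partition the $n \times n$ lattice point set by horizontal and vertical lattice lines into $m = \Theta((n/\ell)^2)$ blocks of $\ell \times \ell$ points (with adjacent blocks sharing a boundary row or column), where $\ell = \ell(n)$ is a parameter satisfying $\ell \to \infty$ and $\ell/n \to 0$. Restricting $F_n$ to triangulations that include all block boundary edges gives an induced subgraph $H$ isomorphic to the Cartesian product $F_\ell^{\square m}$, since each such triangulation decomposes into independent triangulations of the individual blocks. By the Kaibel--Ziegler asymptotic $|V(F_k)| = C^{k^2(1+o_k(1))}$ for a constant $C > 2$, and since $m\ell^2 = n^2(1+o(1))$, this yields $|V(H)| = C^{n^2(1+o(1))} = N^{1-o(1)}$.

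The main step is then to show $\mathrm{tw}(H) = \Omega(N^{1-o(1)})$. For comparison, the naive approach using only unit-square flips ($\ell = 2$) exhibits a Hamming cube subgraph $Q_{(n-1)^2} \hookrightarrow F_n$, but since $\mathrm{tw}(Q_d) = \Theta(2^d/\sqrt d)$ this yields only $\mathrm{tw}(F_n) = \Omega(N^{1/\log_2 C \,-\, o(1)})$, which is strictly polynomial in $N$ since $\log_2 C > 1$. To reach $N^{1-o(1)}$, I would invoke or derive a treewidth lower bound of the form $\mathrm{tw}(G_1 \square \cdots \square G_m) \geq \prod_i |V(G_i)|/\mathrm{poly}(n)$ for Cartesian products of connected graphs --- for instance via an isoperimetric inequality for such products (generalizing Harper's theorem for hypercubes and Bollob\'as--Leader for grids), or via a bramble lifted from the product's coordinate-hyperplane structure. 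Since $\mathrm{poly}(n) = N^{o(1)}$ for our parameter regime, this gives $\mathrm{tw}(H) = \Omega(N^{1-o(1)})$, and subgraph monotonicity of treewidth then yields $\mathrm{tw}(F_n) \geq \mathrm{tw}(H) = \Omega(N^{1-o(1)})$.

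The main obstacle is the treewidth lower bound on $F_\ell^{\square m}$ itself. The treewidth of a Cartesian product of connected graphs can fall below its vertex count by polynomial factors (e.g.\ $P_n^{\square m}$ has treewidth only $n^{m-1}$, not $n^m$), so the choice of $\ell$ must strike a delicate balance: $\ell \to \infty$ so that $C^{\ell^2}$ approximates $|V(F_\ell)|$ tightly enough to make $|V(H)| = N^{1-o(1)}$, and $\ell = o(n)$ (for instance $\ell = \log n$) so that $|V(F_\ell)| = N^{o(1)}$, ensuring that the polynomial slack in the product-treewidth bound is absorbed into $N^{o(1)}$.
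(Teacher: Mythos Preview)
Your decomposition is exactly the paper's: restrict to triangulations containing a fixed grid of block boundaries, obtaining an induced subgraph $H \cong F_\ell^{\square m}$, then lower-bound $\mathrm{tw}(H)$. The paper takes $\ell = \sqrt{n}$, $m = n$.

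Where you diverge is in how you propose to handle your ``main obstacle,'' the treewidth of $F_\ell^{\square m}$. You gesture at deriving a new isoperimetric inequality for general Cartesian products, or building a bramble, and you worry about balancing $\ell$ so the slack stays $N^{o(1)}$. The paper's resolution is much simpler and is worth knowing: any connected graph $G$ trivially has edge expansion at least $1/|V(G)|$ (any cut has at least one crossing edge), and it is a standard fact (Lemma~\ref{lem:cartexp} in the paper, due to Graham et al.) that Cartesian products preserve edge expansion up to a factor of $2$. Hence $H$ has edge expansion $\Omega(1/|V(F_\ell)|)$, so vertex expansion $\Omega\bigl(1/(\Delta\,|V(F_\ell)|)\bigr)$ with $\Delta = O(n^2)$, and therefore $\mathrm{tw}(H) = \Omega\bigl(|V(H)|/(n^2\,|V(F_\ell)|)\bigr)$ via the separator--treewidth connection. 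With $\ell = \sqrt{n}$ this is $\Omega\bigl(2^{cn^2}/(n^2\, 2^{cn})\bigr) = \Omega(N^{1-o(1)})$.

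So your plan is correct in outline, but the ``slack'' in the product-treewidth bound is not $\mathrm{poly}(n)$ as you wrote; it is $|V(F_\ell)|$ times a degree factor. Your instinct that $\ell = o(n)$ is required so that $|V(F_\ell)| = N^{o(1)}$ is exactly right, and no delicate balancing or new isoperimetric machinery is needed beyond the one-line trivial expansion bound plus the known product lemma.
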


\section{Decomposing the convex point set triangulation flip graph}
\label{sec:sliceintuition}
\iffalse We now give intuition for our decomposition technique as it applies to the flip graph of the convex $n$-gon triangulation flip walk. \fi

\subsection{Bounding mixing via expansion}
We have a Markov chain that is in fact a random walk on the associahedron~$K_n$. We wish to bound the mixing time of this walk. It turns out that one way to do this is by lower-bounding the \emph{expansion} of the same graph~$K_n$. Intuitively, expansion concerns the extent to which ``bottlenecks'' exist in a graph. More precisely, it measures the ``sparsest'' cut\textemdash the minimum ratio of the number of edges in a cut divided by the number of vertices on the smaller side of the cut:

\begin{definition}
\label{def:exp}
The \emph{edge expansion} (or simply \emph{expansion}), $h(G)$, of a graph $G = (V, E)$ is the quantity
$$\min_{S \subseteq V: |S| \leq |V|/2} |\partial S|/|S|,$$
where~$\partial S = \{(s, t) | s \in S, t \notin S\}$ is the set of edges across the~$(S, V \setminus S)$ cut.
\end{definition}
It is known~\cite{jsconductance, sinclair_1992} that a lower bound on edge expansion leads to an upper bound on mixing:
\begin{lemma}
\label{lem:expmixing}
The mixing time of the Markov chain whose transition matrix is the normalized adjacency matrix of a $\Delta$-regular graph $G$ is 
$$O\left(\frac{\Delta^2\log(|V(G)|)}{(h(G))^2}\right).$$
\end{lemma}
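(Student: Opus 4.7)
The plan is to reduce this statement to the classical Jerrum--Sinclair conductance bound on mixing time, by translating the combinatorial edge expansion $h(G)$ into the conductance of the walk and then invoking the spectral-gap/mixing connection. The main work is bookkeeping: showing that the factor of $\Delta$ in the denominator of $h(G)/\Delta$ enters the final bound quadratically through Cheeger's inequality.

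First I would record the elementary structure of the chain. Because $G$ is $\Delta$-regular, the stationary distribution $\pi$ of the normalized-adjacency walk is uniform, so $\pi_{\min} = 1/|V(G)|$ and for every edge $xy$ we have $\pi(x)P(x,y) = 1/(|V(G)|\,\Delta)$. The conductance of a set $S$ with $|S| \leq |V(G)|/2$ is therefore
\[
\Phi(S) \;=\; \frac{\sum_{x \in S,\, y \notin S} \pi(x) P(x,y)}{\pi(S)} \;=\; \frac{|\partial S|/(|V(G)|\,\Delta)}{|S|/|V(G)|} \;=\; \frac{|\partial S|}{\Delta\,|S|},
\]
and minimizing over $S$ gives $\Phi(P) = h(G)/\Delta$. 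This is the only place where the regularity hypothesis is used in a nontrivial way.

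Next I would apply the Cheeger-type inequality of Jerrum and Sinclair~\cite{jsconductance, sinclair_1992}: for a reversible, lazy (or otherwise aperiodic) chain the absolute spectral gap satisfies $1 - \lambda_\star \geq \Phi(P)^2/2$, hence
\[
1 - \lambda_\star \;\geq\; \frac{h(G)^2}{2\Delta^2}.
\]
Plugging this into the standard mixing-time bound $\tau(\varepsilon) \leq (1-\lambda_\star)^{-1}\log(1/(\pi_{\min}\varepsilon))$ with $\varepsilon = 1/4$ and $\pi_{\min} = 1/|V(G)|$ yields
\[
\tau \;=\; O\!\left(\frac{\Delta^2 \log |V(G)|}{h(G)^2}\right),
\]
as claimed. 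The one subtlety I expect to flag is aperiodicity: the raw normalized adjacency matrix can be periodic (e.g.\ on a bipartite graph), so I would either implicitly apply the lemma to the lazy version of the walk (as the paper already does in Section~\ref{sec:mixingexp} with its explicit coin-flip step) or else cite the variant of the Jerrum--Sinclair bound that works with the absolute spectral gap. No other step is delicate; the argument is essentially a direct quotation of the classical inequalities, with the regularity used only to make $\pi$ uniform and to give the clean identity $\Phi(P) = h(G)/\Delta$.
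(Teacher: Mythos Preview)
Your proposal is correct and is precisely the classical Jerrum--Sinclair conductance argument that the paper invokes: the paper does not give its own proof of this lemma but simply cites \cite{jsconductance, sinclair_1992}, and your write-up spells out exactly that reduction (edge expansion $\to$ conductance $h(G)/\Delta$ $\to$ Cheeger $\to$ spectral-gap mixing bound), including the correct observation about laziness/aperiodicity that the paper handles via the coin-flip in Section~\ref{sec:mixingexp}.
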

One can do better~\cite{diacstroock, sinclair_1992} if the paths in a \emph{multicommodity flow} are not too long (Section~\ref{sec:cartexpmcflow}).

\subsection{``Slicing and peeling''}
We would like to show that there are many edges in every cut, relative to the number of vertices on one side of the cut. We partition the triangulations~$V(K_n)$ into~$n$ equivalence classes, each inducing a subgraph of~$K_n$. We show that many edges exist between each pair of the subgraphs. Thus the partitioning ``slices'' through many edges. After the partitioning, we show that each of the induced subgraphs has large expansion. To do so, we show that each such subgraph decomposes into many copies of a smaller flip graph~$K_i$, $i < n$. This inductive structure lets us assume that~$K_i$ has large expansion\textemdash then show that the copies of the smaller flip graph are all well connected to one another. We call this ``peeling,'' because one must peel the many~$K_i$ copies from one another\textemdash removing many edges\textemdash to isolate each copy. Molloy, Reed, and Steiger~\cite{molloylb} obtained their~$O(n^{25})$ mixing \emph{upper} bound via a different decomposition, namely using the \emph{central} triangle, via a non-flow-based method. That decomposition is the one we use for our quasipolynomial bound for general $k$-angulations in Appendix~\ref{sec:quadmix}. However, we use a different decomposition here, one with a structure that lets us obtain a nearly tight bound, via a multicommodity flow construction. We formalize the slicing step now:

\iffalse \subsection{The partition~$\mathcal{S}_n$}\fi
\begin{definition}
\label{def:orientedpart}
Fix a ``special'' edge~$e^*$ of the convex $n+2$-gon $P_{n+2}$. For each triangle~$T$ having~$e^*$ as one of its edges, define the \emph{oriented class} $\classtt{n}{T}$ to be the set of triangulations of $P_{n+2}$ that include~$T$ as one of their triangles. Let $\mathcal{T}_n$ be the set of all such triangles; let~$\mathcal{S}_n$ be the set of all classes~$\{\classtt{n}{T} | T \in \mathcal{T}_n\}$.

Orient $P_{n+2}$ so that $e^*$ is on the bottom. Then say that $T$ (respectively $\classtt{n}{T}$) is to the \emph{left} of $T'$ (respectively $\classtt{n}{T'}$) if the topmost vertex of $T$ lies counterclockwise around $P_{n+2}$ from the topmost vertex of $T'$. Say that $T'$ lies to the \emph{right} of $T$. Write $T < T'$ and $T' > T$.
\end{definition}

See Figure~\ref{fig:eclass}.

\begin{figure}[h]
\includegraphics[height=8em]{octagontriang.eps}
\hspace{1.5em}
\includegraphics[height=8em]{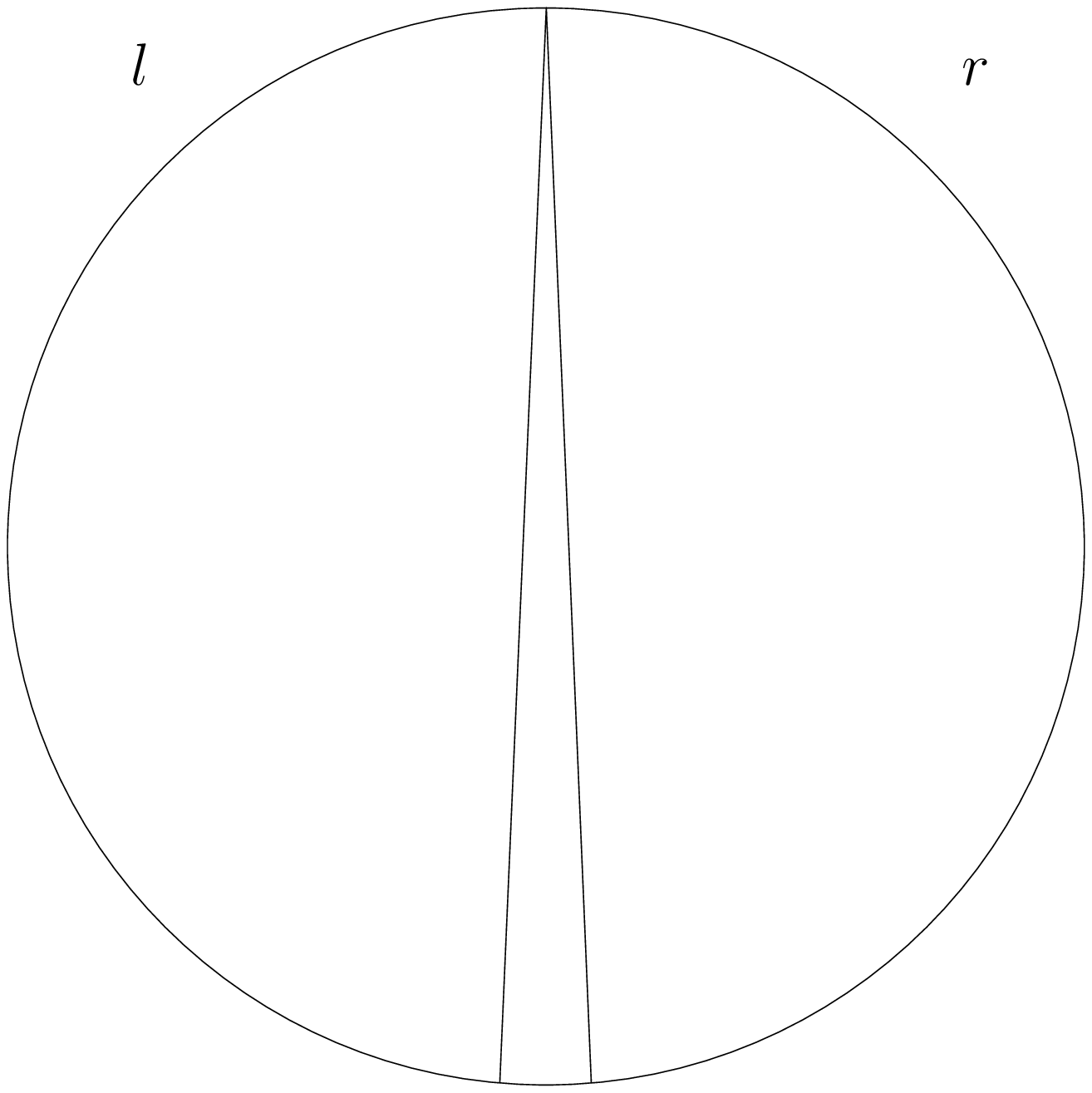}
\hspace{1.5em}
\includegraphics[height=8em]{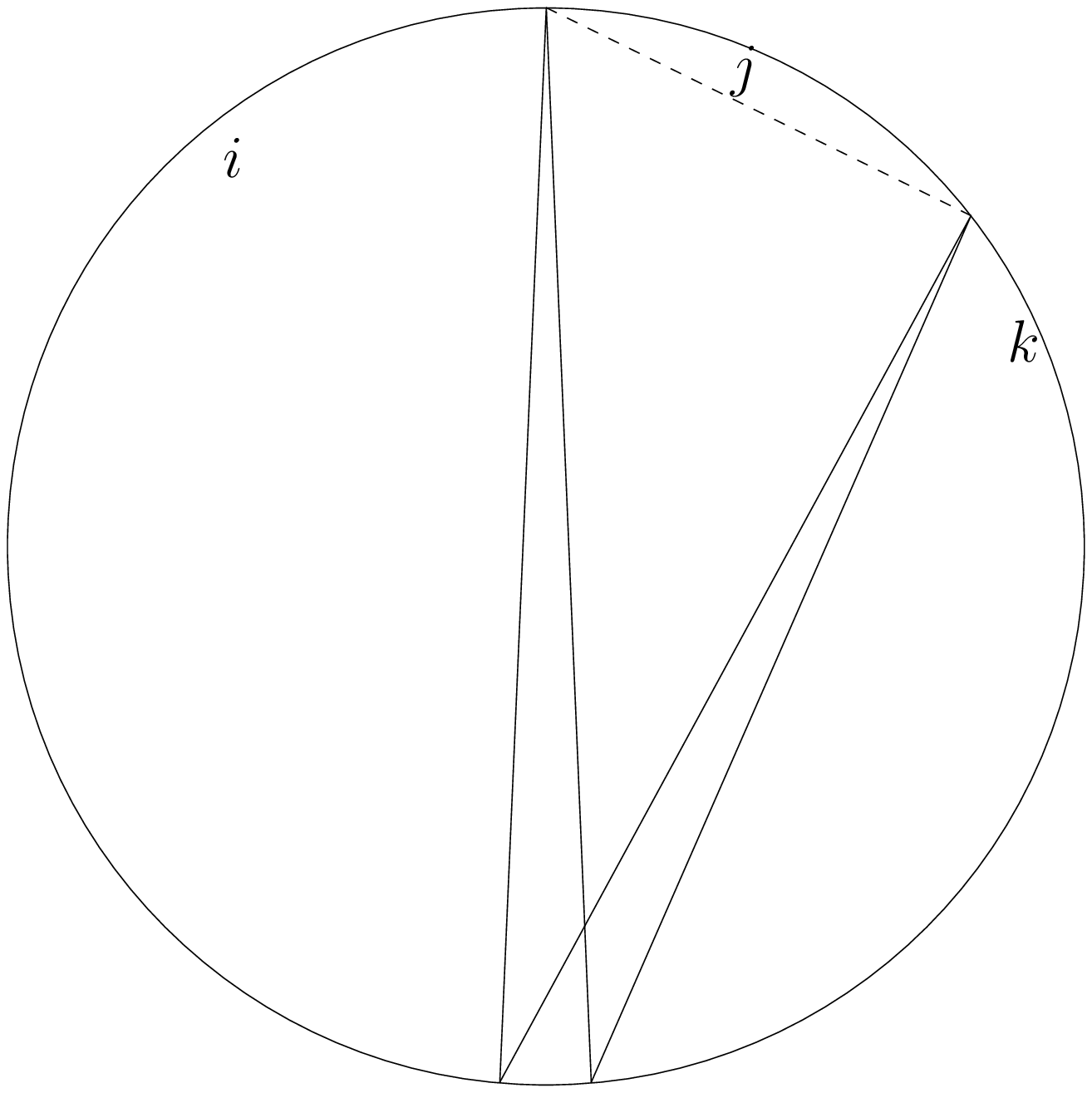}
\caption{Left: A triangulation of the regular octagon. Center: a class $\classtt{n}{T} \in \mathcal{S}_n$, represented schematically by the triangle~$T$ that induces it. We depict the regular $n+2$-gon as a circle (which it approximates as $n\rightarrow \infty$), for ease of illustration. Each triangulation~$t\in \classtt{n}{T}$ consists of~$T$ (the triangle shown), and an arbitrary triangulation of the two polygons on either side of~$T$. Notice that~$\classtt{}{T} \cong K_l \Box K_r$, where~$T$ partitions the~$n+2$-gon into an~$l$-gon and an~$r$-gon. Right: the matching $\edgett{n}{T}{T'}$ between classes $\classtt{n}{T} \cong K_i \Box K_{j+k}$ and $\classtt{n}{T'} \cong K_{i+j} \Box K_k$, is in bijection with the triangulations in $K_i \Box K_j \Box K_k$ (induced by the quadrilateral containing $T$ and $T'$). Therefore, $|\edgett{n}{T}{T'}| = C_{i}C_{j}C_{k}.$}
\label{fig:octtri}
\label{fig:eclass}
\end{figure}

We make observations about the structure of each class as an induced subgraph of~$K_n$\iffalse , and of the sets of edges between each pair of classes. First we need the notion of a \emph{Cartesian product} of graphs.\fi

\begin{definition}
\label{def:cartprod}
The \emph{Cartesian product} graph $G \Box H$ of graphs $G$ and $H$ has vertices $V(G) \times V(H)$ and edges $$\{((u, v), (u', v)) | (u, u') \in E(G), v \in V(H)\}$$
$$\cup \{((u, v), (u, v')) | (v, v') \in E(H), u \in V(G)\}.$$

Given a vertex~$w = (u, v) \in V(G) \times V(H)$, call~$u$ the \emph{projection} of~$w$ onto~$G$, and similarly call~$v$ the projection of~$w$ onto~$H$.
\end{definition}
(Applying the obvious associativity of the Cartesian product operator, one can naturally define the product~$G_1 \Box G_2 \Box \cdots \Box G_k = \Box_{i=1}^k G_i$.)

We can now characterize the structure of each class as an induced subgraph of~$K_n$:

\begin{lemma}
\label{lem:projcart}
Each class $\classtt{n}{T}$ is isomorphic to a Cartesian product of two associahedron graphs $K_l$ and $K_r$, with $l + r = n - 1$.
\end{lemma}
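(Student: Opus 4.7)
The plan is to exhibit an explicit bijection between $\classtt{n}{T}$ and $V(K_l) \times V(K_r)$ for appropriate $l$ and $r$, and then verify that this bijection sends flip-edges to Cartesian-product edges.

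First I would set up the geometry. Let $e^*$ have endpoints $a$ and $b$, and let $v$ be the third vertex of the fixed triangle $T$ containing $e^*$. The triangle $T$ partitions $P_{n+2}$ into $T$ itself plus two sub-polygons: $P_L$, bounded by the diagonal $av$ together with the arc of $\partial P_{n+2}$ from $a$ to $v$ not containing $b$, and $P_R$, bounded by $bv$ together with the arc from $v$ to $b$ not containing $a$. If $P_L$ has $l+2$ vertices and $P_R$ has $r+2$ vertices, then counting vertices of $P_{n+2}$ gives $(l+2) + (r+2) - 3 = n+2$, so $l + r = n - 1$, matching the claim. (Degenerate cases $l=0$ or $r=0$, where $v$ is adjacent to $a$ or to $b$, are handled by interpreting $K_0$ as the one-vertex graph corresponding to the empty triangulation of a $2$-gon.)

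Next I would define the map $\phi : \classtt{n}{T} \to V(K_l) \times V(K_r)$ by sending each triangulation $t \supseteq T$ to the pair $(t \cap P_L, \; t \cap P_R)$ of its restrictions to the two sub-polygons. Since every diagonal of $t$ other than those of $T$ lies entirely inside one of $P_L$ or $P_R$ (no diagonal of $t$ can cross an edge of $T$), the pair $(t_L, t_R)$ is a pair of triangulations of the sub-polygons, and conversely any such pair can be combined with $T$ to yield a triangulation in $\classtt{n}{T}$. Thus $\phi$ is a bijection on vertex sets.

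It remains to match edges. For the forward direction, suppose $t, t' \in \classtt{n}{T}$ differ by a single flip, i.e.\ $(t, t') \in E(K_n)$. The flipped diagonal $D$ cannot be any edge of $T$: $e^*$ is on the polygon boundary and thus not flippable, and flipping $av$ or $bv$ would destroy $T$ and hence leave $\classtt{n}{T}$. So $D$ lies strictly inside $P_L$ or strictly inside $P_R$, and the flip is therefore a flip of the restriction of $t$ to one sub-polygon, with the other restriction unchanged. This is exactly an edge in $K_l \,\Box\, K_r$. The reverse direction is analogous: every flip inside $P_L$ (with $t_R$ fixed) yields a flip of $t$ in $K_n$ that stays in $\classtt{n}{T}$, and likewise for $P_R$. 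Hence $\phi$ is a graph isomorphism $\classtt{n}{T} \cong K_l \,\Box\, K_r$.

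The only mildly delicate point\textemdash which I would flag but not belabor\textemdash is confirming that no flip of a triangulation in $\classtt{n}{T}$ can alter $T$ while remaining within $\classtt{n}{T}$; this follows immediately from the observation that each such flip replaces exactly one diagonal, so it can neither create nor destroy $T$ without exiting the class. Everything else is a routine structural check.
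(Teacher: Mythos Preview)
Your proposal is correct and follows essentially the same approach as the paper, which also partitions $P_{n+2}$ via $T$ into two sub-polygons and identifies the class with the Cartesian product of their flip graphs; you simply supply more detail on the bijection and the edge correspondence. One small arithmetic slip: the vertex count should read $(l+2)+(r+2)-1=n+2$ (the two sub-polygons share only the apex $v$, not three vertices), which then correctly yields $l+r=n-1$.
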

\begin{proof}
Each triangle $T$ partitions the $n+2$-gon into two smaller convex polygons with side lengths $l+1$ and~$r+1$, such that~$l + r = n - 1$. Thus each triangulation in $\classtt{n}{T}$ can be identified with a tuple of triangulations of these smaller polygons. The Cartesian product structure then follows from the fact that every flip between two triangulations in $\classtt{n}{T}$ can be identified with a flip in one of the smaller polygons.
\end{proof}

Lemma~\ref{lem:projcart} will be central to the peeling step. For the slicing step, building on the idea in Lemma~\ref{lem:projcart} will help us characterize the edge sets between classes:

\begin{definition}
\label{def:projggedgebdry}
Given classes~$\classtt{n}{T}, \classtt{n}{T'} \in \mathcal{S}_n$, denote by $\edgett{n}{T}{T'}$ the set of edges (flips) between~$\classtt{n}{T}$ and~$\classtt{n}{T'}$.
Let $\bdrytt{n}{T}{T'}$ and $\bdrytt{n}{T'}{T}$ be the \emph{boundary sets}\textemdash the sets of endpoints of edges in $\edgett{n}{T}{T'}$\textemdash that lie respectively in $\classtt{n}{T}$ and $\classtt{n}{T'}$.
\end{definition}

\begin{lemma}
\label{lem:projggbdry}
For each pair of classes $\classtt{n}{T}$ and $\classtt{n}{T'}$, the boundary set $\bdrytt{n}{T}{T'}$ induces a subgraph of $\classtt{n}{T}$ isomorphic to a Cartesian product of the form~$K_i \Box K_j \Box K_k$, for some~$i + j + k = n - 2$.
\end{lemma}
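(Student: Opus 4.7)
The plan is to characterize $\bdrytt{n}{T}{T'}$ explicitly as the set of triangulations containing a specific pair of fixed triangles, after which the Cartesian product structure follows by an elaboration of the argument for Lemma~\ref{lem:projcart}.

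First, I would fix coordinates with $e^* = (a, b)$, $T = (a, v, b)$, and $T' = (a, v', b)$, using Definition~\ref{def:orientedpart} to assume without loss of generality that $T < T'$, so that $v'$ lies to a specified side of $v$ along the polygon boundary. A single flip transforming some $t \in \classtt{n}{T}$ into some $t' \in \classtt{n}{T'}$ must delete one diagonal, and since $T \subset t$ but $T \not\subset t'$, that diagonal must be one of the two non-$e^*$ edges of $T$; the orientation assumption forces it to be the edge $(v, b)$. For the resulting flip to install $T'$ in $t'$, the quadrilateral around $(v, b)$ in $t$ must be $a, v, v', b$ in cyclic order, which forces $t$ to contain both $T$ and the triangle $U := (v, v', b)$. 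Conversely, any $t \in \classtt{n}{T}$ containing $U$ admits the flip $(v, b) \to (a, v')$ and produces some $t' \in \classtt{n}{T'}$. Hence $\bdrytt{n}{T}{T'}$ is exactly the set of triangulations of $P_{n+2}$ that contain both $T$ and $U$.

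Second, $T \cup U$ is a quadrilateral whose three chord-sides $(a, v)$, $(v, v')$, $(v', b)$, together with the polygon boundary, bound three convex sub-polygons with vertex counts $i+2$, $j+2$, $k+2$ in the convention for which an $(m+2)$-gon has triangulations $K_m$; an elementary count gives $i + j + k = n - 2$. A triangulation in $\bdrytt{n}{T}{T'}$ is determined by the two fixed triangles $T, U$ together with an independent triangulation of each sub-polygon, giving a bijection with $V(K_i) \times V(K_j) \times V(K_k)$. Any flip in the induced subgraph on $\bdrytt{n}{T}{T'}$ must preserve both $T$ and $U$, so the flipped diagonal cannot be an edge of either and must lie strictly inside one of the three sub-polygons; conversely the quadrilateral of any such interior flip is contained in the same sub-polygon, so the flip preserves $T$ and $U$ and appears in the induced subgraph. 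This yields the isomorphism $\bdrytt{n}{T}{T'} \cong K_i \Box K_j \Box K_k$.

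The main bookkeeping subtlety is the degenerate cases in which $v$ is adjacent to $a$, or $v'$ is adjacent to $b$, or $v' = v+1$ along the polygon boundary, so that one or more sub-polygons reduces to a single edge. Interpreting $K_0$ as a one-vertex graph makes the product statement hold uniformly in these cases, so no separate argument is needed beyond a brief remark. I expect the main obstacle to be clean presentation of the case analysis rather than any conceptual gap: the key structural insight is that requiring a single flip between $\classtt{n}{T}$ and $\classtt{n}{T'}$ rigidly determines a second forced triangle $U$, reducing the problem to the one already solved in Lemma~\ref{lem:projcart}.
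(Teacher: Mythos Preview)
Your proposal is correct and follows essentially the same approach as the paper. The paper's proof is terser, phrasing everything in terms of the quadrilateral $Q$ (your $T \cup U$) that partitions $P_{n+2}$ into three sub-polygons, whereas you spell out explicitly why the flipped diagonal is forced and name the second triangle $U$; the underlying idea is identical.
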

\begin{proof}
Each flip between triangulations in adjacent classes $\classtt{n}{T}$ involves flipping a diagonal of~$T$ to transform the triangulation $t \in \classtt{n}{T}$ into triangulation~$t' \in \classtt{n}{T'}$. Whenever this is possible, there must exist a quadrilateral~$Q$, sharing two sides with~$T$ (the sides that are not flipped), such that both~$t$ and~$t'$ contain $Q$. Furthermore, every~$t \in \classtt{n}{T}$ containing $Q$ has a flip to a distinct~$t' \in \classtt{n}{T'}$. The set of all such boundary vertices $t \in \classtt{n}{T}$ can be identified with the Cartesian product described because~$Q$ partitions~$P_{n+2}$ into three smaller polygons, so that each triangulation in $\bdrytt{n}{T}{T'}$ consists of a tuple of triangulations in each of these smaller polygons, and such that every flip between triangulations in~$\bdrytt{n}{T}{T'}$ consists of a flip in one of these smaller polygons.
\end{proof}

\begin{lemma}
\label{lem:projggmatching}
The set~$\edgett{n}{T}{T'}$ of edges between each pair of classes~$\classtt{n}{T}$ and~$\classtt{n}{T'}$ is a nonempty matching. Furthermore, this edge set is in bijection with the vertices of a Cartesian product~$K_i \Box K_j \Box K_k, i + j + k = n - 2$.
\end{lemma}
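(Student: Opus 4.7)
The plan is to leverage Lemma~\ref{lem:projggbdry}, which already identifies $\bdrytt{n}{T}{T'}$ as a Cartesian product $K_i \Box K_j \Box K_k$ with $i + j + k = n - 2$, induced by the quadrilateral $Q$ whose four vertices are the distinct vertices appearing in $T \cup T'$ (note that $T$ and $T'$ share the two endpoints of $e^*$). Crucially, $Q$ admits exactly two triangulations: one whose two triangles include $T$, and one whose two triangles include $T'$; a flip of the unique diagonal of $Q$ present in the triangulation swaps between them. The remaining work is to show that these are the only flips that move between $\classtt{n}{T}$ and $\classtt{n}{T'}$, and to deduce from this the matching and bijection properties.

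First, I would argue that every flip between $t \in \classtt{n}{T}$ and $t' \in \classtt{n}{T'}$ occurs inside $Q$. A single flip changes exactly two triangles of the current triangulation; for $T$ to disappear and $T'$ to appear, $T$ must be one of the two removed triangles and $T'$ one of the two created triangles, so the flip takes place inside the quadrilateral with vertex set $V(T) \cup V(T')$, which is precisely $Q$. Consequently, from each $t \in \bdrytt{n}{T}{T'}$ there is exactly one flip into $\classtt{n}{T'}$, namely flipping the $Q$-diagonal of $t$, and the resulting neighbor $t'$ is uniquely determined. Hence $\edgett{n}{T}{T'}$ is a matching.

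Because this matching is perfect between $\bdrytt{n}{T}{T'}$ and $\bdrytt{n}{T'}{T}$, its edges are in bijection with $\bdrytt{n}{T}{T'}$, which by Lemma~\ref{lem:projggbdry} is in bijection with $V(K_i \Box K_j \Box K_k)$ for some $i + j + k = n - 2$. Nonemptiness follows because $T \neq T'$ makes $Q$ a nondegenerate quadrilateral that partitions $P_{n+2}$ into three convex flaps, each with at least two vertices, so each admits at least one triangulation and $K_i \Box K_j \Box K_k$ is nonempty. The main obstacle will be the forced-flip argument in the second paragraph: one must rule out reaching $\classtt{n}{T'}$ via a flip elsewhere in the triangulation, which rests on the fact that a single flip alters only the two triangles within one quadrilateral, combined with the observation that $Q$ is the unique quadrilateral in $P_{n+2}$ whose two triangulations contain $T$ and $T'$ respectively.
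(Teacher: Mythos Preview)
Your proposal is correct and follows essentially the same approach as the paper. The paper's own proof is a one-line sketch that invokes Lemma~\ref{lem:projggbdry} and asserts that each triangulation in $\bdrytt{n}{T}{T'}$ has exactly one flip (flipping a side of $T$) into $\bdrytt{n}{T'}{T}$; you have simply supplied the missing justification for that uniqueness, namely the counting argument that a flip alters exactly two triangles and hence the flip quadrilateral must coincide with the four-vertex set $V(T)\cup V(T')=Q$.
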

\begin{proof}
The claim follows from the reasoning in Lemma~\ref{lem:projggbdry} and from the observation that each triangulation in~$\bdrytt{n}{T}{T'}$ has exactly one flip (namely, flipping a side of the triangle~$T$) to a neighbor in~$\bdrytt{n}{T'}{T}$.
\end{proof}

Lemma~\ref{lem:projggmatching} characterizes the structure of the edge sets (namely matchings) between classes; we would also like to know the sizes of the matchings. We will use the following formula:

\begin{definition}
\label{def:catalan}
Let $C_{n}$ be the $n$th \emph{Catalan number}, defined as $C_{n} = \frac{1}{n+1}{2n \choose n}$.
\end{definition}
\begin{lemma}~\cite{klarner,hiltonpedersen}:
\label{lem:catalan}
The number of vertices in the associahedron $K_{n}$ is~$C_{n}$, and this number grows as $\frac{1}{\sqrt \pi \cdot n^{3/2}}\cdot 2^{2n}.$
\end{lemma}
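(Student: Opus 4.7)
The plan is to prove the two assertions in sequence: first that $|V(K_n)| = C_n$, then the asymptotic $C_n \sim \frac{1}{\sqrt\pi\, n^{3/2}}\cdot 2^{2n}$. Both steps are classical, but the decomposition we have just set up in Definition~\ref{def:orientedpart} and Lemma~\ref{lem:projcart} makes the first step essentially free, so I would exploit that rather than invoking, say, a bijection with Dyck paths.

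For the count, I would first handle the base cases $|V(K_0)| = |V(K_1)| = 1$ (the empty polygon and the triangle each admit exactly one triangulation). For $n \geq 2$, I would partition $V(K_n)$ using $\mathcal{S}_n$: every triangulation of $P_{n+2}$ contains a unique triangle $T$ incident to the special edge $e^*$, and this triangle splits the remaining region into two convex subpolygons whose sizes sum appropriately. By Lemma~\ref{lem:projcart}, each class $\classtt{n}{T}$ is isomorphic to $K_l \Box K_r$ with $l + r = n-1$, so $|\classtt{n}{T}| = |V(K_l)|\cdot|V(K_r)|$. Summing over $T \in \mathcal{T}_n$, and observing that as $T$ ranges over the $n$ possible apex vertices the pair $(l,r)$ ranges over all nonnegative integer solutions to $l+r=n-1$, I get the recurrence
\[
|V(K_n)| \;=\; \sum_{l+r=n-1} |V(K_l)|\cdot|V(K_r)|.
\]
This is the Catalan recurrence, with matching initial conditions, so induction (or equivalently the generating function identity $f(x) = 1 + x f(x)^2$) gives $|V(K_n)| = C_n = \frac{1}{n+1}\binom{2n}{n}$. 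I would cite the standard closed-form derivation (for instance the cycle lemma or Lagrange inversion) rather than reproving it.

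For the asymptotic, I would apply Stirling's formula $m! \sim \sqrt{2\pi m}\,(m/e)^m$ to both $(2n)!$ and $(n!)^2$. This yields $\binom{2n}{n} \sim \frac{2^{2n}}{\sqrt{\pi n}}$, and dividing by $n+1$ produces
\[
C_n \;\sim\; \frac{2^{2n}}{(n+1)\sqrt{\pi n}} \;\sim\; \frac{1}{\sqrt\pi\, n^{3/2}}\cdot 2^{2n},
\]
as required.

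The only mild subtlety\textemdash not really an obstacle\textemdash is making sure the indexing in the recurrence aligns with the statement: $T$ has three vertices, two of which are the endpoints of $e^*$, so the third vertex can be any of the remaining $n$ vertices of $P_{n+2}$, giving exactly $n$ classes and inducing the decomposition $l+r = n-1$ (consistent with $K_l$ and $K_r$ being flip graphs of polygons with $l+2$ and $r+2$ sides respectively, whose side counts sum to $n+3$, accounting for the shared sides with $T$). Once this indexing is set up correctly, everything else is routine.
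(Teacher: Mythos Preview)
Your argument is correct. The Catalan recurrence derivation from the oriented partition is sound (the indexing check you spell out at the end confirms that $(l,r)$ ranges over all nonnegative pairs summing to $n-1$), and the Stirling computation for the asymptotic is standard and accurate.

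Note, however, that the paper does not actually give a proof of this lemma: it is stated with a citation to~\cite{klarner,hiltonpedersen} and treated as a known result, with no proof environment following it. So there is no ``paper's own proof'' to compare against. What you have written is a self-contained argument that happens to reuse the decomposition machinery (Definition~\ref{def:orientedpart} and Lemma~\ref{lem:projcart}) already established in the paper, which is a pleasant economy\textemdash the count falls out of structure the paper needs anyway, rather than requiring a separate bijection with Dyck paths or ballot sequences. If anything, your version is more informative in context than a bare citation.
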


We will prove the following in Appendix~\ref{sec:missingproofs}:
\iffalse It is easy to see that the number of vertices in the Cartesian product of two (or more) graphs is equal to the product of the numbers of vertices in the graphs. We will combine this fact with Lemma~\ref{lem:catalan} and Lemma~\ref{lem:projggmatching} to prove the following in Appendix~\ref{sec:missingproofs}, along with some additional observations about Catalan numbers:\fi
\begin{restatable}{lemma}{lemmatchingscard}
\label{lem:matchingscard}
For every $T, T' \in \mathcal{T}_n,$ 
$$|\edgett{n}{T}{T'}| \geq \frac{|\classtt{n}{T}||\classtt{n}{T'}|}{C_n}.$$
%$$\frac{|\classtt{n}{T}||\classtt{n}{T'}|}{|\edgett{n}{T}{T'}|C_n} \leq 1.$$
\end{restatable}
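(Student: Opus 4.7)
The strategy is to read the sizes of $\classtt{n}{T}$, $\classtt{n}{T'}$, and $\edgett{n}{T}{T'}$ directly off the geometric decompositions already established in Lemma~\ref{lem:projcart} and Lemma~\ref{lem:projggmatching}, and then reduce the claim to a single majorization inequality on the Catalan sequence.

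First I would orient so that $e^* = v_0 v_{n+1}$ and let $v_s$, $v_u$ be the apexes of $T$ and $T'$ (the vertices other than the endpoints of $e^*$), assuming without loss of generality that $1 \leq s < u \leq n$. Combining Lemma~\ref{lem:projcart} with Lemma~\ref{lem:catalan}, since $T$ (respectively $T'$) splits $P_{n+2}$ into polygons with $s+1$ and $n-s+2$ (respectively $u+1$ and $n-u+2$) vertices,
$$|\classtt{n}{T}| = C_{s-1} C_{n-s}, \qquad |\classtt{n}{T'}| = C_{u-1} C_{n-u}.$$
The proof of Lemma~\ref{lem:projggmatching} identifies the quadrilateral $Q = v_0 v_s v_u v_{n+1}$ and puts $\edgett{n}{T}{T'}$ in bijection with triangulations of the three pieces $Q$ carves from $P_{n+2}$, which have $s+1$, $u-s+1$, and $n-u+2$ vertices, yielding
$$|\edgett{n}{T}{T'}| = C_{s-1} C_{u-s-1} C_{n-u}.$$
Using $|V(K_n)| = C_n$ and cancelling the common factors $C_{s-1}$ and $C_{n-u}$, the desired inequality reduces to the single Catalan inequality
$$C_n \cdot C_{u-s-1} \;\geq\; C_{n-s} \cdot C_{u-1}.$$

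To establish this, I would observe the identity $(u-s-1) + n = (n-s) + (u-1)$ together with
$$u-s-1 \;\leq\; \min(n-s,\, u-1) \;\leq\; \max(n-s,\, u-1) \;\leq\; n,$$
so that the pair $\{u-s-1,\, n\}$ majorizes the pair $\{n-s,\, u-1\}$. It then suffices to verify that the Catalan numbers are log-convex, which follows immediately from the explicit ratio
$$\frac{C_{m+1}}{C_m} \;=\; \frac{4m+2}{m+2} \;=\; 4 - \frac{6}{m+2},$$
strictly increasing in $m$. The standard majorization inequality applied to the convex function $\log C_m$ then gives $\log C_{u-s-1} + \log C_n \geq \log C_{n-s} + \log C_{u-1}$, which is exactly what is needed.

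I do not anticipate any real obstacle: the geometric lemmas already do the combinatorial work. The main items requiring care are the index bookkeeping when translating the three decompositions ($T$, $T'$, and $Q$) into Catalan factors, and the verification that the pair of indices on the favorable side of the final inequality is the more extreme (i.e., majorizing) pair; log-convexity of $\{C_m\}$ is a well-known fact that is easy to check directly from the formula.
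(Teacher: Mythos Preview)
Your proof is correct and follows essentially the same route as the paper: both express the three sizes as Catalan products, cancel common factors to obtain a four-term inequality $C_n C_{u-s-1} \geq C_{n-s} C_{u-1}$, and finish via log-convexity of $(C_m)$ (the paper derives the ratio $C_{m+1}/C_m$ explicitly, just as you do). Your majorization packaging is slightly more direct than the paper's two-step argument (first showing the ratio is monotone in $j$, then checking the extremal case $i=k=1$), but the underlying idea is identical.
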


Lemma~\ref{lem:matchingscard}\textemdash which states that the number of edges between a pair of classes is at least equal to the product of the cardinalities of the classes, divided by the total number of vertices in the graph~$|V(K_n)| = C_n$\textemdash is crucial to this paper. To explain why this is, we will need to present our multicommodity flow construction (Appendix~\ref{sec:flowdetails}). We will give intuition in Section~\ref{sec:framework}. For now, it suffices to say that Lemma~\ref{lem:matchingscard} implies that there are many edges between a given pair of classes, justifying (intuitively) the slicing step. For the peeling step, we need the fact that Cartesian graph products preserve the well-connectedness of the graphs in the product~\cite{Graham1998IsoperimetricIF}:

\iffalse For now, we note that this multicommodity flow involves each vertex in a given class~$\classtt{n}{T}$ sending a unit of flow to each vertex in each other class~$\classtt{n}{T'}$. If the total amount of flow across any given edge in~$K_n$ is at most~$f(n)\cdot C_n$, the flow is said to have \emph{congestion}~$f(n)$, and one obtains an expansion bound of~$\Omega(1/f(n))$. (See Section~\ref{sec:cartexpmcflow}.) Lemma~\ref{lem:matchingscard} allows us to bound the congestion across edges in the matching~$\edgett{n}{T}{T'}$: the total amount of flow sent from~$\classtt{n}{T}$ to~$\classtt{n}{T'}$ is simply~$|\classtt{n}{T}||\classtt{n}{T'}|$, and Lemma~\ref{lem:matchingscard} states that this flow can be sent across the matching, equally distributed across the edges in the matching, while generating~$C_n$ total flow, i.e. congestion at most one, across these edges.

However, this flow still needs to be routed through~$\classtt{n}{T}$ and~$\classtt{n}{T'}$. It will turn out that the~$C_n$ upper bound given by Lemma~\ref{lem:matchingscard} on flow across the matching~$\edgett{n}{T}{T'}$ will provide a recursive structure\textemdash along with insights from the ``peeling'' process\textemdash that will enable us to accomplish this without too much increase in congestion. We will give intuition for why this is in Section~\ref{sec:framework}.
\fi
\begin{lemma}
\label{lem:cartexp}
Given graphs $G_1, G_2, \dots, G_k$, Cartesian product $G_1 \Box G_2 \Box \cdots \Box G_k$ satisfies
$$h(G_1\Box G_2\Box \cdots \Box G_k) \geq \frac{1}{2}\min_i h(G_i).$$
\end{lemma}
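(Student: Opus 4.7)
The plan is to prove the inequality for $k=2$ by slicing the subset along each coordinate direction and applying the expansion bounds inside each slice; the general $k$ case then follows by an analogous multi-direction slicing or, equivalently, by induction.

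For $k=2$, fix $S \subseteq V(G \Box H)$ with $|S| \le |V(G)||V(H)|/2$. For each $v \in V(H)$, let the $v$-th \emph{column} be $S^v = \{u \in V(G) : (u,v) \in S\}$; for each $u \in V(G)$, let the $u$-th \emph{row} be $S_u = \{v \in V(H) : (u,v) \in S\}$. The boundary edges of $S$ in $G \Box H$ partition disjointly into ``$G$-edges'' lying in some column (and forming exactly $\partial_G S^v$ within that column) and ``$H$-edges'' lying in some row (forming $\partial_H S_u$). Applying the expansion hypotheses to each slice yields
\[
|\partial S| \;\ge\; h(G) \sum_v \min(|S^v|,\,|V(G)|-|S^v|) \;+\; h(H) \sum_u \min(|S_u|,\,|V(H)|-|S_u|).
\]

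The main obstacle is the purely combinatorial claim that the parenthesized totals sum to at least $\tfrac{1}{2}|S|$ under the volume constraint $|S| \le |V(G)||V(H)|/2$. I would handle this by a case analysis: call a column \emph{small} if $|S^v| \le |V(G)|/2$, and \emph{large} otherwise. In the first case, if the small columns collectively hold at least $|S|/2$ of the mass of $S$, then the first sum already contributes at least $\tfrac{1}{2}|S|$ and the bound follows. In the complementary case, strictly more than half of $|S|$ sits in large columns; combined with the volume bound $|S| \le |V(G)||V(H)|/2$, this forces the number of large columns to be small enough that a double-counting of row mass shows most rows must be at most half full, delivering the required lower bound on the second sum. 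Either way, $|\partial S| \ge \tfrac{1}{2}\min(h(G), h(H)) \cdot |S|$, which is the claim for $k=2$.

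For general $k$, the same argument applies with $k$-way slicing: I would decompose $\partial S$ into $k$ classes of edges by which coordinate each edge changes, apply the expansion of $G_i$ inside each $i$-th fiber, and run the same majority/minority case analysis to identify at least one coordinate direction whose contribution dominates. Alternatively, one can induct on $k$ by writing $\Box_{i=1}^k G_i = G_1 \Box (\Box_{i=2}^k G_i)$ and applying the $k=2$ result: since the $k=2$ argument actually establishes the stronger bound $h(G \Box H) \ge \min(h(G), h(H))$ whenever it is applied cleanly (the factor $\tfrac{1}{2}$ is only needed once, as slack in the combinatorial case analysis), the induction preserves $\tfrac{1}{2}\min_i h(G_i)$ without further accumulation of constants.
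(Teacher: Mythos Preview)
The paper does not prove this lemma; it cites it as a known result (the reference to~\cite{Graham1998IsoperimetricIF}). Your slicing approach is the standard one, and the displayed inequality
\[
|\partial S| \;\ge\; h(G)\sum_v \min\bigl(|S^v|,\,|V(G)|-|S^v|\bigr) \;+\; h(H)\sum_u \min\bigl(|S_u|,\,|V(H)|-|S_u|\bigr)
\]
is correct. However, your case analysis to lower-bound the right-hand side has a genuine gap. In Case~2 you assert that the row contribution $B=\sum_u \min(|S_u|,m-|S_u|)$ alone satisfies $B\ge |S|/2$, justified by ``the number of large columns is small enough that \ldots\ most rows must be at most half full.'' This is false. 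Take $n=m$ and let $S$ be a $\lceil n/\sqrt{2}\rceil \times \lceil n/\sqrt{2}\rceil$ block, so $|S|\approx n^2/2$. Every nonempty column has about $n/\sqrt{2}>n/2$ elements, so all mass sits in large columns and you are in Case~2; but every nonempty row is \emph{also} large, and one computes $B\approx (n/\sqrt{2})(n-n/\sqrt{2})\approx 0.207\,n^2 < n^2/4 = |S|/2$. Roughly $70\%$ of rows are more than half full, not ``most at most half full.''

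What \emph{is} true is that $A+B\ge |S|/2$ jointly (in the example $A+B\approx 0.414\,n^2$), but establishing this requires a different argument than your dichotomy. Your induction remark compounds the problem: the $k=2$ slicing argument does \emph{not} deliver the stronger bound $h(G\Box H)\ge \min(h(G),h(H))$\textemdash the same block example (or any $a\times b$ rectangle with $a>n/2$, $b>m/2$, $ab\le nm/2$) gives $A+B<|S|$\textemdash so na\"{i}ve induction on $k$ would accumulate a factor $(1/2)^{k-1}$ rather than a single $1/2$. A direct $k$-way argument, or a sharper two-factor bound, is needed.
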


Lemma~\ref{lem:projcart} says that each of the classes~$\classtt{n}{T} \in \mathcal{S}_n$ is a Cartesian graph product of associahedron graphs~$K_l, K_r$, $l < n, r < n$, allowing us to ``peel'' (decompose)~$\classtt{n}{T}$ into graphs that can then be recursively sliced into classes and peeled. Lemma~\ref{lem:cartexp} implies that the peeling must disconnect many edges, as it involves splitting a Cartesian product graph into many subgraphs (copies of~$K_l$).

We will make all of this intuition rigorous in Appendix~\ref{sec:flowdetails} by constructing our flow. The choice of paths through which to route flow will closely trace the edges in this recursive ``slicing and peeling'' decomposition. We will then show that, with this choice of paths, the resulting~\emph{congestion}\textemdash the maximum amount of flow carried along an edge\textemdash is bounded by a suitable polynomial factor. This will provide a lower bound on the expansion.

\begin{figure}[h]
\includegraphics[width=9em]{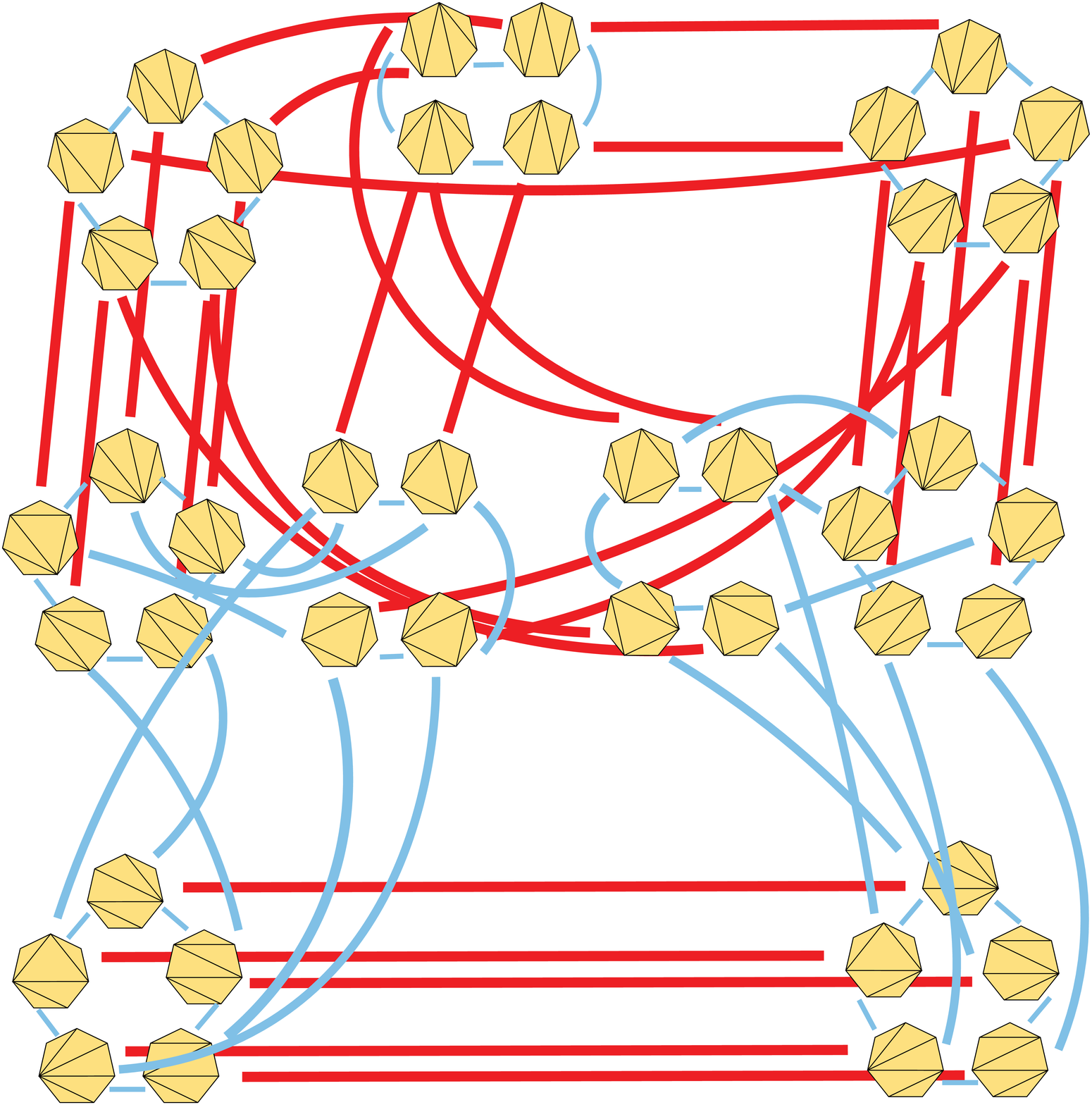}
\hspace{8em}
\includegraphics[width=13em]{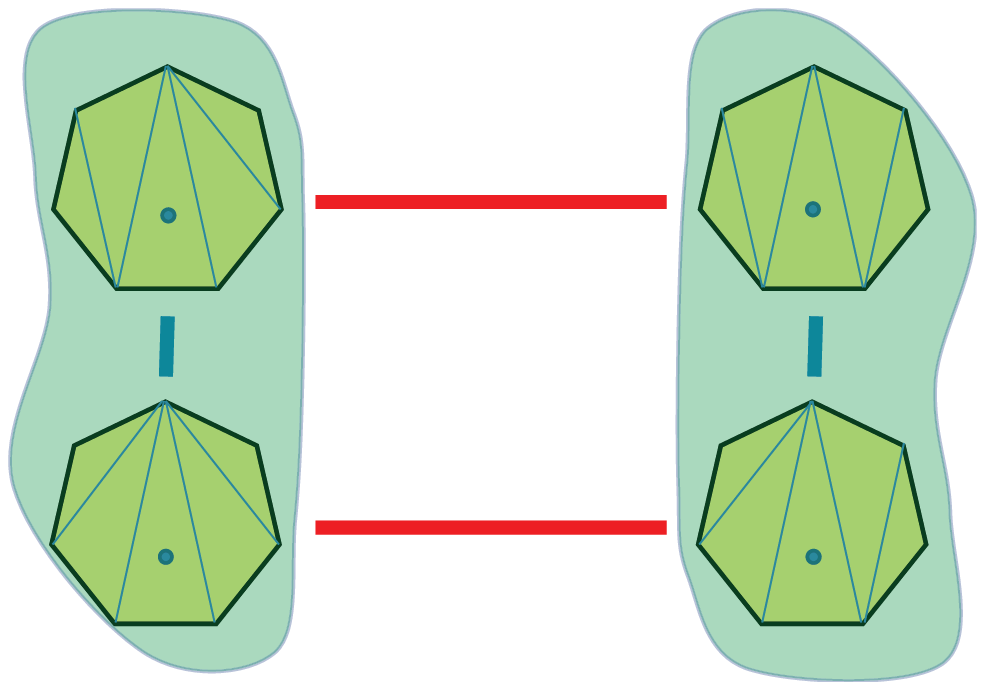}
\caption{\textbf{Left}: The associahedron graph $K_5$, with each vertex representing a triangulation of the regular heptagon. Flips are shown with edges (in blue and red). The vertex set~$V(K_n)$ is partitioned into a set~$\mathcal{S}_n$ of five equivalence classes (of varying sizes). Within each class, all triangulations share the same triangle containing the bottom edge~$e^*$. Flips (edges) between triangulations in the same class are shown in blue. Flips between triangulations in different classes are shown in red. To ``slice''~$K_5$ into its subgraphs, one must cut through these red matchings. \textbf{Right}: A class~$\classtt{5}{T}$ from the graph~$K_5$ on the left-hand side, viewed as an induced subgraph of~$K_5$. The identifying triangle~$T$ is marked with a blue dot. This subgraph is isomorphic to a Cartesian product of two~$K_2$ graphs; each copy of~$K_2$ induced by fixing the rightmost diagonal is outlined in green. ``Peeling'' apart this product requires disconnecting the two red edges connecting the~$K_2$ copies.}
\label{fig:heptclassesor}
\end{figure}

\section{Bounding expansion via multicommodity flows}
\label{sec:prelim}
\label{sec:cartexpmcflow}
The way we will lower-bound expansion is by using \emph{multicommodity flows}~\cite{sinclair_1992, kaibelexp}.
\begin{definition}
\label{def:mcflow}
A \emph{multicommodity flow}~$\phi$ in a graph $G = (V, E)$ is a collection of functions $\{f_{st}:A \rightarrow \mathbb{R}\ \mid s, t \in V\}$, where~$A = \bigcup_{\{u,v\}\in E} \{(u, v), (v, u)\},$ combined with a \emph{demand function} $D: V \times V \rightarrow \mathbb{R}$.

Each $f_{st}$ is a flow sending $D(s, t)$ units of a commodity from vertex $s$ to vertex $t$ through the edges of $G$. We consider the capacities of all edges to be infinite. Let $f_{st}(u,v)$ be the amount of flow sent by $f_{st}$ across the arc $(u, v)$. (It may be that $f_{st}(u, v) \neq f_{st}(v, u)$.) Let 
$$f(u, v) = \frac{1}{|V|}\sum_{s,t\in V\times V} f_{st}(u, v),$$ and let~$\rho = \max_{(u,v) \in A} f(u,v).$ Call~$\rho$ the \emph{congestion}.
\iffalse Let $\bar f(u, v) = |V|f(u, v)$, and call this the \emph{un-normalized} flow across edge~$(u, v)$.\fi
\end{definition}
Unless we specify otherwise, we will mean by ``multicommodity flow'' a \emph{uniform multicommodity flow}, i.e. one in which~$D(s, t) = 1$ for all $s,t$. The following is well established and enables the use of multicommodity flows as a powerful lower-bounding technique for expansion:
\begin{lemma}
\label{lem:flowexp}
Given a uniform multicommodity flow $f$ in a graph $G = (V, E)$ with congestion~$\rho$, the expansion $h(G)$ is at least $1/(2\rho)$.
\end{lemma}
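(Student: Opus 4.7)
The plan is to prove the contrapositive-style bound directly: fix an arbitrary cut $(S, V \setminus S)$ with $|S| \leq |V|/2$, and show that the edges across it must carry enough flow to force $|\partial S| \geq |S|/(2\rho)$. The whole argument is a double counting of flow across this cut, once from the demand side and once from the capacity side.

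First I would identify the total demand that has to cross the cut. For every ordered pair $(s,t)$ with $s \in S$ and $t \in V \setminus S$, the function $f_{st}$ routes one unit of flow from $s$ to $t$, and by conservation at least one net unit of this must cross from the $S$-side to the $V \setminus S$-side. There are $|S|\cdot|V \setminus S|$ such ordered pairs, so the unnormalized net flow crossing the cut from $S$ to $V \setminus S$ is at least $|S|\cdot|V \setminus S|$. After applying the normalization in Definition~\ref{def:mcflow}, the normalized net flow crossing the cut is at least
\[
\frac{|S|\cdot|V \setminus S|}{|V|}.
\]

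Next I would bound the same quantity from above using the congestion. For every undirected edge $\{u,v\} \in \partial S$, the normalized flow on the two associated arcs $(u,v)$ and $(v,u)$ is at most $\rho$ each by definition of congestion; in particular, the net flow across $\{u,v\}$ from $S$ to $V \setminus S$ is at most $\rho$. Summing over the $|\partial S|$ edges in the cut gives the upper bound $\rho \cdot |\partial S|$ on the normalized net flow from $S$ to $V \setminus S$. Combining the two bounds yields
\[
\frac{|S|\cdot|V \setminus S|}{|V|} \;\leq\; \rho \cdot |\partial S|.
\]

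Finally, I would use the assumption $|S| \leq |V|/2$, which gives $|V \setminus S|/|V| \geq 1/2$, and rearrange to obtain $|\partial S|/|S| \geq 1/(2\rho)$. Taking the minimum over all such cuts $S$ proves $h(G) \geq 1/(2\rho)$. There is no real obstacle here; the only thing to be careful about is the accounting of arcs versus edges and the factor of $1/|V|$ hidden in the normalization of $f(u,v)$, which is exactly what produces the demand side $|S|\cdot|V \setminus S|/|V|$ and, after using $|V \setminus S| \geq |V|/2$, the factor of $1/2$ in the final bound.
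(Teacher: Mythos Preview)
Your argument is correct and is the standard double-counting proof of this fact. The paper does not actually prove this lemma\textemdash it states it as a well-established result (citing Sinclair and Kaibel) and uses it as a black box\textemdash so there is nothing to compare against; your write-up supplies exactly the proof one would expect.
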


Lemma~\ref{lem:flowexp}, combined with Lemma~\ref{lem:expmixing}, gives an automatic upper bound on mixing time given a multicommodity flow with an upper bound on congestion\textemdash but with a quadratic loss. As we will discuss in Appendix~\ref{sec:fwpf}, one can do better if the paths used in the flow are short~\cite{diacstroock, sinclair_1992}.

\section{Our framework}
\label{sec:framework}
In addition to the new mixing bounds for triangulations and for general $k$-angulations, we make general technical contributions, in the form of three meta-theorems, which we present in this section. Our first general technical contribution, Theorem~\ref{thm:flowprojres}, provides a recursive mechanism for analyzing the expansion of a flip graph in terms of the expansion of its subgraphs. Equivalently, viewing the random walk on such a flip graph as a Markov chain, this theorem provides a mechanism for analyzing the mixing time of a chain, in terms of the mixing times of smaller \emph{restriction} chains into which one decomposes the original chain\textemdash and analyzing a \emph{projection} chain over these smaller chains. We obtain, in certain circumstances such as the $k$-angulation walk, better mixing time bounds than one obtains applying similar prior decomposition theorems\textemdash which used a different underlying machinery.

The second theorem, Lemma~\ref{lem:fw}, observes and formalizes a set of conditions satisfied by a number of chains (equivalently, flip graphs) under which one can apply either our Theorem~\ref{thm:flowprojres}, or prior decomposition techniques, to obtain rapid mixing reuslts. Depending on the chain, one may then obtain better results either by applying Theorem~\ref{thm:flowprojres}, or by applying the prior techniques. Lemma~\ref{lem:fw} does not require using our Theorem~\ref{thm:flowprojres}; instead, one can use the spectral gap or log-Sobolev constant as the underlying techincal machinery using Jerrum, Son, Tetali, and Vigoda's Theorem~\ref{thm:specprojres}. Prior work exists applying these techniques (using Theorem~\ref{thm:specprojres}) to sampling $q$-colorings~\cite{heinrich2020glauber} in bounded-treewidth graphs and independent sets in regular trees~\cite{jerrumprojres}, as well as probabilistic graphical models in machine learning~\cite{hierwidth} satisfying certain conditions. Lemma~\ref{lem:fw} amounts to an observation unifying these applications. We apply this observation to general $k$-angulations, noting that they satisfy a relaxation of this theorem (Lemma~\ref{lem:fwquasi}), giving a quasipolynomial bound. This bound will come from incurring a polynomial loss over logarithmic recursion depth.

The third theorem, Lemma~\ref{lem:fwstrong}, adapts the machinery in Theorem~\ref{thm:flowprojres} to eliminate this multiplicative loss altogether, assuming that a chain satisfies certain properties. One such key property is the existence large matchings in Lemma~\ref{lem:matchingscard} in Section~\ref{sec:sliceintuition}. Another property, which we will discuss further after presenting Lemma~\ref{lem:fwstrong}, is that the \emph{boundary sets}\textemdash the vertices in one class (equivalently, states in a restriction chain) having neighbors in another class\textemdash are well connected to the rest of the first class. When these properties are satisfied, one can apply our flow machinery to overcome the multiplicative loss and obtain a polynomial bound. However, the improvement relies on observations about congestion that do not obviously translate to the spectral setting.

\iffalse Jerrum, Son, Tetali, and Vigoda proved a general theorem that gave a \emph{projection-restriction} paradigm for bounding the mixing time of a chain by recursively decomposing it into smaller (restriction) chains. The theorem states that if the restriction chains have a large \emph{spectral gap} (a property that implies rapid mixing), and if the projection chain has the same property, then the overall chain also has a large spectral gap (and therefore mixes rapidly). One thus makes an inductive argument, assuming for the inductive hypothesis that the smaller chains admit rapid mixing.

Our first general theorem gives an alternative in the form of 
\fi
\iffalse Lemma~\ref{lem:fw}, and Lemma~\ref{lem:fwstrong}. Theorem~\ref{thm:flowprojres} is a purely combinatorial analogue of Jerrum, Son, Tetali, and Vigoda's spectral decomposition theorem (Theorem~\ref{thm:specprojres}) and gives a way to directly bound expansion via chain decomposition. Lemma~\ref{lem:fw} formalizes a general pattern in Markov chain decomposition that unifies $k$-angulations with a number of other chains admitting decomposition via Theorem~\ref{thm:flowprojres} or Theorem~\ref{thm:specprojres}. Lemma~\ref{lem:fwstrong} gives a set of conditions, satisfied by the triangulation walk (but not by general $k$-angulations) under which one can show rapid mixing using flows, but under which it is not clear how to apply earlier spectral methods.\fi

\subsection{Markov chain decomposition via multicommodity flow}
\iffalse We state our first general theorem:\fi
\iffalse To state our theorem and place it in the context of prior work, we need the notion of \emph{restriction} and \emph{projection} chains, which require casting our flip graphs in more general Markov chain-oriented language:\fi
\iffalse One of the main motivations for considering projection and restriction chains is that applied recursively, they give a means of analyzing the expansion of the state space of a chain\textemdash or of anlyzing other parameters that correspond to rapid mixing. Our first general theorem is Theorem~\ref{thm:flowprojres} (below). Theorem~\ref{thm:flowprojres} states that if \fi

In this section we state our first general theorem. To place our contribution in context with prior work, we cast our flip graphs in the language of Markov chains. As we discussed in Section~\ref{sec:mixingexp}, any Markov chain satisfying certain mild conditions has a \emph{stationary} distribution~$\pi^*$ (which in the case of our triangulation walks is uniform). We can view such a chain as a random walk on a graph~$\mathcal{M}$ (an unweighted graph in the case of the chains we consider, which have uniform distributions and regular transition probabilities). In the case of convex polygon triangulations, we have~$\mathcal{M} = K_n$.

The flip graph~$\mathcal{M}$ has vertex set~$\Omega$ and (up to normalization by degree) adjacency matrix~$P$\textemdash and we abuse notation, identifying the Markov chain~$\mathcal{M}$ with this graph. When~$\pi^*$ is not uniform, it is easy to generalize the flip graph to a \emph{weighted} graph, with each vertex (state)~$t$ assigned weight~$\pi(t)$, and each transition (edge)~$(t, t')$ assigned weight~$\pi(t)P(t, t') = \pi(t')P(t', t)$. We assume here that this latter equality holds, a condition on the chain~$\mathcal{M}$ known as \emph{reversibility}. We then replace a uniform multicommodity flow with one where~$D(t, t') = \pi(t)\pi(t')$ (up to normalization factors). 

\begin{definition}
\label{def:projres}
Consider a Markov chain~$\mathcal{M}$ with finite state space~$\Omega$ and probability transition matrix~$P$, and stationary distribution~$\pi$. Consider a partition of the states of~$\Omega$ into classes~$\Omega_1, \Omega_2, \dots, \Omega_k$. Let the \emph{restriction} chain, for $i=1,\dots,k$, be the chain with state space~$\Omega_i$, probability distribution~$\pi_i$, with~$\pi_i(x) = \pi(x)/(\sum_{y\in\Omega_i}\pi(y))$, for~$x\in\Omega_i$, and transition probabilities~$P_i(x, y) = P(x, y)/(\sum_{z\in\Omega_i} P(x,z))$. Let the \emph{projection} chain be the chain with state space~$\bar\Omega = \{1, 2, \dots, k\}$, stationary distribution~$\bar\pi$, with~$\bar\pi(i) = \sum_{x\in\Omega_i} \pi(i)$, and transition probabilities~$\bar P(i,j) = \sum_{x\in\Omega_i,y\in\Omega_j}P(x,y)$.
\end{definition}

\iffalse In the case of triangulations, a natural walk on the classes we defined in Section~\ref{sec:sliceintuition}\textemdash with transition probabilities weighted by the sizes of the edge matchings between classes\textemdash forms a projection chain. Each class is the state space of a restriction chain. Thus the projection-restriction partitioning corresponds to the ``slicing.'' We now state our first decomposition theorem:\fi
\begin{restatable}{theorem}{thmflowprojres}
\label{thm:flowprojres}
Let~$\mathcal{M}$ be a reversible Markov chain with finite state space~$\Omega$ probability transition matrix~$P$, and stationary distribution~$\pi^*$. Suppose~$\mathcal{M}$ is connected (irreducible). Suppose~$\mathcal{M}$ can be decomposed into a collection of restriction chains~$(\Omega_1, P_1), (\Omega_2, P_2), \dots, (\Omega_k, P_k)$, and a projection chain~$(\bar\Omega, \bar P)$. Suppose each restriction chain admits a multicommodity flow (or canonical paths) construction with congestion at most~$\rho_{\max}$. Suppose also that there exists a multicommodity flow construction in the projection chain with congestion at most~$\bar\rho$. Then there exists a multicommodity flow construction in~$\mathcal{M}$ (viewed as a weighted graph in the natural way) with congestion
$$(1+2\bar\rho\gamma\Delta)\rho_{\max},$$ where~$\gamma = \max_{i\in[k]}\max_{x\in\Omega_i}\sum_{y\notin\Omega_i}P(x,y),$ and~$\Delta$ is the degree of~$\mathcal{M}$.
\end{restatable}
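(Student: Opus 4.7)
The plan is to construct, for every ordered pair $(s,t) \in \Omega \times \Omega$, a flow $f_{st}$ in $\mathcal{M}$ carrying demand $\pi^*(s)\pi^*(t)$, and to bound the edge congestion of the aggregated flow $f = \sum_{s,t} f_{st}$ against the edge weights $Q(u,v) := \pi^*(u)P(u,v)$ (the natural reversibility-compatible extension of Definition~\ref{def:mcflow} to the weighted setting discussed just before the theorem). The construction splits into an intra-class part, which accounts for the ``$1$'' in the bound $(1+2\bar\rho\gamma\Delta)\rho_{\max}$, and an inter-class part, which accounts for the ``$2\bar\rho\gamma\Delta$'' factor.

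\textbf{Intra-class routing.} For $s,t \in \Omega_i$, take the restriction-chain flow $f^{(i)}_{st}$ guaranteed by hypothesis and rescale it to carry $\pi^*(s)\pi^*(t)$ units instead of $\pi_i(s)\pi_i(t)$. Since $\pi_i(x) = \pi^*(x)/\pi^*(\Omega_i)$ on $\Omega_i$ and $P_i$ is obtained from $P$ by conditioning on remaining in $\Omega_i$, one checks that on any edge with both endpoints in $\Omega_i$ the restriction-chain weight satisfies $Q_i(u,v) = Q(u,v)/\pi^*(\Omega_i)$ up to a factor between $1$ and $1/(1-\gamma)$. Consequently the contribution of the intra-class flows to $f(u,v)/Q(u,v)$ is bounded by $\rho_{\max}$.

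\textbf{Inter-class routing via projection lift.} For $s \in \Omega_i$, $t \in \Omega_j$ with $i \neq j$, decompose the projection flow $\bar f_{ij}$ into weighted paths $\bar p = (i=i_0, i_1, \dots, i_\ell = j)$ in $\bar{\mathcal{M}}$. For each projection-edge $(i_k, i_{k+1})$ on such a path, lift the flow to the underlying transitions $(x,y)$ with $x \in \Omega_{i_k}$, $y \in \Omega_{i_{k+1}}$, and $P(x,y) > 0$, distributing mass proportionally to $Q(x,y)$. Inside each intermediate class $\Omega_{i_k}$, stitch together the induced ``entry'' and ``exit'' distributions on boundary vertices by invoking the restriction-chain flow of $\Omega_{i_k}$ a second time: any pair of measures whose marginals are pointwise dominated by $\pi_{i_k}$ can be routed through the restriction flow at cost at most $\rho_{\max}$ per unit of total demand. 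At the endpoints, route between the Dirac measures $\delta_s$ and $\delta_t$ and the corresponding boundary distributions in $\Omega_i$ and $\Omega_j$ via the same device, weighted by $\pi^*(s)$ and $\pi^*(t)$.

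\textbf{Congestion accounting and main obstacle.} Fix an edge $(u,v)$ with both endpoints in some $\Omega_i$. The inter-class contribution to $f(u,v)/Q(u,v)$ equals $\rho_{\max}$ times the total lifted flow that the projection blueprint routes through $\Omega_i$. That total flow is at most $\bar\rho$ times the boundary weight $W_{\partial i} = \sum_{x \in \Omega_i}\pi^*(x)\sum_{y \notin \Omega_i} P(x,y) \leq \gamma\,\pi^*(\Omega_i)$; an additional factor of $\Delta$ appears because each projection-edge lift spreads the mass over at most $\Delta$ actual transitions per boundary vertex, and a factor of $2$ absorbs the fact that each intermediate class is both entered and exited by every traversing path. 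Summing with the intra-class $\rho_{\max}$ yields $(1 + 2\bar\rho\gamma\Delta)\rho_{\max}$. The main obstacle is the weight bookkeeping in the projection lift: one must verify that the entry and exit marginals produced on the boundary of each intermediate $\Omega_{i_k}$ are, after the appropriate normalization by $\bar\rho\gamma\Delta$, pointwise dominated by $\pi_{i_k}$, so that the restriction flow can indeed service them at the unit-scale congestion $\rho_{\max}$ rather than something larger. This domination is precisely what couples $\bar\rho$, $\gamma$, and $\Delta$ into the stated multiplicative factor, and a clean weight-preservation check at each intermediate vertex is the heart of the proof.
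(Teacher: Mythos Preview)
Your overall architecture matches the paper's proof closely: use the restriction flows directly for intra-class pairs; for cross-class pairs, lift the projection flow onto boundary edges proportionally to $Q(x,y)/\bar Q(i,j)$, and reuse the restriction flow inside each (intermediate or terminal) class to move between the induced entry and exit boundary distributions. The ``$1$'' is the intra-class invocation, and the ``$2$'' accounts for inbound and outbound flow separately at each class. Your identification of the crux---that the boundary marginals produced by the lift must be pointwise dominated (after normalization) by $\pi_{i_k}$ so that the restriction flow can service them at unit scale---is also exactly what the paper checks.

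However, your accounting for the factor $\Delta$ is backwards. You write that ``each projection-edge lift spreads the mass over at most $\Delta$ actual transitions per boundary vertex,'' but spreading mass over more edges can only \emph{lower} per-edge congestion, so this does not explain a multiplicative loss. In the paper's convention, congestion on an edge $(x,y)$ is $f(x,y)/(\Delta\cdot Q(x,y))$, so the hypothesis on the projection chain unpacks as $\bar f(i,j)\le \bar\rho\,\Delta\,\bar Q(i,j)$. Plugging this into the lift, the total flow arriving at a boundary vertex $z\in\Omega_i$ from outside is
\[
\sum_{w\notin\Omega_i}\bar f(i,j)\,\frac{Q(z,w)}{\bar Q(i,j)}\;\le\;\bar\rho\,\Delta\sum_{w\notin\Omega_i}Q(z,w)\;=\;\bar\rho\,\Delta\,\pi^*(z)\sum_{w\notin\Omega_i}P(z,w)\;\le\;\bar\rho\,\gamma\,\Delta\,\pi^*(z),
\]
which is precisely the bound that couples $\bar\rho$, $\gamma$, and $\Delta$. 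Your line ``total flow through $\Omega_i$ is at most $\bar\rho\,W_{\partial i}$'' already drops the $\Delta$, and the sentence that tries to restore it does not supply a valid reason; as written, the arithmetic does not close.
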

The proof of Theorem~\ref{thm:flowprojres} is in Appendix~\ref{sec:fwpf}.

We give a full proof in Appendix~\ref{sec:fwpf}. Jerrum, Son, Tetali, and Vigoda~\cite{jerrumprojres} presented an analogous (and classic) decomposition theorem, which we restate below as Theorem~\ref{thm:specprojres}, and which has become a standard tool in mixing time analysis. The key difference between our theorem and theirs is that our theorem uses multicommodity flows, while their theorem uses the so-called \emph{spectral gap}\textemdash another parameter that can use to bound the mixing time of a chain. Often, the spectral gap gives tighter mixing bounds than combinatorial methods. Their Theorem~\ref{thm:specprojres} gave bounds analogous to our Theorem~\ref{thm:flowprojres}, but with the multicommodity flow congestion replaced with the \emph{spectral gap} of a chain\textemdash and with a~$3\gamma$ term in place of our~$2\gamma$. (They also gave an analogous version for the \emph{log-Sobolev} constant\textemdash yet another parameter for bounding mixing times.) The spectral gap of a chain~$\mathcal{M} = (\Omega, P)$, which we denote~$\lambda$, is the difference between the two largest eigenvalues of the transition matrix~$P$ (which we can view as the normalized adjacency matrix of the corresponding weighted graph). The key point is that while on the one hand the mixing time~$\tau$ satisfies
$\tau \leq \lambda^{-1}\log |\Omega|,$ the bound on mixing using expansion in Lemma~\ref{lem:expmixing} comes from passing through the spectral gap:
$\lambda \geq \frac{(h(\mathcal{M}))^2}{2\Delta^2},$ where~$\Delta$ is the degree of the flip graph and~$h(\mathcal{M})$ is the expansion of~$\mathcal{M}$.
The quadratic loss in passing from expansion to mixing is not incurred when bounding the spectral gap directly, so one can obtain better bounds via the spectral gap. Jerrum, Son, Tetali, and Vigoda gave a mechanism for doing precisely this:

\begin{theorem}
\label{thm:specprojres}\cite{jerrumprojres}
Let~$\mathcal{M}$ be a reversible Markov chain with finite state space~$\Omega$ probability transition matrix~$P$, and stationary distribution~$\pi^*$. Suppose~$\mathcal{M}$ is connected (irreducible). Suppose~$\mathcal{M}$ can be decomposed into a collection of restriction chains~$(\Omega_1, P_1), (\Omega_2, P_2), \dots, (\Omega_k, P_k)$, and a projection chain~$(\bar\Omega, \bar P)$. Suppose each restriction chain has spectral gap at least~$\lambda_{\min}$. Suppose also that the projection chain has spectral gap at least~$\bar\lambda$. Then ~$\mathcal{M}$ has gap at least $$\min\left\{\frac{\lambda_{\min}}{3}, \frac{\bar\lambda\lambda_{\min}}{3\gamma + \bar\lambda}\right\},$$ where~$\gamma$ is as in Theorem~\ref{thm:flowprojres}.
\end{theorem}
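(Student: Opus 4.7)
The plan is to work from the variational characterization of the spectral gap: for a reversible chain $(\Omega,P,\pi^*)$, the gap $\lambda$ equals the infimum over non-constant $f:\Omega\to\mathbb{R}$ of $\mathcal{E}(f,f)/\mathrm{Var}_{\pi^*}(f)$, where $\mathcal{E}(f,f)=\tfrac12\sum_{x,y}\pi^*(x)P(x,y)(f(x)-f(y))^2$. So it suffices to exhibit, for every $f$, an inequality of the form $\mathrm{Var}_{\pi^*}(f)\le C\cdot\mathcal{E}(f,f)$ with $C^{-1}$ matching the claimed minimum. The natural first step is to apply the law of total variance relative to the partition $\{\Omega_1,\ldots,\Omega_k\}$: defining the projected function $\bar f(i)=\mathbb{E}_{\pi_i}[f]$, one has
\[
\mathrm{Var}_{\pi^*}(f)=\sum_{i}\bar\pi(i)\,\mathrm{Var}_{\pi_i}(f|_{\Omega_i})+\mathrm{Var}_{\bar\pi}(\bar f).
\]
I would then control each of the two summands separately, using the restriction chains' gap for the first and the projection chain's gap for the second.

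For the within-class piece, the spectral gap hypothesis on each restriction chain gives $\mathrm{Var}_{\pi_i}(f|_{\Omega_i})\le \lambda_{\min}^{-1}\mathcal{E}_i(f,f)$, where $\mathcal{E}_i$ is the Dirichlet form of $(\Omega_i,P_i)$. Unpacking the definitions of $\pi_i$ and $P_i$ from Definition~\ref{def:projres} gives $\bar\pi(i)\pi_i(x)P_i(x,y)=\pi^*(x)P(x,y)/p_i(x)$ for $x,y\in\Omega_i$, where $p_i(x)=\sum_{z\in\Omega_i}P(x,z)\ge 1-\gamma$. Summing over $i$ and using that the edges $(x,y)$ with both endpoints inside some $\Omega_i$ form a subset of all edges, this yields
\[
\sum_i\bar\pi(i)\,\mathrm{Var}_{\pi_i}(f)\;\le\;\frac{1}{\lambda_{\min}}\cdot\frac{\mathcal{E}(f,f)}{1-\gamma},
\]
with an easy further simplification available when $\gamma$ is small. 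This is the routine half.

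The between-class piece is the main obstacle. Applying the projection chain's gap hypothesis, $\mathrm{Var}_{\bar\pi}(\bar f)\le \bar\lambda^{-1}\bar{\mathcal{E}}(\bar f,\bar f)$, so the task reduces to bounding $\bar{\mathcal{E}}(\bar f,\bar f)=\tfrac12\sum_{i,j}\bar\pi(i)\bar P(i,j)(\bar f(i)-\bar f(j))^2$ in terms of $\mathcal{E}(f,f)$ plus a controlled amount of within-class variance. The key identity is that $\bar\pi(i)\bar P(i,j)=\sum_{x\in\Omega_i,y\in\Omega_j}\pi^*(x)P(x,y)$, which suggests writing, for any cross-class transition $(x,y)$ with $x\in\Omega_i,y\in\Omega_j$,
\[
\bar f(i)-\bar f(j)=\bigl(\bar f(i)-f(x)\bigr)+\bigl(f(x)-f(y)\bigr)+\bigl(f(y)-\bar f(j)\bigr),
\]
averaging this identity against the conditional distributions $\pi_i,\pi_j$, squaring with a Cauchy--Schwarz / AM-QM inequality, and summing. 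The $(f(x)-f(y))^2$ contribution accumulates to at most $2\,\mathcal{E}(f,f)$ (because $\gamma$ is the maximal mass of cross-class transitions out of any state), while the $(f(x)-\bar f(i))^2$ and $(f(y)-\bar f(j))^2$ contributions accumulate to at most $2\gamma\sum_i\bar\pi(i)\mathrm{Var}_{\pi_i}(f)$, since each state $x$ participates in cross-class transitions with total probability at most $\gamma$. The precise constants $3\gamma$ versus $2\gamma$ come out of this Cauchy--Schwarz step; I would be careful to use the sharp $(a+b+c)^2\le 3(a^2+b^2+c^2)$ bound here.

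Combining the pieces, $\mathrm{Var}_{\pi^*}(f)$ is bounded by $\lambda_{\min}^{-1}\mathcal{E}(f,f)$ plus $\bar\lambda^{-1}$ times a linear combination of $\mathcal{E}(f,f)$ and $\sum_i\bar\pi(i)\mathrm{Var}_{\pi_i}(f)$; folding the latter back using the within-class bound yields an inequality of the shape
\[
\mathrm{Var}_{\pi^*}(f)\;\le\;\Bigl(\tfrac{3}{\lambda_{\min}}\Bigr)\mathcal{E}(f,f)\quad\text{or}\quad \mathrm{Var}_{\pi^*}(f)\;\le\;\Bigl(\tfrac{3\gamma+\bar\lambda}{\bar\lambda\lambda_{\min}}\Bigr)\mathcal{E}(f,f),
\]
depending on which branch of the decomposition dominates. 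Taking the better (smaller $C$) of the two, equivalently the larger of the two implied gap lower bounds, gives exactly the $\min\{\lambda_{\min}/3,\;\bar\lambda\lambda_{\min}/(3\gamma+\bar\lambda)\}$ claimed. The delicate accounting is entirely in the cross-term step; once the $3\gamma$ factor is pinned down there, the remaining algebra is mechanical.
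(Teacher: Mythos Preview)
The paper does not give its own proof of Theorem~\ref{thm:specprojres}; it is simply quoted from Jerrum, Son, Tetali, and Vigoda~\cite{jerrumprojres} as prior work, and the paper's own contribution is the multicommodity-flow analogue Theorem~\ref{thm:flowprojres}, proved in Appendix~\ref{sec:fwpf}. So there is no ``paper's proof'' to compare against here. Your sketch is, in outline, the standard argument from the original source: variational characterization of the gap, law of total variance with respect to the partition, Poincar\'e inequality on each restriction chain for the within-class term, and a three-term Cauchy--Schwarz decomposition to control $\bar{\mathcal{E}}(\bar f,\bar f)$ by $\mathcal{E}(f,f)$ plus a $\gamma$-weighted within-class variance for the between-class term.

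One genuine wobble: your last paragraph has the logic of the $\min$ backwards. If a case analysis yields $\mathrm{Var}\le C_1\,\mathcal{E}$ in one regime and $\mathrm{Var}\le C_2\,\mathcal{E}$ in the other, the uniform conclusion is $\mathrm{Var}\le \max(C_1,C_2)\,\mathcal{E}$, hence gap $\ge \min(1/C_1,1/C_2)$. You wrote ``taking the better (smaller $C$) of the two, equivalently the larger of the two implied gap lower bounds, gives exactly the $\min$,'' which is self-contradictory: the smaller $C$ gives the larger gap bound, not the $\min$. The final formula is right, but the sentence explaining how you arrive at it is inverted. In the actual Jerrum--Son--Tetali--Vigoda computation there is no ``or'': one obtains a single inequality $\mathrm{Var}\le \bigl(\tfrac{1}{\lambda_{\min}}+\tfrac{3\gamma}{\bar\lambda\lambda_{\min}}+\tfrac{1}{\bar\lambda}\bigr)$-type bound (after folding the within-class variance back in), and then observes that this constant is at most $\max\{3/\lambda_{\min},\,(3\gamma+\bar\lambda)/(\bar\lambda\lambda_{\min})\}$. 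If you carry out the algebra carefully rather than splitting into branches, the $\min$ emerges without the logical slip.
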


Our Theorem~\ref{thm:flowprojres} has a simple, purely combinatorial proof (Appendix~\ref{sec:fwpf}), and fills a gap in the literature by showing that such a construction can be used in place of the spectral machinery from the earlier technique. We also obtain a tighter bound on expansion than would result from a black-box application of Theorem~\ref{thm:specprojres}. The cost to our improvement is in passing from expansion to mixing via the spectral gap.
\iffalse implies that a na\"ive application of our Theorem~\ref{thm:flowprojres} gives worse results than a na\"ive application of Theorem~\ref{thm:specprojres}\textemdash because of the quadratic loss inherent in using expansion bounds.\fi
Nonetheless, we will show that in the case of triangulations, our Theorem~\ref{thm:flowprojres} can be adapted to give a new mixing bound whereas, by contrast, it is not clear how to obtain even a polynomial bound adapting Jerrum, Son, Tetali, and Vigoda's spectral machinery. We will also show that for general $k$-angulations, one can, with our technique, use a combinatorial insight to eliminate the~$\gamma$ factor in our decomposition in favor of a~$\Delta^{-1}$ factor (for $k$-angulations we have~$\gamma = k/\Delta$)\textemdash whereas it is not clear how to do so with the spectral decomposition.

\subsection{General pattern for bounding projection chain congestion}
Our second decomposition theorem, which we will apply to general $k$-angulations, states that if one can recursively decompose a chain into restriction chains in a particular fashion, and if the projection chain is well connected, then Theorem~\ref{thm:flowprojres} gives an expansion bound:

\begin{lemma}
\label{lem:fw}
Let~$\mathcal{F} = \{\mathcal{M}_1, \mathcal{M}_2, \dots \}$ be a family of connected graphs, parameterized by a value $n$. Suppose that every graph~$\mathcal{M}_n = (\mathcal{V}_n, \mathcal{E}_n) \in \mathcal{F}$, for $n \geq 2$, can be partitioned into a set~$\mathcal{S}_n$ of classes satisfying the following conditions:
\begin{enumerate}
\item\label{multcondcart} Each class in $\mathcal{S}_n$ is isomorphic to a Cartesian product of one or more graphs~$\classt{}{T} \cong \mathcal{M}_{i_1} \Box \cdots \mathcal{M}_{i_k}$, where for each such graph~$\mathcal{M}_{i_j} \in \mathcal{F}$, $i_j \leq n/2$.
\item\label{multcondnum} The number of classes is~$O(1)$.
\item\label{multcondmatch} For every pair of classes~$\classt{}{T}, \classt{}{T'}\in \mathcal{S}_n$ that share an edge, the number of edges between the two classes is~$\Omega(1)$ times the size of each of the two classes.
\item\label{multcondsize} The ratio of the sizes of any two classes is~$\Theta(1)$.
\end{enumerate}
Suppose further that~$|\mathcal{V}_1| = 1$. Then the expansion of~$\mathcal{M}_n$ is~$\Omega(n^{-O(1)})$.
\end{lemma}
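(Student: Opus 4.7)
The plan is strong induction on $n$, with the hypothesis that $\mathcal{M}_m$ carries a uniform multicommodity flow of congestion $\rho(m) = O(m^c)$ for a suitable absolute constant $c$; Lemma~\ref{lem:flowexp} then converts this into the desired expansion bound $h(\mathcal{M}_m) = \Omega(m^{-c})$. The base case $n = 1$ is trivial since $|\mathcal{V}_1| = 1$. For the inductive step I would feed the partition $\mathcal{S}_n$ into Theorem~\ref{thm:flowprojres}, which requires a flow in each restriction chain and a flow in the projection chain.

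For the restriction chains, condition~\ref{multcondcart} identifies each class with a Cartesian product $\mathcal{M}_{i_1} \Box \cdots \Box \mathcal{M}_{i_k}$ in which every $i_j \leq n/2$. The inductive hypothesis supplies a flow of congestion at most $\rho(n/2)$ in each factor, and these assemble into a flow on the product by the standard ``route one coordinate at a time'' construction: to move a unit of flow from $(u_1,\dots,u_k)$ to $(v_1,\dots,v_k)$, apply the $j$-th factor flow in succession along each axis. A direct counting argument---in which the $\prod_{j'\neq j} |V(\mathcal{M}_{i_{j'}})|$-fold multiplicity of parallel copies of the $j$-th factor flow cancels exactly against the product's uniform normalization---shows that the resulting product flow has congestion bounded by the maximum factor congestion, so $\rho_{\max} \leq \rho(n/2)$.

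For the projection chain, conditions~\ref{multcondnum}, \ref{multcondmatch}, and~\ref{multcondsize} together imply that it is a graph on $O(1)$ vertices of roughly balanced stationary weight, with every pair of adjacent classes connected by inter-class transition weight bounded below by a constant. Since the projection chain is connected (it inherits connectedness from $\mathcal{M}_n$), it has $\Omega(1)$ expansion, and a unit flow along single-edge paths suffices to give $\bar\rho = O(1)$. Substituting into Theorem~\ref{thm:flowprojres} yields the recursion
\[\rho(n) \leq \bigl(1 + 2\bar\rho\gamma\Delta\bigr)\,\rho(n/2).\]

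The main obstacle will be verifying that the multiplicative factor $(1 + 2\bar\rho\gamma\Delta)$ is $O(1)$ at every level, so that unrolling across the $O(\log n)$ levels yields $\rho(n) = O(1)^{O(\log n)} = n^{O(1)}$ instead of a quasipolynomial blowup. Since $\gamma\Delta$ counts the maximum number of out-of-class neighbors of a single vertex, this reduces to a bounded-boundary-degree claim, and here condition~\ref{multcondcart} plays a second crucial role: because each class is an induced subgraph isomorphic to the full Cartesian product of its factors, the in-class degree of any vertex equals the sum of the factor degrees, leaving only the few ``cross-class'' flips unaccounted for (two of them in the triangulation case, corresponding to the two diagonals of the distinguished bottom triangle). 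Granting this family-specific $O(1)$ bound on the residual boundary degree, the recursion unrolls to $\rho(n) = n^{O(1)}$, and a final invocation of Lemma~\ref{lem:flowexp} delivers the claimed $\Omega(n^{-O(1)})$ expansion.
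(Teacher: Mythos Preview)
Your overall architecture---strong induction on $n$, Cartesian-product flows for the restriction chains, an $O(1)$-vertex projection chain giving $\bar\rho=O(1)$, and Theorem~\ref{thm:flowprojres} to glue them---matches the paper's. The gap is exactly where you flag it: nothing in the four hypotheses of Lemma~\ref{lem:fw} bounds $\gamma\Delta$, the maximum out-of-class degree of a vertex. Your attempt to derive it from Condition~\ref{multcondcart} presupposes that the degree of $\mathcal{M}_n$ is close to $\sum_j \deg(\mathcal{M}_{i_j})$, which happens to hold for triangulations (where the central $k$-gon accounts for only $k$ of the $n-1$ diagonals) but is not part of the abstract statement. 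One can construct families satisfying all four conditions in which boundary vertices have arbitrarily many out-of-class neighbors; Condition~\ref{multcondmatch} gives only a \emph{lower} bound on the inter-class edge count, not an upper bound, and nothing forces those edges to form a matching. You correctly call the bound ``family-specific'' and then grant it, which is precisely the step that is missing from a proof of the lemma as stated.

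The paper does not bound $\gamma\Delta$. Instead of applying Theorem~\ref{thm:flowprojres} as a black box, it sharpens the congestion accounting directly (this is Lemma~\ref{lem:nonhierexact} and Remark~\ref{rmk:deletegamma}): even if a boundary vertex $z\in\Omega_i$ has many neighbors in other classes, the \emph{combined} flow $z$ receives across all of them is still at most $|\mathcal{V}_n|^2/|\mathcal{E}_{n,\min}|$, since that is the total demand routed through the projection chain divided by the minimum projection-edge weight. The per-level multiplier therefore becomes $2f(n)$ with $f(n)=|\mathcal{V}_n|/|\mathcal{E}_{n,\min}|$, which Conditions~\ref{multcondnum}--\ref{multcondsize} force to be $O(1)$, and the recursion unrolls over $O(\log n)$ levels to $n^{O(1)}$ congestion without $\gamma$ ever appearing. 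Your proof becomes correct if you replace the black-box invocation of Theorem~\ref{thm:flowprojres} with this direct accounting.
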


Lemma~\ref{lem:fw} is easy to prove given Theorem~\ref{thm:flowprojres}. An analogue in terms of spectral gap is easy to prove given Theorem~\ref{thm:specprojres}. Furthermore, as we will prove in Appendix~\ref{sec:fwpf}, a precise statement of the bounds given by Lemma~\ref{lem:fw} is as follows:
\begin{restatable}{lemma}{lemnonhierexact}
\label{lem:nonhierexact}
Suppose a flip graph $\mathcal{M}_n = (\mathcal{V}_n, \mathcal{E}_n)$ belongs to a family~$\mathcal{F}$ of graphs satisfying the conditions of Lemma~\ref{lem:fw}. Suppose further that every graph~$\mathcal{M}_k = (\mathcal{V}_k, \mathcal{E}_k) \in \mathcal{F}$, $k < n$, satisfies
$$|\mathcal{V}_k|/|\mathcal{E}_{k,\min}| \leq f(k),$$
for some function $f(k)$, where $\mathcal{E}_{k,\min}$ is the smallest edge set between adjacent classes $\classt{}{T},\classt{}{T'} \in \mathcal{S}_k$, where $\mathcal{S}_k$ is as in Lemma~\ref{lem:fw}.
Then the expansion of $\mathcal{M}_n$ is
$$\Omega(1/(2 f(n))^{\log n})),$$
where~$\gamma$ is as in Theorem~\ref{thm:flowprojres}, and~$\Delta$ is the degree of~$\mathcal{M}_n$.
\end{restatable}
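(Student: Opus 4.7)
The plan is to induct on $n$, applying Theorem~\ref{thm:flowprojres} once per level of the decomposition supplied by Lemma~\ref{lem:fw}. The inductive hypothesis is that for every $k < n$, $\mathcal{M}_k$ admits a multicommodity flow with congestion at most $\rho(k) \leq (2f(k))^{\log k}$; the claimed expansion bound then follows from Lemma~\ref{lem:flowexp}. The base case is trivial because $|\mathcal{V}_1| = 1$.

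For the inductive step I partition $\mathcal{V}_n$ into the classes $\mathcal{S}_n$ guaranteed by Lemma~\ref{lem:fw} and bound the two ingredients of Theorem~\ref{thm:flowprojres} in turn. To bound the projection-chain congestion $\bar\rho$, I exploit that conditions~\ref{multcondnum} and~\ref{multcondsize} give $|\mathcal{S}_n| = O(1)$ classes of size $\Theta(|\mathcal{V}_n|)$; the projection chain is therefore a constant-size weighted graph whose edge weights are proportional to the inter-class edge counts. Since by hypothesis every inter-class edge set has at least $|\mathcal{V}_n|/f(n)$ edges, the natural flow that routes each projection-chain demand evenly across the corresponding matching carries un-normalized load $O(|\mathcal{V}_n|\cdot f(n))$ per edge, giving $\bar\rho = O(f(n))$ after the $1/|\mathcal{V}_n|$ normalization from Definition~\ref{def:mcflow}. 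To bound the restriction-chain congestion $\rho_{\max}$, I use condition~\ref{multcondcart}: each class is a Cartesian product $\mathcal{M}_{i_1}\Box\cdots\Box\mathcal{M}_{i_m}$ with every $i_j \leq n/2$, and a standard coordinate-by-coordinate assembly of the factor-wise inductive flows (route from $(u_1,\ldots,u_m)$ to $(v_1,\ldots,v_m)$ by changing one coordinate at a time, using the inductive flow in the corresponding factor) yields a product flow whose congestion is at most $\max_j \rho(i_j) \leq (2f(n/2))^{\log(n/2)}$, using that $f$ is non-decreasing.

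Plugging into Theorem~\ref{thm:flowprojres} yields
$$\rho(n) \leq (1 + 2\bar\rho\gamma\Delta)\rho_{\max} \leq (2f(n))^{\log n},$$
where the constant factors from $\gamma\Delta$ and from $\bar\rho = O(f(n))$ are absorbed into the base of the exponent, closing the induction. The main technical subtlety will be verifying $\gamma\Delta = O(1)$ from the abstract hypotheses of Lemma~\ref{lem:fw}: condition~\ref{multcondmatch} controls only the \emph{aggregate} number of inter-class edges, not the maximum per-vertex cross-class degree, so some additional structural observation is needed. I would handle this either by strengthening condition~\ref{multcondmatch} to require that the inter-class edges decompose into $O(1)$ matchings (as is the case for $k$-angulations, where each inter-class edge set is a single matching by the analogue of Lemma~\ref{lem:projggmatching}), or by absorbing any $\gamma\Delta$ overhead into the implicit constants hidden in the $\Omega(\cdot)$; either route is harmless as long as $\gamma\Delta$ is at most polylogarithmic in $n$.
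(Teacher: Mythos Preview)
Your overall induction scheme matches the paper's: apply Theorem~\ref{thm:flowprojres} at each level of the decomposition, bound the projection congestion by $f(n)$ via the total-demand-over-minimum-edge-weight argument, and handle the restriction chains using the Cartesian product structure (the paper's Lemma~\ref{lem:cartflow}). So structurally you are doing the same thing.

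The one substantive divergence is in how the $\gamma\Delta$ factor is handled, and this is precisely the technical point the paper singles out. You apply Theorem~\ref{thm:flowprojres} as a black box, pick up the $(1+2\bar\rho\gamma\Delta)$ multiplier, and then try to argue after the fact that $\gamma\Delta=O(1)$ or can be absorbed. The paper instead goes inside the flow construction and argues that the $\gamma\Delta$ factor never arises in the first place: even if a vertex $z\in\Omega_i$ has several cross-class neighbors, the \emph{combined} flow $z$ receives across all of them is at most $|\mathcal{V}_k|^2/|\mathcal{E}_{k,\min}|$, because the total demand being routed through the projection graph is $|\mathcal{V}_k|^2$ and every cross-class edge set has at least $|\mathcal{E}_{k,\min}|$ edges to share it. This direct bound replaces the per-neighbor bound that produces $\gamma\Delta$ in the generic proof of Theorem~\ref{thm:flowprojres}, and is exactly why the lemma's base is $2f(n)$ rather than $2\gamma\Delta f(n)$ (see Remark~\ref{rmk:deletegamma}).

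Your proposed workarounds do not quite close the gap as stated. The hypotheses of Lemma~\ref{lem:fw} do not guarantee that inter-class edge sets are matchings, so $\gamma\Delta=O(1)$ is not derivable from them alone; and ``polylogarithmic $\gamma\Delta$'' is not harmless for the precise bound claimed, since $(\mathrm{polylog}\,n)^{\log n}=n^{\Theta(\log\log n)}$ would appear as an extra factor. For the intended $k$-angulation application your matching assumption does hold (Lemma~\ref{lem:tdsetype}), so your argument would go through there, but the paper's route is what gives the lemma in its stated generality.
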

\begin{proof}
Constructing an arbitrary multicommodity flow (or set of canonical paths) in the projection graph at each inductive step gives the result claimed. The term~$|\mathcal{V}_k|/|\mathcal{E}_{k,\min}|$ bounds the (normalized) congestion in any such flow because the total amount of flow exchanged by all pairs of vertices (states) combined is~$|\mathcal{V}_k|^2$, and the minimum weight of an edge in the projection graph is~$|\mathcal{E}_{k,\min}|$.

Notice that we do not incur a~$\gamma\Delta$ term here, because even if a state (vertex) in~$\Omega_i \subseteq \mathcal{V}_k$ has neighbors~$x\in \Omega_j, y \in \Omega_l$, $z$ still only receives no more than~$|\mathcal{V}_k|^2/\mathcal{E}_{k,\min}\}$ flow across the edges~$(z, x)$ and~$(z, y)$ combined.
\end{proof}

\begin{remark}
\label{rmk:deletegamma}
The~$\gamma\Delta$ factor in Theorem~\ref{thm:flowprojres}, which does not appear in Lemma~\ref{lem:nonhierexact}, does appear in a straightforward appliation of Jerrum, Son, Tetali, and Vigoda's Theorem~\ref{thm:specprojres}. 
\end{remark}

We will show that $k$-angulations (with fixed~$k \geq 4$) satisfy a relaxation of Lemma~\ref{lem:fw}:
\begin{lemma}
\label{lem:fwquasi}
Suppose a family~$\mathcal{F}$ of graphs satisfies the conditions of Lemma~\ref{lem:fw}, with the~$\Omega(1)$,~$O(1)$, and~$\Theta(1)$ factors in Conditions~\ref{multcondmatch},~\ref{multcondnum}, and~\ref{multcondsize} respectively replaced by~$\Omega(n^{-O(1)})$, $O(n^{O(1)})$, and~$\Theta(n^{O(1)})$. Then for every~$\mathcal{M}_n \in \mathcal{F}$, the expansion of~$\mathcal{M}_n$ is~$\Omega(n^{-O(\log n)})$.
\end{lemma}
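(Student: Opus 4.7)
I plan to derive Lemma~\ref{lem:fwquasi} by directly instantiating the bound of Lemma~\ref{lem:nonhierexact} with $f(n) = n^{O(1)}$. The first step is to verify that, under the relaxed conditions, $f(n) = |\mathcal{V}_n|/|\mathcal{E}_{n,\min}|$ is indeed bounded by $n^{O(1)}$. Combining relaxed Condition~\ref{multcondnum} (at most $n^{O(1)}$ classes) with relaxed Condition~\ref{multcondsize} (class sizes within an $n^{O(1)}$ factor of each other), every class $\Omega_j$ satisfies $|\Omega_j| \geq |\mathcal{V}_n| \cdot n^{-O(1)}$. Relaxed Condition~\ref{multcondmatch} then gives $|\mathcal{E}_{n,\min}| \geq |\Omega_j| \cdot n^{-O(1)} \geq |\mathcal{V}_n| \cdot n^{-O(1)}$, and hence $f(n) \leq n^{O(1)}$.

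Next, I would verify that the proof of Lemma~\ref{lem:nonhierexact} carries over verbatim: one inducts on $n$ and applies Theorem~\ref{thm:flowprojres} at each level, routing an arbitrary multicommodity flow (or canonical paths) in the projection chain with normalized congestion at most $f(k) = k^{O(1)}$. The inductive hypothesis provides expansion $\Omega((n/2)^{-O(\log(n/2))})$ on each factor $\mathcal{M}_{i_j}$ of each restriction chain, and Lemma~\ref{lem:cartexp} lifts this (up to a factor $1/2$) to the Cartesian product $\classt{}{T} \cong \mathcal{M}_{i_1} \Box \cdots \Box \mathcal{M}_{i_k}$. Converting product expansion to flow congestion and substituting into Theorem~\ref{thm:flowprojres} adds only an $n^{O(1)}$ factor per level. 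Since $i_j \leq n/2$ by Condition~\ref{multcondcart}, the recursion has depth $O(\log n)$, and the per-level polynomial losses accumulate multiplicatively to congestion $n^{O(\log n)}$ in $\mathcal{M}_n$; by Lemma~\ref{lem:flowexp} this yields expansion $\Omega(n^{-O(\log n)})$.

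The main point requiring care is the $\gamma\Delta$ factor appearing in Theorem~\ref{thm:flowprojres}, which does not surface in the statement of Lemma~\ref{lem:nonhierexact}. For the $k$-angulation application with fixed $k$ one has $\gamma\Delta = k$, a constant, so it is harmless; in the general statement of Lemma~\ref{lem:fwquasi} the hypothesis should implicitly control $\gamma\Delta$ by a polynomial in $n$, in which case the factor is absorbed into the per-level $n^{O(1)}$ loss. A secondary subtlety is the conversion from the product-graph expansion bound to a flow construction with polynomial congestion; rather than invoking Leighton--Rao (which would add a $\log n$ factor at every level, still quasipolynomial but wasteful), I expect to strengthen the inductive hypothesis to directly assert a flow of congestion $n^{O(\log n)}$ in each $\mathcal{M}_k$, and build the product flow coordinate-by-coordinate from these factor flows. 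This is the only step I expect to require a nontrivial argument; the rest is bookkeeping.
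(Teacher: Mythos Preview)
Your proposal is correct and follows the same approach as the paper, which simply notes that Lemma~\ref{lem:fwquasi} is immediate from Theorem~\ref{thm:flowprojres} via Lemma~\ref{lem:nonhierexact}. Two small clarifications: (i) the $\gamma\Delta$ factor does not need to be controlled by an extra hypothesis---the proof of Lemma~\ref{lem:nonhierexact} already explains that it never arises, since the total flow any boundary vertex $z$ receives across \emph{all} its out-of-class edges combined is at most $|\mathcal{V}_k|^2/|\mathcal{E}_{k,\min}|$; (ii) your instinct to strengthen the inductive hypothesis to flows is exactly what the paper does, using Lemma~\ref{lem:cartflow} (not Lemma~\ref{lem:cartexp}) to carry flow congestion through Cartesian products with no loss, so no Leighton--Rao step is needed.
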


Lemma~\ref{lem:fw} enables us to relate a number of chains admitting a certain decomposition process in a black-box fashion, unifying prior work applying Theorem~\ref{thm:specprojres} separately to individual chains. Marc Heinrich~\cite{heinrich2020glauber} presented a similar but less general construction for the Glauber dynamics on~$q$-colorings in bounded-treewidth graphs; other precursors exist, including for the hardcore model on certain trees~\cite{jerrumprojres} and a general argument for a class of graphical models~\cite{hierwidth}. 
\iffalse  However, in both Heinrich's case and in the case of the hardcore model on trees, the underlying machinery used a direct spectral technique. This technique (and its analogue in the form of log-Sobolev inequalities) gives a recurrence of a similar form to our multiplicative gain in Theorem~\ref{thm:flowprojres}, but with a~$3\gamma$ term in place of our~$2\gamma$ term. Thus if one's approach to bounding the spectral gap (or log-Sobolev constant) in the projection chain is by way of canonical paths, our construction yields an asymptotically better bound.\fi
\iffalse We obtain such an improvement in the case of $k$-angulations: one could obtain quasipolynomial mixing with Jerrum, Son, Tetali, and Vigoda's framework, but our bound is slightly better. \fi
In the companion paper we mentioned in Section~\ref{sec:associntro}, we apply Lemma~\ref{lem:fw} to chains for sampling independent sets and dominating sets in bounded-treewidth graphs, as well as chains on $q$-colorings, maximal independent sets, and several other structures, in graphs whose treewidth and degree are bounded.

\subsection{Eliminating inductive loss: nearly tight conductance for triangulations}
We now give the meta-theorem that we will apply to triangulations. Lemma~\ref{lem:fw}\textemdash using either Theorem~\ref{thm:flowprojres} or Theorem~\ref{thm:specprojres}\textemdash gives a merely quasipolynomial bound when applied straightforwardly to $k$-angulations, including the case of triangulations\textemdash simply because the $f(n)$ term in Lemma~\ref{lem:nonhierexact} is~$\omega(1)$ and thus the overall congestion is~$\omega(1)^{\log n}$ (not polynomial). However, it turns out that the large matchings given by Lemma~\ref{lem:matchingscard} between pairs of classes in the case of triangulations (but not general $k$-angulations), combined with some additional structure in the triangulation flip walk, satisfy an alternative set of conditions that suffice for rapid mixing. The conditions are:
\begin{restatable}{lemma}{lemfwstrong}
\label{lem:fwstrong}
Let~$\mathcal{F} = \{\mathcal{M}_1, \mathcal{M}_2, \dots\}$ be an infinite family of connected graphs, parameterized by a value $n$. Suppose that for every graph~$\mathcal{M}_n = (\mathcal{V}_n, \mathcal{E}_n) \in \mathcal{F}$, for $n \geq 2$, the vertex set~$\mathcal{V}_n$ can be partitioned into a set~$\mathcal{S}_n$ of classes inducing subgraphs of~$\mathcal{M}_n$ that satisfy the following conditions:
\begin{enumerate}
\item\label{addcondcart} Each subgraph is isomorphic to a Cartesian product of one or more graphs~$\classt{}{T} \cong \mathcal{M}_{i_1} \Box \cdots \mathcal{M}_{i_k}$, where for each such graph~$\mathcal{M}_{i_j} \in \mathcal{F}$, $i_j < n$.
\item\label{addcondnum} The number of classes is~$n^{O(1)}$.
\item\label{addcondmatch} For every pair of classes~$\classt{}{T}, \classt{}{T'}\in \mathcal{S}_n$, the set of edges between the subgraphs induced by the two classes is a matching of size at least~$\frac{|\classt{}{T}||\classt{}{T'}|}{|\mathcal{V}_n|}.$
\item\label{addcondbdry} Given a pair of classes~$\classt{}{T}, \classt{}{T'} \in \mathcal{S}_n$, there exists a graph~$\mathcal{M}_i$ in the Cartesian product~$\classt{}{T}$, and a class~$\classt{}{U} \in \mathcal{S}_i$ within the graph~$\mathcal{M}_i$, such that the set of vertices in~$\classt{}{T}$ having a neighbor in~$\classt{}{T'}$ is precisely the set of vertices in~$\classt{}{T}$ whose projection onto~$\mathcal{M}_i$ lies in~$\classt{}{U}$. Furthermore, no class~$\classt{}{U}$ within~$\mathcal{M}_i$ is the projection of more than one such boundary.
\end{enumerate}

Suppose further that~$|\mathcal{V}_1| = 1$. Then the expansion of~$\mathcal{M}_n$ is~$\Omega(1/(\kappa(n)n))$, where~$\kappa(n) = \max_{1\leq i\leq n}|\classt{}{S_i}|$ is the maximum number of classes in any~$\mathcal{M}_i, i\leq n$.
\end{restatable}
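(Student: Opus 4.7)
The plan is to prove the bound by induction on $n$, constructing a uniform multicommodity flow in $\mathcal{M}_n$ with congestion $O(\kappa(n)n)$ and then invoking Lemma~\ref{lem:flowexp}. The base case $n = 1$ is immediate since $|\mathcal{V}_1| = 1$. For the inductive step, I would build the flow out of canonical paths that split into two regimes depending on whether their endpoints $s,t$ lie in the same class.

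When $s, t \in \classt{}{T}$ with $\classt{}{T} \cong \mathcal{M}_{i_1} \Box \cdots \Box \mathcal{M}_{i_k}$ (condition~\ref{addcondcart}), I would route flow between the per-coordinate projections using the inductive flow constructions on the factors $\mathcal{M}_{i_j}$, concatenated one coordinate at a time. This is the standard product-of-flows argument; combined with the fact that each factor has $i_j < n$, it gives intra-class congestion bounded by the inductive $O(\kappa(i_j) \cdot i_j)$. When $s \in \classt{}{T}$ and $t \in \classt{}{T'}$ lie in different classes, I would decompose the canonical path as $s \to b_s \to b_t \to t$, where $(b_s, b_t)$ is an edge of the matching guaranteed by condition~\ref{addcondmatch}. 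The key move is how $b_s$ is selected: by condition~\ref{addcondbdry}, the boundary of $\classt{}{T}$ facing $\classt{}{T'}$ is exactly the set of vertices whose projection onto some factor $\mathcal{M}_{i_j}$ lies in a specific subclass $\classt{}{U} \subseteq \mathcal{M}_{i_j}$, so I would route $s$ to $b_s$ by altering only its $\mathcal{M}_{i_j}$-projection, using the inductive flow inside $\mathcal{M}_{i_j}$ to move that projection to $\classt{}{U}$ (and symmetrically on the $\classt{}{T'}$ side).

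For the congestion accounting, the matching edges themselves are immediate: the total flow sent from $\classt{}{T}$ to $\classt{}{T'}$ is $|\classt{}{T}||\classt{}{T'}|$, the matching has size at least $|\classt{}{T}||\classt{}{T'}|/|\mathcal{V}_n|$ (condition~\ref{addcondmatch}), so after dividing by $|\mathcal{V}_n|$ (per Definition~\ref{def:mcflow}) the contribution per matching edge is $O(1)$. The intra-class congestion is where condition~\ref{addcondbdry} pays off: its uniqueness clause says each class $\classt{}{U}$ within a factor $\mathcal{M}_{i_j}$ serves as the projection of at most one external boundary, so the total extra demand placed on the inductive flow in $\mathcal{M}_{i_j}$ is at most $\kappa(n)$ times the baseline same-class demand. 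This is what lets the recursion add a term of order $\kappa(n)$ per level rather than multiply by it, driving the additive recurrence $C_n \le C_{n-1} + O(\kappa(n))$ that sums to $O(\kappa(n) n)$ — the crucial difference from Lemma~\ref{lem:nonhierexact}, where repeated multiplicative loss produces $\omega(1)^{\log n}$.

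The main obstacle will be the congestion bookkeeping on the intra-class routing: one has to show that, across the $n^{O(1)}$ pairs of adjacent classes (condition~\ref{addcondnum}) and across all coordinates of the Cartesian product, the inductive flows inside the factors $\mathcal{M}_{i_j}$ can be combined without multiplicative amplification. This amounts to carefully exploiting the partition of $\mathcal{M}_{i_j}$ into its own classes to argue that each arc in $\mathcal{M}_{i_j}$ is reused $O(\kappa(n))$ times, and then unwrapping this into the telescoping recurrence above. Given that careful bookkeeping, the flow has congestion $O(\kappa(n)n)$ and Lemma~\ref{lem:flowexp} yields the claimed expansion bound $\Omega(1/(\kappa(n)n))$.
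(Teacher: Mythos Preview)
Your high-level plan matches the paper's: induct on $n$, build a uniform multicommodity flow, aim for the additive recurrence $\rho_n \le \rho_{n-1} + O(\kappa(n))$ (this is exactly Lemma~\ref{lem:fwstrongindstep}), and invoke Lemma~\ref{lem:flowexp}. Your reading of conditions~\ref{addcondmatch} and~\ref{addcondbdry} is also right: the matching size controls congestion on cross-class edges, and the uniqueness clause in condition~\ref{addcondbdry} is what prevents boundaries from colliding inside a factor.

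There is, however, a genuine gap in the inter-class routing. You write the canonical path as $s \to b_s \to b_t \to t$, with $b_s$ obtained from $s$ by changing only the $\mathcal{M}_{i_j}$-coordinate, and propose to handle $b_t \to t$ ``symmetrically on the $\classt{}{T'}$ side.'' But $b_t$ is determined by $b_s$ via the matching, and $t$ is arbitrary in $\classt{}{T'}$: in general $b_t$ and $t$ differ in \emph{every} coordinate of the factorization of $\classt{}{T'}$, not just one. So the symmetric one-coordinate move does not get you from $b_t$ to $t$. If you instead route $b_t \to t$ with the full inductive flow in $\classt{}{T'}$, you have now invoked the inductive flow twice (once implicitly on the $\classt{}{T}$ side to reach the boundary, once on the $\classt{}{T'}$ side), and the recurrence becomes multiplicative rather than additive---precisely the loss you are trying to avoid.

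The missing idea is the paper's \emph{shuffling} step. Before concentrating flow on the boundary, one first uses the inductive flow in $\classt{}{T}$ to spread each source $s$ uniformly over all of $\classt{}{T}$ (this single invocation is what contributes the additive $\rho_{n-1}$). After shuffling, all commodities originating in $\classt{}{T}$ are indistinguishable, so the remaining concentrate--transmit--distribute steps become \emph{single-commodity} MSF problems. In particular, the distribution in $\classt{}{T'}$ now starts with flow uniform on $\bdryt{}{T'}{T}$ and must end uniform on $\classt{}{T'}$; by condition~\ref{addcondbdry} this reduces, in each copy of the factor $\mathcal{M}_i$, to moving flow from the class $\classt{}{U}$ uniformly to all of $\mathcal{M}_i$. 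That subproblem has exactly the same shape as the top-level problem and can be solved recursively with congestion~$1$ on each transmission edge (by condition~\ref{addcondmatch}), summing to $O(\kappa)$ over the classes. Without the shuffle, the distribution side remains genuinely multicommodity and your bookkeeping sketch does not close.
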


Unlike Lemma~\ref{lem:fw}, this lemma requires a purely combinatorial construction; it is not clear how to apply spectral methods to obtain even a polynomial bound.
\iffalse Intuitively, the loss in that theorem comes from a recursive construction in which a flow in each of the restriction chains is used twice: once to route flow to be sent to other restriction chains, and once to route flow received from other chains. We show in Appendix~\ref{sec:flowdetails}, how to use the recursive flow in the restriction chain only once, ``shuffling'' all outbound flow, so that all flow sent from a given restriction chain (a class) can be considered as a single commodity. We then carefully, using the additional structure in the conditions below, construct the paths through which to route this single commodity.\fi
Condition~\ref{addcondbdry} is crucial. To give more intuition for this condition, we state and prove the following fact about the triangulation flip graph (visualized in Figure~\ref{fig:tk}):
\iffalse  for the following reason: in our intuitive sketch of the ``slicing'' and ``peeling'' process, we showed that there are many edges between pairs of classes at each level of the recursive decomposition, and that each class is internally well connected. However, in order to make our argument rigorous and properly lower-bound the expansion of~$K_n$, we will need to show, additionally, that the \emph{boundary set} of vertices within~$\classtt{n}{T}$ having neighbors in a given class~$\classtt{n}{T'}$ is itself well connected to the rest of~$\classtt{n}{T}$. \fi

\begin{figure}[h]
\includegraphics[height=8em]{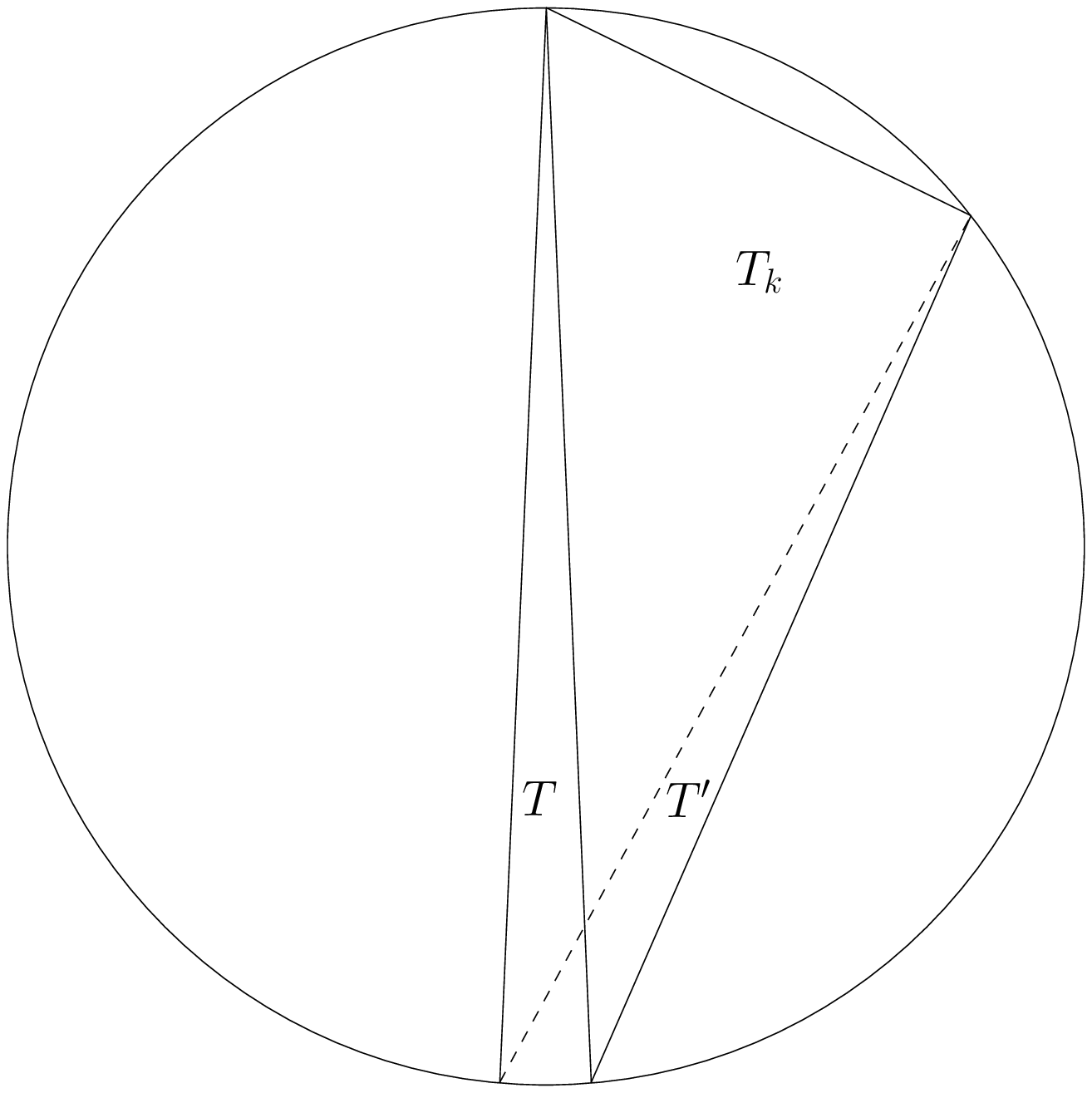}
\hspace{0.5em}
\includegraphics[height=8em]{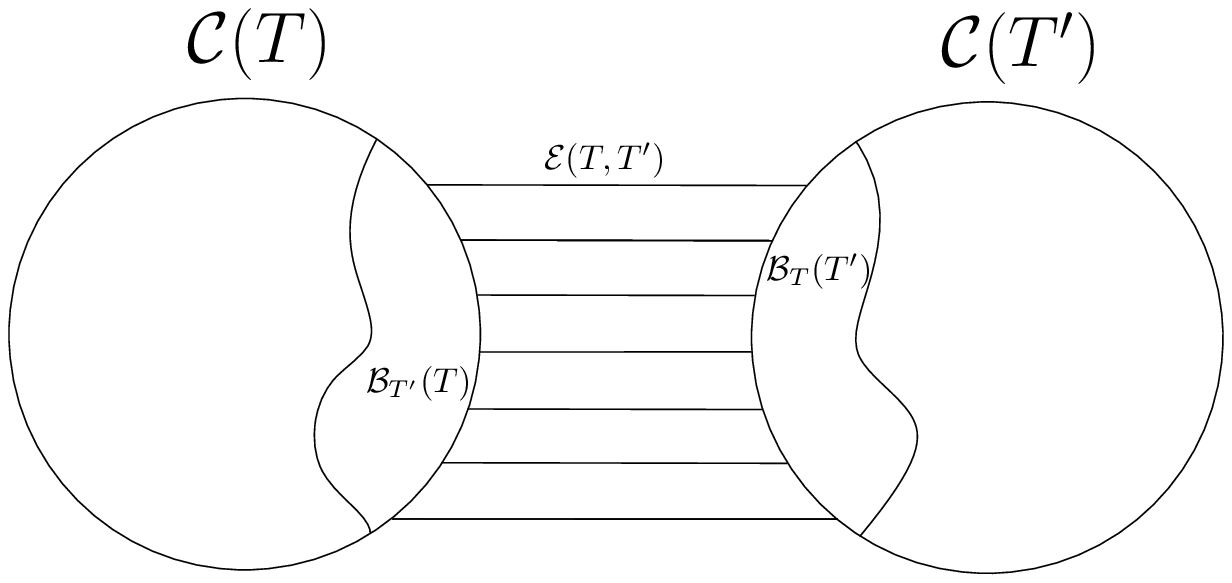}
\hspace{0.5em}
\includegraphics[width=10em]{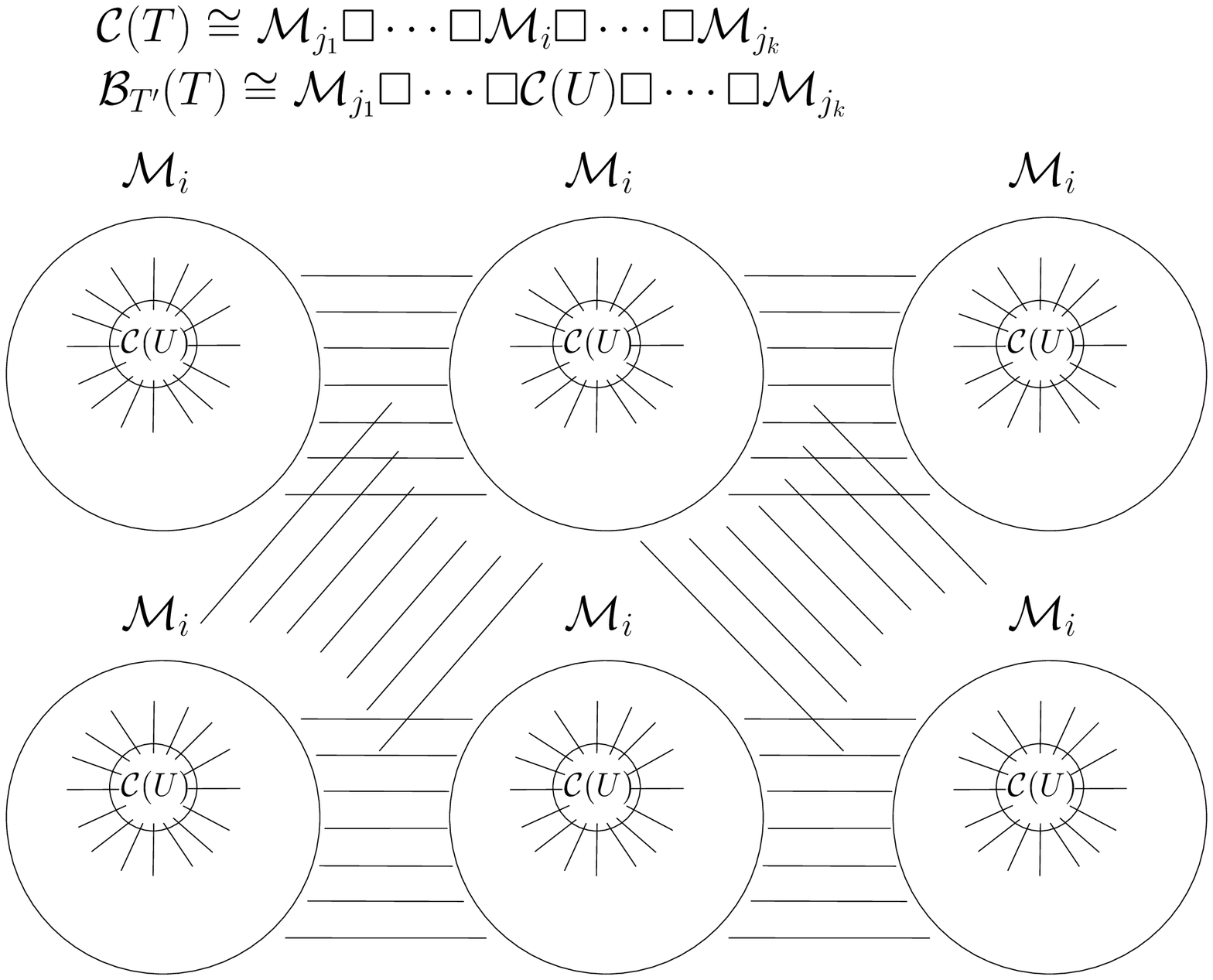}
\caption{\textbf{Left}: (Lemma~\ref{lem:bdryquad}) The set of edges $\edgett{}{T}{T'}$ has $K_i \Box \classtt{}{T_k}$ as its set of boundary vertices in $\classtt{}{T}$.
\textbf{Center}: An illustration of Condition~\ref{addcondmatch} in Lemma~\ref{lem:fwstrong}, showing a large matching~$\edget{}{T}{T'}$ between two classes (subgraphs)~$\classt{}{T}$ and~$\classt{}{T'}$.
\textbf{Right}: An illustration of Conditions~\ref{addcondcart} and~\ref{addcondbdry} in Lemma~\ref{lem:fwstrong}: $\classt{}{T}$ as a Cartesian product of smaller graphs~$\mathcal{M}_{j_1}, \dots, \mathcal{M}_i, \dots, \mathcal{M}_{j_k}$ in the family~$\mathcal{F}$. The schematic view shows this Cartesian product as a collection of copies of~$\mathcal{M}_i$, connected via perfect matchings between pairs of the copies\textemdash with the pairs to connect determined by the structure of the Cartesian product. The boundary~$\bdryt{}{T}{T'}$ (center) is isomorphic to a class~$\classt{}{U}$ (right) within~$\mathcal{M}_i$, a graph in the product. Within each copy of~$\mathcal{M}_i$, many edges connect~$\classt{}{U}$ to the rest of~$\mathcal{M}_i$.}
\label{fig:fwaddbdry-noflow}
\label{fig:tk}
\end{figure}
\begin{lemma}
\label{lem:bdryquad}
Given $T, T' \in \mathcal{T}_n,$ suppose $T'$ lies to the right of $T$. Then the subgraph of $\classtt{n}{T}$ induced by $\bdrytt{n}{T}{T'}$ is isomorphic to a Cartesian product $K_l \Box \classtt{r}{T_k},$ where $l + r = n - 1$, and where $T_k$ has as an edge the right diagonal of $T$, and as the vertex opposite this edge the topmost vertex of $T'.$ A symmetric fact holds for $\bdrytt{n}{T'}{T}.$
\end{lemma}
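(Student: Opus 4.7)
The plan is to combine the Cartesian decomposition of $\classtt{n}{T}$ from Lemma~\ref{lem:projcart} with a geometric identification of exactly which triangulations in $\classtt{n}{T}$ admit a flip into $\classtt{n}{T'}$. Orient $P_{n+2}$ so that $e^* = ab$ is on the bottom (with $b$ the right endpoint), and let $v$ be the apex of $T$ and $v'$ the apex of $T'$. Since $T' > T$, the vertex $v'$ lies strictly on the right side of the polygon relative to $v$.

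First I would argue that any flip taking $t \in \classtt{n}{T}$ to $t' \in \classtt{n}{T'}$ must flip an edge of $T$ (since $T$ and $T'$ are distinct triangles on $e^*$ and every other triangle of $t$ is disjoint from $e^*$), and more specifically must flip the right diagonal $vb$: flipping the left diagonal $va$ could only produce an apex on $e^*$ that lies counterclockwise from $v$, yielding a $T'$ to the left, not to the right. Next I would identify the other triangle of the quadrilateral containing $vb$: for the flip to yield a triangle $T'$ on $e^*$ with apex exactly $v'$, the quadrilateral across $vb$ must have fourth vertex $v'$, so $t$ must contain the triangle $T_k = vbv'$. Conversely, whenever $t$ contains both $T$ and $T_k$, flipping $vb$ produces a triangulation containing $T'$, so it lies in $\classtt{n}{T'}$. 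This yields the set-theoretic identification $\bdrytt{n}{T}{T'} = \{t \in \classtt{n}{T} : T_k \in t\}$.

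Second I would lift this set-theoretic description to the graph isomorphism. By Lemma~\ref{lem:projcart}, $\classtt{n}{T} \cong K_l \Box K_r$, where the left factor is the set of triangulations of the left sub-polygon (bounded by $va$) and the right factor is the set of triangulations of the right sub-polygon (bounded by $vb$), viewed as a convex polygon of $r+1$ vertices whose own distinguished edge we may take to be $vb$. The triangle $T_k$ lies entirely in the right sub-polygon and contains $vb$, so constraining $t$ to contain $T_k$ constrains only the right factor, and the triangulations of the right sub-polygon containing $T_k$ are by definition $\classtt{r}{T_k}$. Hence $\bdrytt{n}{T}{T'}$ is in bijection with $V(K_l) \times V(\classtt{r}{T_k})$. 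Finally, any flip internal to $\bdrytt{n}{T}{T'}$ preserves both $T$ and $T_k$ (it takes place in one of the three sub-polygons determined by these two triangles together with $e^*$), so the induced edge set matches that of $K_l \Box \classtt{r}{T_k}$, completing the isomorphism. The symmetric statement for $\bdrytt{n}{T'}{T}$ follows by swapping the roles of $T$ and $T'$ (and of left and right).

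I do not expect a serious obstacle: the content is essentially bookkeeping, and the only delicate point is verifying that the ``right vs.\ left'' orientation forces the flipped diagonal to be $vb$ rather than $va$, which is a direct consequence of how Definition~\ref{def:orientedpart} places $T'$'s apex counterclockwise of $T$'s apex along the right boundary. Care is also needed in writing $\classtt{r}{T_k}$, since this interprets the right sub-polygon as its own oriented polygon with special edge $vb$; this is legitimate because the associahedron structure of Lemma~\ref{lem:projcart} depends only on the convexity and vertex count of the sub-polygon.
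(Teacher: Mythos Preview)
Your proposal is correct and follows essentially the same approach as the paper's proof: both identify that $\bdrytt{n}{T}{T'}$ is exactly the set of triangulations in $\classtt{n}{T}$ containing the triangle $T_k$ (equivalently, the quadrilateral $Q$ formed by $T$ and $T_k$), and then read off the Cartesian product structure from the decomposition $\classtt{n}{T} \cong K_l \Box K_r$. Your write-up is in fact more detailed than the paper's---you explicitly argue why the right diagonal $vb$ (rather than $va$) must be the flipped edge, and you verify the edge structure of the induced subgraph---whereas the paper leans on the earlier Lemma~\ref{lem:projggbdry} and leaves these points implicit.
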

\begin{proof}
Every triangulation in~$\bdrytt{n}{T}{T'}$ (i) includes the triangle~$T$ and (ii) is a single flip away from including the triangle~$T'$. As we observed in the proof of Lemma~\ref{lem:projggbdry}, this implies that~$\bdrytt{n}{T}{T'}$ consists of the set of triangulations in~$\classtt{n}{T}$ containing a quadrilateral~$Q$. Specifically,~$Q$ shares two sides with~$T$: one of these is $e^*$, and the other is the left side of~$T$. One of the other two sides of~$Q$ is the right side of~$\classtt{n}{T'}$. Combining this side with the ``top'' side of~$Q$ and with the right side of~$T$, one obtains the triangle~$T_k$, proving the claim.
\end{proof}

Lemma~\ref{lem:bdryquad} implies that there are many edges between the boundary set~$\bdrytt{n}{T}{T'}$ and the rest of~$\classtt{}{T}$: ~$\classtt{}{T}\cong K_l \Box K_r$, where~$K_l$ and~$K_r$ are smaller associahedron graphs, so~$\classtt{}{T}$ is a collection of copies of~$K_r$, with pairs of copies connected by perfect matchings. Each~$K_r$ copy can itself be decomposed into a set~$\mathcal{S}_r$ of classes, one of which, namely~$\classtt{r}{T_k}$, is the intersection of~$\bdrytt{n}{T}{T'}$ with the~$K_r$ copy. Applying Condition~\ref{addcondmatch} to the~$K_r$ copy implies that there are many edges between boundary vertices in~$\classtt{r}{T_k}$ to other subgraphs (classes) in the~$K_r$ copy. That is, the boundary set~$\bdrytt{n}{T}{T'}$ is well connected to the rest of~$\classtt{n}{T}$.

Figure~\ref{fig:fwaddbdry-noflow} visualizes this situation in general terms for the framework. We have now proven:
\begin{lemma}
\label{lem:triangnewcond}
The associahedron graph~$K_n$, along with the oriented partition in Definition~\ref{def:orientedpart}, satisfies the conditions of Lemma~\ref{lem:fwstrong}.
\end{lemma}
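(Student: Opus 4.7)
The plan is to verify each of the four conditions of Lemma~\ref{lem:fwstrong} for $K_n$ equipped with the oriented partition $\mathcal{S}_n$ from Definition~\ref{def:orientedpart}, invoking the structural lemmas already proved in this section. The base case $|V(K_1)| = 1$ is immediate because $K_1$ is the triangulation graph of the triangle, which has a unique triangulation.

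First I will dispense with the three ``easy'' conditions. Condition~\ref{addcondcart} follows directly from Lemma~\ref{lem:projcart}: each class $\classtt{n}{T}$ is isomorphic to $K_l \Box K_r$ with $l, r < n$, and both $K_l$ and $K_r$ are themselves members of the associahedron family. Condition~\ref{addcondnum} is immediate: the triangles $T \in \mathcal{T}_n$ having $e^*$ as a side are in bijection with the remaining $n$ vertices of $P_{n+2}$, so $|\mathcal{S}_n| = n = n^{O(1)}$. Condition~\ref{addcondmatch} combines two earlier results: Lemma~\ref{lem:projggmatching} says that $\edgett{n}{T}{T'}$ is a matching, and Lemma~\ref{lem:matchingscard} gives the required size bound $|\edgett{n}{T}{T'}| \geq |\classtt{n}{T}||\classtt{n}{T'}|/C_n$, where $C_n = |V(K_n)|$ by Lemma~\ref{lem:catalan}.

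The main work is condition~\ref{addcondbdry}. For the structural part, I will apply Lemma~\ref{lem:bdryquad}: for $T' > T$, the boundary set $\bdrytt{n}{T}{T'}$ is isomorphic to $K_l \Box \classtt{r}{T_k}$, which is exactly the set of vertices in $\classtt{n}{T} \cong K_l \Box K_r$ whose projection onto the right factor $K_r$ lies in the class $\classtt{r}{T_k} \in \mathcal{S}_r$. The symmetric statement for $T' < T$ gives the analogous decomposition using the left factor $K_l$. This matches the condition exactly, with $\mathcal{M}_i$ being the right (resp. left) associahedron factor and $\classt{}{U} = \classtt{r}{T_k}$.

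The remaining subtlety in condition~\ref{addcondbdry} is the uniqueness requirement: no class within a Cartesian factor may be the projected boundary image of two different classes $\classtt{n}{T'}, \classtt{n}{T''} \in \mathcal{S}_n$. I would argue this by cases on the left/right orientation. If $T' > T$ and $T'' < T$, the two boundaries project to different Cartesian factors (right vs.\ left), so there is nothing to check. If both $T', T'' > T$, then by Lemma~\ref{lem:bdryquad} the identifying triangles $T_k, T_k'$ within $K_r$ each have the right diagonal of $T$ as their $e^*$-edge, but with distinct opposite vertices (namely the topmost vertices of $T'$ and $T''$, which differ since $T' \neq T''$); hence $T_k \neq T_k'$ and the corresponding classes in $\mathcal{S}_r$ are distinct. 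The symmetric case with both on the left is identical. This will complete the verification. I do not expect any serious obstacle: every condition reduces to a lemma already proved in this section, and the only genuine piece of new reasoning is the small case analysis for boundary uniqueness.
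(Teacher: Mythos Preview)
Your proposal is correct and follows essentially the same approach as the paper's proof, which simply cites Lemma~\ref{lem:projcart}, Lemma~\ref{lem:projggmatching}, Lemma~\ref{lem:matchingscard}, and Lemma~\ref{lem:bdryquad} for the four conditions. You are more explicit than the paper on Condition~\ref{addcondnum} and on the uniqueness clause in Condition~\ref{addcondbdry} (which the paper leaves to ``the discussion leading to this lemma''), while you omit the one-line remark that $K_n$ is connected; both differences are cosmetic.
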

\begin{proof}
The connectedness of~$K_n$ is known~\cite{mct}. Conditions~\ref{addcondcart} and~\ref{addcondmatch} follow from Lemma~\ref{lem:projcart}, Lemma~\ref{lem:projggmatching}, and Lemma~\ref{lem:matchingscard}. Concerning the boundary sets, Condition~\ref{addcondbdry} follows from Lemma~\ref{lem:bdryquad} and from the discussion leading to this lemma.
\end{proof}

Together with Lemma~\ref{lem:expmixing} and the easy fact that $K_n$ is a $\Theta(n)$-regular graph, Lemma~\ref{lem:triangnewcond} implies rapid mixing, pending the proof of Lemma~\ref{lem:fwstrong}\textemdash which we prove in Appendix~\ref{sec:flowdetails}.\iffalse  We first give additional intuition for our construction\textemdash and why it gives a polynomial bound when the spectral approach breaks down\textemdash in Appendix~\ref{sec:decompintuition}.\fi

\subsection{Intuition for the flow construction for triangulations}
\label{sec:decompintuition}
We will prove Lemma~\ref{lem:fwstrong} in Appendix~\ref{sec:flowdetails}, from which a coarse expansion lower bound for triangulations\textemdash and a corresponding coarse (but polynomial) upper bound for mixing\textemdash will be immediate by Lemma~\ref{lem:triangnewcond}. We give some intuition now for the flow construction we will give in the proof of Lemma~\ref{lem:fwstrong}, and in particular for the centrality of Condition~\ref{addcondmatch} and Condition~\ref{addcondbdry} (corresponding respectively to Lemma~\ref{lem:matchingscard} and Lemma~\ref{lem:bdryquad} for triangulations). Consider the case of triangulations, for concreteness. Every~$t\in\classtt{n}{T}, t'\in\classtt{n}{T'}$ must exchange a unit of flow. This means that a total of~$|\classtt{n}{T}||\classtt{n}{T'}|$ flow must be sent across the matching~$\edgett{n}{T}{T'}$. To minimize congestion, it will be optimal to equally distribute this flow across all of the boundary matching edges. We can decompose the overall problem of routing flow from each~$t\in\classtt{n}{T}$ to each~$t'\in\classtt{n}{T'}$ into three subproblems: (i) \emph{concentrating} flow from every triangulation in~$\classtt{n}{T}$ within the boundary set~$\bdrytt{n}{T}{T'}$, (ii) routing flow across the matching edges~$\edgett{n}{T}{T'}$, i.e. from~$\bdrytt{n}{T}{T'} \subseteq \classtt{n}{T}$ to~$\bdrytt{n}{T'}{T} \subseteq \classtt{n}{T'}$, and (iii) \emph{distributing} flow from the boundary~$\bdrytt{n}{T'}{T}$ to each~$t' \in \classtt{n}{T'}$.
\begin{figure}[h]
\includegraphics[height=7em]{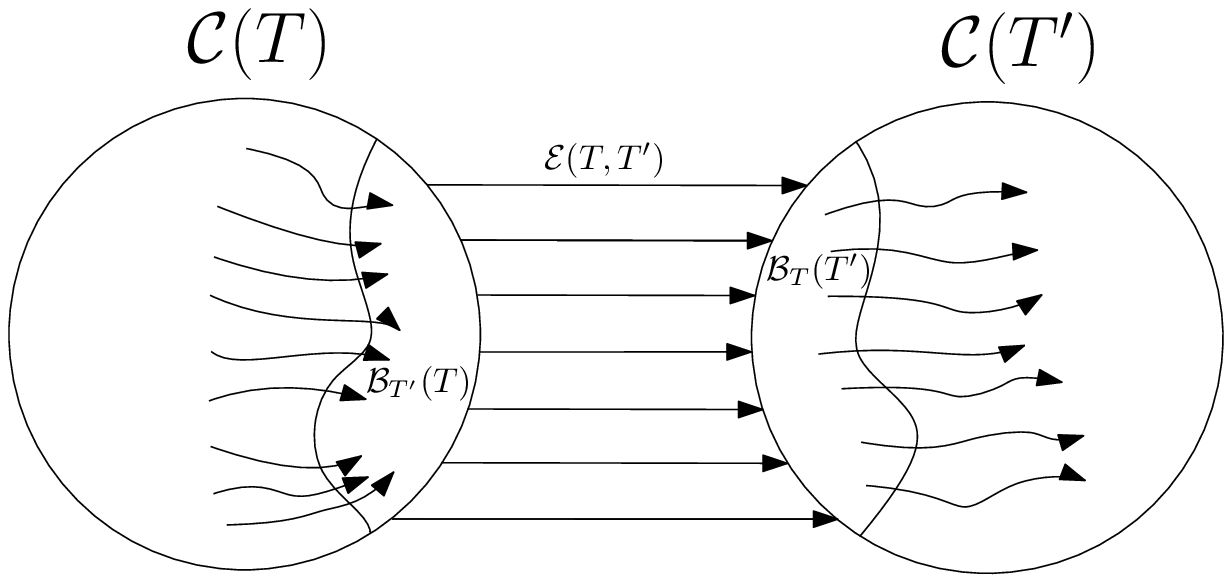}
\hspace*{0.5em}
\includegraphics[height=7em]{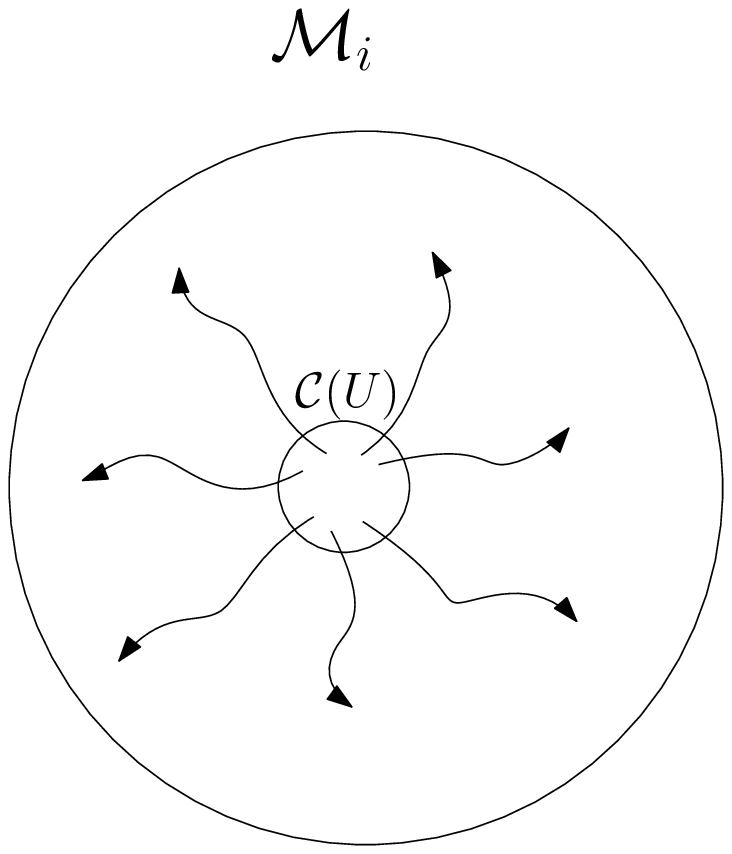}
\hspace*{0.5em}
\includegraphics[height=7em]{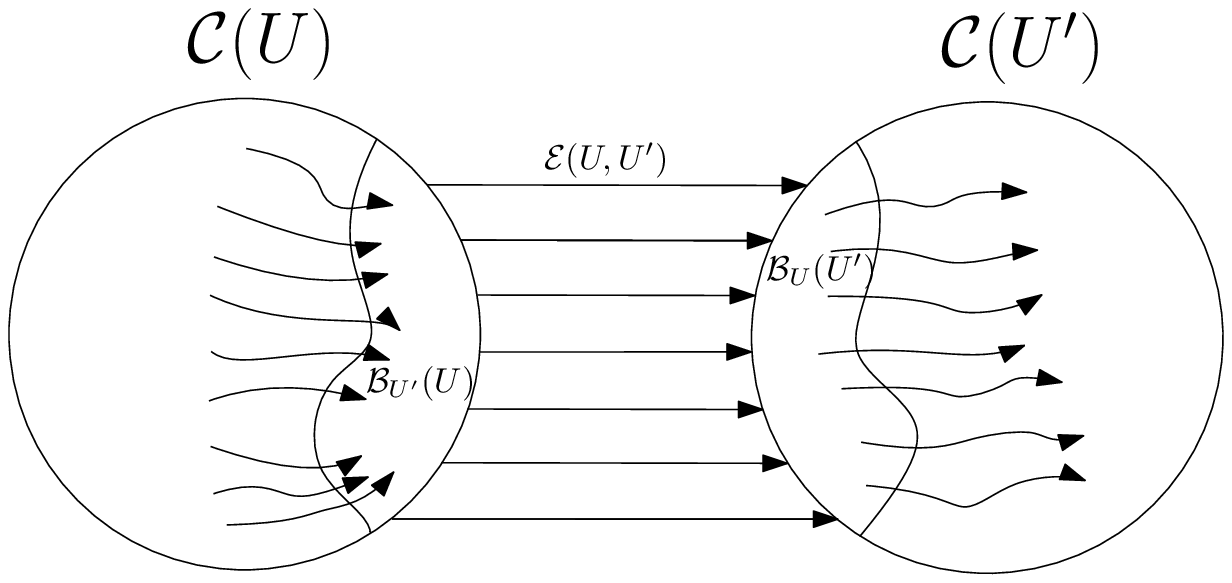}
\caption{\textbf{Left}: The problem of sending flow from each~$t \in \classtt{n}{T}$ to each~$t' \in \classtt{n}{T'}$, decomposed into subproblems: (i) \emph{concentrating} flow within~$\bdrytt{n}{T}{T'}$, (ii) \emph{transmitting} the flow across the boundary matching~$\edgett{n}{T}{T'}$, and (iii) \emph{distributing} the flow from~$\bdrytt{n}{T'}{T}$ throughout~$\classtt{n}{T'}$.
\textbf{Center}: Within each copy of~$\mathcal{M}_i$ in the product~$\classtt{n}{T'} \cong \mathcal{M}_{j_1} \Box  \cdots \Box \mathcal{M}_i \Box \cdots \Box \mathcal{M}_{j_k}$, the distribution problem in Figure~\ref{fig:decompintuition1} induces the problem of distributing flow from a class~$\classtt{n}{U}$\textemdash namely the projection of~$\bdrytt{n}{T'}{T}$ onto~$\mathcal{M}_i$\textemdash throughout the rest of~$\mathcal{M}_i$.
\textbf{Right}: The problem in the center figure induces subproblems in which~$\classtt{n}{U} \subseteq \mathcal{M}_i$ must send flow to each~$\classtt{n}{U'} \subseteq \mathcal{M}_i$. These subproblems are of the same form as the original~$\classtt{n}{T}, \classtt{n}{T'}$ problem (left), and can be solved recursively. The large matchings~$\edgett{n}{T}{T'}, \edgett{n}{U}{U'}$ guaranteed by Condition~\ref{addcondmatch} prevent any recursive congestion increase.}
\label{fig:decompintuition1}
\label{fig:decompintuition2}
\end{figure}
Now, the amount of flow that must be concentrated from~$\classtt{n}{T}$ at \emph{each} boundary triangulation~$u\in\bdrytt{n}{T}{T'}$ (and symmetrically distributed from \emph{each}~$v\in\bdrytt{n}{T'}{T}$ throughout~$\classtt{n}{T'}$) is equal to
$$\frac{|\classtt{n}{T}||\classtt{n}{T'}|}{|\bdrytt{n}{T}{T'}|} = \frac{|\classtt{n}{T}||\classtt{n}{T'}|}{|\bdrytt{n}{T'}{T}|} = \frac{|\classtt{n}{T}||\classtt{n}{T'}|}{|\edgett{n}{T}{T'}|} \leq C_n,$$
where we have used the equality~$|\bdrytt{n}{T}{T'}| = |\bdrytt{n}{T'}{T}| = |\edgett{n}{T}{T'}|$ by Lemma~\ref{lem:projggbdry} and Lemma~\ref{lem:projggmatching}, and where the inequality follows from Lemma~\ref{lem:matchingscard}. As a result, in the ``concentration'' and ``distribution'' subproblems (i) and (iii), at most~$C_n$ flow is concentrated at or distributed from any given triangulation (Figure~\ref{fig:decompintuition1}). This bound yields a recursive structure: the concentration (respectively distribution) subproblem decomposes into a flow problem within~$\classtt{n}{T}$ (respectively~$\classtt{n}{T'}$), in which, by the inequality, each triangulation has~$C_n$ total units of flow it must receive (or send).  We will then apply Condition~\ref{addcondbdry}, observing (see Figure~\ref{fig:decompintuition2}) that the concentration (symmetrically) distribution of this flow can be done entirely between pairs of classes~$\classtt{n}{U}, \classtt{n}{U'}$ within copies of a smaller flip graph~$\mathcal{M}_i$ in the Cartesian product~$\classtt{n}{T'} \cong \mathcal{M}_{j_1} \Box  \cdots \Box \mathcal{M}_i \Box \cdots \Box \mathcal{M}_{j_k}$. 

The~$\classtt{n}{U}, \classtt{n}{U'}$ subproblem is of the same form as the original~$\classtt{n}{T}, \classtt{n}{T'}$ problem (Figure~\ref{fig:decompintuition1}), and we will show that the~$C_n$ bound on the flow (normalizing to congestion one) across the~$\edgett{n}{T}{T'}$ edges will induce the same~$C_n$ bound across the~$\edgett{n}{U}{U'}$ edges in the induced subproblem. We further decompose the~$\classtt{n}{U},\classtt{n}{U'}$ problem into concentration, transmission, and distribution subproblems without any gain in overall congestion. To see this, view the initial flow problem in~$K_n$ as though every triangulation~$t \in V(K_n)$ is initially ``charged'' with~$|V(K_n)| = C_n$ total units of flow to distribute throughout~$K_n$. Similarly, in the induced distribution subproblem within each copy of~$\mathcal{M}_i = K_i$ in the product~$\classtt{n}{T'}$, each vertex on the boundary~$\bdrytt{n}{T}{T'}$ is initially ``charged'' with~$C_n$ total units to distribute throughout~$K_i$. Just as the original problem in~$K_n$ results in each~$\edgett{n}{T}{T'}$ carrying at most~$C_n$ flow across each edge, similarly (we will show in Appendix~\ref{sec:flowdetails}) the induced problem in~$K_i$ results in each~$\edgett{n}{U}{U'}$ carrying at most~$C_n$ flow across each edge. This preservation of the bound~$C_n$ under the recursion avoids any congestion increase.

One must be cautious, due to the linear recursion depth, not to accrue even a constant-factor loss in the recursive step (the coefficient~$2$ in Theorem~\ref{thm:flowprojres}). In Theorem~\ref{thm:flowprojres}, it turns out that this loss comes from routing \emph{outbound} flow within a class~$\classtt{n}{T}$\textemdash flow that must be sent to other classes\textemdash and then also routing \emph{inbound} flow. The combination of these steps involves two ``recursive invocations'' of a uniform multicommodity flow that is inductively assumed to exist within~$\classtt{n}{T}$. 
\iffalse We have ignored the need to ensure, after all flow is equally distributed throughout~$\classtt{n}{T'}$\textemdash i.e., after each~$t' \in \classtt{n}{T'}$ has received the right total amount of flow\textemdash that each~$t'$ has indeed received the right amount of flow \emph{from each}~$t \in \classtt{n}{T}$. That is, we have implicitly assumed all commodities were indistinguishable. \fi
We will show in Appendix~\ref{sec:flowdetails} that one can avoid the second ``invocation'' with an initial ``shuffling'' step: a uniform flow within~$\classtt{n}{T}$ in which each triangulation~$t\in\classtt{n}{T}$ distributes all of its outbound flow evenly throughout~$\classtt{n}{T}$.
\iffalse One may worry this step may need to be performed twice, as in the sketch of the proof of Theorem~\ref{thm:flowprojres}: the first time is to initially shuffle outbound flow, and the second time is to route incoming flow. Fortunately, the distribution flow we have sketched here routes the inbound flow without a second ``recursive invocation.''\fi

It is here that Jerrum, Son, Tetali, and Vigoda's spectral Theorem~\ref{thm:specprojres} breaks down, giving a~$3$-factor loss at each recursion level, due to applying the Cauchy-Schwarz inequality to a \emph{Dirichlet form} that is decomposed into expressions over the restriction chains. Although Jerrum, Son, Tetali, and Vigoda gave circumstances for mitigating or eliminating their multiplicative loss, this chain does not satisfy those conditions in an obvious way. 
\iffalse By contrast, Lemma~\ref{lem:fwstrong} allows us, with a flow construction, to eliminate the multiplicative loss altogether.\fi

%\bibliographystyle{plain}

\appendix
\section*{Appendix}

\section{Proof that the conditions of Lemma~\ref{lem:fwstrong} imply rapid mixing}
\label{sec:assocexplb}
\label{sec:flowdetails}
In this section we prove Lemma~\ref{lem:fwstrong}:
\lemfwstrong*

We will use the fact that one can prove an analogue of Lemma~\ref{lem:cartexp} for multicommodity flows\textemdash namely one that does not lose a factor of two. We prove this in Appendix~\ref{sec:missingproofs}:
\begin{restatable}{lemma}{lemcartflow}
\label{lem:cartflow}
Let $J = G \Box H$. Given multicommodity flows $g$ and $h$ in $G$ and $H$ respectively with congestion at most~$\rho$, there exists a multicommodity flow $f$ for $J$ with congestion at most~$\rho$.
\end{restatable}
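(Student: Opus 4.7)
\begin{proofsketch}
The plan is to construct an explicit multicommodity flow $f$ in $J = G \Box H$ out of the given flows $g$ in $G$ and $h$ in $H$, and then verify the congestion bound by a direct computation. Every vertex of $J$ is of the form $(u, v)$ with $u \in V(G), v \in V(H)$, and the edges of $J$ come in two types: \emph{$G$-edges}, of the form $((u, v), (u', v))$ with $(u, u') \in E(G)$, and \emph{$H$-edges}, of the form $((u, v), (u, v'))$ with $(v, v') \in E(H)$. The demands in $J$ will be $1$ for every ordered pair $((u_1, v_1), (u_2, v_2))$ of distinct vertices, matching the uniform demands of $g$ and $h$.

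For each such pair I would route the unit demand along a two-leg path that first changes the second coordinate from $v_1$ to $v_2$ using $h_{v_1 v_2}$ restricted to $H$-edges with first coordinate fixed at $u_1$, and then changes the first coordinate from $u_1$ to $u_2$ using $g_{u_1 u_2}$ restricted to $G$-edges with second coordinate fixed at $v_2$. Concretely, the amount of flow that this pair sends across an $H$-edge $((u_1, a), (u_1, b))$ equals $h_{v_1 v_2}(a, b)$, the amount sent across a $G$-edge $((a, v_2), (b, v_2))$ equals $g_{u_1 u_2}(a, b)$, and no other edge carries any flow for this pair. A routine flow-conservation check (the intermediate vertex $(u_1, v_2)$ is a sink of the first leg and an equal-valued source of the second leg) verifies that this is a genuine unit flow from $(u_1, v_1)$ to $(u_2, v_2)$.

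The core computation is the congestion bound. Fix any $G$-edge $e = ((a, c), (b, c))$ in $J$. The only source-destination pairs that send flow across $e$ are those of the form $((u_1, v_1), (u_2, c))$ (destination second-coordinate equal to $c$), each contributing $g_{u_1 u_2}(a, b)$. Summing and applying the normalization from Definition~\ref{def:mcflow} with $|V(J)| = |V(G)| \cdot |V(H)|$,
\[
f(e) \;=\; \frac{1}{|V(J)|} \sum_{u_1, u_2, v_1} g_{u_1 u_2}(a, b) \;=\; \frac{|V(H)|}{|V(G)| \cdot |V(H)|} \sum_{u_1, u_2} g_{u_1 u_2}(a, b) \;=\; g(a, b) \;\le\; \rho,
\]
and a symmetric computation bounds the flow on every $H$-edge by $h(a, b) \le \rho$. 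I do not expect a genuine obstacle here: the construction is standard, and the only subtle point worth emphasizing is that routing through a single intermediate vertex $(u_1, v_2)$ cleanly separates the $G$- and $H$-contributions on each edge of $J$. This is precisely what allows us to avoid the factor of~$\tfrac12$ appearing in the analogous expansion statement (Lemma~\ref{lem:cartexp})\textemdash a separation that is essential when we feed this lemma into the recursive flow construction for Lemma~\ref{lem:fwstrong}, where even a constant multiplicative loss at each level would accumulate to an exponential blow-up.
\end{proofsketch}
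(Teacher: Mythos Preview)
Your proof is correct and follows essentially the same two-leg routing idea as the paper's proof: route each $(u_1,v_1)\to(u_2,v_2)$ demand first through an $H$-copy using $h_{v_1 v_2}$ and then through a $G$-copy using $g_{u_1 u_2}$, and observe that the $|V(H)|$- (resp.\ $|V(G)|$-) fold replication of $g$ (resp.\ $h$) is exactly cancelled by the extra normalization factor $|V(J)|=|V(G)|\cdot|V(H)|$. If anything, your presentation is cleaner, since your single unified routing rule subsumes what the paper splits into ``Part~1'' (same-copy pairs) and ``Part~2'' (cross-copy pairs), and your explicit edge-by-edge congestion computation makes the cancellation transparent.
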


We will construct a ``good flow''\textemdash that is, a uniform multicommodity flow with polynomially bounded congestion\textemdash in any~$\mathcal{M}_n \in \mathcal{F}$ satisfying the conditions of Lemma~\ref{lem:fwstrong}, via an inductive process. The base case,~$|\mathcal{V}| = 1$, is trivial. For the inductive hypothesis, we assume that for all~$i < n$, there exists a good flow in~$\mathcal{M}_i$. For the inductive step, we begin by combining Lemma~\ref{lem:cartflow} with Condition~\ref{addcondcart} to obtain a good flow in each~$\classt{}{T}$: since each class is a product of smaller graphs~$\{\mathcal{M}_i\}$ in the same family, the inductive assumption that those smaller graphs have good flows carries through to~$\classt{}{T}$ by Lemma~\ref{lem:cartflow}.

The more difficult part of the inductive step is then to route flow between pairs of vertices that lie in different classes. We now introduce machinery, in the form of \emph{multi-way single-commodity flows}, that we will apply to the boundary set structure in Condition~\ref{addcondbdry} to find the right paths for these pairs.

\begin{definition}
\label{def:mwscf}
Define a \emph{multi-way single-commodity flow} (MSF), given a graph $G = (V, E)$, with \emph{source} set $S \subseteq V$ and \emph{sink} set $T \subseteq V$, and a set of ``surplus'' and ``deficit'' amounts $\sigma: S \rightarrow \mathbb{R}$ and $\delta: T \rightarrow \mathbb{R}$, as a flow $f: A(E) \rightarrow \mathbb{R}$ in $G$, such that:
\begin{enumerate}
\item the net flow out of each vertex $s \in S \setminus T$ is $\sigma(s)$,
\item the net flow into each vertex $t \in T \setminus S$ is $\delta(t)$, 
\item the net flow out of each vertex $u \in S \cap T$ is $\sigma(u) - \delta(u)$, and
\item the net flow into (out of) each vertex $u \in V \setminus (S \cup T)$ is zero.
\end{enumerate}
Denote the MSF as the tuple $\rho = (f, S, T, \sigma, \delta)$.
(Here $A(E)$ is the directed arc set obtained by creating directed arcs $(u, v)$ and $(v, u)$ for each edge $\{u, v\} \in E$.)
When~$\sigma$ and~$\delta$ are constant functions, abuse notation and denote by~$\sigma$ and~$\delta$ their values.
\end{definition}
Intuitively, Definition~\ref{def:mwscf} describes sending flow from some set of vertices (the source set) in a graph to another set (the sink set). It differs from a multicommodity flow in that it is not important that every vertex in $S$ send flow to every vertex in $T$. For instance, in a bipartite graph, if the source set and sink set are the two sides of the bipartition, and all surpluses and demands are one, it suffices to direct the flow across a matching.

It will also be useful to talk about an \emph{MSF problem}, in which we are given surpluses and demands but need to find the actual flow function.
\begin{definition}
\label{def:msfprob}
Define a \emph{multi-way single-commodity flow problem} (MSF problem) as a tuple $\pi = (S, T, \sigma, \delta)$, where $S, T, \sigma, \delta$ are as in Definition~\ref{def:mwscf}, but no flow function $f$ is specified.
\end{definition}

(One could alternatively formulate an MSF problem as a more familiar $s-t$ flow problem by adding extra vertices and edges. However, Definition~\ref{def:mwscf} will make our flow construction more convenient.)

The main lemma of this section is as follows:
\begin{restatable}{lemma}{lemfwstrongindstep}
\label{lem:fwstrongindstep}
Let a graph~$\mathcal{M}_n \in \mathcal{F}$ be given, with~$n > 1$ and~$\mathcal{F}$ satisfying the conditions of Lemma~\ref{lem:fwstrong}. Suppose that for all~$1 \leq i < n$, the graph~$\mathcal{M}_i$ has a uniform multicommodity flow with congestion at most~$\rho$, for some~$\rho > 0$. Then there exists a uniform multicommodity flow in~$\mathcal{M}_n$ with congestion at most~$\rho + \kappa$, where~$\kappa = |\mathcal{S}_n|$ is the number of classes in the partition described in Lemma~\ref{lem:fwstrong}.
\end{restatable}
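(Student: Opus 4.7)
The plan is to construct, by induction on $n$, the required uniform multicommodity flow in $\mathcal{M}_n$, following the three-stage decomposition of Section~\ref{sec:decompintuition}: within-class routing, and for pairs in distinct classes, a composite of \emph{concentration} inside the source class, \emph{transmission} across a boundary matching, and \emph{distribution} inside the target class. The base case $n = 1$ is trivial. For the inductive step, I would first handle pairs $(s,t)\in \classt{}{T}\times\classt{}{T}$ by applying Lemma~\ref{lem:cartflow} to the factor flows of $\classt{}{T} \cong \mathcal{M}_{i_1}\Box\cdots\Box\mathcal{M}_{i_k}$, whose factors all have index strictly less than $n$ by Condition~\ref{addcondcart} and thus satisfy the inductive hypothesis. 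Lemma~\ref{lem:cartflow} yields a uniform flow inside $\classt{}{T}$ of congestion $\rho$, and after rescaling for the $|\mathcal{V}_n|$-normalization of Definition~\ref{def:mcflow} this contributes at most $\rho$ to any interior edge of $\mathcal{M}_n$.

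For a between-class pair $(s,t)\in\classt{}{T}\times\classt{}{T'}$ with $T\neq T'$, the transmission stage splits the unit of flow evenly across the $|\edget{}{T}{T'}|$ edges of the boundary matching. Summed over all $|\classt{}{T}||\classt{}{T'}|$ between-class pairs for this $(T,T')$, each matching edge carries at most $|\classt{}{T}||\classt{}{T'}|/|\edget{}{T}{T'}| \leq |\mathcal{V}_n|$ un-normalized units by Condition~\ref{addcondmatch}, i.e.\ at most $1$ after normalizing by $|\mathcal{V}_n|$. The concentration (and symmetric distribution) stage is where I would use Condition~\ref{addcondbdry} together with the MSF formalism of Definition~\ref{def:mwscf}--\ref{def:msfprob}. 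Writing $\classt{}{T}\cong \mathcal{M}_i\Box\mathcal{M}'$ so that $\bdryt{}{T}{T'}$ corresponds to $\classt{}{U}\times V(\mathcal{M}')$ for the unique class $\classt{}{U}\subseteq \mathcal{M}_i$ given by Condition~\ref{addcondbdry}, I would realize the concentration by first performing a single ``shuffle''---a uniform multicommodity flow inside $\classt{}{T}$, provided by the inductive hypothesis and Lemma~\ref{lem:cartflow}, that spreads each source's outbound demand uniformly over $\classt{}{T}$---and then solving, in each copy of $\mathcal{M}_i$, the MSF problem with source set $V(\mathcal{M}_i)$, sink set $\classt{}{U}$, and uniform surplus/deficit, using the inductive flow on $\mathcal{M}_i$ restricted to source/sink pairs with sink in $\classt{}{U}$.

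The main obstacle is the congestion accounting, and specifically the packing argument needed to obtain an additive $\rho + \kappa$ rather than a multiplicative $\kappa\rho$. The uniqueness clause of Condition~\ref{addcondbdry} guarantees that the concentration MSFs for different target classes $T'$ are assigned to disjoint sink classes within $\mathcal{M}_i$, or to different factor positions of $\classt{}{T}$ entirely, so they can all be overlaid on the single inductive flow rather than each paying a fresh copy of $\rho$; the shuffle is likewise performed only once per class, avoiding the two-fold recursive use of the inductive flow that produces the multiplicative $3\gamma$ loss in Theorem~\ref{thm:specprojres} and the $2\gamma\Delta$ loss in Theorem~\ref{thm:flowprojres}. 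I would then verify that any interior edge of $\classt{}{T}$ sees at most $\rho$ from within-class routing plus shuffle plus packed concentration/distribution combined, that each matching edge sees at most $1$ from its own transmission, and that summed contributions to any edge of $\mathcal{M}_n$ across the at most $\kappa$ ordered class pairs it participates in total to at most $\rho+\kappa$. The detailed packing---specifying the MSF construction inside each factor $\mathcal{M}_i$, and proving that the overlaid MSFs do not violate the inductive $\rho$ bound---is what I would develop carefully in the full proof, and it is precisely the step with no obvious spectral analogue, which is why Lemma~\ref{lem:fwstrong} is formulated in terms of multicommodity flows rather than the spectral gap.
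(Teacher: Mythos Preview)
Your outline captures the high-level three-stage decomposition (shuffle, concentrate/transmit/distribute) and correctly identifies Condition~\ref{addcondbdry} as the mechanism that lets the different concentration problems land in disjoint subclasses. However, there is a genuine gap in how you propose to solve the concentration and distribution subproblems, and it prevents you from reaching the additive bound $\rho+\kappa$.

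You propose to solve the concentration MSF in each copy of $\mathcal{M}_i$ by ``using the inductive flow on $\mathcal{M}_i$ restricted to source/sink pairs with sink in $\classt{}{U}$.'' Even with your packing argument (which is sound: the disjointness from Condition~\ref{addcondbdry} does let you overlay all target classes $T'$ onto a single copy of the $\mathcal{M}_i$ flow), the best bound this yields on an interior edge is $\rho$ for all concentrations combined, and another $\rho$ for all distributions combined. Together with the shuffle's $\rho$, an interior edge sees roughly $3\rho$, not $\rho$. Unrolled, the recurrence $\rho_n \le 3\rho_{<n}+O(1)$ is multiplicative and blows up; it does not give $\rho+\kappa$. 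In other words, you have only reduced from using the inductive flow $\kappa$ times to using it three times (once for shuffle, once for concentration, once for distribution), whereas the lemma demands using it \emph{once}.

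The paper's construction avoids invoking the inductive uniform flow for concentration/distribution altogether. After the single shuffle, it solves $\pi_{conc}$ and $\pi_{dist}$ by a fresh recursive construction that uses only boundary-matching edges: the distribution problem in $\classt{}{T'}$ reduces (via Condition~\ref{addcondbdry}) to distributing from a class $\classt{}{U}$ throughout $\mathcal{M}_i$; this is then split into $|\mathcal{S}_i|\le\kappa$ subproblems $\pi_{rec,U,U'}$, each of which is again a concentrate/transmit/distribute problem between subclasses of $\mathcal{M}_i$. Condition~\ref{addcondmatch} bounds every transmission step by normalized congestion~$1$, and Lemma~\ref{lem:recuucong}(iii) shows the surplus does not grow under recursion, so each arc lies in exactly one transmission and the total for a single $\pi_{rec,U,U'}$ is~$1$. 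Summing over the at most $\kappa$ classes $U'$ gives the additive $\kappa$. This is the missing idea: concentration/distribution are built from scratch out of matching crossings, never by re-invoking the $\rho$-congestion flow.
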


Lemma~\ref{lem:fwstrongindstep} forms the inductive step of an argument that easily proves Lemma~\ref{lem:fwstrong}.
\iffalse \textemdash except that this argument gives expansion~$\Omega(1/(\kappa(n)^2 n))$\textemdash still inverse polynomial, but weaker than the~$\Omega(1/(\kappa(n) n))$ bound claimed. We will strengthen the bound~$\rho + \kappa^2$ in the lemma to~$\rho + \kappa$ in Appendix~\ref{sec:missingproofs}, justifying the bound in Lemma~\ref{lem:fwstrong}.\fi

To prove Lemma~\ref{lem:fwstrongindstep}, we will start by partitioning~$\mathcal{M}_n$ into the classes~$\mathcal{S}_n$ as described in~Lemma~\ref{lem:fwstrong}. Now consider any vertex~$s\in \classt{}{T}$, for a given class~$\classt{}{T} \in \mathcal{S}_n$, and consider any other class~$\classt{}{T'} \neq \classt{}{T}$. Consider a multi-way single-commodity flow problem
$$\pi_s = (\{s\}, \classt{}{T'}, \sigma_s = |\classt{}{T'}|, \delta_s = 1).$$

We will ``solve'' this problem\textemdash construct a flow function~$f_s$ that satisfies the surpluses and demands of the problem. Notice that to solve~$\pi_s$ is to send a unit of flow from~$s$ to every~$t\in \classt{}{T'}$. Thus if we construct such a function~$f_s$ for every~$s\in\classt{}{T}$, and construct similar flows for every pair of classes~$\classt{}{T}, \classt{}{T'}$, we will have constructed a uniform multicommodity flow in~$\mathcal{M}_n$. We will do precisely this, then analyze the congestion of the sum of these flow functions.

To construct~$f_s$, we will express the problem~$\pi_s$ as the composition of four MSF problems
$$\pi_{shuf} = (\{s\}, \classt{}{T}, \sigma_{shuf} = \sigma_s = |\classt{}{T'}|, \delta_{shuf} = \frac{|\classt{}{T'}|}{|\classt{}{T}|}),$$
$$\pi_{conc} = (\classt{}{T}, \bdryt{}{T}{T'}, \sigma_{conc} = \delta_{shuf}, \delta_{conc} = \frac{|\classt{}{T'}|}{|\bdryt{}{T}{T'}|}),$$
$$\pi_{tran} = (\bdryt{}{T}{T'}, \bdryt{}{T'}{T}, \sigma_{tran} = \delta_{tran} = \delta_{conc} = \frac{|\classt{}{T'}|}{|\bdryt{}{T}{T'}|} = \frac{|\classt{}{T'}|}{|\bdryt{}{T'}{T}|},$$
$$\pi_{dist} = (\bdryt{}{T'}{T}, \classt{}{T'}, \sigma_{dist} = \delta_{tran}, \delta_{dist} = \delta_s = 1).$$

(Here we have defined the matching~$\edget{}{T}{T'}$ and the boundary set~$\bdryt{}{T}{T'}$ for the general family~$\mathcal{F}$ in the same way we defined~$\edgett{n}{T}{T'}$ and~$\bdrytt{n}{T}{T'}$ for the associahedron in Definition~\ref{def:projggedgebdry}. We have implicitly used the equality $|\bdryt{}{T}{T'}| = |\edget{}{T}{T'}| = |\bdryt{}{T'}{T}|,$ which follows from the assumption in Condition~\ref{addcondmatch} that these boundary edges form a matching.)

\begin{remark}
\label{rmk:composemsf}
It is easy to see, by comparing~$\sigma$ and~$\delta$ values and by comparing source and sink sets, that if one specifies flow functions solving the four subproblems~$\pi_{shuf},\pi_{conc},\pi_{tran},\pi_{dist}$, one can take the arc-wise sum of these functions as a solution to the original MSF problem~$\pi_s$.
\end{remark}

\begin{figure}[h]
\includegraphics[height=10em]{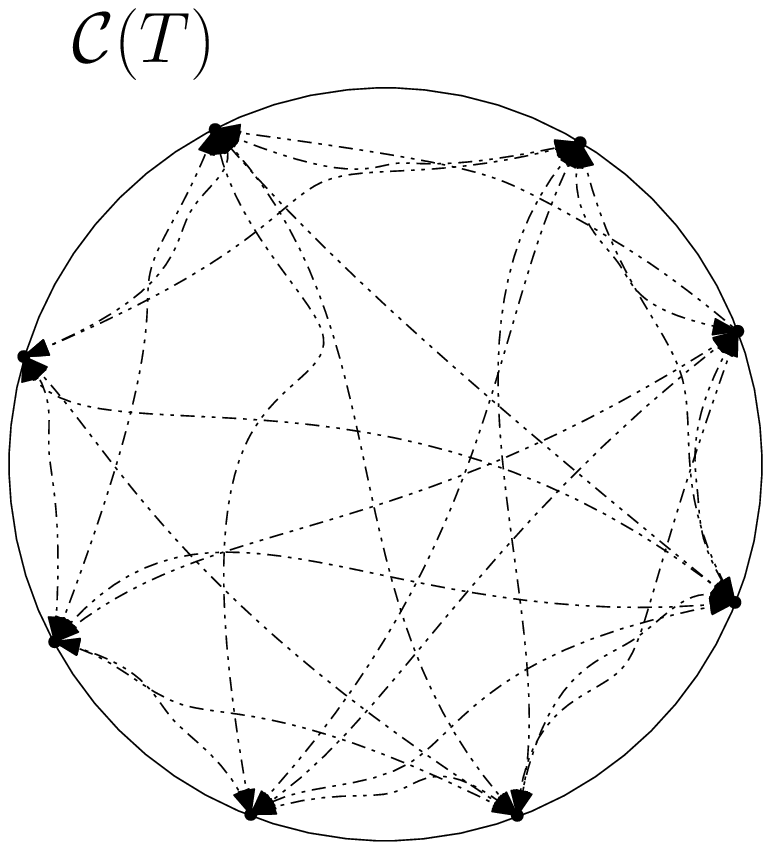}
\hspace*{1.5em}
\includegraphics[height=10em]{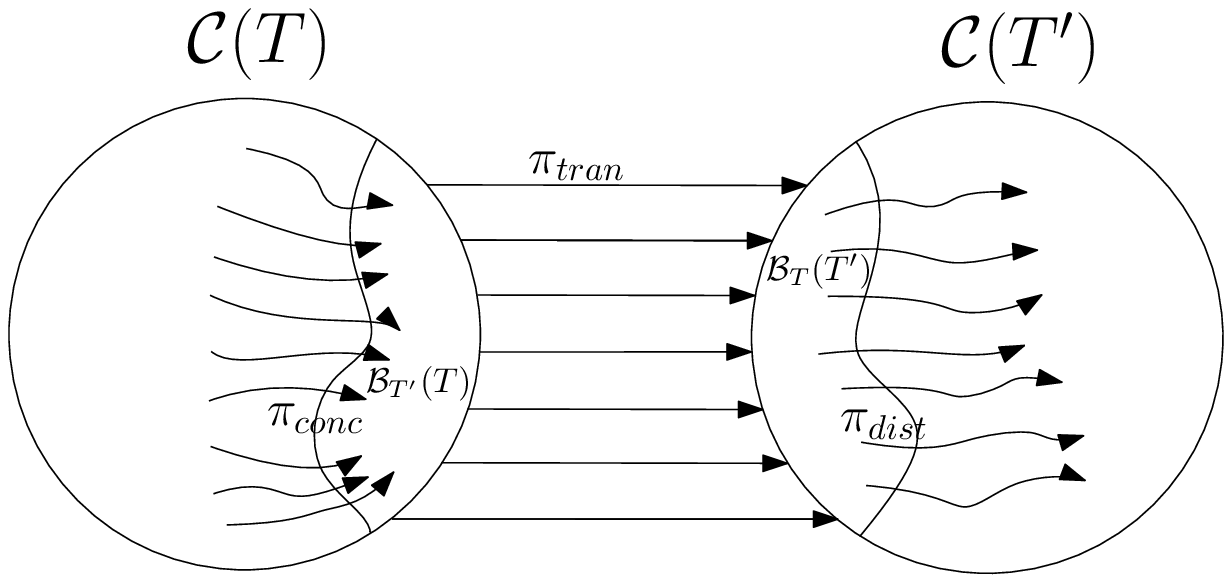}
\caption{The decomposition of the MSF problem~$\pi_s$. Left:~$\pi_{shuf}$, solved in aggregate for all~$s\in\classt{}{T}$ by a uniform multicommodity flow in~$\classt{}{T}$. Right: the problems~$\pi_{conc},\pi_{tran},$ and~$\pi_{dist}$, in which the (single) commodity from~$s\in\classt{}{T}$ begins uniformly spread throughout~$\classt{}{T}$. The flow must then be concentrated on the boundary~$\bdryt{}{T}{T'}$ (for~$\pi_{conc}$), sent to~$\classt{}{T'}$ (for~$\pi_{tran}$), and distributed uniformly throughout~$\classt{}{T'}$ (for~$\pi_{dist}$).}
\label{fig:fwaddtoplvl}
\end{figure}

\begin{figure}[h]
\includegraphics[width=8em]{fwaddrec.eps}
\hspace*{1.5em}
\includegraphics[width=20em]{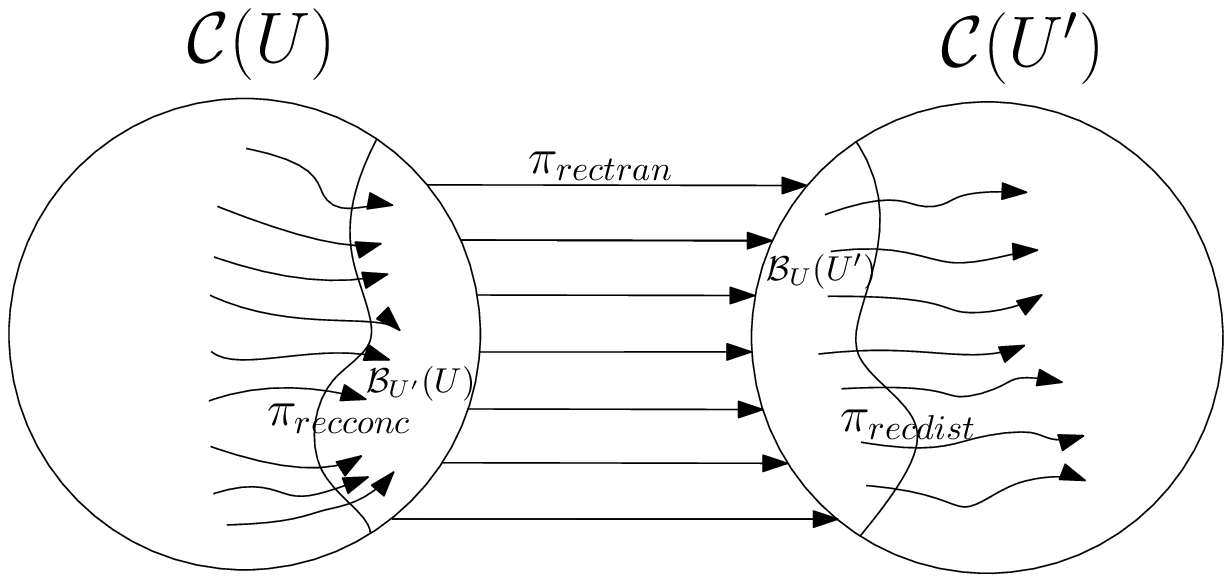}
\caption{Left: An illustration of~$\pi_{rec}$, to which we reduce~$\pi_{dist}$ in Lemma~\ref{lem:reducrec}, in which~$\classt{}{U}$ must distribute its flow throughout~$\mathcal{M}_i$,  inducing a corresponding distribution of flow from~$\protect\bdryt{}{T'}{T}$ throughout~$\protect\classt{}{T}$, by the isomorphism in Condition~\ref{addcondbdry}.
\\\\Right: a decomposition of the flow~$\pi_{rec,U,U'}$ from Lemma~\ref{lem:recuu}, which decomposes into~$\pi_{conc},\pi_{tran},\pi_{dist}$, which are similar to~$\protect\pi_{conc},\pi_{tran},\pi_{dist}$ and thus admit a recursive decomposition (Lemma~\ref{lem:recuucong}).}
\label{fig:fwaddrec}
\end{figure}

Intuitively, $\pi_{shuf}$ describes the problem of ``shuffling,'' or distributing evenly throughout~$\classt{}{T}$, the flow that~$s$ must send to vertices in~$\classt{}{T'}$. We solve this subproblem in aggregate for every~$s \in \classt{}{T}$ by applying the inductive hypothesis and Lemma~\ref{lem:cartflow}, obtaining a uniform multicommodity flow~$f_T$ in~$\classt{}{T}$ with combined congestion at most~$\rho$. We then let~$f_{shuf} = f_{s,shuf}$ be the part of~$f_T$ that sends flow just for~$s$\textemdash since~$f_T$ can be written as a sum~$\sum_{s\in\classt{}{T}} f_{s,shuf},$ where~$f_s = \sum_{s'\in\classt{}{T}} f_{s,s'}$, where~$f_{s,s'}$ is the single-commodity flow function as described in Definition~\ref{def:mcflow}.

Thus we prove the following:
\begin{lemma}
\label{lem:shufcong}
The MSF subproblem~$\pi_{shuf}$ as defined in this section for any two classes~$\classt{}{T}, \classt{}{T'} \in \mathcal{S}_n$, with~$\mathcal{S}_n$ partitioning~$\mathcal{M}_n \in \mathcal{F}$, $n > 1$, with~$\mathcal{F}$ satisfying the conditions of Lemma~\ref{lem:fwstrong}, can be solved in aggregate for all~$s\in\classt{}{T}$ and for all~$\classt{}{T'}\neq\classt{}{T}$, while generating at most congestion~$\rho$\textemdash where~$\rho$ is as in the statement of Lemma~\ref{lem:fwstrongindstep}.
\end{lemma}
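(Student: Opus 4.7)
The plan is to use the inductive hypothesis together with Condition~\ref{addcondcart} and Lemma~\ref{lem:cartflow} to obtain a uniform multicommodity flow inside each class, and then to observe that the aggregate of $\pi_{shuf}$ for a pair $(T,T')$ is essentially a rescaled copy of that flow.

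First I would fix a class $\classt{}{T} \in \mathcal{S}_n$ and apply Condition~\ref{addcondcart} to write $\classt{}{T} \cong \mathcal{M}_{i_1} \Box \cdots \Box \mathcal{M}_{i_k}$ with each $i_j < n$. The inductive hypothesis of Lemma~\ref{lem:fwstrongindstep} supplies a uniform multicommodity flow of congestion at most~$\rho$ in each factor, and Lemma~\ref{lem:cartflow} combines them into a uniform multicommodity flow $f_T$ on $\classt{}{T}$ of congestion at most~$\rho$ (normalized by $|\classt{}{T}|$). Un-normalizing, the total flow carried across any arc of $\classt{}{T}$ in $f_T$ is at most $\rho |\classt{}{T}|$.

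Second, for each $\classt{}{T'} \neq \classt{}{T}$, the aggregate of $\pi_{shuf}$ over $s \in \classt{}{T}$ asks that every ordered pair $(s,s') \in \classt{}{T}^2$ exchange exactly $|\classt{}{T'}|/|\classt{}{T}|$ units, which is precisely $f_T$ scaled by $|\classt{}{T'}|/|\classt{}{T}|$. I would sum these scaled flows over all $T' \neq T$; on any arc of $\classt{}{T}$ this gives an un-normalized total of at most
$$\sum_{T' \neq T} \frac{|\classt{}{T'}|}{|\classt{}{T}|} \cdot \rho |\classt{}{T}| = \rho \sum_{T' \neq T} |\classt{}{T'}| \leq \rho |\mathcal{V}_n|.$$
Since the shuffling flow for class $T$ is supported entirely on arcs inside $\classt{}{T}$ and $\mathcal{S}_n$ partitions $V(\mathcal{M}_n)$, the shuffling flows contributed by distinct classes use disjoint arc sets and do not compound. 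Dividing the bound above by $|\mathcal{V}_n|$, the normalizer used in Definition~\ref{def:mcflow} to define congestion in $\mathcal{M}_n$, yields normalized congestion at most~$\rho$ on any arc, as claimed.

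The only real subtlety, and what I would double-check most carefully, is the change of normalization: $f_T$ is normalized by $|\classt{}{T}|$, whereas the congestion bound sought in $\mathcal{M}_n$ is normalized by $|\mathcal{V}_n|$. The factor $|\classt{}{T}|$ in the un-normalized bound on $f_T$ is cancelled exactly by $\sum_{T' \neq T} |\classt{}{T'}|/|\classt{}{T}| \leq |\mathcal{V}_n|/|\classt{}{T}|$, which is what makes the final bound come out to $\rho$ rather than a factor that depends on the ratio of class sizes. Everything else in the argument is black-box use of the inductive hypothesis and Lemma~\ref{lem:cartflow}.
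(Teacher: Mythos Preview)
Your proposal is correct and follows essentially the same approach as the paper: apply the inductive hypothesis together with Condition~\ref{addcondcart} and Lemma~\ref{lem:cartflow} to obtain a uniform multicommodity flow in $\classt{}{T}$ with congestion~$\rho$, then observe that summing the $\pi_{shuf}$ flows over all~$T'$ scales the un-normalized congestion from $\rho|\classt{}{T}|$ up to at most $\rho|\mathcal{V}_n|$, so that the normalized congestion in~$\mathcal{M}_n$ remains~$\rho$. Your explicit remark that the shuffling flows for distinct classes live on disjoint arc sets is a detail the paper leaves implicit but is indeed needed for the bound to hold globally.
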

\begin{proof}
As in the discussion leading to this lemma, the uniform multicommodity flow~$f_T$ in~$\classt{}{T}$ given by the application of the inductive hypothesis and Lemma~\ref{lem:cartflow} has congestion at most~$\rho$. More precisely, in this uniform multicommodity flow, the \emph{un-normalized} congestion, as in Definition~\ref{def:mcflow}, is at most~$\rho|\classt{}{T}|$. Under the definition of~$\sigma_{shuf} = |\classt{}{T'}|$, and summing over all~$s$ and over all~$\classt{}{T'}$, what we in fact need is a \emph{scaled} version of~$f_T$\textemdash in which the amount of flow sent between each pair of vertices~$s,s'\in\classt{}{T}$, and therefore the overall congestion across each edge within~$\classt{}{T}$, is scaled so that each $s$ sends to each $s'$ 
$$\frac{|\mathcal{V}_n|}{|\classt{}{T}|}$$
units of flow, instead of just one unit.

Thus we increase the un-normalized congestion from~$\rho|\classt{}{T}|$ to~$\rho|\mathcal{V}_n|$. However, since we are now considering congestion within the graph~$\mathcal{M}_n$ instead of the induced subgraph~$\classt{}{T}$, the \emph{normalized} congestion~$\rho$ does not change.
\end{proof}

We define~$f_{tran}$\textemdash solving the problem~$\pi_{tran}$ of transmitting the flow from the boundary edges $\bdryt{}{T}{T'} \subseteq \classt{}{T}$ to~$\bdryt{}{T'}{T} \subseteq \classt{}{T'}$ in the natural way: for each directed arc~$(u,v)\in\edget{}{T}{T'}$, let~$f(u,v) = \sigma_{tran} = \delta_{tran}$. Summing the resulting flow over every~$s\in\classt{}{T}$ gives (normalized) congestion
$$\frac{1}{|\mathcal{V}_n|}|\classt{}{T}|\sigma_{tran} = \frac{|\classt{}{T}||\classt{}{T'}|}{|\edget{}{T}{T'}||\mathcal{V}_n|} \leq 1,$$
where the inequality follows from Condition~\ref{addcondmatch} of Lemma~\ref{lem:fwstrong}.

Thus we have proven:
\begin{lemma}
\label{lem:trancong}
The MSF subproblem~$\pi_{tran}$ as defined in this section for a given pair of classes~$\classt{}{T},\classt{}{T'}$ can be solved by a function~$f_{tran}$ while generating at most congestion one\textemdash when summing over all~$s\in\classt{}{T}$.
\end{lemma}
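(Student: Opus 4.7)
The plan is to construct $f_{tran}$ by routing all flow exclusively across the matching $\edget{}{T}{T'}$. For each matching edge with endpoints $u\in\bdryt{}{T}{T'}$ and $v\in\bdryt{}{T'}{T}$, I will set $f_{tran}(u,v)=\sigma_{tran}=\delta_{tran}=|\classt{}{T'}|/|\edget{}{T}{T'}|$ on the forward arc and zero on the reverse arc, and set $f_{tran}$ to zero on every other arc of $\mathcal{M}_n$. Because Condition~\ref{addcondmatch} of Lemma~\ref{lem:fwstrong} guarantees that $\edget{}{T}{T'}$ is a matching, every source vertex $u\in\bdryt{}{T}{T'}$ is incident to exactly one outgoing arc that carries flow, so its net outflow is precisely $\sigma_{tran}$; the symmetric statement holds at each sink. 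Verifying that the resulting function meets the four conditions of Definition~\ref{def:mwscf} is then immediate, so $f_{tran}$ solves $\pi_{tran}$.

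The next step is to bound the total flow placed on any single arc of $\mathcal{M}_n$ after summing the constructions for all sources $s\in\classt{}{T}$. Because the surpluses and demands defining $\pi_{tran}$ do not depend on $s$, each of the $|\classt{}{T}|$ source vertices contributes the same $f_{tran}$, and each arc in $\edget{}{T}{T'}$ therefore accumulates an un-normalized total of $|\classt{}{T}|\sigma_{tran}=|\classt{}{T}||\classt{}{T'}|/|\edget{}{T}{T'}|$, while arcs outside the matching receive nothing. Dividing by $|\mathcal{V}_n|$ in accordance with Definition~\ref{def:mcflow} and invoking the large-matching inequality $|\edget{}{T}{T'}|\geq|\classt{}{T}||\classt{}{T'}|/|\mathcal{V}_n|$ from Condition~\ref{addcondmatch} produces a normalized congestion of at most one, as claimed.

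I do not anticipate a real obstacle: the entire argument is local, since the flow is supported on a single matching, and the only nontrivial input is precisely the matching lower bound built into Condition~\ref{addcondmatch}. The one piece of bookkeeping that must be checked is that the common value $\sigma_{tran}=\delta_{tran}$ is consistent, which follows from the equalities $|\bdryt{}{T}{T'}|=|\bdryt{}{T'}{T}|=|\edget{}{T}{T'}|$ already invoked when the subproblems were set up.
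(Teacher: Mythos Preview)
Your proposal is correct and matches the paper's own argument essentially line for line: the paper also defines $f_{tran}$ by assigning $\sigma_{tran}=\delta_{tran}$ to each arc of the matching $\edget{}{T}{T'}$, sums over all $s\in\classt{}{T}$ to obtain $\frac{|\classt{}{T}||\classt{}{T'}|}{|\edget{}{T}{T'}||\mathcal{V}_n|}$, and then invokes Condition~\ref{addcondmatch} to bound this by one. Your additional remarks verifying that $f_{tran}$ satisfies Definition~\ref{def:mwscf} and that $\sigma_{tran}=\delta_{tran}$ is consistent are correct elaborations of details the paper leaves implicit.
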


It remains to solve~$\pi_{conc}$ and~$\pi_{dist}$. We observe that these two problems are of the same form up to reversal of flows: $\pi_{conc}$ describes beginning with flow from a single commodity distributed equally throughout~$\classt{}{T}$, and ending with that flow concentrated (uniformly) within the boundary~$\bdryt{}{T}{T'}$. On the other hand, ~$\pi_{dist}$ describes just the reverse process within~$\classt{}{T'}$. We will construct~$\pi_{dist}$ within~$\classt{}{T'}$, in aggregate, for all~$s\in\classt{}{T}$; the form of this construction will easily give a symmetric construction for~$\pi_{conc}$ within~$\classt{}{T}$.

Our construction is recursive, and it is here that we use the boundary set structure in Condition~\ref{addcondbdry}: we use this condition to reduce the problem~$\pi_{dist}$ to a problem
$$\pi_{rec} = (\classt{}{U} \in \mathcal{S}_i, \mathcal{M}_i, \sigma_{rec} = \sigma_{dist} = \frac{|\classt{}{T'}|}{|\bdryt{}{T'}{T}|}, \delta_{rec} = \delta_s = 1).$$

We obtain a reduction that allows us to pass from the problem~$\pi_{dist}$ to the problem~$\pi_{rec}$: by Condition~\ref{addcondbdry}, we have that the projection of~$\bdryt{}{T'}{T}$ onto some~$\mathcal{M}_i$ in the Cartesian product~$\classt{}{T} \cong \Box_i \mathcal{M}_i$ is precisely~$\classt{}{U}$, for some~$\classt{}{U} \in \mathcal{S}_i$. Therefore, if one views~$\pi_{dist}$ as a process of distributing flow throughout~$\classt{}{T'}$, the flow is initially uniform within every copy of~$\mathcal{M}_j$, for all graphs~$\mathcal{M}_j$ in the product other than~$\mathcal{M}_i$. It therefore suffices to distribute the flow within each copy of~$\mathcal{M}_i$, in which it is initially concentrated uniformly within~$\classt{}{U}$.

\iffalse Furthermore, notice that, in the initial MSF problem~$\pi_s$, the vertex $s$ begins with~$\sigma_s = |\classt{}{T'}|$ units of flow to send to~$\classt{}{T'}$. Summing over all~$\classt{}{T''}$ classes to which~$s$ must send flow (including~$\classt{}{T}$ itself) then gives
$$\sum_{T''}|\classt{}{T''}| = C_n.$$\fi

Thus we prove:
\begin{lemma}
\label{lem:reducrec}
The problem~$\pi_{dist}$ described in this section can be solved by any flow function~$f_{rec}$ that solves the MSF problem~$\pi_{rec}$ as described in this section. Furthermore, if~$f_{rec}$ generates congestion at most~$\rho$, then~$f_{dist}$ also generates congestion at most~$\rho$. The problem~$\pi_{conc}$ is of the same form as the reversal of~$\pi_{dist}$ and therefore is solved by a flow function similar to~$f_{rec}$, also with congestion at most~$\rho$.
\end{lemma}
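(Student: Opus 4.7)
The plan is to lift any solution $f_{rec}$ to $\pi_{rec}$ in $\mathcal{M}_i$ to a flow $f_{dist}$ in $\classt{}{T'}$ by applying $f_{rec}$ independently inside each ``fiber'' copy of $\mathcal{M}_i$ sitting inside $\classt{}{T'} \cong \mathcal{M}_{j_1} \Box \cdots \Box \mathcal{M}_i \Box \cdots \Box \mathcal{M}_{j_k}$. Every vertex of $\classt{}{T'}$ is a tuple $\vec{v} = (v_{j_1}, \ldots, v_i, \ldots, v_{j_k})$; fixing every coordinate other than the $i$-th yields an induced subgraph of $\classt{}{T'}$ isomorphic to $\mathcal{M}_i$, which I call a \emph{fiber}. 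By Condition~\ref{addcondbdry}, the intersection of $\bdryt{}{T'}{T}$ with each such fiber is precisely a copy of $\classt{}{U}$.

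First I will verify the surplus/deficit bookkeeping. Within any one fiber, $f_{rec}$ by definition produces net outflow $\sigma_{rec}$ at each source vertex (the copy of $\classt{}{U}$) and net inflow $\delta_{rec}$ at each sink vertex (the whole copy of $\mathcal{M}_i$); at a vertex that is both source and sink, the net outflow is $\sigma_{rec} - \delta_{rec}$. Since $\sigma_{rec} = \sigma_{dist}$ and $\delta_{rec} = \delta_{dist} = 1$, and since a vertex $\vec{v}$ of $\classt{}{T'}$ lies in $\bdryt{}{T'}{T}$ if and only if its $i$-th coordinate lies in $\classt{}{U}$, the assembled lifted flow satisfies exactly the source/sink constraints of $\pi_{dist}$.

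Next I will account for congestion. The lifted flow $f_{dist}$ is supported only on ``$\mathcal{M}_i$-direction'' arcs of the product, i.e., arcs between tuples differing only in coordinate $i$; each such arc belongs to exactly one fiber and carries only the flow from that fiber's copy of $f_{rec}$. Hence the flow on any such arc equals the flow of $f_{rec}$ on the corresponding arc of $\mathcal{M}_i$, which is at most $\rho$ by hypothesis, and all arcs in other directions carry no flow at all. Therefore the congestion of $f_{dist}$ is at most $\rho$.

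Finally, the claim for $\pi_{conc}$ will follow by a symmetric construction carried out inside $\classt{}{T}$. Condition~\ref{addcondbdry} (applied with the roles of $T$ and $T'$ reversed in the pair, or directly, depending on which boundary is involved) guarantees that $\bdryt{}{T}{T'}$ is the preimage under projection onto some factor of the Cartesian product decomposition of $\classt{}{T}$ of a suitable class. One then defines the analogue of $\pi_{rec}$ for the opposite direction of flow and lifts fiber-by-fiber through $\classt{}{T}$; the same fiber-disjointness argument preserves congestion. The main step requiring care will be the surplus/deficit audit for the ``reversed'' problem, since $\sigma_{conc}$ and $\delta_{conc}$ differ in scale from $\sigma_{dist}$ and $\delta_{dist}$; however, the scales on source and sink sides are precisely those arising from the analogous reduction within $\classt{}{T}$, so no serious obstacle is anticipated.
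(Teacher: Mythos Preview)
Your proposal is correct and follows essentially the same approach as the paper: you lift $f_{rec}$ fiber-by-fiber into each copy of $\mathcal{M}_i$ inside the Cartesian product $\classt{}{T'}$, verify the surplus/deficit constraints match those of $\pi_{dist}$ via Condition~\ref{addcondbdry}, and observe that the fibers have pairwise disjoint arc sets so the maximum per-arc flow (hence congestion) is unchanged. The paper's proof is the same argument stated more tersely, calling your fibers ``copies of $\mathcal{M}_i$'' and appealing to the disjointness of their arc sets for the congestion bound.
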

\begin{proof}
The first part of the lemma statement\textemdash the reduction\textemdash is justified by the discussion leading to this lemma. That is, we can easily construct a flow function~$f_{dist}$ that solves~$\pi_{dist}$ as the arc-wise sum of many separate (but identical) functions~$f_{rec}$\textemdash  one such function within each copy of~$\mathcal{M}_i$ in the Cartesian product~$\classt{}{T'}$.

The preservation of the congestion bound~$\rho$ follows from the fact that these functions are defined over disjoint sets of arcs, since the copies of~$\mathcal{M}_i$ are all mutually disjoint.

Finally, the symmetry of~$\pi_{conc}$ and~$\pi_{dist}$ follows from the discussion leading to this lemma.
\end{proof}

Furthermore, notice that in~$\pi_{rec}$, we have the problem of flow that is initially concentrated uniformly within a class~$\classt{}{U} \in \mathcal{S}_i$, such that an equal amount must be distributed to each vertex~$t\in\classt{}{U'}$, for every class~$\classt{}{U'} \in \mathcal{S}_i$. Let~$\pi_{rec,U,U'}$ be this problem of sending the flow that is bound for vertices in~$\classt{}{U'}$. We now have:
\begin{lemma}
\label{lem:recuu}
The problem~$\pi_{rec}$, defined with respect to~$s \in \classt{}{T}$ and~$\classt{}{U}\in\mathcal{M}_i$, can be decomposed into a collection of problems~$\pi_{rec,U,U'}$, one for each~$\classt{}{U'} \in \mathcal{S}_i$.
\end{lemma}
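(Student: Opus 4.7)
My plan is to exhibit an explicit decomposition whose source side re-uses $\classt{}{U}$ and whose sink side is partitioned according to the classes of $\mathcal{S}_i$, and then invoke the additivity principle from Remark~\ref{rmk:composemsf}. For each $\classt{}{U'}\in\mathcal{S}_i$, I would set
\[
\pi_{rec,U,U'} \;=\; \Bigl(\classt{}{U},\ \classt{}{U'},\ \sigma_{rec,U,U'} = \tfrac{|\classt{}{U'}|}{|\classt{}{U}|},\ \delta_{rec,U,U'} = 1\Bigr),
\]
which is individually balanced since the total surplus and the total deficit both equal $|\classt{}{U'}|$, making each $\pi_{rec,U,U'}$ a well-posed MSF problem on $\mathcal{M}_i$.

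Next I would check that these subproblems aggregate to $\pi_{rec}$. The sink-side check is immediate: by definition of the partition $\mathcal{S}_i$, every vertex of $\mathcal{V}_i$ belongs to exactly one class $\classt{}{U'}$, so it appears as a sink in exactly one subproblem and inherits deficit $1 = \delta_{rec}$. For the source side, I would compute
\[
\sum_{U'\in\mathcal{S}_i}\sigma_{rec,U,U'} \;=\; \frac{1}{|\classt{}{U}|}\sum_{U'\in\mathcal{S}_i}|\classt{}{U'}| \;=\; \frac{|\mathcal{V}_i|}{|\classt{}{U}|},
\]
and then verify that this equals $\sigma_{rec}=|\classt{}{T'}|/|\bdryt{}{T'}{T}|$. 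Here the Cartesian-product structure from the setup of $\pi_{rec}$ does the real work: $\mathcal{M}_i$ is, by Condition~\ref{addcondcart}, one factor of the product $\classt{}{T'}$, so $|\classt{}{T'}| = |\mathcal{V}_i|\cdot N$ with $N$ the product of the sizes of the remaining factors; and by Condition~\ref{addcondbdry}, $\bdryt{}{T'}{T}$ is exactly the preimage under projection of $\classt{}{U}\subseteq\mathcal{M}_i$, so $|\bdryt{}{T'}{T}| = |\classt{}{U}|\cdot N$. The ratio is $|\mathcal{V}_i|/|\classt{}{U}|$, matching the sum above.

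Once both aggregation identities hold, Remark~\ref{rmk:composemsf} applies directly: the arc-wise sum of any MSF solutions $f_{rec,U,U'}$ is a solution to $\pi_{rec}$, since net outflow aggregates to $\sigma_{rec}$ at each source in $\classt{}{U}$, net inflow is exactly $1$ at each sink in $\mathcal{V}_i$, and intermediate vertices conserve flow in every summand and hence in the sum. The lemma is essentially bookkeeping; the only substantive ingredient is the Cartesian-product size calculation identifying $\sigma_{rec}$ with $|\mathcal{V}_i|/|\classt{}{U}|$, and I anticipate no real obstacle.
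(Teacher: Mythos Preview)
Your proposal is correct and follows essentially the same approach as the paper: you define the same subproblems $\pi_{rec,U,U'}$ (the paper writes $\sigma_{rec,U,U'}=\sigma_{rec}\cdot|\classt{}{U'}|/|\mathcal{V}_i|$, which equals your $|\classt{}{U'}|/|\classt{}{U}|$ once one uses the Cartesian-product identity $\sigma_{rec}=|\mathcal{V}_i|/|\classt{}{U}|$ that you spell out), and you verify the same two aggregation identities on sources and sinks. Your write-up is in fact a bit more explicit than the paper's, since you also check balance of each subproblem and justify the size identity via Conditions~\ref{addcondcart} and~\ref{addcondbdry}; the only minor imprecision is that Remark~\ref{rmk:composemsf} literally concerns a sequential composition, whereas here you are using the (equally elementary) parallel additivity of MSF solutions with common source and disjoint sinks.
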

\begin{proof}
Following the discussion leading to this lemma, it suffices to define
$$\pi_{rec,U,U'} = (\classt{}{U}, \classt{}{U'}, \sigma_{rec,U,U'} = \sigma_{rec}\frac{|\classt{}{U'}|}{|\mathcal{V}_i|}, \delta_{rec,U,U'} = \delta_{rec}).$$
The definitions of~$\sigma_{rec,U,U'}$ and~$\delta_{rec,U,U'}$ are indeed correct (achieve the decomposition of~$\pi_{rec}$ stated in the lemma):~$\delta_{rec,U,U'} = \delta_{rec}$ obviously agrees with~$\pi_{rec}$, and one can check that
$$\sum_{U'}\sigma_{rec,U,U'} = \sigma_{rec},$$
as needed.
\iffalse Finally, one can check that~$\pi_{rec,U,U'}$ is a valid MSF, i.e.
$$\sigma_{rec,U,U'}|\classt{}{U}| = \delta_{rec,U,U'}|\classt{}{U'}|,$$
since indeed
\fi
\end{proof}

Furthermore, since~$\mathcal{M}_i$ is in the family~$\mathcal{F}$ and thus satisfies the conditions of Lemma~\ref{lem:fwstrong}, the problem~$\pi_{rec,U,U'}$ is of the same form as our original problem (~$\pi_{conc}, \pi_{tran}, \pi_{dist}$) of sending flow that was uniformly concentrated within~$\classt{}{T}$ to vertices in~$\classt{}{T'}$, where $\classt{}{T},\classt{}{T'}\in\mathcal{S}_n$ were classes in the original graph~$\mathcal{M}_n$.

That is, just as we decomposed the original problem~$\pi_s$ into the ``concentration'' problem~$\pi_{conc}$, the ``transmission'' problem $\pi_{tran},$ and the ``distribution'' problem~$\pi_{dist}$, we can recursively decompose~$\pi_{rec,U,U'}$ in the same fashion. In particular, we can solve the resulting transmission problem, in the same fashion as before. Furthermore, recall that the original problem~$\pi_s$ was defined with respect to a single~$s\in\classt{}{T}$. We claim that even after solving the transmission problem for \emph{all}~$s\in\classt{}{T}$, we obtain congestion at most one.

\iffalse The equality~$\delta_{rec} = \delta_s = 1$ is important, and we will. \fi
To see why this is, note first that:
\begin{remark}
\label{rmk:noconginc}
Summing~$\sigma_{dist}$ over all~$s\in\classt{}{T}$ produces
$$|\classt{n}{T}|\sigma_{dist} = \frac{|\classt{n}{T}||\classt{n}{T'}|}{|\bdryt{n}{T}{T'}}| \leq |\mathcal{V}_n|$$ flow ``concentrated'' within each boundary vertex.
\end{remark}

These facts, we claim, indicate that the congestion does not increase as we pass from one level of recursion to the next. 
Remark~\ref{rmk:noconginc} implies that in this reduction, we have within~$\mathcal{M}_i$ a problem similar to the original problem in~$\mathcal{M}_n$: that is, in the original problem, the overall flow construction, we have a collection of MSF problems~$\{\{\pi_s\} | s \in \mathcal{V}_n\}$, in which each~$s\in\mathcal{V}_n$ is ``charged'' with initial surplus values~$\sum_{T'} |\classt{}{T'}| = |\mathcal{V}_n|$. What we have now is a single MSF problem, in~$\mathcal{M}_i$, in which each~$u\in\bdryt{}{T'}{T} \cap \mathcal{M}_i = \classt{}{U}$ has a surplus (summing over all~$s\in\classt{}{T}$) of~$|\classt{n}{T}|\sigma_{dist} \leq |\mathcal{V}_n|$, by Remark~\ref{rmk:noconginc}. Furthermore, just as the original problem of distributing $|\classt{}{T}||\mathcal{V}_n|$ outbound flow from vertices~$s\in\classt{}{T}$ throughout~$\mathcal{M}_n$ induces the subproblem of sending~$|\classt{}{T}||\classt{}{T'}|$ flow from~$\classt{}{T}$ to~$\classt{}{T'}$ (and thus by Condition~\ref{addcondmatch} producing~$\leq|\mathcal{V}_n|$ flow across each~$\edget{}{T}{T'}$ edge), similarly the subproblem of distributing~$|\mathcal{V}_n|$ flow from each~$u\in\classt{}{T}$ throughout~$\mathcal{M}_i$ induces the subproblem of sending
$$|\classt{}{U}||\mathcal{V}_n|\frac{|\classt{}{U'}|}{|V(\mathcal{M}_i)|}$$
flow from~$\classt{}{U}$ to each~$\classt{}{U'}$ in~$\mathcal{M}_i$, since each~$\classt{}{U'}$ receives a portion of the $|\classt{}{U}||\mathcal{V}_n|$ outbound flow from~$\classt{}{U}$ that is proportional to the cardinality of~$\classt{}{U'}$ within~$V(\mathcal{M}_i)$. This generates at most
$$\frac{|\classt{}{U}||\mathcal{V}_n|\frac{|\classt{}{U'}|}{|V(\mathcal{M}_i)|}}{|\edget{}{U}{U'}} \leq \frac{|V(\mathcal{M}_i)||\mathcal{V}_n|}{|V(\mathcal{M}_i)|} = |\mathcal{V}_n|$$
flow across the matching edges~$\edget{}{U}{U'}$, producing (normalized) congestion one, and matching the flow across~$\edget{}{T}{T'}$. (Here, in the first inequality, we have applied Condition~\ref{addcondmatch} to the matching~$\edget{}{U}{U'}$.) Thus we have a recursive decomposition in which the congestion does not increase in the recursion.

\begin{lemma}
\label{lem:recuucong}
Let the problem~$\pi_{rec,U,U'}$ be defined as in this section, with respect to~$s\in\classt{}{T}$, class~$\classt{}{U},\classt{}{U'}$ being classes in~$\mathcal{M}_i$, with~$\mathcal{M}_i$ a graph in the Cartesian product~$\classt{}{T}$.

Then~$\pi_{rec,U,U'}$ can be recursively decomposed into~$\pi_{recconc}, \pi_{rectran},$ and~$\pi_{recdist}$, with each problem solved by a respective flow~$f_{recconc}, f_{rectran}, f_{recdist},$ such that:

\begin{enumerate}[(i)]
\item The sum total congestion incurred by all of the~$f_{rectran}$ subproblems induced by all~$s\in\classt{}{T}$, is at most one, and
\item ~$\pi_{reccconc}$ and~$\pi_{recdist}$ are similar to the problems~$\pi_{dist}$ and~$\pi_{conc}$ described in this section and thus admit a recursive decomposition as in Lemma~\ref{lem:reducrec}, and
\item the demand~$\delta_{recconc}$ is upper-bounded by~$\delta_{conc}$, the surplus value in the original concentration problem~$\pi_{conc}$; similarly, $\sigma_{recdist} \leq \sigma_{dist}$.
\end{enumerate}
\end{lemma}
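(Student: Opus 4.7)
The plan is to define $\pi_{recconc}, \pi_{rectran}, \pi_{recdist}$ by mirroring, inside $\mathcal{M}_i$, the three-stage decomposition used at the top level for $\pi_s$: the roles of $(\classt{}{T}, \classt{}{T'}, \bdryt{}{T}{T'}, \bdryt{}{T'}{T}, \edget{}{T}{T'})$ are played by $(\classt{}{U}, \classt{}{U'}, \bdryt{}{U}{U'}, \bdryt{}{U'}{U}, \edget{}{U}{U'})$, where the latter boundaries and matching exist because $\mathcal{M}_i \in \mathcal{F}$ itself partitions into classes satisfying the conditions of Lemma~\ref{lem:fwstrong}. The three subproblems use the same source/sink/surplus/demand pattern as at the top level, with per-source surplus $\sigma_{rec,U,U'}$ in $\classt{}{U}$ and per-target demand $\delta_{rec,U,U'}=1$ in $\classt{}{U'}$. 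No shuffle step is required here, because the flow is already uniform inside $\classt{}{U}$ upon entry (that is how $\pi_{dist}$, and hence $\pi_{rec,U,U'}$, was constructed one recursion layer up). Checking that the three-stage surpluses and demands balance reduces, via Condition~\ref{addcondbdry} applied to $\bdryt{}{T'}{T}$, to the cardinality identity $|\classt{}{U}|\sigma_{rec} = |\mathcal{V}_i|$; once that is in hand, Remark~\ref{rmk:composemsf} gives that the arc-wise sum of any solutions to the three subproblems solves $\pi_{rec,U,U'}$.

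Claim~(ii) is then immediate by inspection: $\pi_{recconc}$ is literally a concentration problem on $\classt{}{U}$ inside $\mathcal{M}_i$, and $\pi_{recdist}$ is literally a distribution problem on $\classt{}{U'}$ inside $\mathcal{M}_i$, so Lemma~\ref{lem:reducrec} reduces each to a further MSF problem of the same form in an even smaller graph from the Cartesian factorization. Claim~(iii) is a short arithmetic check: computing $\delta_{recconc}$ and simplifying using Condition~\ref{addcondmatch} inside $\mathcal{M}_i$ yields $\delta_{recconc} \leq \sigma_{dist} = \delta_{conc}$, and $\sigma_{recdist} = \delta_{rectran} = \delta_{recconc}$ by construction, so the same inequality gives $\sigma_{recdist} \leq \sigma_{dist}$.

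The main obstacle is claim~(i), which is the place where the argument that congestion does not grow between recursion levels actually lives. The strategy is to apply Condition~\ref{addcondmatch} twice, at two different scales. First, for a fixed $s \in \classt{}{T}$, each matching edge of $\edget{}{U}{U'}$ receives $\sigma_{rectran}$ flow from $\pi_{rectran}$, and Condition~\ref{addcondmatch} inside $\mathcal{M}_i$ bounds $\sigma_{rectran}$ by $\sigma_{dist}$, canceling the factor $|\classt{}{U'}|/|\mathcal{V}_i|$ that appeared when breaking $\pi_{rec}$ into per-$U'$ problems. Summing the $|\classt{}{T}|$ contributions over all $s \in \classt{}{T}$, the per-edge flow grows to at most $|\classt{}{T}|\sigma_{dist} = |\classt{}{T}||\classt{}{T'}|/|\bdryt{}{T'}{T}|$, and a second application of Condition~\ref{addcondmatch}, now inside $\mathcal{M}_n$ and using $|\bdryt{}{T'}{T}| = |\edget{}{T}{T'}|$, bounds this by $|\mathcal{V}_n|$, giving normalized congestion at most one. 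The delicate point is bookkeeping: in each application of Condition~\ref{addcondmatch}, one must verify that the correct numerator cancels exactly against the corresponding denominator, which requires the cardinality identity from Condition~\ref{addcondbdry} to connect $|\bdryt{}{T'}{T}|$ to $|\classt{}{U}|$ and $|\mathcal{V}_i|$. Once that identity is in place, both cancellations are exact and the recursion carries no loss.
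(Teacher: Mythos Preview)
Your proposal is correct and follows essentially the same approach as the paper: you define the three subproblems by mirroring the top-level decomposition inside $\mathcal{M}_i$, bound $\sigma_{rectran}\le\sigma_{dist}$ via Condition~\ref{addcondmatch} on $\edget{}{U}{U'}$, then sum over $s\in\classt{}{T}$ and apply Condition~\ref{addcondmatch} on $\edget{}{T}{T'}$ to obtain un-normalized flow at most $|\mathcal{V}_n|$, with (iii) falling out of the same inequality chain. Your explicit use of the identity $|\classt{}{U}|\sigma_{rec}=|\mathcal{V}_i|$ (derived from Condition~\ref{addcondbdry}) makes the cancellation slightly more transparent than in the paper, but the substance is identical.
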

\begin{proof}
We prove~(ii) first: define
$$\pi_{recconc} = (\classt{}{U}, \bdryt{}{U}{U'}, \sigma_{recconc} = \sigma_{rec}, \delta_{recconc} = \sigma_{recconc}\frac{|\classt{}{U}|}{|\bdryt{}{U}{U'}|}),$$
$$\pi_{rectran} = (\bdryt{}{U}{U'}, \bdryt{}{U'}{U}, \sigma_{rectran} = \delta_{rectran} = \delta_{recconc}),$$
$$\pi_{recdist} = (\bdryt{}{U'}{U}, \classt{}{U'}, \sigma_{recdist} = \delta_{rectran}, \delta_{recdist} = \sigma_{recdist}\frac{|\bdryt{}{U'}{U}|}{|\classt{}{U'}|}).$$

Comparing source and sink sets, and comparing~$\sigma$ and~$\delta$ functions shows that~$\pi_{rec}$ decomposes into~$\pi_{recconc}, \pi_{rectran}$, and~$\pi_{recdist}$. Each class~$\classt{}{U}$ and~$\classt{}{U'}$ decomposes as a Cartesian product satisfying Condition~\ref{addcondcart} in Lemma~\ref{lem:fwstrong}, and similarly the boundary sets~$\bdryt{}{U}{U'}, \bdryt{}{U'}{U}$ satisfy Condition~\ref{addcondbdry}. Thus exactly the same form of decomposition used to reduce the original~$\pi_{dist}$ and~$\pi_{conc}$ to~$\pi_{rec}$ also works for~$\pi_{recconc}$ and~$\pi_{recdist}$. We can thus recursively construct~$f_{recconc}$ and~$f_{recdist}$, proving~(ii).

For~(i), we need to define~$f_{rectran}$ and to bound the resulting congestion.

Define~$f_{rectran}$ in the same natural way we defined~$f_{tran}$: simply assign~$\sigma_{rectran} = \delta_{rectran}$ to each arc.

We observe that
$$\sigma_{recconc} = \sigma_{rec,U,U'} = \sigma_{rec}\frac{|\classt{}{U'}|}{|\mathcal{V}_i|} = \sigma_{dist}\frac{|\classt{}{U'}|}{|\mathcal{V}_i|} =  \frac{|\classt{}{T'}|}{|\bdryt{}{T'}{T}|}\cdot \frac{|\classt{}{U'}|}{|\mathcal{V}_i|},$$
by the definitions of the MSF problems we have given in this section. It is easy to see also that
$$\sigma_{rectran} = \delta_{rectran} = \delta_{recconc} = \sigma_{recconc}\frac{|\classt{}{U}|}{|\bdryt{}{U}{U'}|}.$$

Combining these facts gives
$$\sigma_{rectran} = \frac{|\classt{}{T'}|}{|\bdryt{}{T'}{T}|}\cdot \frac{|\classt{}{U'}|}{|\mathcal{V}_i|}\cdot \frac{|\classt{}{U}|}{|\bdryt{}{U}{U'}|} \leq \frac{|\classt{}{T'}|}{|\edget{}{T'}{T}|} = \sigma_{tran},$$

where the inequality follows from the fact that the matching~$\edget{}{U}{U'}$ satisfies Condition~\ref{addcondmatch} of Lemma~\ref{lem:fwstrong}.

Now, to obtain the un-normalized congestion~$\bar \rho_{rectran}$ that results from~$f_{rectran}$, we sum over all~$s\in\classt{}{T}$, scaling the above quantity by a factor of~$|\classt{}{T}|$, giving 
$$\bar\rho_{rectran} = |\classt{}{T}|\sigma_{rectran} = |\classt{}{T}|\sigma_{tran} = \frac{|\classt{}{T}||\classt{}{T'
}|}{|\edget{}{T}{T'}|} \leq |\mathcal{V}_n|,$$
where we have again applied Condition~\ref{addcondmatch} of Lemma~\ref{lem:fwstrong}.

Thus we obtain \emph{normalized} congestion at most $\frac{|\mathcal{V}_n|}{|\mathcal{V}_n|} \leq 1$, proving~(i).

%\frac{|\classt{}{T}||\classt{}{T'}|}{|\bdryt{}{T'}{T}|}\cdot \frac{|\classt{}{U}|}{|\bdryt{}{U}{U'}|}.$$

%$$ = \frac{|\mathcal{V}_i|}{|\classt{}{T}|}\cdot \frac{|\classt{}{U'}|}{|\mathcal{V}_i|} = \frac{|\classt{}{T}|}{|\classt{}{U'}|},$$

%$$\pi_{rectran} = (\bdryt{}{U}{U'}, \bdryt{U'}{U})$$

For~(iii), claim~(i) also implies that the congestion does not increase in the recursive decomposition given by~(ii)\textemdash that is, passing from~$\pi_{dist}$, to~$\pi_{rec}$, to~$\pi_{rec,U,U'}$, to~$\pi_{recdist}$, preserves the bound
$$\sigma_{recdist} \leq \sigma_{dist}.$$
The analogous fact for~$\sigma_{recconc}$ is symmetric.
\end{proof}

\iffalse 
For reasons that will become clear in the proof of Lemma~\ref{lem:fwstrongindstep}, we prove the following about the subproblem~$\pi_{recconc}$. As in the preceding lemmas, given a~$\classt{}{T},\classt{}{T'}$ pair, consider the~$\pi_{dist}$ problem within~$\classt{}{T'}$, which induces subproblem~$\pi_{rec}$, the problem of distributing flow from~$\classt{}{U}$ throughout~$\mathcal{M}_i$. We have shown that~$\pi_{rec,U,U'}$ generates congestion at most one (Lemma~\ref{lem:recuucong}), but when summing na\"ively over all~$\classt{}{U'}$ we gain a~$\kappa = |\classt{}{M}_i|$ factor in the congestion.

We show the following:
\begin{lemma}
\label{lem:recuumsf}
The problem~$\pi_{rec}$, as described in the preceding discussion, of distributing flow from~$\classt{}{U}$ throughout a graph~$\mathcal{M}_i$,
\end{lemma}
\fi 

We now have all the pieces we need to prove Lemma~\ref{lem:fwstrongindstep}:
\lemfwstrongindstep*
\begin{proof}
To construct the desired uniform multicommodity flow, it suffices to construct, for every~$\classt{}{T},\classt{}{T'} \in \mathcal{S}_n$ and for every~$s\in\classt{}{T}$, the flow~$f_s$ solving the MSF problem~$\pi_s$. As shown in this section, $\pi_s$ decomposes (Remark~\ref{rmk:composemsf}) as the subproblems~$\pi_{shuf},\pi_{conc},\pi_{tran},$ and~$\pi_{dist}$.

For~$\pi_{shuf}$, summing over all~$s\in\classt{}{T}$ and over all~$s\in\classt{}{T'}$, the sum of the~$f_{shuf}$ flows given by the inductive hypothesis and the Cartesian flow structure (Lemma~\ref{lem:cartflow}) of~$\classt{}{T}$ gives congestion at most~$\rho$, by Lemma~\ref{lem:shufcong}.

For a given~$\classt{}{T},\classt{}{T'}$ pair, again summing over all~$s\in\classt{}{T'}$, we obtain flows $f_{tran}$ for~$\pi_{tran}$ whose sum is congestion one, by Lemma~\ref{lem:trancong}.

Dividing~$\pi_{dist}$ (and symmetrically~$\pi_{conc}$) into copies of the~$\pi_{rec}$ problem as in Lemma~\ref{lem:reducrec}, and further dividing each~$\pi_{rec}$ into problems~$\pi_{rec,U,U'}$ (by Lemma~\ref{lem:recuu}), each of which we further divide into~$\pi_{recconc}, \pi_{rectran},$ and~$\pi_{recdist}$. Furthermore, by Lemma~\ref{lem:recuucong}, these subproblems are of the same form as~$\pi_{conc}, \pi_{tran},$ and~$\pi_{dist}$, with the natural solution~$f_{rectran}$ to the ``transmission'' problem~$\pi_{tran}$ being of the same form as~$f_{tran}$ and producing, like~$f_{tran}$, overall congestion one after summing over all~$s\in\classt{}{T}$.

We then recursively decompose~$\pi_{recconc}$ and~$\pi_{recdist}$ in the same fashion as we did~$\pi_{conc}$ and~$\pi_{dist}$, with, by Lemma~\ref{lem:recuucong}, congestion one in the transmission problems at each level of recursion. Since all flow produced by solving the subproblems in this decomposition is counted by the transmission flows, and since (it is easy to see) each arc occurs in only one such transmission flow, we obtain overall congestion one for~$\pi_{rec,U,U'}$.

Recall that~$\pi_{rec,U,U'}$ is defined with respect to a given~$\classt{}{T},\classt{}{T'}$ pair, where~$\classt{}{U}$ is determined by~$\classt{}{T}$, as a class within the graph~$\mathcal{M}_i$, within the Cartesian product~$\classt{}{T} \cong \Box_j \mathcal{M}_j$. Thus we must sum this bound of congestion one for~$f_{rec,U,U'}$ over all~$\classt{}{U'} \in\mathcal{S}_i$. By assumption~$|\mathcal{S}_i| \leq \kappa$, so we obtain~$\kappa$ flows each with congestion one, giving overall congestion at most~$\kappa$.

One may worry that the~$\kappa^2$ pairs of classes exchanging flow may produce~$\kappa^2$ congestion, since we do obtain~$\kappa^2$ subproblems. Fortunately, we can justify the~$\kappa$ bound as follows: consider~$\kappa$ MSF problems instead of~$\kappa^2$ problems. In each of the~$\kappa$ MSF problems, a given class~$\classt{}{T}$ must send flow to~\emph{all} other classes. This introduces some asymmetry, as the concentration flow within~$\classt{}{T}$ involves only a single commodity, while the distribution flow within~$\classt{}{T}$ involves~$\kappa-1$ commodities. Thus we can easily break this distribution flow into~$\kappa-1$ recursive distribution flows that each involve a single commodity distributed throughout~$\classt{}{T}$ from~$\bdryt{}{T}{T'}$ for some~$\classt{}{T'}$.

The concentration flow takes slightly more work: it involves a single commodity but induces a subproblem in which every pair of subclasses within~$\classt{}{T}$ must exchange a unit of flow. Consider the boundary sets~$\bdryt{}{T}{T'}$ and~$\bdryt{}{T}{T''}$ along which~$\classt{}{T}$ must send flow to any two of the other classes~$\classt{}{T'}$ and~$\classt{}{T''}$. By Condition~\ref{addcondbdry}, we know that all of this flow occurs between subclasses within copies of smaller flip graphs. Say these subclasses are~$\classt{}{U'}$ and~$\classt{}{U''}$. Notice that we do not need to send flow in both directions, \emph{because we have only a single commodity}. Only the \emph{amount} of flow sent matters. This observation gives us a convenient subproblem in which for each pair of subclasses~$\classt{}{U'}, \classt{}{U''}$, one class sends to the other an amount of flow that, by Condition~\ref{addcondmatch}, generates congestion at most one, producing appropriate recursive subproblems without an increase in congestion.
\end{proof}

Lemma~\ref{lem:fwstrongindstep} forms the inductive step of Lemma~\ref{lem:fwstrong} (with a trivial base case), and thus we have proven Lemma~\ref{lem:fwstrong}.

\section{Nearly tight conductance for triangulations: lower bound}
\label{sec:combinedec}
Lemma~\ref{lem:triangnewcond} and Lemma~\ref{lem:fwstrong}, as we showed in Appendix~\ref{sec:assocexplb}, imply the known result that the flip walk on triangulations of the convex polygon mixes rapidly. However, the bound given by Lemma~\ref{lem:triangnewcond} is $O(n^2)$ congestion, giving~$O(n^7)$ mixing time by Lemma~\ref{lem:expmixing}.
\iffalse Once can improve the~$O(n^7)$ analysis for the same flow construction to~$O(n^5)$\textemdash an~$O(\log n)$-factor improvement on McShine and Tetali's bound\textemdash by considering both the lengths of the fractional paths used to route the flow, and the comparatively smaller congestion produced at lower levels of induction. \fi
Through a more careful flow construction, one can further improve this bound to~$O(n^{3}\log^3 n)$.
\iffalse  This bound is of course weaker than McShine and Tetali's result~\cite{mct}. To obtain the~$O(n^{3}\log^3 n)$ mixing result we have claimed, we need to ``tinker'' with our decomposition somewhat. \fi
For the more careful construction, we will define a different decomposition, via the \emph{central triangle}:
\begin{definition}
\label{def:centralpart}
Given a triangle~$T$ containing the center of the regular $n+2$-gon $P_{n+2}$ and sharing all of its vertices with $P_{n+2}$, identify~$T$ with the class~$\classt{}{T}$ of triangulations $t \in V(K_n)$ such that $T$ forms one of the triangles in~$t$. Let~$\mathcal{S}_{n}$ be the set of all such~$\classt{}{T}$ classes. 
\end{definition}
(If $P_{n+2}$ has an even number of edges, we perturb the center slightly so that every triangulation lies in some class.) 

\begin{remark}
The set $\mathcal{S}_{n}$ is a partition of $V(K_n)$, because no pair of triangles whose endpoints are polygon vertices can contain the origin without crossing.
\end{remark}
Molloy, Reed, and Steiger~\cite{molloylb} defined this same partition in their work.

See Figure~\ref{fig:centralclasses}.

We will combine this central-triangle decomposition with the oriented decomposition we defined earlier. What we gain from using the central-triangle decomposition is that the number of levels of induction will now be~$O(\log n)$, by the fact that using the central triangle to partition the classes divides the~$n+2$-gon into smaller polygons of size~$\leq n/2$. What we \emph{lose}, however, is that we no longer have matchings between every pair of classes, nor are all of the matchings between adjacent pairs sufficiently large to obtain a polynomial bound. Thus if we were to use just this decomposition on its own, we would be stuck with the quasipolynomial bound (which in fact is what we obtain for general $k$-angulations in Appendix~\ref{sec:quadmix}).

Fortunately, we will show how to combine the two decompositions. With suitable care, this will allow us to eliminate one of the factors of~$n$\textemdash which we incurred in the~$n$ levels of induction via Lemma~\ref{lem:fwstrong}. Some further optimizations will give us the claimed congestion bound~$O(\sqrt n)$:

\begin{restatable}{lemma}{lemflowind}
\label{lem:flowind}
Suppose that for all $1 \leq i \leq n/2,$ a uniform multicommodity flow exists with congestion $O(\sqrt i \log i)$ in $K_i.$ Then a uniform multicommodity flow exists in $K_n$ with congestion $O(\sqrt n \log n).$
\end{restatable}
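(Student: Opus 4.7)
The plan is to use the central-triangle partition $\mathcal{S}_n$ of Definition~\ref{def:centralpart} at this inductive step, exploiting the fact that a central triangle splits $P_{n+2}$ into three sub-polygons each of size at most $n/2$. Concretely, each class $\classt{}{T} \in \mathcal{S}_n$ is isomorphic to a Cartesian product $K_i \Box K_j \Box K_k$ with $i + j + k = n - 1$ and $\max(i,j,k) \leq n/2$. By the inductive hypothesis applied to each factor, together with Lemma~\ref{lem:cartflow}, every class admits a uniform multicommodity flow with congestion at most $O(\sqrt{n/2}\log(n/2))$. This directly handles all source--sink pairs that lie in a common class.

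For pairs of triangulations in distinct classes $\classt{}{T}, \classt{}{T'}$, I would adapt the shuffle--concentrate--transmit--distribute decomposition of Lemma~\ref{lem:fwstrongindstep}, but with an important modification: unlike the oriented partition, pairs of central-triangle classes are not generally connected by direct matchings of size $\Omega(|\classt{}{T}|\,|\classt{}{T'}|/C_n)$, so a na\"ive reuse of the proof of Lemma~\ref{lem:fwstrongindstep} gives too large a congestion. To remedy this, I would combine the two decompositions. The concentration and distribution phases inside each class $\classt{}{T}$ are implemented using the oriented partition inside the individual factors $K_i, K_j, K_k$ of the Cartesian product, where Lemma~\ref{lem:matchingscard} furnishes the large matchings needed to route flow cheaply. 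For the transmission phase between distinct central-triangle classes, I would construct canonical paths through the graph on $\mathcal{S}_n$ whose edges correspond to single-diagonal flips of a central triangle, distributing transmission flow so that every intermediate matching carries an equal share.

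The main obstacle will be bounding the additional congestion contributed by the inter-class routing at this level by $O(\sqrt n)$, so that when added to the recursive bound $O(\sqrt{n/2}\log(n/2))$ the total is at most $O(\sqrt n \log n)$. The $\sqrt n$ factor is forced by Catalan-number asymptotics (Lemma~\ref{lem:catalan}): whereas the oriented decomposition of Lemma~\ref{lem:matchingscard} would give matchings of size $\Omega(|\classt{}{T}|\,|\classt{}{T'}|/C_n)$, the matchings across central-triangle boundaries lose an extra $\sqrt n$ factor coming from how $C_n/(C_iC_jC_k)$ scales when $i+j+k = n-1$. Crucially, however, this $\sqrt n$ is paid only \emph{once}, at this top level of the recursion, because below this level we are working inside copies of $K_i$ with $i \leq n/2$, for which the inductive congestion bound already absorbs the recursive contributions.

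Finally, to avoid any multiplicative blowup when combining the intra-class flow with the concentration and distribution subflows, I would use the shuffling trick from the proof of Lemma~\ref{lem:fwstrongindstep}: the outbound flow of each $s \in \classt{}{T}$ is first spread uniformly throughout $\classt{}{T}$ by a single invocation of the intra-class flow, after which concentration on the boundary is a single-commodity problem that costs nothing extra asymptotically. This ensures that the intra-class flow is invoked only once per source (not twice), so that the congestion bounds add rather than multiply, giving the claimed $O(\sqrt n \log n)$ total congestion in $K_n$.
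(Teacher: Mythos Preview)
Your high-level strategy matches the paper's: use the central-triangle partition so each class is a product of $K_i$'s with $i\le n/2$, invoke the inductive hypothesis via Lemma~\ref{lem:cartflow} inside each class, and combine with oriented-partition structure for boundaries (this is exactly Remark~\ref{rmk:bdryoriented}). But the proposal has a real gap at the step you flag as ``the main obstacle'': you assert that inter-class routing costs only an additive $O(\sqrt n)$, and you justify this by saying central-triangle matchings ``lose an extra $\sqrt n$ factor'' and that equal distribution over intermediate matchings suffices. Neither claim holds up. The paper explicitly tries the short-path routing you describe (replacing vertices of $T$ by vertices of $T'$ one at a time) and observes it fails: many large class-pairs are then forced through much smaller intermediate classes, and the congestion blowup is not $O(\sqrt n)$. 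Lemma~\ref{lem:combinemsfcong} makes this quantitative: with $\bar\rho$ congestion across inter-class matchings, the distribution/concentration inside each intermediate class costs a factor $O(n)$, because a class has $\Theta(n)$ neighbors. So your scheme as stated yields $O(n)$ additive congestion, not $O(\sqrt n)$.

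What the paper actually does to get $O(\sqrt n)$ is considerably more intricate and is not hinted at in your outline. It organizes central-triangle classes by apex vertex $p$, then second vertex $q$ (Remark~\ref{rmk:apqoriented}), and groups them into 24 overlapping constant-fraction regions (Definition~\ref{def:regions}). The key device is a \emph{hierarchical grouping trick} (Lemma~\ref{lem:logtrick}): rather than sending all $\classtt{}{T_i}\to\classtt{}{T_j}$ flow directly, one merges commodities level by level---pairs of adjacent classes first exchange and pool flow into a single commodity, then pairs of pairs, and so on. At level $j-i$ the relevant matchings have size $\Theta((j-i)^{-3/2})$ times the class size, while the pooled source set has grown by a factor $\Theta(j-i)$, yielding $O(\sqrt{j-i})$ congestion per level; summing the geometric series over $\log n$ levels gives $O(\sqrt n)$. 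This trick is then reapplied at the $\mathcal{L}_{pq}$, $\mathcal{A}_p$, and region levels (Corollary~\ref{cor:sqrtwithinsecondtotal}, Lemma~\ref{lem:sqrtwithinapex}, Lemma~\ref{lem:sqrtwithinregion}), with the constant-fraction overlap between regions handling the final inter-region step at $O(1)$ cost. Without this hierarchical merging idea, there is no mechanism in your proposal to beat the $O(n)$ penalty of Lemma~\ref{lem:combinemsfcong}.
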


(Here of course the constant hidden in the $O$ notation is independent of the number of induction levels.) Once we prove this lemma, then clearly Theorem~\ref{thm:triangmixub} follows via simple induction and an application of Lemma~\ref{lem:expmixing}. (The base case in the induction is trivial.)

What we will do is, before  routing the flow between triangulations in two different classes, to do the same shuffling step as before\textemdash this time within each class in the \emph{central-triangle} partition, instead of within each class in the \emph{oriented} partition. It is easy to see that this is simply a ``scaled-up'' uniform multicommodity flow in each class and produces no increase in congestion, using the same analysis as before. We then have an MSF problem for each pair of classes, in which the flow is routed through a set of intermediate classes, and:

\begin{remark}
\label{rmk:bdryoriented}
The boundary set~$\bdryt{n}{T}{T'}$ between every pair of central-triangle-induced classes is isomorphic to a Cartesian product~$\classtt{n}{U} \Box K_j \Box K_l$, where~$\classtt{n}{U}$ is an \emph{oriented} class in~$K_i$, and where~$\classt{n}{T} \cong K_i \Box K_j \Box K_l$, $i + j + l = n - 2$, $i,j,l\leq n/2$.
\end{remark}

In other words, even though we are now using the \emph{central-triangle} decomposition, our boundary classes are, as before, Cartesian products of \emph{oriented} classes with associahedron graphs. Therefore:

Remark~\ref{rmk:bdryoriented}, combined with our earlier congestion analysis for concentration and distribution flows, implies the following:

\begin{lemma}
\label{lem:combinemsfcong}
Suppose it is possible to construct a multicommodity flow~$f$ in~$K_n$ in which the total congestion across edges between a pair of classes is at most~$\bar\rho$. Then the total congestion produced by~$f$ is at most~$2\bar\rho n$.
\end{lemma}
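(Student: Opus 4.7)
The plan is to bound the congestion of $f$ edge by edge, exploiting the fact that by construction, the subflow associated with a given pair of central-triangle classes is confined to those two classes together with the matching edges between them. Fix an arbitrary edge $e$ of $K_n$, and split into two cases according to whether $e$ lies inside a single class of $\mathcal{S}_n$ or between two classes.

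First, suppose $e$ is a matching edge between two classes $\classt{n}{T}$ and $\classt{n}{T'}$. The shuffling, concentration, and distribution steps all route flow strictly inside one class, so the unique subflow that can traverse $e$ is the transmission subflow of the pair $(T, T')$. By hypothesis, this contributes at most $\bar\rho$, and the desired bound is met trivially.

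Next, suppose $e$ lies inside some class $\classt{n}{T}$. Because the shuffling, concentration, and distribution subproblems for a pair $(T_1, T_2)$ route their flow entirely within $\classt{n}{T_1} \cup \classt{n}{T_2}$, the only subflows contributing to $e$ come from pairs that involve $T$ itself. I claim that $T$ is incident, in the meta-graph on $\mathcal{S}_n$, to only $O(n)$ other classes: the central triangle $T$ has just three edges, and flipping any one of them (within some triangulation belonging to $\classt{n}{T}$) yields a new central triangle whose identity is determined by the choice of a single polygon vertex on the far side of the flipped edge, and there are at most $n$ such candidates. Summing the per-pair bound $\bar\rho$ over these $O(n)$ neighbors, and accounting for the two roles $T$ can play per neighbor $T'$ (as source in pair $(T, T')$\textemdash contributing concentration and shuffling flow in $\classt{n}{T}$\textemdash and as sink in pair $(T', T)$\textemdash contributing distribution flow in $\classt{n}{T}$), yields congestion at most $2\bar\rho n$ on $e$. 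Taking the maximum over all edges gives the claimed bound.

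The main obstacle is the localization claim used in the second case\textemdash that every subflow touching an internal edge of $\classt{n}{T}$ belongs to a pair involving $T$. Making this rigorous requires verifying that the recursive handling of the concentration and distribution subproblems via the oriented-partition flows from Appendix~\ref{sec:flowdetails}, applied to the boundary structure described in Remark~\ref{rmk:bdryoriented}, does not leak flow outside the pair's two classes. Once this locality is established, the rest is a purely combinatorial degree argument in the meta-graph on $\mathcal{S}_n$.
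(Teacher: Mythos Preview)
Your argument has the same skeleton as the paper's: count that each central-triangle class $\classt{n}{T}$ has $O(n)$ neighbouring classes, then sum a per-neighbour contribution over the concentration and distribution roles. But you have swapped which step is easy and which carries the real work.

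The locality claim you flag as the ``main obstacle'' is immediate from the construction: the concentration and distribution MSFs $\pi_{conc}$ and $\pi_{dist}$ have source and sink sets lying inside a single class by definition, so they never leak. What you actually need, and pass over without justification, is that each such concentration or distribution subflow has congestion at most $\bar\rho$ on \emph{internal} edges of $\classt{n}{T}$. The hypothesis of the lemma only bounds congestion on the inter-class matching edges; it says nothing directly about edges inside a class. Your phrase ``summing the per-pair bound $\bar\rho$'' presupposes exactly this internal bound without naming it. The paper makes this step explicit: ``each concentration flow and each distribution flow produces no increase in congestion relative to the amount across each edge between classes,'' which is the content of Remark~\ref{rmk:bdryoriented} combined with the recursive congestion-preservation analysis from Appendix~\ref{sec:flowdetails}. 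Once that is in hand, the $O(n)$ degree count finishes the argument, just as you say; but the no-increase property, not locality, is the substantive ingredient.
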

\begin{proof}
Routing flow through an intermediate class~$\classt{n}{T''},$ say, that originates at class~$\classt{n}{T}$ and is bound for~$\classt{n}{T'}$, can be accomplished with the combination of a uniform multicommodity flow in~$\classt{n}{T}$ (a ``shuffling flow''), scaled as in the construction in Appendix~\ref{sec:flowdetails}, and an MSF with source set~$\classt{n}{T}$ and sink set~$\classt{n}{T''}$. This MSF then induces in~$\classt{n}{T''}$ both a concentration flow and a distribution flow. Notice that~$\classt{n}{T''}$ has~$O(n)$ distinct neighboring classes. Therefore there are~$O(n)$ such concentration flows and~$O(n)$ such distribution flows. Since each concentration flow and each distribution flow produces no increase in congestion relative to the amount across each edge between classes, the claim follows.
\end{proof}

\label{sec:optimizing}
We will use this idea of combining decompositions to obtain our~$O(\sqrt n\log n)$ congestion bound. We will exhibit a flow with~$\bar\rho = O(\sqrt n)\log n$, and will show how to avoid the~$O(n)$ gain in Lemma~\ref{lem:combinemsfcong}.
\begin{figure}[h]
\includegraphics[width=20em]{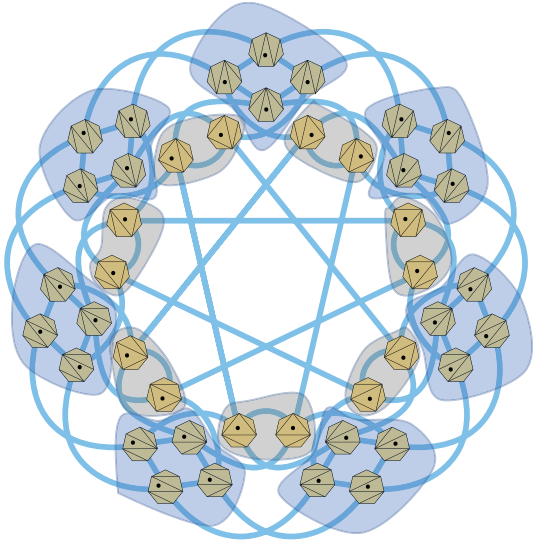}
\caption{An alternative partitioning of the associahedron graph $K_5$, with each vertex representing a triangulation of the regular heptagon. Flips are shown with edges (in blue). The vertex set~$V(K_n)$ is partitioned into a set~$\mathcal{S}_n$ of equivalence classes. Within each class, all triangulations share the same \emph{central triangle}\textemdash contrast with the oriented partition depicted in Figure~\ref{fig:heptclassesor}.}
\label{fig:centralclasses}
\end{figure}

\iffalse Pending the proof of Lemma~\ref{lem:msfcong} in Appendix~\ref{sec:flowdetails}, we have exhibited a flow in $K_n$ with congestion $O(n^{5.5})$ (Lemma~\ref{lem:weakerflowind}). In this section we improve this bound and prove Lemma~\ref{lem:flowind}:
\lemflowind*
\fi

We do so by choosing carefully a good set of ``paths'' between each pair of classes, where each path consists of a sequence of intermediate classes through which to route flow. One first attempt might be, given classes $\classt{n}{S}$ and $\classt{n}{U}$, to consider the $P_{n+2}$ vertices by which $S$ and $U$ differ. Route flow from $\classt{n}{S}$ to $\classt{n}{S'}$, where $S'$ is a triangle formed by replacing some vertex of $S \setminus U$ with a vertex of $U \setminus S$. It is easy to see that this results in routing flow through at most two intermediate classes. Unfortunately, if we do this for all $S,U$ pairs, then some of these intermediate classes will end up routing flow for too many $S,U$ pairs, and the congestion improvement will be insufficient for our purposes.

Roughly speaking, and perhaps counterintuitively, it turns out that the large congestion under the scheme described above results from using paths that are too short: there exist many pairs of large classes $\classt{n}{S}, \classt{n}{U}$ such that the intermediate classes found under this scheme are much smaller than $\classt{n}{S}$ and $\classt{n}{U}$, and thus cannot effectively ``spread out'' the congestion that results from sending the $\classt{n}{S}, \classt{n}{U}$ flow. Instead we will find slightly longer paths.

To define these longer paths, we first need to organize the classes into a (non-disjoint) union of larger classes, which we call \emph{regions}:
\begin{definition}
\label{def:regions}
Mark 24 equally spaced points on the convex polygon $P_{n+2}$, in counterclockwise order. Define the following 24 \emph{regions} as collections of the classes: let $\mathcal{U}_i$, $0 \leq i \leq 23$, be the set of all classes $\classt{n}{T}$ such that the vertex \emph{opposite the shortest edge} of~$T$ lies in the (inclusive) interval $[i\cdot n/24, (i+2)\cdot n/24)$.
\end{definition}

This is not a partition of the classes $\{\classt{n}{T}\}$, since it is possible for a triangle $T$ to have a vertex in two of the intervals described. However:

\begin{remark}
\label{rmk:equalpart}
It is easy to see that all of the classes in Definition~\ref{def:regions} are of equal size, and that the regions each have cardinality $C_n/12.$ Also, the regions form a cycle, in which each consecutive pair of regions shares an overlap of size~$C_n/24$.
\end{remark}

The idea now is that we will establish the existence of a flow, within each of the 24 regions, that has congestion $O(\sqrt{n/2})$. Once this is accomplished, we will use the constant-factor intersections of the classes to route flow between classes with additional (additive) congestion $O(\sqrt n)$. 

We will further partition the central-triangle-induced classes into two additional levels of classes. 

\begin{definition}Given a central triangle~$T$, let the \emph{apex} of~$T$ be the vertex of~$T$ opposite the shortest side. (If~$T$ has no unique shortest side, break ties in some arbitrary fashion; this will not change the asymptotics.) Let the \emph{second} vertex of~$T$ be the first of the two non-apex vertices that succeeds the apex in counterclockwise order; let the remaining vertex be the \emph{third} vertex. Given a vertex~$p \in [0, n+1]$ of the~$n+2$-gon, with the vertices labeled in counterclockwise order, let~$\mathcal{A}_p$ be the set of classes~$\classt{n}{T}$ with~$p$ as the apex of~$T$. Let~$\mathcal{L}_{pq}$ be the set of classes~$\classt{n}{T}$ in~$\mathcal{A}_p$ with~$q$ as the second vertex of~$T$. 
\iffalse Symmetrically, let~$\mathcal{R}_{pr}$ be the set of classes~$\classt{n}{T}$ in~$\mathcal{A}_p$ with~$r$ as the third vertex of~$T$.\fi
\end{definition}

\begin{remark}
\label{rmk:apqoriented}
Every~$\mathcal{L}_{pq}$ is a collection of central-triangle classes sharing an edge, namely the diagonal~$pq$. Every class~$\classt{n}{T} \in \mathcal{L}_{pq}$ is, therefore, a Cartesian product~$K_{q-p-1} \Box \classtt{n}{T}$ of a flip graph~$K_{q-p-1}$ over the $q-p+1$-gon on one side of the diagonal~$pq$, and a class~$\classtt{n}{T} \subseteq V(K_{n-q+p-1})$ in the \emph{oriented} partition induced by the edge~$pq$ in the $n-q+p+1$-gon on the other side of~$pq$.
\end{remark}

\begin{lemma}
\label{lem:sqrtwithinsecond}
Within~$\mathcal{L}_{pq}$, for all~$p\in[0, n+1]$, it is possible to route a unit of flow between every ordered pair of triangulations~$t,t' \in \mathcal{L}_{pq}$ while producing congestion one across the edges between any pair of central-triangle-induced classes.
\end{lemma}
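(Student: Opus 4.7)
\emph{Plan.} My plan is to exploit the Cartesian product structure of the classes in $\mathcal{L}_{pq}$ and to reduce the problem to constructing a flow on the oriented partition of a smaller associahedron. By Remark~\ref{rmk:apqoriented}, each central-triangle class $\classt{n}{T} \in \mathcal{L}_{pq}$ factors as $K_{q-p-1} \Box \classtt{m}{T'}$, with $m = n-q+p-1$ and $\classtt{m}{T'}$ an oriented class in $K_m$ with special edge $pq$. Since all these classes share the same $K_{q-p-1}$ factor (the triangulations of the sub-polygon on one side of $pq$), the induced subgraph $\mathcal{L}_{pq} \subseteq K_n$ is isomorphic to $K_{q-p-1} \Box Z$, where $Z$ is the induced subgraph of $K_m$ whose vertex set is $\bigcup_r \classtt{m}{T_r}$, ranging over those vertices $r$ for which $T_r = \triangle pqr$ has $p$ as its apex. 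Each edge of $\mathcal{L}_{pq}$ crossing between central-triangle classes thus corresponds to an edge of $Z$ crossing between two oriented classes, lifted to $C_{q-p-1}$ parallel copies in the product.

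I would then build a uniform multicommodity flow in $Z$ by applying the MSF decomposition from Appendix~\ref{sec:flowdetails}. For each ordered pair of oriented classes $\classtt{m}{T_r}, \classtt{m}{T_{r'}} \subseteq Z$, split the $|\classtt{m}{T_r}| \cdot |\classtt{m}{T_{r'}}|$ units to be exchanged into the four subproblems $\pi_{shuf}, \pi_{conc}, \pi_{tran}, \pi_{dist}$. The shuffle, concentration, and distribution flows live entirely inside the respective oriented classes and contribute nothing to cross-class edges. The transmission flow $f_{tran}$ distributes all $|\classtt{m}{T_r}| \cdot |\classtt{m}{T_{r'}}|$ units uniformly across the matching $\edgett{m}{T_r}{T_{r'}}$, so by Lemma~\ref{lem:matchingscard} each matching edge receives at most $|\classtt{m}{T_r}||\classtt{m}{T_{r'}}| / |\edgett{m}{T_r}{T_{r'}}| \leq C_m$ un-normalized units. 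Lemma~\ref{lem:projggmatching} guarantees that distinct unordered class pairs use disjoint matchings, so these contributions never accumulate on any single edge.

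Next, I would lift via Lemma~\ref{lem:cartflow} to a flow in $\mathcal{L}_{pq}$. In the standard product construction, an edge between two central-triangle classes inherits only flow from its $Z$-projection, summed over the $C_{q-p-1}$ parallel copies; its un-normalized congestion is therefore at most $C_{q-p-1} \cdot C_m$. Because $(q-p-1) + m = n-2$, the Catalan convolution identity $C_{n-1} = \sum_{i+j = n-2} C_i C_j$ gives $C_{q-p-1} \cdot C_m \leq C_{n-1} \leq C_n$. Viewing the sub-flow as a piece of the global $K_n$ flow to be assembled in Lemma~\ref{lem:flowind}, the normalization $|V(K_n)| = C_n$ from Definition~\ref{def:mcflow} then yields congestion at most one on each cross-class edge.

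The main obstacle I anticipate is keeping the normalizations straight: the flow is built inside $\mathcal{L}_{pq}$ but is ultimately measured against the global vertex count $C_n$, and the $C_{q-p-1}$ factor introduced by the Cartesian lift must be absorbed cleanly by the Catalan convolution. Once that bookkeeping is settled, the MSF transmission machinery handles the within-class routing without any further loss, since each cross-class edge is touched by the transmission flow of exactly one ordered class pair.
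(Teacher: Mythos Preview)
Your proposal is correct and follows essentially the same route as the paper: both use Remark~\ref{rmk:apqoriented} to factor each class as $K_{q-p-1}\Box\classtt{m}{T}$, apply Lemma~\ref{lem:matchingscard} to the oriented-class factor in $K_m$, and finish with the Catalan product bound $C_{q-p-1}C_m\le C_n$. The paper simply writes the one-line ratio $\frac{|\classt{n}{T}||\classt{n}{T'}|}{|\edget{n}{T}{T'}|\,C_n}$ and factors it directly, whereas you route the same calculation through the MSF decomposition and Lemma~\ref{lem:cartflow}; the extra machinery is sound but not needed here, since only the transmission step touches cross-class edges and the lift merely reintroduces the $C_{q-p-1}$ factor that the paper cancels algebraically.
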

\begin{proof}
By Remark~\ref{rmk:apqoriented}, we can apply Lemma~\ref{lem:matchingscard} to conclude that every pair of classes~$\classt{n}{T}, \classt{n}{T'} \in \mathcal{L}_{pq}$ can exchange one unit of a commodity with congestion at most
$$\frac{|\classt{n}{T}||\classt{n}{T'}|}{|\edget{n}{T}{T'}||V(K_n)|} \leq \frac{|V(K_{q-p-1}) \Box \classtt{q-p-1}{T}||V(K_{q-p-1}) \Box \classtt{q-p-1}{T'}|}{|V(K_{q-p-1}) \Box \edgett{q-p-1}{T}{T'}||V(K_n)|} \leq \frac{|\classtt{q-p-1}{T}||\classtt{q-p-1}{T'}|}{|\edgett{q-p-1}{T}{T'}|}\cdot \frac{C_{q-p-1}}{C_n},$$
with the first expression describing the product of the cardinalities of the classes divided by the number of edges between them, and a normalization factor~$|V(K_n)| = C_n$ according to the definition of congestion. The first inequality comes from Remark~\ref{rmk:apqoriented}, and the second from rearranging terms. Applying Lemma~\ref{lem:matchingscard} gives an upper bound of
$$\frac{C_{n-q+p-q}C_{q-p-1}}{C_n} \leq 1.$$ 
(Actually, this quantity is not only at most one but at most~$\frac{C_{q-p-1}C_{n-q+p}}{C_n} = O(1/(q-p)^{3/2})$.)
\end{proof}

\iffalse Lemma~\ref{lem:sqrtwithinsecond} can be viewed as implicitly describing a collection of MSF problems each with surplus values equal to~$|\mathcal{L}_{pq}|$, since each triangulation in each class~$\classt{n}{T}$ is distributing flow to all other triangulations in~$\mathcal{L}_{pq}$. Lemma~\ref{lem:sqrtwithinsecond} merely bounds the flow across boundary matchings between classes. On the other hand, the inequality in the proof of Lemma~\ref{lem:sqrtwithinsecond} actually shows that one can get the same bound of congestion one even if the surplus values are equal to~$C_{q-p-1}C_{n-q+p}$. This is important, as we are going to use these MSF problems as a ``sub-component'' in a larger flow, with larger demands. We will need the following:\fi

Lemma~\ref{lem:sqrtwithinsecond} describes only the flow across edges \emph{between pairs of central-triangle classes}. The construction in Appendix~\ref{sec:flowdetails} shows how to obtain polynomial congestion \emph{within} classes from this bound. However, as we observed in the proof of Lemma~\ref{lem:fwstrongindstep}, one suffers a loss accounting for flow from~$\kappa = O(n)$ classes. We now show how to improve this~$O(n)$ loss to~$O(\sqrt n)$.

It is easy to see that, for the purpose of analyzing congestion, a uniform multicommodity flow in~$K_n$ is equivalent to the sum of~$|V(K_n)| = C_n$ \emph{single}-commodity flows, one ``originating'' at (having sink set as) a single triangulation~$t \in V(K_n)$. Furthermore, given the \emph{oriented} partition~$\mathcal{S}_n$, and considering the classes~$\classtt{n}{T_1}, \classtt{n}{T_2}, \dots, \classtt{n}{T_n} \in \mathcal{S}_n$, a uniform multicommodity flow in~$K_n$ is equivalent to the sum of~$n$ uniform multicommodity flows, one \emph{within} each class~$\classtt{n}{T_i}$, \emph{added} to~$n$ \emph{multi-way single-commodity flows} (MSFs), each of which distributes flow from one class~$\classtt{n}{T_i}$ to the rest of the graph~$K_n$. 

The congestion bound one obtains for the MSFs (ignoring the flows within the classes) from the analysis in the proof of Lemma~\ref{lem:fwstrongindstep} is then~$\kappa = O(n)$. The following lemma states that we can do better: we can solve these~$n$ MSF problems with congestion~$O(\sqrt n)$ by improving the flow construction. Intuitively, given two triangles~$T_i, T_j$ with third vertices~$i$ and~$j$ on the~$n+2$-gon, the size of the matching~$\edgett{n}{T_i}{T_j}$ between~$\classtt{n}{T_i}$ and~$\classtt{n}{T_j}$ is large when~$|j - i|$ is small, and small when~$|j - i|$ is large. When~$T_i$ and~$T_j$ are far apart ($|j -i|$ is large), we will route some of the $\classtt{n}{T_i}\rightarrow\classtt{n}{T_j}$ flow through a sequence of intermediate classes~$\{\classtt{n}{T_k}\}$, $i < k < j$, taking advantage of the larger matchings between~$\classtt{n}{T_i}$ and~$\classtt{n}{T_k}$, and between~$\classtt{n}{T_k}$ and~$\classtt{n}{T_j}$.

In particular, we will first group the classes into pairs of consecutive classes~$\classtt{n}{T_i}, \classtt{n}{T_{i+1}}$ (with, say, $i = 0 \pmod 2$), and let the two classes within a given pair exchange flow, so that all of the flow originating at either class in the pair is uniformly distributed throughout the pair~$\classtt{n}{T_i} \cup \classtt{n}{T_{i+1}}$. That way, subsequent flow sent by the two classes can now be considered as a single commodity. We will then group these pairs of classes into sets of four classes, then sets of eight, and so on\textemdash reaching~$O(\log n)$ hierarchical levels of sets, until all~$n(n-1)$ ordered pairs of classes have exchanged flow.

\begin{lemma}
\label{lem:logtrick}
Given the flip graph~$K_n$ over the $n+2$-gon and the special edge~$e^*$, consider the triangles~$T_1, T_2, \dots, T_n$ that include~$e^*$ as an edge, such that $T_1, T_2, \dots, T_n$ occur in consecutive order according to their third vertex. Consider the~$n$ MSF problems~$\pi_1,\pi_2, \dots, \pi_n$, one for each \emph{oriented} class~$\classtt{n}{T_i}$, $i = 1, 2, \dots, n$. Suppose each~$\pi_i$ has source set~$\classtt{n}{T_i}$ and sink set~$V(K_n)$, with uniform surplus and demand functions~$\sigma_i = |V(K_n)| = C_n, \delta_i = |\classtt{n}{T_i}|$. Then ~$\pi_1, \pi_2, \dots, \pi_n$ can be reduced to an alternative collection of MSF problems that can be solved with congestion~$O(\sqrt n)$.
\end{lemma}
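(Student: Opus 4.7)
The plan is to reduce the collection $\pi_1, \ldots, \pi_n$ to a hierarchical family of ``shuffling'' MSFs organized into $O(\log n)$ levels, then bound the total congestion level-by-level. I would first organize the indices $\{1, \ldots, n\}$ into a balanced binary tree in which, at level $\ell$, the leaves of any depth-$(\lceil\log n\rceil-\ell)$ subtree form a level-$\ell$ block of $2^\ell$ consecutive indices. For each level $\ell \geq 1$ and each level-$\ell$ block with level-$(\ell-1)$ halves $A$ and $B$, I would introduce a pair of MSFs $\pi_{A,B}^{(\ell)}$ and $\pi_{B,A}^{(\ell)}$ whose source and sink sets are $\bigcup_{i\in A}\classtt{n}{T_i}$ and $\bigcup_{j\in B}\classtt{n}{T_j}$ (and symmetrically), and whose surplus and demand functions are chosen so that, after they are applied, the portion of flow originating in $A\cup B$ and destined for vertices outside $A\cup B$ has been made uniform across $A\cup B$ from the viewpoint of every outside destination.

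Given this setup, I would verify, by induction on $\ell$, that summing all shuffling MSFs at levels $1, 2, \ldots, \lceil\log n\rceil$ together with internal uniform multicommodity flows within each individual class $\classtt{n}{T_i}$ solves each original $\pi_i$. The internal within-class flows are supplied by the inductive hypothesis (each $K_{i'}$ with $i' < n$ carrying a uniform flow of congestion $O(\sqrt{i'}\log i')$) together with Lemma~\ref{lem:cartflow} applied to the Cartesian product structure from Lemma~\ref{lem:projcart}. At level $\ell=\lceil\log n\rceil$ the single block is all of $V(K_n)$, so uniform pooling at this level achieves the required unit flow from every vertex in $\classtt{n}{T_i}$ to every vertex of the graph. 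Each individual $\pi_{A,B}^{(\ell)}$ is then solved by routing its flow proportionally across the union of matchings $\bigcup_{i\in A, j\in B}\edgett{n}{T_i}{T_j}$, whose combined cardinality is at least $|A||B|/C_n$ by Lemma~\ref{lem:matchingscard}.

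The hard part will be passing from the naive congestion bound $O(n)$ to the claimed $O(\sqrt n)$. The crucial structural observation is that each individual matching $\edgett{n}{T_i}{T_j}$ is used by exactly one shuffling MSF, namely the one for the unique level $\ell^*(i,j)$ at which $i$ and $j$ first belong to different halves of a common block; hence congestion on matching edges does not compound across levels and can be bounded level by level. At level $\ell$ the flow crossing the $(A,B)$ cut decomposes into a ``within-block exchange'' piece of total size $|A||B|$, which contributes congestion at most $O(1)$ per matching edge by Lemma~\ref{lem:matchingscard}, plus a ``pooling'' piece whose magnitude relative to the available matching decays geometrically in $\ell$ and therefore sums to $O(1)$ across the hierarchy. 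The remaining work is to show that distributing the pooling flow proportionally to $|\classtt{n}{T_i}||\classtt{n}{T_j}|/C_n$ across matching edges, and using Condition~\ref{addcondbdry} together with Lemma~\ref{lem:cartflow} to push the resulting concentration and distribution subproblems into recursive uniform flows inside smaller flip graphs, yields the $O(\sqrt n)$ congestion bound for the new MSF collection, completing the proof.
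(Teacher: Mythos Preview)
Your hierarchical merging scheme is exactly the right skeleton and matches the paper's construction: group the oriented classes into dyadic blocks and, at each level, let the two halves of a block exchange flow so that their commodities merge. The gap is in your congestion analysis.

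The claim that the ``pooling'' piece decays geometrically and sums to $O(1)$ is not correct. Consider a level-$\ell$ block of $2^\ell$ consecutive classes centered near index $n/2$. Each such class has size $\Theta(C_n/n^{3/2})$, so the block's total triangulation count is only $|A\cup B|\approx 2^\ell C_n/n^{3/2}$. The pooling flow you describe (redistributing the portion destined for outside $A\cup B$) has total magnitude $\Theta(|A||B|(C_n-|A\cup B|)/|A\cup B|)$, and dividing by the combined matching size $\ge |A||B|/C_n$ from Lemma~\ref{lem:matchingscard} gives normalized congestion $\Theta(C_n/|A\cup B|)\approx n^{3/2}/2^\ell$. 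This is largest at the \emph{lowest} levels, not the highest, and sums over $\ell$ to $\Theta(n^{3/2})$, not $O(1)$. More generally, Lemma~\ref{lem:matchingscard} by itself cannot yield $O(\sqrt n)$: it only tells you $|\classtt{n}{T_l}||\classtt{n}{T_r}|/|\edgett{n}{T_l}{T_r}|\le C_n$, which for middle classes with $|\classtt{n}{T_r}|=\Theta(C_n/n^{3/2})$ gives the useless bound $|\classtt{n}{T_l}|/|\edgett{n}{T_l}{T_r}|=O(n^{3/2})$.

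The paper does not try to separate ``exchange'' from ``pooling''. It sends the full surplus $C_n$ per vertex across the $(A,B)$ cut at each level, with each right-half class $\classtt{n}{T_r}$ receiving a $|\classtt{n}{T_r}|/\sum_{r'\in[i,j]}|\classtt{n}{T_{r'}}|$ share, and then uses two quantitative facts you did not invoke. First, the explicit Catalan asymptotics (Lemma~\ref{lem:catalan} combined with Lemma~\ref{lem:projggmatching}) give, assuming without loss of generality $l<r\le n/2$, the much sharper estimate $|\classtt{n}{T_l}|/|\edgett{n}{T_l}{T_r}|=C_{n-l}/(C_{r-l-1}C_{n-r})=O((r-l)^{3/2})\le O((j-i)^{3/2})$. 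Second, monotonicity of $|\classtt{n}{T_{r'}}|$ on $[1,n/2]$ gives $|\classtt{n}{T_r}|/\sum_{r'\in[i,j]}|\classtt{n}{T_{r'}}|=O(1/(j-i))$. Multiplying yields congestion $O(\sqrt{j-i})$ on each matching edge at the $(j-i)$-level, and since each matching is used at exactly one level, the sum $\sum_k O(\sqrt{2^k})=O(\sqrt n)$ finishes the argument. You need both of these ingredients; the ``proportional routing plus Lemma~\ref{lem:matchingscard}'' plan will not reach $O(\sqrt n)$.
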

\begin{proof}
Assume for simplicity that~$n$ is a power of two; it is easy to modify the solution if not. Group the classes hierarchically as described in the disussion preceding this lemma. Let~$\pi_{[i, j]}$ be the problem, defined over the subgraph of~$K_n$ induced by the classes~$\classtt{n}{T_i} \cup \cdots \cup \classtt{n}{T_j}$, of distributing flow from the ``left half'' of the classes~$\classtt{n}{T_i} \cup \cdots \cup \classtt{n}{T_{i+\frac{j-i+1}{2}-1}}$ to the ``right half''~$\classtt{n}{T_{i+\frac{j-i+1}{2}}} \cup \cdots \cup \classtt{n}{T_j}$. Define~$\bar\pi_{[i,j]}$ symmetrically.

As discussed, the original collection of MSF problems~$\{\pi_i\}$ reduces to a collection~$\{\pi_{[i,j]}, \bar\pi_{[i,j]}\}$, where the pairs~$[i,j]$ are those induced by hierarchically partitioning the classes\textemdash first into problems~$\pi_{[1, n]}$ and~$\bar\pi_{[1, n]}$, then into (on the ``left-hand side'')~$\pi_{[1, n/2]}, \bar\pi_{[1, n/2]}$ and (on the ``right-hand side'')~$\pi_{[n/2+1, n]}, \bar\pi_{[n/2+1, n]}$, then into four pairs of problems, and so on.

Now, for a given pair of problems~$\pi_{[i, j]}, \bar\pi{[i,j]}$, each class~$\classtt{n}{T_l}$ on the ``left-hand side''~$\classtt{n}{T_i} \cup \cdots \cup \classtt{n}{T_{i+\frac{j-i+1}{2}-1}}$, $i \leq l \leq i+\frac{j-i+1}{2}-1$, must distribute~$C_n|\classtt{n}{T_l}|$ units of flow\textemdash that is, the demand~$\sigma_{[i,j]}$ times the size of the source set~$\classtt{n}{T_l}$ of~$\pi_{[i,j]}$\textemdash to the right-hand side, and vice versa. Each  class~$\classtt{n}{T_r}$ on the right-hand side receives a~$\frac{|\classtt{n}{T_r}|}{|\sum_{r'\in[i, j]}|\classtt{n}{T_{r'}}|}$ factor of this flow, and the flow must be distributed across the matching~$|\edgett{n}{T_l}{T_r}|$.

This produces congestion at most
$$O\left(\frac{C_n|\classtt{n}{T_l}||\classtt{n}{T_r}|}{|\edgett{n}{T_l}{T_r}|\sum_{r'\in[i,j]}|\classtt{n}{T_{r'}}|C_n}\right) = O\left(\frac{|\classtt{n}{T_l}||\classtt{n}{T_r}|}{|\edgett{n}{T_l}{T_r}|\sum_{r'\in[i,j]}|\classtt{n}{T_{r'}}|}\right).$$

We will bound this quantity as~$O(\sqrt{j-i})$, by showing that $\frac{|\classtt{n}{T_l}|}{|\edgett{n}{T_l}{T_r}|} = O((j-i)^{3/2})$ and that $\frac{|\classtt{n}{T_{r}}|}{|\sum_{r'\in[i,j]}|\classtt{n}{T_{r'}}|} = O(1/(j-i)).$

The first inequality is true because, by Lemma~\ref{lem:projggmatching}, $\edgett{n}{T_l}{T_r}$ is in bijection with the vertex set of a Cartesian product~$K_{l-1} \Box K_{r-l-1} \Box K_{n-r}$ graph, whereas~$\classtt{n}{T_l} \cong K_{l-1} \Box K_{n-l}$. Thus~$\frac{|\classtt{n}{T_l}|}{|\edgett{n}{T_l}{T_r}|} \leq \frac{C_{n-l}}{C_{r-l-1}{n-r}}$. We can assume without loss of generality that~$1 \leq l \leq r \leq n/2$ (since~$T_l$ and~$T_r$ send one another the same amount of flow), and therefore this quantity is at most~$O((r-l)^{3/2}) = O((j-i)^{3/2})$.

The second inequality can be seen by noticing that for all $r' \in [i, j]$, $\classtt{n}{T_{r'}} \cong K_{r'-1} \Box K_{n-r'}$, so~$|\classtt{n}{T_{r'}}| = C_{r'-1}{C_n-r'} = \frac{C_n}{\Theta({r'}^{3/2})}.$ Since this is a decreasing function of~$r'$, we have
$$\sum_{i\leq r' \leq j}|\classtt{n}{T_{r'}}| \geq \sum_{i \leq r' \leq r}|\classtt{n}{T_{r'}}| \geq (r-i+1)|\classtt{n}{T_{r}}| \geq \frac{(j-i)}{2}|\classtt{n}{T_r}|,$$
which implies certainly that~$|\classtt{n}{T_r}||\sum_{r'\in[i,j]}| = O(j-i)$.

With every MSF pair~$\pi_{[i,j]},\hat\pi_{[i,j]}$ solvable with congestion~$O(\sqrt{j-i})$, where~$j-i=1, 2, 4, 8, \dots, n$, it is easy to see that the overall congestion is~$\sum_{k=0}^{\log n}\sqrt{2^k}$ = $O(\sqrt n)$, as claimed.

Finally, one may worry that there may be a factor~$(j-i+1)/2$ gain in congestion for each~$\pi_{[i,j]},\bar\pi_{[i,j]}$ pair, since~$\classtt{n}{T_r}$ must receive flow from $(j-i+1)/2$ classes\textemdash just as we had a~$\kappa$-factor gain in the proof of Lemma~\ref{lem:fwstrongindstep}. However, that gain occurred because we had~$\kappa$ separate MSF problems. Here, however, we only have two MSF problems, inducing two flows. The same construction we used in that proof then gives $O(1)$ congestion per~$\pi_{[i,j]},\bar\pi_{[i,j]}$ pair.
\iffalse  Consider the boundary sets~$\bdrytt{n}{T_r}{T_l} \cong \classtt{n}{U_l} \Box K_{n-r}$ and~$\bdrytt{n}{T_r}{T_{l'}} \cong \classtt{n}{U_{l'}} \Box K_{n-r}$ along which~$\classtt{n}{T_r}$ (on the right-hand side of the~$[i,j]$ interval) receives flow from two given classes~$\classtt{n}{T_l}$ and~$\classtt{n}{T_{l'}}$ on the left-hand side of the~$[i,j]$ interval ($i \leq l < l' \leq i+(j-i+1)/2-1$). (Here~$\classtt{n}{U_l}$ and~$\classtt{n}{U_{l'}}$ are classes in~$\mathcal{S}_{r-1}$.) Just as in\fi

\end{proof}

\begin{corollary}
\label{cor:sqrtwithinsecondtotal}
Within~$\mathcal{L}_{pq}$, for a given~$p\in[0, n+1]$, consider a collection of~$q-p-1$ MSF problems, each of which corresponds to one class~$\classt{n}{T} \in \mathcal{L}_{pq}$ and describes distributing a single commodity with surplus value~$C_n$, initially concentrated in~$\classt{n}{T}$, throughout the rest of~$\mathcal{L}_{pq}$. All of these problems can be solved while producing total congestion~$O(\sqrt{q-p})$.
\end{corollary}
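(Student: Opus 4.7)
The plan is to apply the hierarchical pairing machinery of Lemma~\ref{lem:logtrick} directly to the oriented-class-like structure that $\mathcal{L}_{pq}$ inherits from Remark~\ref{rmk:apqoriented}, and then lift the resulting flow construction to $\mathcal{L}_{pq}$ via the Cartesian product decomposition of its classes.

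First, I would use Remark~\ref{rmk:apqoriented} to identify each class $\classt{n}{T}\in\mathcal{L}_{pq}$ with a Cartesian product $\classtt{q-p-1}{T_r}\Box K_{n-q+p-1}$, where the first factor ranges over the $q-p-1$ oriented classes of the smaller flip graph $K_{q-p-1}$ (indexed by the choice of third triangle vertex $r$). This provides a bijection between classes of $\mathcal{L}_{pq}$ and oriented classes in $K_{q-p-1}$, so the $q-p-1$ MSF problems of the corollary correspond exactly to the $q-p-1$ oriented MSF problems to which Lemma~\ref{lem:logtrick} applies in $K_{q-p-1}$. I would therefore invoke Lemma~\ref{lem:logtrick} in $K_{q-p-1}$: its $O(\log(q-p))$-level hierarchical pairing scheme produces a joint flow solving all $q-p-1$ problems simultaneously with total congestion $O(\sqrt{q-p})$.

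Next, I would lift this flow from $K_{q-p-1}$ to $\mathcal{L}_{pq}$ as follows: for every arc of $K_{q-p-1}$ used by the Lemma~\ref{lem:logtrick} flow, send the corresponding scaled amount of flow, distributed evenly, across each of the lifted matching edges in the Cartesian-product-induced matching between the two corresponding classes of $\mathcal{L}_{pq}$. Within each class, the commodity is shuffled uniformly using the inductive uniform multicommodity flow within the class (available because the class is a Cartesian product of smaller flip graphs, via Lemma~\ref{lem:cartflow} and the hypothesis of Lemma~\ref{lem:flowind}). The congestion accounting then mirrors that of the proof of Lemma~\ref{lem:sqrtwithinsecond}: every boundary matching $|\edget{n}{T}{T'}|$ between a pair of classes in $\mathcal{L}_{pq}$ is scaled from its counterpart $|\edgett{q-p-1}{T_r'}{T_{r'}'}|$ in $K_{q-p-1}$ by the common factor $|V(K_{n-q+p-1})|$, so this factor appears symmetrically in the numerator and denominator and cancels. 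The surplus $C_n$ in the corollary (rather than the smaller native surplus in Lemma~\ref{lem:logtrick}) is exactly absorbed by the fact that normalized congestion is divided by $|V(K_n)|=C_n$, so the two scalings offset.

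The main subtlety\textemdash and what I expect to be the principal technical obstacle\textemdash is avoiding the sort of $\kappa$-factor loss that appeared in the proof of Lemma~\ref{lem:fwstrongindstep} when summing congestion contributions over $\Theta((q-p)^2)$ source-sink class pairs. We sidestep this in the same way as the final argument in the proof of Lemma~\ref{lem:logtrick}: each MSF problem is single-commodity, so the hierarchical pairing processes all $q-p-1$ problems in parallel, giving $O(\sqrt{2^k})$ congestion at level $k$ and a geometric sum of $O(\sqrt{q-p})$ overall. That argument transfers verbatim after the lift, since the lift preserves the relative congestion ratios. Combining these steps yields the claimed total congestion $O(\sqrt{q-p})$.
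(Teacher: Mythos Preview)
Your overall strategy\textemdash reduce via Remark~\ref{rmk:apqoriented} to an oriented-class structure, apply the hierarchical pairing of Lemma~\ref{lem:logtrick}, and lift back through the Cartesian product with the appropriate scaling\textemdash is the paper's approach. However, you have the two Cartesian factors reversed. Remark~\ref{rmk:apqoriented} says each class $\classt{n}{T}\in\mathcal{L}_{pq}$ decomposes as $K_{q-p-1}\Box\classtt{}{T}$, where the \emph{fixed} factor is $K_{q-p-1}$ (the small sub-polygon between $p$ and $q$) and the \emph{varying} oriented class $\classtt{}{T}$ lives in the flip graph of the larger sub-polygon on the other side of the diagonal $pq$\textemdash the one containing the third triangle vertex $r$. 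So the bijection is between classes of $\mathcal{L}_{pq}$ and a \emph{subset} of oriented classes in that larger flip graph, not with the full set of oriented classes of $K_{q-p-1}$.

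This swap is not cosmetic. Invoking Lemma~\ref{lem:logtrick} in the larger flip graph naively yields congestion $O(\sqrt{n-q+p})$, not $O(\sqrt{q-p})$. The paper recovers the desired bound by an observation you omit: the shortest-side constraint on the central triangle confines $r$ to a range of length at most $q-p$, so only $O(q-p)$ oriented classes participate, and hence the hierarchical pairing terminates after $O(\log(q-p))$ levels, giving the geometric sum $O(\sqrt{q-p})$. Your scaling paragraph is correct in spirit and matches the paper's: with the native surplus one first gets congestion $O(\sqrt{q-p})\cdot C_{q-p-1}C_{n-q+p}/C_n$, and scaling the surplus up to $C_n$ multiplies by $C_n/(C_{q-p-1}C_{n-q+p})$, leaving $O(\sqrt{q-p})$.
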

\begin{proof}
It suffices to combine the constructions in Lemma~\ref{lem:sqrtwithinsecond} and Lemma~\ref{lem:logtrick}. The exchange in Lemma~\ref{lem:sqrtwithinsecond}, that is, induces MSF subproblems that can be viewed, by Remark~\ref{rmk:apqoriented}, as an exchange between pairs of~\emph{oriented} subclasses of~$K_{n-q+p}$ (in copies of~$K_{q-p-1})$\textemdash in which the surplus values are all~$C_{q-p-1}C_{n-q+p}$. We can then apply Lemma~\ref{lem:logtrick} to obtain congestion~$O(\sqrt n)\frac{C_{q-p-1}C_{n-q+p}}{C_n}$.

Here we need to be careful. First, it may be that this bound exceeds~$O(\sqrt n)$, in particular if~$\frac{C_{q-p-1}C_{n-q+p}}{C_n}\geq \omega(1/\sqrt{n})$. Fortunately, it is easy to see from the proof of Lemma~\ref{lem:logtrick} that the~$O(\sqrt n)$ bound in that lemma can be sharpened to~$O(\sqrt{q - p})$ (by noticing that at no level of the hierarchical partitioning do we have~$j - i > q - p$). Thus we have the congestion bound~$O(\sqrt{q-p})\frac{C_{q-p-1}C_{n-q+p}}{C_n}.$

Finally, we have assumed surplus values of~$C_{q-p-1}C_{n-q+p}$. Actually, however, the present claim concerns surplus values~$C_n$. Scaling by~$\frac{C_n}{C_{q-p-1}C_{n-q+p}}$ gives congestion~$O(\sqrt{q-p})$.
\end{proof}

\iffalse 
(One important subtlety is that we have ``un-normalized'' and then ``re-normalized'' congestion in this calculation. The application of Lemma~\ref{lem:logtrick} concerns MSF problems in which the total amount of flow sent to all other triangulations by a given triangulation is~$C_{n-q+p-1}$, and thus the resulting total un-normalized congestion is~$O(\sqrt n C_{n-q+p-1})$. In our setting, \fi

\begin{lemma}
\label{lem:sqrtwithinapex}
Within a given~$\mathcal{A}_p$, $p\in[0, n+1]$, consider a collection of MSF problems, one for each~$\mathcal{L}_{pq} \in \mathcal{A}_p$ (with source set~$\mathcal{L}_{pq}$, with surplus values~$C_n$), with flow that must be distributed uniformly throughout~$\mathcal{A}_p$. It is possible to solve these problems while producing total congestion~$O(\sqrt{n})$.
\end{lemma}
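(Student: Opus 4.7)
The plan is to mimic, at one level up the hierarchy, the ``log trick'' of Lemma~\ref{lem:logtrick}: where that lemma paired oriented classes by third-vertex index, here I will pair the sub-regions $\mathcal{L}_{pq}$ (for fixed apex $p$) by their second-vertex index $q$. Corollary~\ref{cor:sqrtwithinsecondtotal} plays the role at this level that Lemma~\ref{lem:sqrtwithinsecond} played below it, acting as the ``exchange primitive'' that aggregates flow within a single $\mathcal{L}_{pq}$ before routing it across sub-region boundaries.

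Concretely, I would first observe that $\mathcal{A}_p$ decomposes as the disjoint union of the $\mathcal{L}_{pq}$ over the admissible second-vertex indices $q$, which (once the apex is fixed) range over an interval of size $O(n)$. Indexing these sub-regions counterclockwise by $q$, I hierarchically partition the indices into nested dyadic intervals $[i,j]$, reducing the original collection of MSF problems (one per $\mathcal{L}_{pq}$, each with surplus $C_n$ and sink set $\mathcal{A}_p$) to a collection of paired problems $\pi_{[i,j]}, \bar\pi_{[i,j]}$ of size $O(\log n)$ per index, in which the ``left half'' of the $\mathcal{L}_{pq}$'s in $[i,j]$ sends flow collectively to the ``right half,'' and vice versa. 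This is precisely the reduction at the start of the proof of Lemma~\ref{lem:logtrick}.

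For each such pair, I would route the exchange in two stages: first ``shuffle'' the outbound flow inside each $\mathcal{L}_{pq}$ using the uniform multicommodity flow within $\mathcal{L}_{pq}$ with congestion $O(\sqrt{q-p}) = O(\sqrt{n})$ provided by Corollary~\ref{cor:sqrtwithinsecondtotal}; then route the single-commodity aggregate across the matchings between an $\mathcal{L}_{pq}$ on the left half and an $\mathcal{L}_{pq'}$ on the right half. The essential calculation is to bound this cross-matching congestion: using the Cartesian product description of $\classt{n}{T} \in \mathcal{L}_{pq}$ from Remark~\ref{rmk:apqoriented} together with Lemma~\ref{lem:matchingscard}, the matching $\edget{n}{T}{T'}$ for $\classt{n}{T}\in\mathcal{L}_{pq}, \classt{n}{T'}\in\mathcal{L}_{pq'}$ is large enough that, after normalization by $C_n$ and summation over all class-pairs in the two halves, each dyadic level contributes congestion $O(\sqrt{j-i})$ to its pair $\pi_{[i,j]},\bar\pi_{[i,j]}$. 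Summing $\sum_{k=0}^{\log n}\sqrt{2^k} = O(\sqrt{n})$ across the $O(\log n)$ dyadic levels yields the claimed total congestion.

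The main obstacle will be the bookkeeping for the matching sizes at this level: one must verify that when the second vertex changes from $q$ to $q'$ while the apex remains $p$, the Catalan-factor ratio $|\classt{n}{T}||\classt{n}{T'}|/(|\edget{n}{T}{T'}|\cdot C_n)$ exhibits the same $\Theta((q'-q)^{-3/2})$ scaling that made the two inequalities in the proof of Lemma~\ref{lem:logtrick} work\textemdash this amounts to careful accounting of which sub-polygons of $P_{n+2}$ are carved off by the triangles $pqr$ and $pq'r'$. A secondary subtlety is that naively summing over the many class-pairs within a single level could reintroduce a factor of $\kappa = O(n)$, just as in the proof of Lemma~\ref{lem:fwstrongindstep}; this is avoided, exactly as at the end of the proof of Lemma~\ref{lem:logtrick}, by treating each $\pi_{[i,j]}, \bar\pi_{[i,j]}$ as a single pair of MSF problems (with a single commodity on each side) rather than as a quadratic collection of pairwise problems.
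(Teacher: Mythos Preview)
Your high-level plan---hierarchically grouping the $\mathcal{L}_{pq}$ by second-vertex index $q$ and summing $\sum_k \sqrt{2^k}=O(\sqrt n)$ across dyadic levels---is exactly what the paper does. But two concrete obstacles stand between your sketch and a working proof, and the paper spends most of its effort on them.

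First, the matching between $\mathcal{L}_{pq}$ and $\mathcal{L}_{pq'}$ is not what you assume. The neighbor of $\classt{n}{T_{pqr}}$ across the $q\to q'$ flip is $\classt{n}{T_{pq'r}}$, but $T_{pq'r}$ may fail the shortest-edge constraint defining membership in $\mathcal{L}_{pq'}$ (or even in $\mathcal{A}_p$). When $q$ is near $n/4$, the number of central-triangle classes in $\mathcal{L}_{pq}$ is only $2(q-n/4)$, so most classes in $\mathcal{L}_{pq'}$ have no neighbor in $\mathcal{L}_{pq}$ at all. The paper handles this by first restricting to $q\in[7n/24,n/2)$---where it can show a $\Theta(1)$ fraction of triangulations in $\mathcal{L}_{pq'}$ do have neighbors in $\mathcal{L}_{pq}$---and then treating the range $q\in[n/4,7n/24)$ separately with its own instance of the hierarchical trick.

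Second, and more seriously, your use of Corollary~\ref{cor:sqrtwithinsecondtotal} as a black-box shuffle does not give $O(\sqrt n)$. After flow crosses into $\mathcal{L}_{pq}$ from the various $\mathcal{L}_{pq'}$, it lands \emph{non-uniformly} across the central-triangle classes $\classt{n}{T_{pqr}}$: a class with $r$ close to $n/2$ receives a factor $\Theta(\sqrt{(r-q)/(r-n/2)})$ more per vertex than the average. The paper explicitly notes that plugging this into the redistribution primitive yields congestion $O(\sqrt n\cdot\sqrt{(r-q)/(r-n/2)})$, which is too large. Its fix is to split the incoming flow by whether $q'-q\ge (r-q)/2$ or not; the ``near'' case contributes only $O(\log n)$ average surplus per class and can use the corollary directly, while the ``far'' case requires a \emph{second, nested} application of the hierarchical grouping trick inside $\mathcal{L}_{pq}$, together with a careful averaging argument bounding the mean surplus over each dyadic block by $O(\sqrt{(n/2-q)/j})$. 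Your proposal does not anticipate this non-uniformity, and without it the argument does not close.
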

\begin{proof}
To route flow between pairs of central-triangle classes lying in distinct second-vertex classes, i.e. between~$\classt{n}{T_{pqr}} \in \mathcal{L}_{pq},\classt{n}{T_{pq'r'}} \in \mathcal{L}_{pq'},$ $q \neq q'$, we will use the same trick as in the hierarchical grouping in Lemma~\ref{lem:logtrick}.  Assume that~$p=0$ without loss of generality and for simplicity. For all~$\mathcal{L}_{pq}, \in \mathcal{A}_p$, it holds that~$n/4\leq q < n/2$: $q \geq n/4$ since the triangle edge~$pq$ must be at least as long as the edge~$qr$ in any~$T_{pqr}, \classt{n}{T_{pqr}} \in \mathcal{L}_{pq}$ by definition of~$\mathcal{L}_{pq}$ and of~$\mathcal{A}_p$, and $q \leq n/2$ since for every~$\classt{n}{T_{pqr}}$, $T_{pqr}$ is a central triangle.
It will turn out to be convenient to include in the grouping only the classes~$\{\mathcal{L}_{pq} | 7n/24 \leq q < n/2\}$. Order the second-vertex-induced classes~$\{\mathcal{L}_{pq}\}$ with~$q \in [7n/24, n/2-1]$ in increasing order. Group pairs of adjacent second-vertex classes, then group these pairs into adjacent pairs, and so on.

Suppose $q < q'$ without loss of generality. At the~$j-i$ level of the grouping, i.e. the level at which the number of second-vertex classes on the left- and right-hand sides combined is~$j-i$, the amount of flow to be exchanged between~$\mathcal{L}_{pq}$ and~$\mathcal{L}_{pq'}$ lying on respectively the left and right-hand sides of the group, in each direction, is
$$\frac{C_n|\mathcal{L}_{pq}||\mathcal{L}_{pq'}|}{\sum_{i\leq q'' \leq j}|\mathcal{L}_{pq''}|},$$
where~$[i, j]$ is the interval of classes~$\mathcal{L}_{pi},\dots, \mathcal{L}_{pj}$ defining the group.

Let $\edget{n}{L_{pq}}{L_{pq'}} = \bigcup_{\classt{n}{T_{pqr}}\in\mathcal{L}_{pq}} \edget{n}{T_{pqr}}{T_{pq'r}}$ denote the matching connecting~$\mathcal{L}_{pq}$ and~$\mathcal{L}_{pq'}$. The resulting congestion is at most
$$\frac{|\mathcal{L}_{pq}||\mathcal{L}_{pq'}|}{\sum_{i\leq q'' \leq j}|\mathcal{L}_{pq''}||\edget{n}{L_{pq}}{L_{pq'}}|} \leq \frac{O((q' - q)^{3/2})}{(j-i)},$$
where the inequality holds because ~$|\mathcal{L}_{pq''}| \geq |\mathcal{L}_{pq}|$ for~$i + (j-i)/2 \leq q'' \leq j$, so that~$\frac{|\mathcal{L}_{pq}|}{\sum_{i\leq q'' \leq j}|\mathcal{L}_{pq''}|} \leq \frac{1}{(j-i)/2}$,
and because~$\frac{|\mathcal{L}_{pq'}|}{|\edget{n}{L_{pq}}{L_{pq'}}|} = O((q'-q)^{3/2})$.
The latter fact can be seen as follows: first,
$$\edget{n}{L_{pq}}{L_{pq'}} = \bigcup_{T_{pqr} \in \mathcal{L}_{pq}} \edget{n}{T_{pqr}}{T_{pq'r}}.$$
Every~$\classt{n}{T_{pqr}}$ has a nonempty matching to its neighboring class~$\classt{n}{T_{pq'r}}$, and indeed~$\classt{n}{T_{pq'r}}$ lies in~$\mathcal{A}_p$ and in~$\mathcal{L}_{pq'}$. On the other hand, due to the constraint for membership in~$\mathcal{A}_p$ that~$pq$ and~$pq'$ be the shortest edges of their respective central triangles, there may exist some values of~$r$ for which~$\mathcal{T}_{pq'r} \in \mathcal{L}_{pq'}$ but for which there is no neighbor of~$\mathcal{T}_{pq'r} \in \mathcal{L}_{pq}$. Fortunately:
\begin{enumerate}[(i)]
\item Since by assumption~$q \geq 7n/24$, it is easy to show that~$\classt{n}{T_{pqr}} \in \mathcal{L}_{pq}$ (i.e. the edge~$qr$ is indeed shorter than the edges~$pq$ and~$rp$) for~$r=n/2+1, \dots, 2\cdot 7n/24 = 7n/12$, and thus there are at least~$n/12$ central-triangle classes in~$\mathcal{L}_{pq}$ (and thus at least as many in~$\mathcal{L}_{pq'}$.
\item In~$\mathcal{L}_{pq}$ (and similarly~$\mathcal{L}_{pq'}$), the central-triangle classes occur in decreasing order of size (up to asymptotic order) as~$r$ increases.
\end{enumerate}

Facts~(i) and~(ii) imply that an~$\Omega(1)$ factor of the triangulations in~$\mathcal{L}_{pq'}$ lie in central-triangle classes having a neighboring class in~$\mathcal{L}_{pq}$, and thus
$$|\edget{n}{L_{pq}}{L_{pq'}}| = \Omega(1)\Omega(\frac{1}{(q'-q)^{3/2}}|\classt{n}{L_{pq'}}|.$$

Now, the~$\frac{O((q' - q)^{3/2})}{(j-i)} \leq \sqrt{j-i} \leq \sqrt{n}$ congestion that occurs across the boundary matching~$\edget{n}{L_{pq}}{L_{pq'}}$ for a given~$q,q'$ pair occurs  for a \emph{single commodity}, at a single level in the hierarchical grouping. We need to distribute this flow evenly first throughout each class~$\classt{n}{T_{pqr}}$ that receives it, and then throughout~$\mathcal{L}_{pq}$. By the same reasoning as in the proof of Lemma~\ref{lem:logtrick}, this flow can be distributed throughout a given class~$\classt{n}{T_{pqr}} \in \mathcal{L}_{pq}$ with no asymptotic congestion gain. Summing over all levels of the grouping produces
$$\sum_{s=0}^{\log(n/2-7n/24)} O(\sqrt{2^s}) = O(\sqrt n)$$
congestion within each~$\classt{n}{T_{pqr}}$.

To distribute the flow received by~$\classt{n}{T_{pqr}}$ throughout~$\mathcal{L}_{pq}$, first notice that the total amount of (normalized by a factor of~$C_n$) flow received by~$\classt{n}{T_{pqr}}$ is at most~$O(\log(r-q))|\classt{n}{T_{pqr}}|$ from classes~$\classt{n}{T_{pq'r}}, q < q' < (r-q)/2$, because each vertex in each boundary set~$\bdryt{n}{T_{pqr}}{T_{pq'r}}$ receives~$O(\sqrt{q'-q})$ flow and ~$|\bdryt{n}{T_{pqr}}| = \Theta(\frac{1}{(q'-q)^{3/2}})|\classt{n}{T_{pqr}}|,$ so the total is
$$|\classt{n}{T_{pqr}}|\sum_{k=1}^{(r-q)/2} \frac{\sqrt k}{k^{3/2}} = O(\log(r-q))|\classt{n}{T_{pqr}}|.$$

The analysis is similar for classes with~$q' < q$.

Now notice that the total amount of normalized flow received by~$\classt{n}{T_{pqr}}$ from classes~$\classt{n}{T_{pq'r}}$ with~$q' - q \geq (r-q)/2$ is at most
$$\sum_{k=r-n/2}^{(r-q)/2}O\left(\frac{1}{k^{3/2}}\right)\cdot O(\sqrt{(r-q)})|\classt{n}{T_{pqr}}| = O(\frac{\sqrt{r-q}}{\sqrt{r-n/2}})|\classt{n}{T_{pqr}}|.$$

Recall that we are dealing with a single commodity. Thus we do not have multiple MSFs to be concerned about. Unfortunately, however, the bound given by the construction in the proof of Lemma~\ref{lem:fwstrongindstep} gives a bound of~$O(\frac{\sqrt{r-q}}{\sqrt{r-n/2}}\cdot \sqrt{n})$, insufficient for our purposes. 

Fortunately, we can apply the hierarchical grouping trick again within~$\mathcal{L}_{pq}$, but we need to take care: first, it is insufficient merely to apply Corollary~\ref{cor:sqrtwithinsecondtotal}, as we simply recover the~$O(\sqrt{n})$ factor gain mentioned above. Second, unlike in Corollary~\ref{cor:sqrtwithinsecondtotal}, we are dealing here with only a single commodity (this will help us). What we do is observe that since the average flow received by a class~$\classt{n}{T_{pqr}}$ is, as stated,~$O(\frac{\sqrt{r-q}}{\sqrt{r-n/2}}) = O(\frac{\sqrt{n/2-q}}{\sqrt{r-n/2}})$, this bound decreases as~$r$ increases, and the average over all classes~$\classt{n}{T_{pqr}}$ within the range~$r \in[i,j]$ (assuming for the worst case that~$i = n/2+(n/2-1)$, since we are only considering flow from classes~$q'$ with) is at most
$$\sum_{r=n/2+1}^{n/2+j}\frac{(j+n/2-q)^{3/2}\sqrt{n/2-q}}{j(r-q)^{3/2}\sqrt{r-n/2}},$$
where we have used the fact that
$$\sum_{s\in[n/2+1,n/2+j]}|\classt{n}{T_{pqs}}| \geq j\cdot \frac{\sqrt{n/2-q}}{(j+n/2-q)^{3/2}}|\mathcal{L}_{pq}|,$$
since the left-hand side is a sum of~$j$ terms each of which is at least
$$|\classt{n}{T_{pq(n/2+j)}}| = \frac{\sqrt{n/2-q}}{(j+n/2-q)^{3/2}}|\mathcal{L}_{pq}|,$$
and also the fact that
$$|\classt{n}{T_{pqr}}| = \frac{O(\sqrt{n/2-q})}{(r-q)^{3/2}}|\mathcal{L}_{pq}|.$$

When~$j \leq n/2 - q$, we can bound the term
$$\sum_{r=n/2+1}^{n/2+j}\frac{(j+n/2-q)^{3/2}\sqrt{n/2-q}}{j(r-q)^{3/2}\sqrt{r-n/2}} \leq \frac{(j+n/2-q)^{3/2}}{n/2-q}\frac{1}{j}\sum_{r=n/2+1}^{n/2+j}\frac{1}{\sqrt{r-n/2}}$$
$$\leq \sqrt{n/2-q}\frac{\sqrt{j}}{j} = \frac{\sqrt{n/2-q}}{\sqrt{j}}$$
since~$r-q \geq n/2 - q$ always, and since we are assuming~$j \leq n/2 - q$.

When~$j > n/2 - q$, remember that we are considering only the flow to each~$\classt{n}{T_{pqr}}$ from classes~$\classt{n}{T_{pq'r}}$ with~$q' - q \geq (r-q)/2$, and therefore with~$r \leq n/2 - q$. Thus the average never exceeds~$\frac{\sqrt{n/2-q}}{\sqrt{j}}$.

Let~$\mu_{j-i}$ denote this average. Now we can bound the congestion across a given matching~$\edget{n}{T_{pqr}}{T_{pqr'}}$, for~$\classt{n}{T_{pqr}}, \classt{n}{T_{pqr'}} \in \mathcal{L}_{pq}$ as
$$\frac{C_n\mu_{j-i}|\classt{n}{T_{pqr}}||\classt{n}{T_{pqr'}}|}{C_n\sum_{s=i}^j |\classt{n}{T_{pqs}}||\edget{n}{T_{pqr}}{T_{pqr'}}|} \leq \frac{(r'-r)^{3/2}\mu_{j-i}}{j-i} \leq \frac{(r'-r)^{3/2}\sqrt{n/2-q}}{(j-i)^{3/2}} \leq \sqrt{n/2-q} \leq \sqrt{n}$$
for all $j-i$.
Since we are dealing with a single commodity and every class~$\classt{n}{T_{pqr}}$ has at most~$O(\sqrt{n})$ surplus or demand in each of its boundary vertices, we can use the same construction as in the proof of Lemma~\ref{lem:fwstrongindstep} to conclude that the overall resulting congestion in~$\mathcal{L}_{pq}$ is at most~$O(\sqrt{n})$.

It remains to consider the flow received by~$\classt{n}{T_{pqr}}$ from classes~$\classt{n}{T_{pq'r}}$ with~$q' - q \leq (r-q)/2$. As we have already observed, the average for each~$\classt{n}{T_{pqr}}$ is at most~$O(\log(r-q)) = O(\log n)$, and thus we can simply apply Corollary~\ref{cor:sqrtwithinsecondtotal} to obtain~$O(\sqrt n \log n)$ congestion.

Lastly, we have only distributed flow so far among the classes~$\mathcal{L}_{pq}$ with~$q \geq 7n/24$. We need to send flow from classes with~$q \geq 7n/24$ to those with~$n/4 \leq q < 7n/24$ and vice versa. We will first use the same construction as above to concentrate all of the flow from the~$[7n/24, n/2]$ classes within the~$[7n/24, n/3]$ classes. Because (as it is easy to show) the~$[7n/24, n/3]$ classes constitute a~$\Theta(1)$ factor of the triangulations in~$\mathcal{A}_p$, this concentration causes at most an~$O(1)$ increase in congestion.

Now let the~$[7n/24, n/3]$ and the~$[n/4, 7n/24]$ classes exchange flow. Once more we apply the hierarchical grouping trick. The challenge is now that for the number of central-triangle classes in~$\mathcal{L}_{pq}$ is small. Let~$\chi(\mathcal{L}_{pq}) = |\{\classt{n}{T}_{pqr} | \classt{n}{T_{pqr}} \in \mathcal{L}_{pq}\}|$ denote the number of central-triangle classes in~$\mathcal{L}_{pq}$. It is easy to show that~$\chi(\mathcal{L}_{pq}) = 2(q-n/4)$ whenever~$n/4 \leq q \leq n/3$.

Thus for~$n/4 \leq q < q' \leq n/3$ we can bound
$$\frac{C_n|\mathcal{L}_{pq}||\mathcal{L}_{pq'}|}{C_n\sum_{q''=i}^j |\mathcal{L}_{pq''}||\edget{n}{L_{pq}}{L_{pq'}}|} \leq \frac{(q'-q)^{3/2}\chi(\mathcal{L}_{pq'})(1/n^{3})}{(j-i)/2\chi(\mathcal{L}_{p(q'-(j-i)/2)})(1/n^{3})} = O(\sqrt{j-i})$$
for each group, and we are done.
\end{proof}

\begin{lemma}
\label{lem:sqrtwithinregion}
Within every region~$\mathcal{U}_i$, it is possible to route a unit of flow between every ordered pair of triangulations~$t,t' \in \mathcal{U}_i$ while producing total congestion~$O(\sqrt{n}\log n)$.
\end{lemma}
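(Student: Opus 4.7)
I plan to apply the hierarchical grouping trick one more level, this time across the apex classes $\{\mathcal{A}_p\}$ whose union forms the region $\mathcal{U}_i$, building directly on Lemma~\ref{lem:sqrtwithinapex}.

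First I would decompose the overall MSF for each ordered pair $(t,t')$ with $t,t' \in \mathcal{U}_i$ into an intra-apex contribution (when $t$ and $t'$ lie in the same $\mathcal{A}_p$) and an inter-apex contribution. The intra-apex contribution is handled directly by invoking Lemma~\ref{lem:sqrtwithinapex} once within each $\mathcal{A}_p \subseteq \mathcal{U}_i$, yielding $O(\sqrt{n})$ congestion. Since the $\mathcal{A}_p$'s partition $\mathcal{U}_i$ (after tie-breaking the apex choice), summing these disjoint invocations introduces no multiplicative gain and can only contribute to congestion on edges internal to a single $\mathcal{A}_p$.

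Second, for inter-apex flow, I would decompose each ordered pair $(\mathcal{A}_p, \mathcal{A}_{p'})$ MSF into a shuffling flow within $\mathcal{A}_p$, a cross-apex transmission, and a distribution flow within $\mathcal{A}_{p'}$, in the same spirit as the MSF decompositions in Appendix~\ref{sec:flowdetails}. The shuffling and distribution steps are handled by additional single-commodity invocations of Lemma~\ref{lem:sqrtwithinapex}, which lets the transmission step treat each ordered pair $(\mathcal{A}_p, \mathcal{A}_{p'})$ as exchanging a single commodity. I would then order the apex values within the region and hierarchically pair them---pairs of adjacent $\mathcal{A}_p$'s first, then pairs of pairs, and so on---for $O(\log n)$ levels, exactly as in Lemma~\ref{lem:logtrick} and Lemma~\ref{lem:sqrtwithinapex}. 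At a level where the combined size of the two super-groups is $j-i$, the per-level congestion generated across the cross-apex matching is $O(\sqrt{j-i})$; summing the geometric series $\sum_{s=0}^{\log(n/12)} O(\sqrt{2^s}) = O(\sqrt{n})$ gives $O(\sqrt{n})$ congestion for the transmission, and the extra $\log n$ factor in the final bound absorbs the repeated invocations of Lemma~\ref{lem:sqrtwithinapex} used for shuffling and single-commodity redistribution after each cross-apex transmission.

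The main obstacle will be establishing that the cross-apex matchings are large enough to support the hierarchical argument. Unlike the matchings between classes in $\mathcal{L}_{pq}$---which share both an apex and a second vertex---the matchings here connect central triangles with distinct apices $p \neq p'$. However, because all apices in $\mathcal{U}_i$ lie within an interval of length $n/12$, one can sum Lemma~\ref{lem:matchingscard} over aligned pairs of central triangles $T_{pqr}$ and $T_{p'qr}$ (or $T_{pqr}$ and $T_{p'q'r}$ where $pq$ and $p'q'$ are short and similar in length) to obtain an $\Omega(1)$ fraction of the ideal cross-apex matching size $|\mathcal{A}_p||\mathcal{A}_{p'}|/C_n$---analogous to facts (i) and (ii) used inside the proof of Lemma~\ref{lem:sqrtwithinapex}. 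This suffices to preserve the per-level bound under hierarchical grouping. Combining the intra-apex contribution $O(\sqrt{n}\log n)$ with the inter-apex contribution $O(\sqrt{n}\log n)$ gives the claimed $O(\sqrt{n}\log n)$ total congestion.
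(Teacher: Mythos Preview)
Your high-level plan matches the paper's: shuffle, then apply Lemma~\ref{lem:sqrtwithinapex} inside each apex class, then hierarchically group the apex classes $\mathcal{A}_p$ within $\mathcal{U}_i$ into pairs, fours, etc., and sum $\sum_s O(\sqrt{2^s})=O(\sqrt n)$ across the levels. That is exactly what the paper does.

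There is, however, a real gap in your transmission step. You assert that summing Lemma~\ref{lem:matchingscard} over aligned pairs $T_{pqr},T_{p'qr}$ yields a cross-apex matching of size $\Omega(|\mathcal{A}_p||\mathcal{A}_{p'}|/C_n)=\Omega(C_n/n^2)$, and that this gives $O(\sqrt{j-i})$ congestion at level $j-i$. These two claims are incompatible. Plugging your matching bound into the hierarchical calculation gives per-level congestion
\[
\frac{C_n\,|\mathcal{A}_p|\,|\mathcal{A}_{p'}|/|\text{group}|}{\Omega(|\mathcal{A}_p||\mathcal{A}_{p'}|/C_n)\cdot C_n}
=\Theta\!\left(\frac{n}{j-i}\right),
\]
which sums to $O(n)$, not $O(\sqrt n)$. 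The bound that actually produces $O(\sqrt{j-i})$ is the paper's sharper, $|p-p'|$-dependent estimate: for a $\Theta(1)$ fraction of the triangulations in $\mathcal{A}_p$, the class $\classt{n}{T_{pqr}}$ has a neighbor $\classt{n}{T_{p'qr}}\in\mathcal{A}_{p'}$ with $|\classt{n}{T_{pqr}}|/|\edget{n}{T_{pqr}}{T_{p'qr}}|=O((p'-p)^{3/2})$. This is what cancels against the $1/(j-i)$ from grouping to give $(p'-p)^{3/2}/(j-i)\le\sqrt{j-i}$. Your ``$\Omega(1)$ fraction of the ideal matching'' formulation loses this dependence on $|p-p'|$ and therefore cannot deliver the per-level bound you quote.

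A second omission: before any of the apex-level machinery can be invoked, the paper first shuffles \emph{inside each central-triangle class} $\classt{n}{T}$ via the inductive hypothesis (a uniform flow in a product of strictly smaller associahedra), incurring $O(\sqrt{n/2}\log(n/2))$ congestion. Only after this step is the outgoing flow from each $\classt{n}{T}$ a single commodity, which is what Lemma~\ref{lem:sqrtwithinapex} (stated for MSFs with source an $\mathcal{L}_{pq}$, not for a uniform flow in $\mathcal{A}_p$) actually consumes. You cite Lemma~\ref{lem:sqrtwithinapex} directly for ``intra-apex'' routing and for shuffling within $\mathcal{A}_p$, but that lemma does not provide a uniform multicommodity flow in $\mathcal{A}_p$; you still need the inductive shuffle at the central-triangle level underneath it.
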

\begin{proof}
We need a shuffling step first: let each central-triangle class shuffle via a uniform multicommodity flow, scaled so that each triangulation~$t\in\classt{n}{T}$ in~$\mathcal{U}_i$ sends~$\frac{|\mathcal{U}_i|}{|\classt{n}{T}|}$ units to each~$t'\in\classt{n}{T}$. By the natural induction we have been using, this can be done with~$O(\sqrt{n/2}\log(n/2))$ congestion. We then have a collection of MSFs, each with source set~$\classt{n}{T_{pqr}}$, for each~$\classt{n}{T_{pqr}}$. Apply Lemma~\ref{lem:sqrtwithinapex} to solve these MSFs with~$O(\sqrt{n}\log n)$ additional congestion.

Finally, we need to solve~$n/12$ MSFs, one for each apex class in the region~$\mathcal{U}_i$. Each MSF has as its source set an apex class. All apex classes are isomorphic to one another and have cardinality~$C_n/n$; the surplus values are all~$|\mathcal{U}_i| = \Theta(1)C_n$, and the sink set for each MSF is all of~$\mathcal{U}_i$.

We will use the hierarchical grouping trick once more: just as we grouped together central-triangle classes within a second-vertex class in Lemma~\ref{lem:logtrick}, and just as we grouped together second-vertex classes in the proof  of Lemma~\ref{lem:sqrtwithinapex}, here we group apex classes first into pairs, then into contiguous sequences (in, say, counterclockwise order according to the apex~$p$) of four, then eight, and so on up to~$n/3$.

Crucially, whenever~$\mathcal{A}_p, \mathcal{A}_{p'}$ lie in a given~$\mathcal{U}_i$ (i.e.~$|p' - p| \leq n/12$), it is easy to show that a~$\Theta(1)$ factor of the triangulations in~$\mathcal{A}_p$ lie in classes~$\classt{n}{T_{pqr}}$ having a neighboring class~$\classt{n}{T_{p'qr}}$ in~$\mathcal{A}_{p'}$ such that~$|\edget{n}{T_{pqr}}{T_{p'qr}}| \geq (p'-p)^{3/2}|\classt{n}{T_{pqr}}|$.

Thus the hierarchical grouping produces
$$\frac{\Theta(1)C_n|\mathcal{A}_p||\mathcal{A}_{p'}|}{\Theta(1)C_n(j-i)|\mathcal{A}_p||\edget{n}{A_p}{A_{p'}}|} = O\left(\frac{(p'-p)^{3/2}}{j-i}\right) = O(\sqrt{j-i})$$
congestion at the~$j-i$ level,
and~$O(\sqrt n)$ congestion overall.

The MSF subproblems induced in each~$\mathcal{A}_p$ each involve distributing a single commodity with surplus~$O(\sqrt{j-i})$ throughout a given class~$\classt{n}{T_{pqr}}$, such that the resulting average flow concentrated in~$\classt{n}{T_{pqr}}$ is~$O(1)C_n$, then distributing this flow throughout~$\mathcal{A}_p$, which in turn produces~$O(\sqrt n \log n)$ congestion, proving the lemma.
\end{proof}

We have now demonstrated that a flow exists, in the subgraph of $K_n$ induced by the region $\mathcal{U}_i$\textemdash in which the additional congestion added in the inductive step is $O(\sqrt{n} \log n)$. We are now ready to prove Theorem~\ref{thm:triangmixub} by way of Lemma~\ref{lem:flowind}, by routing flow among the 24 regions:

\lemflowind*
\begin{proof}
The first step is to apply Lemma~\ref{lem:sqrtwithinregion}, obtaining a flow $f_i$ within each $\mathcal{U}_i$ in which each pair of triangulations exchanges a unit of flow, and in which each edge carries at most $O(\sqrt n \log n)$ congestion. 

We do the same for all regions. There is a wrinkle: since some edge classes (and pairs thereof) belong to more than one region, these 24 scaled-up flows result in multiple units of flow being sent between some pairs, as well as a constant-factor increase in congestion. For the former, we simply let pairs in the same class abstain from exchanging flow after the (lexicographically, say) first of the six flows. Clearly, the flows between pairs can never increase the congestion in the network.

For the latter, one may worry that we have lost our ``additive advantage'' and will now incur a multiplicative penalty in the induction. Fortunately, however, it is easy to see that the multiplicative factor is only applied \emph{after} we have applied the inductive hypothesis within each triangular class.

Next, we need to route the $\mathcal{U}_i \rightarrow \mathcal{U}_{i+1}$ flow through the triangular classes in the intersection $\mathcal{U}_i \cap \mathcal{U}_{i+1}$. This we accomplish by noting that, by Remark~\ref{rmk:equalpart}, we can simply concentrate the flow within the intersection between the regions, then send it with~$O(1)$ congestion gain. To get from $\mathcal{U}_{i+1}$ to the other 22 classes, we send flow in turn within $\mathcal{U}_{i+1}$, concentrating it on the boundary wtih~$\mathcal{U}_{i+2}$, and so on. Upon reaching the destination region, we then distribute the flow in a fashion symmetric to the concentration.

The increases in congestion in this process are all by a constant factor, and crucially, again, these increases are not applied more than once in the induction: our application of the inductive hypothesis occurs only within each \emph{central-triangle} class, and all subsequent routing and redistribution of flow through and within these classes avoids multiplying these factors by the congestion assumed in the inductive hypothesis. 
 
Finally, the overall $O(\sqrt n \log n)$ congestion bound claimed now follows from combining the $\log n$ levels of induction with the master theorem.
\end{proof}
Theorem~\ref{thm:assocexplb} is now immediate. A mixing upper bound of $O(n^{4}\log^2 n)$ follows from Lemma~\ref{lem:expmixing}; in Section~\ref{sec:avgcond} we will improve this to the~$O(n^3 \log^3 n)$ bound claimed in Theorem~\ref{thm:triangmixub}.

\subsection{Eliminating~$\log|V(K_n)|$: mixing time~$O(n^3 \log^3 n)$ for triangulations}
\label{sec:avgcond}
We have obtained our~$O(n^4\log^2 n)$ bound by showing that the expansion of~$K_n$ is~$\Omega(1/(\sqrt n \log n))$, then applying Lemma~\ref{lem:expmixing}. The loss comes from: (i) normalizing by the degree~$\Theta(n)$ of~$K_n$, (ii) squaring the resulting bound per Lemma~\ref{lem:expmixing}, and~(iii) multiplying by an additional factor of~$\log|V(K_n)| = \Theta(n)$. We show in this section that we can eliminate the~$\Theta(n)$ factor in step~(iii), obtaining an overall bound of~$O(n^3 \log^3 n)$ via a result of Lov\'{a}sz and Kannan:
\begin{lemma}~\cite{avgcond}
\label{lem:avgcond}
Given a family of finite, reversible, connected Markov chains~$\{\mathcal{M}_n = (\Omega_n, P_n)\}$ parameterized by~$n$, with stationary distribution~$\pi$, let~$\pi_{\min} = \min_{\{t \in \Omega_n\}}\pi(t)$. For all~$x \in [1/\pi_{\min}, 1/2],$ define the quantity
$$\phi(x) = \min_{S: \pi(S) \leq x}\frac{|\partial S|}{\operatorname{vol}(S)},$$ where~$\operatorname{vol}(S) = \sum_{t \in S}\frac{\pi(t)}{\Delta}$ is the probability mass of~$S$ normalized by the maximum degree~$\Delta$ of the chain~$\mathcal{M}_n$ (viewed as a graph).
Then the mixing time of~$\mathcal{M}_n$ is at most
$$\tau(n) \leq O(1)\int_{\pi_{\min}}^{1/2}{\frac{dx}{(\phi(x))^2 x}}.$$
\end{lemma}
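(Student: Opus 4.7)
The plan is to derive the integral bound via the classical Lov\'asz--Kannan level-set argument. Fix an arbitrary starting state $x_0 \in \Omega_n$ and consider the $t$-step distribution $\pi_t = P_n^t(x_0, \cdot)$; write $h_t(y) = \pi_t(y)/\pi(y)$ for its density against $\pi$. Since the chain is reversible and irreducible, it suffices to bound the time at which $\|h_t - 1\|_\infty$, or equivalently the total variation distance between $\pi_t$ and $\pi$, drops below $1/4$, and monotonicity of the TV distance under $P_n$ means this bound only needs to hold for the worst-case starting state.

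First I would introduce, for each threshold $a > 1$, the super-level set $S_a(t) = \{y : h_t(y) \geq a\}$ and the excess mass $E_t(a) = \pi_t(S_a(t)) - a\,\pi(S_a(t))$. The key technical ingredient is a one-step decay inequality of the form
$$E_{t+1}(a) \leq \bigl(1 - c\,\phi(\pi(S_a(t)))^2\bigr) E_t(a)$$
for some absolute constant $c > 0$; this is a Cheeger-type bound localized to the single level set $S_a(t)$. One obtains it by writing the one-step change in $E_t(a)$ as a Dirichlet-form expression over the cut $(S_a(t), \Omega_n \setminus S_a(t))$, lower-bounding the cut contribution using the definition of $\phi$ at the point $x = \pi(S_a(t))$, and absorbing the degree normalization into $\operatorname{vol}(\cdot)$ so that the per-step drift has the correct scale. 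The main obstacle will be executing this local Cheeger argument cleanly, because the level set $S_a(t)$ shifts as $t$ advances; the standard remedy is a co-area trick that integrates the one-step inequality in $a$ before applying the conductance bound, so that the resulting estimate survives the reshuffling of level sets at the next step.

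Given the local decay inequality, I would partition the range $\pi(S_a(t)) \in [\pi_{\min}, 1/2]$ into dyadic bands $x \in [2^{-k-1}, 2^{-k}]$ for $k = 1, 2, \dots, \lceil \log_2(1/\pi_{\min}) \rceil$. Within band $k$, the local decay rate is at least $c\,\phi(2^{-k})^2$, so the time needed to halve the excess mass of a super-level set whose $\pi$-measure lies in this band is $O(\phi(2^{-k})^{-2})$. Summing these contributions across all dyadic bands and using the change of variables $x = 2^{-k}$, so that $dx/x = -\log 2\, dk$, gives
$$\tau(n) \leq O(1) \sum_k \phi(2^{-k})^{-2} = O(1)\int_{\pi_{\min}}^{1/2} \frac{dx}{\phi(x)^2\, x},$$
which is the claimed bound. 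Terminating the integration at $x = 1/2$ is justified because once every super-level set of the density has $\pi$-measure at most $1/2$, the total variation distance is already bounded by an absolute constant, and a further $O(1)$ lazy steps suffice to bring the chain within the $\varepsilon = 1/4$ mixing threshold by an elementary coupling argument.
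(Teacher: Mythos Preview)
The paper does not prove this lemma: it is quoted from Lov\'asz and Kannan~\cite{avgcond} and used as a black box in Section~\ref{sec:avgcond} to derive the $O(n^3\log^3 n)$ mixing bound. There is no ``paper's own proof'' to compare against.

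Your sketch is a reasonable outline of the Lov\'asz--Kannan argument---super-level sets of the density, a localized Cheeger inequality giving a one-step contraction at rate $\phi(x)^2$, then a dyadic summation over $x$ that collapses to the integral. That is indeed the shape of the original proof. The places where your outline is thin are the ones you flag yourself: establishing the one-step decay inequality carefully (the level sets move, and one needs either the co-area formula or an evolving-sets / spectral-profile argument to make the contraction survive across steps), and justifying the termination at $x=1/2$. If you wanted to turn this into a self-contained proof you would need to fill those in, but for the purposes of this paper the lemma is simply imported and no proof is expected.
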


Lemma~\ref{lem:avgcond} implies that in a given flip graph, if small sets have sufficiently larger expansion than large sets, then one can eliminate the~$\log|\Omega|$ factor incurred in passing to mixing from squared expansion. This in fact is true for~$K_n$: suppose a set~$S \subseteq V(K_n)$ is at most~$(C_{n/k})^{k}/2$, for a given integer~$k \in [1, \dots, n + 1]$. It is easy to show that~$S$ can be partitioned into a collection of subsets of disjoint Cartesian products of the form~$K_{i_1} \Box K_{i_2} \Box \cdots \Box K_{i_{k}}$, where each~$K_{i_j}$ is a smaller flip graph with all~$i_j \leq \frac{n}{k}$, because of the following fact:
\begin{lemma}
\label{lem:partitioncart}
For every integer~$1 \leq k \leq n$, every triangulation~$t\in V(K_n)$ lies in some Cartesian product of flip graphs~$K_{i_1} \Box K_{i_2} \Box \cdots \Box K_{i_{k}}$, with~$i_j \leq \frac{n}{2^{\lfloor\log_3 k \rfloor}}$ for all~$j$.
\end{lemma}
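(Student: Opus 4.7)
The plan is to prove this by a recursive centroid decomposition of the dual tree of $t$. Let $T_t$ denote the dual tree of $t$: its vertices are the $n$ triangles of $t$ and its edges are the pairs of triangles sharing a diagonal; it has $n$ vertices and maximum degree $3$. Any set of $k - 1$ diagonals of $t$ corresponds to $k - 1$ edges of $T_t$ whose removal partitions the polygon into $k$ sub-polygons (one per subtree component), whose triangle counts $i_1, \ldots, i_k$ satisfy $\sum_j i_j = n$.

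The key sub-claim is: for any triangulation of an $(m+2)$-gon with $m \geq 3$, there exist at most $2$ diagonals whose removal partitions the polygon into at most $3$ sub-polygons each with at most $\lfloor m/2 \rfloor$ triangles. I would prove this by choosing a centroid $v$ of the dual tree (a vertex whose removal leaves every resulting subtree with at most $m/2$ vertices). If $\deg(v) = 3$, with subtree sizes $a \leq b \leq c$ summing to $m - 1$, then $a \leq (m-1)/3$ and a direct calculation gives $(m+2)/3 \leq m/2$ for $m \geq 4$; removing the two edges from $v$ to the larger subtrees yields three components of sizes $a+1$, $b$, $c$, all bounded by $\lfloor m/2 \rfloor$. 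If $\deg(v) = 2$, I would remove both edges incident to $v$, yielding components of sizes $1$, $b$, $c \leq \lfloor m/2 \rfloor$. The degree-$1$ case and the cases $m \in \{2, 3\}$ are checked directly.

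Given the sub-claim, the lemma follows by induction on $d = \lfloor \log_3 k \rfloor$. The base case $d = 0$ is trivial since $n/2^0 = n$. For $d \geq 1$, I would first apply the induction hypothesis with $k' = 3^{d-1}$ to obtain a partition into $3^{d-1}$ sub-polygons each with at most $n/2^{d-1}$ triangles. Then, applying the sub-claim separately to each sub-polygon of size $\geq 3$, I refine it into $3$ pieces of size at most $\lfloor (n/2^{d-1})/2 \rfloor \leq n/2^d$; sub-polygons of size $\leq 2$ already satisfy the bound $n/2^d$ whenever $n \geq 2^{d+1}$, which follows from $d \leq \lfloor \log_3 n \rfloor$ for all but finitely many exceptional small $n$ that are verified directly. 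The resulting partition has at most $3^d \leq k$ parts; if strictly fewer than $k$, I would split any current sub-polygon of size $\geq 2$ along one of its internal diagonals, repeating until exactly $k$ parts are obtained. Further splitting only decreases sub-polygon sizes, so the bound $i_j \leq n/2^d$ is preserved throughout.

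The main technical obstacle is the sub-claim, and specifically obtaining the improved bound $\lfloor m/2 \rfloor$ rather than the easier $\lfloor 2m/3 \rfloor$ one gets by removing a single balanced edge: the factor $2$ per level of recursion it produces is exactly what matches the $2^{\lfloor \log_3 k \rfloor}$ in the claim. The improvement exploits the fact that, at a centroid $v$ of degree $3$, the smallest of the three subtrees has size at most $(m-1)/3$, so attaching it to $v$ still leaves a piece of size at most $(m+2)/3 \leq m/2$. The remainder of the proof is bookkeeping on the recursion and verification of the small-$n$ edge cases.
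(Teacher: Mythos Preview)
Your argument is correct and follows the same high-level strategy as the paper: recursively split the polygon into at most three pieces, each with at most half as many triangles, for $\lfloor \log_3 k\rfloor$ rounds, then pad with additional diagonals to reach exactly $k$ parts. The difference lies in the splitting device. The paper uses the \emph{central triangle}\textemdash the unique triangle of $t$ containing the geometric center of the regular polygon\textemdash and recurses using the central-triangle partition on each sub-polygon. You instead use the centroid of the dual tree. These coincide in effect: the central triangle is always a dual-tree centroid, since each of the three caps it cuts off lies in an arc of less than half the polygon and hence carries at most $\lfloor n/2\rfloor$ triangles. Your combinatorial formulation has the advantage of applying uniformly at every recursion depth without needing to re-specify a ``center'' for non-regular sub-polygons (a point the paper elides by passing through the isomorphism $K_i$ at each level), and your handling of the degree-$2$ centroid case, the small-$m$ boundary cases, and the final padding to exactly $k$ parts is more explicit than the paper's sketch.
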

\begin{proof}
To identify the Cartesian product to which~$t$ belongs, partition~$K_n$ using the central-triangle partitioning. Each class is a Cartesian product of three smaller flip graphs induced by three smaller polygons; partition each of these classes according to the three central triangles in the three smaller polygons. Repeat this process recursively, in a ``breadth-first'' fashion, with the triangles placed at a given level in some consistent lexicographic order. \iffalse, stopping the partitioning within any polygon that has size ast most~$l$, for a given~$l \in [1, n]$\fi . Stop the partitioning after~$k$ polygons have been obtained. Now the original $n$-gon has been partitioned into a collection of smaller polygons, the size of each of which is at most $\max\{1, n/2^{\lfloor\log_3 k\rfloor}\}$. This is because, first, if the recursion depth is~$d$, then the number of ``leaf nodes''\textemdash polygons at the bottom level of partitioning\textemdash is at most~$3^d$.
\iffalse  Second, every placement of a triangle increases the total number of ``nodes'' by~$i \in \{2, 3\}$, and increases the total number of ``leaf nodes'' by~$i-1$. Therefore, the total number of leaf nodes is at least half the total number of nodes. Therefore the total number of nodes (triangles placed) is at most~$k \leq 2\cdot 3^d$, which implies~$d\geq \log_3 k$. \fi
Second, the breadth-first nature of the partitioning guarantees that each level of partitioning decreases the maximum size of a polygon by at least half, so the largest polygon has size at most~$n/2^{\lfloor\log_3 k\rfloor}$.

Now, once the partitioning has stopped, the number of triangulations lying in the resulting partition is at least~$(C_{n/k})^k$, because the partition consists of~$k$ polygons whose sizes add up to at least~$n$, and because the size of the resulting Cartesian product~$C_{l_1}C_{l_2}\cdots C_{l_k}$, $\sum_i l_i \geq n$, is minimized when~$l_i = n/k$ for all~$i$.
\end{proof}

The following is now easy:
\begin{corollary}
\label{cor:partitioncond}
For every~$S \subseteq V(K_n)$, if~$|S| \leq (C_{n/k})^k/2$, for integer~$k \in [1, n]$, then~$|\partial S|/|S| \geq \Omega(1/((n/2^{\lfloor\log_3 k\rfloor})^{3/2}\log(n/2^{\lfloor\log_3 k\rfloor})))$.
\end{corollary}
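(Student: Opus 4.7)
The plan is to push the expansion question on $K_n$ down to the smaller Cartesian product pieces produced by Lemma~\ref{lem:partitioncart}, on which both Lemma~\ref{lem:cartexp} and the inductive expansion bound from Theorem~\ref{thm:assocexplb} apply directly. Write $m := n/2^{\lfloor\log_3 k\rfloor}$ throughout.

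First, I would apply Lemma~\ref{lem:partitioncart} with the given $k$ to obtain a partition of $V(K_n)$ into pieces $P_1, P_2, \ldots$, each of the form $K_{i_1}\Box\cdots\Box K_{i_k}$ with every $i_j \leq m$. Two facts about this partition matter: (i) each piece is a Cartesian product of associahedron graphs in the same family, so Theorem~\ref{thm:assocexplb} applies to each factor; and (ii) each piece has cardinality at least $(C_{n/k})^k$, which is the minimum of $\prod_j C_{l_j}$ subject to $\sum_j l_j \geq n$, a fact already invoked in the proof of Lemma~\ref{lem:partitioncart}. Combined with the hypothesis $|S| \leq (C_{n/k})^k/2$, fact (ii) yields $|S \cap P| \leq |P|/2$ for every piece $P$, so $S \cap P$ sits on the smaller side of any cut within the subgraph induced by $P$.

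Next, I would combine Theorem~\ref{thm:assocexplb}, which gives $h(K_{i_j}) = \Omega(1/(\sqrt{i_j}\log i_j)) \geq \Omega(1/(\sqrt{m}\log m))$ for each factor since $i_j \leq m$, with Lemma~\ref{lem:cartexp}, which says Cartesian products preserve edge expansion up to a factor of $1/2$, to conclude $h(P) = \Omega(1/(\sqrt{m}\log m))$ for every piece $P$. Together with the previous paragraph this yields $|\partial_P(S \cap P)| \geq \Omega(1/(\sqrt{m}\log m))\,|S \cap P|$, where $\partial_P$ denotes cut edges inside the induced subgraph on $P$. Every edge cut inside some piece is also an edge cut in $K_n$, so summing over pieces gives
\[
|\partial S| \;\geq\; \sum_P |\partial_P(S \cap P)| \;\geq\; \Omega(1/(\sqrt{m}\log m))\,|S|,
\]
which in particular implies the claimed (weaker) bound $|\partial S|/|S| \geq \Omega(1/(m^{3/2}\log m))$.

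I do not expect a serious obstacle; the two places that require a little care are verifying the lower bound $(C_{n/k})^k$ on each piece's size via the standard log-concavity-type minimization of $\prod_j C_{l_j}$ under $\sum_j l_j \geq n$, and checking that the breadth-first recursive partition in Lemma~\ref{lem:partitioncart} really does present each piece as a Cartesian product whose factors are honest associahedra of the claimed sizes, so that Lemma~\ref{lem:cartexp} applies verbatim.
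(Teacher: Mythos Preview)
Your argument is correct and matches the paper's own proof essentially line for line: partition $V(K_n)$ via Lemma~\ref{lem:partitioncart}, use the $(C_{n/k})^k$ lower bound to ensure each piece is at most half full, then apply Theorem~\ref{thm:assocexplb} to each factor and Lemma~\ref{lem:cartexp} to the product. You have in fact supplied more detail than the paper (which omits the explicit invocation of Lemma~\ref{lem:cartexp}), and you correctly observe that the argument yields the stronger bound $\Omega(1/(\sqrt{m}\log m))$, of which the stated $\Omega(1/(m^{3/2}\log m))$ is an immediate consequence.
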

\begin{proof}
The claim follows from noticing that any such set can be partitioned into its intersections with Cartesian products (sets of triangulations) of the form described in Lemma~\ref{lem:partitioncart}, each of which is at most half full, then noticing that in each such Cartesian product, by Lemma~\ref{lem:partitioncart} each graph~$K_{i_j}$ in the product has~$i_j \leq \frac{n}{2^{\lfloor\log_3 k\rfloor}}$. Appyling Theorem~\ref{thm:assocexplb} then proves the claim.
\end{proof}

We now combine Lemma~\ref{lem:avgcond} with Lemma~\ref{lem:partitioncart}, then combine Lemma~\ref{lem:expmixing} with Theorem~\ref{thm:assocexplb} to obtain mixing time~$O(n^3 \log^3 n)$ for triangulations, proving Theorem~\ref{thm:triangmixub}:
\begin{proof}{Proof of Theorem~\ref{thm:triangmixub}}
We can write, by Lemma~\ref{lem:avgcond},
$$\tau(n)\leq O(1)\int_{\pi_{\min}}^{1/2}{\frac{dx}{(\phi(x))^2 x}}$$
$$= O(1)\sum_{k=1}^n \int_{(C_{n/(k+1)})^{k+1}/C_n}^{(C_{n/k})^k/C_n} O((n/2^{\lfloor\log_3 k\rfloor})^3 \log^2 (n/2^{\lfloor\log_3 k\rfloor})) \frac{dx}{x}$$
$$\leq O(n^3\log^2 n )\sum_{k=1}^n O((1/2^{\lfloor\log_3 k\rfloor})^3  \int_{(C_{n/(k+1)})^{k+1}/C_n}^{(C_{n/k})^k/C_n} \frac{dx}{x}$$
$$= O(n^3\log^2 n )\sum_{k=1}^n O((1/2^{\lfloor\log_3 k\rfloor})^3  \ln\left(\frac{(C_{n/k})^k}{(C_{n/(k+1)})^{k+1}}\right)$$
$$= O(n^3\log^2 n )\sum_{k=1}^n O((1/2^{\lfloor\log_3 k\rfloor})^3  \ln(O(n^{3/2}))$$
%$$= O(n^3\log^3 n )\sum_{k=1}^n O((1/2^{3(\log_3 k - 1)}))$$
%$$= O(n^3\log^3 n )\sum_{k=1}^n O((1/  2^{\log_3 ((k /3)^3)}))$$
%$$= O(n^3\log^3 n )\sum_{k=1}^n O((1/(k/3)^{3\log_3 2}))$$
$$= O(n^3\log^3 n )\sum_{k=1}^n O(1/k^{\log_3 8})$$
$$= O(n^3\log^3 n ) \cdot O(1) = O(n^3\log^3 n ).$$
\end{proof}

\section{Associahedron expansion upper bound}
\label{sec:expub}
To prove Theorem~\ref{thm:assocexpub}, we simply find a sparse cut and apply the definition of expansion. We use the central-triangle partition we used in Appendix~\ref{sec:quadmix} and Appendix~\ref{sec:combinedec}. These are the same classes used to show the $\Omega(n^{3/2})$ mixing lower bound by Molloy, Reed, and Steiger~\cite{molloylb}. As we discussed in the introduction, their mixing lower bound does \emph{not} imply the expansion upper bound we give here, but our expansion upper bound does imply their mixing lower bound.
\label{sec:assocexpub}
\subsection{Finding a sparse cut}
\label{sec:thecut}
We will find a cut $(S, \bar{S})$ with $|\partial S|/|S| = O(n^{-1/2}).$ We start by partitioning the vertices of the associahedron into central-classes as in Appendix~\ref{sec:combinedec}; within any given class, all vertices will be on the same side of the cut. Consider the associahedron~$K_{n-2}$ over the~$n$-gon. Draw the regular~$n$-gon in the plane, and label the vertices of the regular~$n$-gon~$[0, n-1]$, with~$0$ as the topmost vertex. 

Let~$\mathcal{C}_l = \{t \in \classt{n}{T} | T\textnormal{ has shortest side length} l\}$ be the set of all triangulations whose central triangle's shortest side has length~$l$, ~$l \in [1, n/3]$.

Let~$\mathcal{S} = \bigcup_{\mathcal{C}_l | l \in [1, n/6]}\mathcal{C}_l$. Let~$\mathcal{\bar S} = V(K_{n-2}) \setminus \mathcal{S} = \bigcup_{\mathcal{C}_l | l \in (n/6, n/3]}\mathcal{C}_l$.
\iffalse As in Appendix~\ref{sec:combinedec}, define the~\emph{apex} of a central triangle as the vertex opposite the shortest side of the triangle; define the \emph{apex class}
$$\mathcal{A}_p = \{\classt{n}{T_{pqr}} | T\textnormal{ is a central triangle}\}$$
given~$p \in [0, n-1]$. Define four ``quadrants'' (similar to the ``regions'' in Appendix~\ref{sec:combinedec}):
$$\mathcal{NW} = \bigcup_{p \in [0, n/4-1]}\mathcal{A}_p, \mathcal{SW} = \bigcup_{p \in [n/4, n/2-1]}\mathcal{A}_p, \mathcal{SE} =  \bigcup_{p \in [n/2, 3n/4-1]}\mathcal{A}_p, \mathcal{NE} = \bigcup_{p \in [3n/4, n-1]}\mathcal{A}_p.$$

We can think of these quadrants as respectively ``northwest,'' ``southwest,'' ``southeast,'' and ``northeast.'' Define~$\mathcal{S} = \{t \in \classt{n}{T} | \classt{n}{T} \in \mathcal{NW} \cup \mathcal{SE}\}$, and~$\mathcal{\bar{S}} = \{t \in \classt{n}{T} | \classt{n}{T} \in \mathcal{NE} \cup \mathcal{SW}\}$. Consider the cut~$(\mathcal{S}, \mathcal{\bar S})$. Per standard notation, let~$\partial S =|\{(t, t') \in E(K_{n-2}) | t \in \mathcal{S}, t' \in \mathcal{\bar S}\}|$. 
\fi
\begin{lemma}
\label{lem:lencuteven}
The cut~$\mathcal{S}$ is indeed a partition of~$V(K_{n-2})$, has~$|\mathcal{S}| = \Theta(1)|V(K_{n-2})|$ and~$|\mathcal{\bar S}| = \Theta(1)|V(K_{n-2}|$.
\end{lemma}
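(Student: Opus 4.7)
The partition claim is immediate from the framework established in the excerpt: by the remark following Definition~\ref{def:centralpart}, every $t \in V(K_{n-2})$ contains a unique central triangle $T(t)$, so the classes $\{\classt{n-2}{T}\}$ are pairwise disjoint, and the $\mathcal{C}_l$ inherit this disjointness. It then suffices to verify that the shortest side length $l$ of a central triangle lies in $[1, n/3]$. The three sides of any central triangle partition the $n$ polygon edges into three arcs of lengths $l_1, l_2, l_3 \geq 1$ summing to $n$, and a short geometric check shows that each $l_i$ must be strictly less than $n/2$: otherwise the chord of side $i$ leaves the center of the polygon strictly on the opposite side of the triangle. Hence $l_{\min} \leq \lfloor n/3\rfloor$, and the sets $\mathcal{S}$ (shortest side $\leq n/6$) and $\bar{\mathcal{S}}$ (shortest side in $(n/6, n/3]$) partition $V(K_{n-2})$.

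For the size bounds, three applications of Lemma~\ref{lem:projcart} give $\classt{n-2}{T} \cong K_{l_1-2}\Box K_{l_2-2}\Box K_{l_3-2}$ when $T$ has side lengths $(l_1, l_2, l_3)$, so $|\classt{n-2}{T}| = C_{l_1-1}C_{l_2-1}C_{l_3-1}$. Combined with the Catalan asymptotic $C_m = \Theta(4^m/m^{3/2})$ from Lemma~\ref{lem:catalan}, this yields
\[
\frac{|\classt{n-2}{T}|}{|V(K_{n-2})|} \;=\; \Theta\!\left(\frac{n^{3/2}}{(l_1 l_2 l_3)^{3/2}}\right),
\]
which equals $\Theta(n^{-3})$ whenever all three side lengths are $\Theta(n)$.

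To lower-bound $|\bar{\mathcal{S}}|$ by $\Omega(|V(K_{n-2})|)$, I count central triangles with all three side lengths in the open interval $(n/4, 3n/8)$, so that every such triangle has $l_{\min} > n/6$ and contributes to $\bar{\mathcal{S}}$. A direct count gives $\Omega(n^2)$ integer triples $(l_1, l_2, l_3) \in (n/4, 3n/8)^3$ summing to $n$, and each triple corresponds to $\Omega(n)$ distinct central triangles via cyclic placement on the polygon; multiplying the resulting $\Omega(n^3)$ central triangles by the per-class fraction $\Theta(n^{-3})$ yields $|\bar{\mathcal{S}}|/|V(K_{n-2})| = \Omega(1)$. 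A symmetric computation bounds $|\mathcal{S}|$ from below: fix the shortest side length $l_1 \in [\lfloor n/12\rfloor, \lfloor n/6\rfloor]$ and let $l_2, l_3$ range within an $\Omega(n)$-length window around $(n-l_1)/2$, producing $\Omega(n^3)$ qualifying central triangles whose classes together contain $\Omega(|V(K_{n-2})|)$ triangulations. The matching $O(|V(K_{n-2})|)$ upper bounds are trivial since both sets sit inside $V(K_{n-2})$. The only delicate point is checking that the Catalan asymptotic applies uniformly across the relevant ranges of side lengths, which follows directly from Lemma~\ref{lem:catalan}; no substantive obstacle remains beyond bookkeeping.
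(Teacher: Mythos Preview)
Your proof is correct and follows essentially the same approach as the paper: both use the Cartesian-product class-size formula together with the Catalan asymptotic from Lemma~\ref{lem:catalan}, then count central triangles satisfying the relevant side-length constraints. The only difference is tactical---the paper sums explicitly over the shortest-side length $l$ (using $\Theta(nl)$ central triangles with shortest side $l$, each class having fraction $\Theta(n^{-3/2}l^{-3/2})$), while you restrict to a convenient sub-range where all three sides are $\Theta(n)$ to extract the $\Omega(1)$ lower bound---but the two routes are interchangeable. (Minor indexing slip: the sub-polygon on a side of length $l_i$ has $l_i+1$ vertices, so its flip graph is $K_{l_i-1}$, not $K_{l_i-2}$; your cardinality $C_{l_i-1}$ is nonetheless correct.)
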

\begin{proof}
It is clear that every triangulation lies in exactly one~$\mathcal{C}_l$, and that~$\mathcal{S}$ and~$\mathcal{\bar S}$ together partition all of the triangulations. To see that~$|\mathcal{S}| = \Theta(1)|V(K_{n-2})| = \Theta(1)C_{n-2}$, we first count the cardinality of each~$\mathcal{C}_l, l \in [1,n/6]$. Consider the number of ways to choose a central triangle~$T$ so that~$\classt{n}{T} \subseteq \mathcal{C}_l$, i.e. so that~$T$ has shortest side length~$l$. Notice that there are~$n$ ways to choose the apex of a central triangle (the vertex opposite the shortest side). Conditioned on this choice, and conditioned on a choice of~$l$ for the side length opposite the apex, there are~$l$ ways to choose the second vertex (the first vertex after the apex in counterclockwise order) so that the center of the~$n$-gon still lies inside the triangle. For all of these choices, the side opposite the apex is indeed shortest. The number of triangulations lying in a class with shortest side length~$l$ is~$\Theta\left(\frac{1}{n^{3/2}l^{3/2}}\right)C_{n-2}$, and thus the number of triangulations is
$$\sum_{l=1}^{n/6}nl\frac{1}{\Theta(n^{3/2}l^{3/2})} = \Theta\left(\frac{1}{\sqrt n}\right)\sum_{l=1}^{n/6}\Theta\left(\frac{1}{\sqrt l}\right) = \Theta\left(\frac{\sqrt n}{\sqrt n}\right) = \Theta(1).$$

Thus~$|\mathcal{S}| = \Theta(1)C_{n-2}|$. For~$|\mathcal{S}|$, notice that for every~$T$ with shortest side length~$l \in (n/6, n/3]$, there are~$\Theta(n^2)$ ways to choose~$T$, by the same argument we used for~$l \in [1, n/6]$, and each central-triangle-induced class~$\classt{n}{T}$ with shortest side~$l \in (n/6, n/3]$ has~$|\classt{n}{T}| = \Theta\left(\frac{1}{n^3}\right)$. Thus we have the sum
$$\sum_{l=n/6+1}^{n/3}\frac{\Theta(n^2)}{\Theta(n^{3})} = \Theta(1).$$
\end{proof}

\begin{lemma}
\label{lem:lencutsparse}
The cut~$(\mathcal{S}, \mathcal{\bar S})$ has~$|\partial S|/|S| = O(1/\sqrt n)$
\end{lemma}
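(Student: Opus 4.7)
The plan is to show $|\partial\mathcal{S}| = O(C_n/\sqrt n)$, which combined with $|\mathcal{S}| = \Theta(C_n)$ from Lemma~\ref{lem:lencuteven} gives the bound. Since a flip changes the central triangle of a triangulation only when the flipped diagonal is a side of that central triangle, I will fix a central triangle $T\in\mathcal{C}_l$ (with shortest side of length $l\le n/6$) and bound the contributions to $|\partial\mathcal{S}|$ from each of the three sides of $T$ separately. A short case analysis immediately rules out the shortest side: flipping it replaces $T$ by two triangles each of which inherits an arc of length at most $l-1$ from the cap on that side, so the new central triangle has a side of length $\le n/6-1$ and still lies in $\mathcal{S}$.

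For a flip of a non-shortest side $qr$ of $T$ of length $s$, with the other non-shortest side of length $s'=n-l-s$, the partner vertex $v$ at arc distance $\gamma\in[1,s-1]$ determines a quadrilateral whose four caps have arc lengths $l,\gamma,s-\gamma,s'$. A direct arc computation gives the sides of the two candidate replacement triangles as $\{l,\gamma,s{+}s'{-}\gamma\}$ and $\{l{+}\gamma,s{-}\gamma,s'\}$; the first always has a side of length $l\le n/6$ (so lies in $\mathcal{S}$), so the edge can cross into $\bar{\mathcal{S}}$ only if the actual new central triangle is the second, and this requires $s'>n/6$ together with $\gamma\in(n/6-l,\;s-n/6)$. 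The number of triangulations in $\classt{n}{T}$ realizing a given such partner is the Catalan product $C_{l-1}\,C_{\gamma-1}\,C_{s-\gamma-1}\,C_{s'-1}$, one per triangulation of the four caps.

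The crucial analytical step will be the Catalan convolution estimate
\[
\sum_{\gamma=\max(1,\,n/6-l+1)}^{s-n/6-1} C_{\gamma-1}\,C_{s-\gamma-1} \;\le\; O(C_{s-1})\cdot\Bigl(\tfrac{1}{\sqrt{n/6-l+1}}+\tfrac{1}{\sqrt n}\Bigr),
\]
obtained by passing from the sum to an integral using $C_k\sim 4^k/(\sqrt\pi\,k^{3/2})$, so that $C_{\gamma-1}C_{s-\gamma-1}/C_{s-1}\sim s^{3/2}/(4\sqrt\pi\,\gamma^{3/2}(s-\gamma)^{3/2})$, then splitting at $\gamma=s/2$ and integrating $\gamma^{-3/2}$ and $(s-\gamma)^{-3/2}$ separately to pick up the boundary contributions from the two truncations $\gamma\ge n/6-l+1$ and $\gamma\le s-n/6-1$. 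Multiplying by $C_{l-1}C_{s'-1}$ and summing over the two non-shortest sides, the total number of edges from $\classt{n}{T}$ into $\bar{\mathcal{S}}$ is at most $O(|\classt{n}{T}|)\cdot(1/\sqrt{n/6-l+1}+1/\sqrt n)$.

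Finally, summing over $T\in\mathcal{C}_l$ via $|\mathcal{C}_l|=\Theta(C_n/\sqrt{nl})$ from Lemma~\ref{lem:lencuteven}, and then over $l\in[1,n/6]$, yields
\[
|\partial\mathcal{S}| \;\le\; O\!\Bigl(\tfrac{C_n}{\sqrt n}\Bigr)\sum_{l=1}^{n/6}\tfrac{1}{\sqrt{l(n/6-l+1)}} \;+\; O\!\Bigl(\tfrac{C_n}{n}\Bigr)\sum_{l=1}^{n/6}\tfrac{1}{\sqrt l} \;=\; O\!\Bigl(\tfrac{C_n}{\sqrt n}\Bigr),
\]
since the first (discrete Beta-type) sum is $O(1)$ and the second is $O(\sqrt n)$. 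The main obstacle is controlling the Catalan sum uniformly in $l$: for $l$ close to $n/6$ the first boundary term $1/\sqrt{n/6-l+1}$ degenerates toward a constant and the per-$T$ bound weakens to the trivial $O(|\classt{n}{T}|)$, but this is compensated by the simultaneous shrinkage of $|\mathcal{C}_l|$ (which is $\Theta(C_n/n)$ at $l=n/6$), so the total contribution remains within the target $O(C_n/\sqrt n)$.
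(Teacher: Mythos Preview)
Your proposal is correct and follows essentially the same approach as the paper: both arguments count cut edges by fixing the central triangle $T$ of shortest side $l$, bounding the fraction of $\classt{n}{T}$ lying on the boundary via Catalan asymptotics $C_aC_b/C_{a+b}\sim (a+b)^{3/2}/(a^{3/2}b^{3/2})$, and then summing over $l$. The only organizational difference is that the paper parameterizes boundary edges by the shortest side length $k$ of the target triangle $T'$ (obtaining the per-pair bound $O((k-l)^{-3/2})$) and handles the sum over $l$ by an ad~hoc split at $l=n/8$, whereas you parameterize by the partner-vertex position $\gamma$ and handle the full $l$-range in one shot via the Beta-type integral $\sum_l 1/\sqrt{l(n/6-l+1)}=O(1)$; your treatment is arguably a bit cleaner but the underlying estimate is the same.
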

\begin{proof}
Notice that in order for a triangulation~$t \in \classt{n}{T}$, given a central-triangle class~$T$ in~$\mathcal{C}_l$, $l \in [1, n/6]$, to have a neighbor in~$\mathcal{\bar S}$, i.e. for~$t$ to have a neighboring triangulation~$t' \in \classt{n}{T'}$ with~$T'$ having shortest side length~$k\geq n/6 + 1$, the central triangles~$T$ and~$T'$ must form a quadrilateral in~$t$ and in~$t'$. This quadrilateral, in~$t$, consists of~$T$ along with a triangle~$U$, where~$U$ has shortest side length~$k$. The fraction of triangulations lying in the boundary set~$\bdryt{n}{T}{T'}$ is therefore at most~$O\left(\frac{1}{(k-l)^{3/2}}\right).$

For~$l \in [1, n/6]$, let
$$\partial_l S = \{(t, t') \in E(K_{n-2}) | t \in \mathcal{C}_l, t' \in \mathcal{\bar S}\} = \bigcup_{\classt{n}{T} \subseteq \mathcal{C}_l, \classt{n}{T'} \subseteq \mathcal{\bar S} }\edget{n}{T}{T'}$$
be the set of all cut edges incident to triangulations in~$\mathcal{C}_l$. 

We will split the sets~$\{\mathcal{C}_l\}$ in~$\mathcal{S}$ into the cases~$l \in [1, n/8]$ and~$l \in (n/8, n/6]$. First, for all~$l \in [1, n/8]$, by the above reasoning, we have
$$|\partial_l S|/|\mathcal{C}_l| = \sum_{k=n/6+1}^{n/3}O\left(\frac{1}{(k-l)^{3/2}}\right) \leq (n/3-n/6)O\left(\frac{1}{(n/6+1-l)^{3/2}}\right)$$
$$\leq (n/3-n/6)O\left(\frac{1}{(n/6+1-n/8)^{3/2}}\right) = O\left(\frac{n}{n^{3/2}}\right) = O(1/\sqrt n).$$

On the other hand, for~$l \in (n/8, n/6]$, we compute the sum
$$\frac{\sum_{l=n/8+1}^{n/6}|\partial_l S|}{|\mathcal{S}|} = \sum_{l=n/8+1}^{n/6}\frac{|\partial_l S|}{\Theta(1)C_{n-2}} \leq \sum_{l=n/8+1}^{n/6}|\mathcal{C}_l|\sum_{k=n/6+1}^{n/3}O\left(\frac{1}{(k-l)^{3/2}}\right)$$
$$= \int_{l=n/8+1}^{n/6}O\left(\frac{1}{\sqrt n \sqrt l}\right)\int_{k=n/6+1}^{n/3}O\left(\frac{1}{(k-l)^{3/2}}\right)dkdl,$$
where for the last inequality we have applied the observation from the proof of Lemma~\ref{lem:lencuteven} that~$|\mathcal{C}_l| = O\left(\frac{1}{\sqrt n\sqrt l}\right)C_{n-2},$ and have used the fact that asymptotically the summation is equal to a double integral. Evaluating the inner integral we obtain
$$O\left(\frac{1}{\sqrt n}\right)\int_{l=n/8+1}^{n/6}O\left(\frac{1}{\sqrt l\sqrt{n/6+1-l}}\right)dl = O\left(\frac{1}{n}\right)\int_{u=1}^{n/6-n/8}O\left(\frac{1}{\sqrt{u}}\right)du,$$
with the substitution~$u = n/6 + 1 - l$ and the observation that when~$l\geq n/8$ we have~$1/\sqrt{l} = O(1/\sqrt{n})$. Finally, evaluating the integral gives
$$O\left(\frac{1}{n}\cdot \sqrt{n/6-n/8}\right) = O(1/\sqrt n).$$

\iffalse Notice that for all~$l\geq n/8$ we have~$\sqrt{l} = \Omega(n)$, and therefore we obtain an upper bound of
$$\sum_{l=n/8+1}^{n/6}O\left(\frac{1}{\sqrt n \sqrt n}\right)\int_{k=n/6+1}^{n/3}O\left(\frac{1}{(k-l)^{3/2}}\right)dk = O\left(\frac{1}{n}\right)\sum_{l=n/8+1}^{n/6}\int_{k=n/6+1}^{n/3}O\left(\frac{1}{(k-l)^{3/2}}\right)dk.$$
Finally, the integral evaluates in the worst case~$l=n/6$) to~$O(\sqrt{n/3-n/6})$, giving us the overall bound
$$\sum_{l=n/8+1}^{n/6}|\partial_l S|/|S| = O(\sqrt{n}/n) = O(1/\sqrt{n}).$$
\fi

We now have
$$|\partial S|/|S| = \sum_{l \in [1, n/6]}|\partial_l S|/|S| = \sum_{l \in [1, n/8]} |\partial_l S|/|S| + \sum_{l \in [n/8+1, n/6]} |\partial_l S|/|S| = O(1/\sqrt n) + O(1/\sqrt n) = O(1/\sqrt n),$$
as claimed.
\end{proof}

\section{$k$-angulations of convex point sets: quasipolynomial mixing}
\label{sec:quadmix}
\subsection{Generalizing triangulations}
As we stated in the introduction, one can generalize triangulations to $k$-angulations. We do so in more detail here. A \emph{quadrangulation} of a point set is a maximal subdivision of the point set into quadrilaterals, where each quadrilateral has all of its vertices in the point set. Consider $P_{2n+2}$, the regular polygon with $2n + 2$ vertices. We denote by $K_{4,2n+2}$ the graph whose vertex set is the set of all quadrangulations of $P_{2n+2}$, and whose edges are the flips between quadrilaterals. Here, a flip is defined as follows: each diagonal belongs to two quadrilaterals, which together form a hexagon. Replace the diagonal with one of the other two diagonals in the hexagon. (Thus each diagonal in a qudrangulation can be flipped in two possible ways~\cite{caraceni2020}.)

There is a polytope, analogous to the associahedron, known as the \emph{accordiohedron}~\cite{towardspgaccord,banerjee}, whose vertices and edges are those of a \emph{subgraph} of $K_{4,2n+2}$. However, we ignore this polytope and just consider the graph $K_{4,2n+2}$.

We refer to a \emph{$k$-angulation} of a point set as a maximal subdivision of the point set into $k$-gons, each of whose vertices all belong to the point set. A bijection exists~\cite{hiltonpedersen} between the $k$-angulations of $P_{(k-2)n+2}$ and the set of all $k-1$-ary plane trees with $n$ internal nodes. 

It is easy to generalize the definition of a flip between triangulations or quadrangulations to a flip between $k$-angulations: each diagonal in a $k$-angulation belongs to two $k$-gons, which together form a $2k-2$-gon. A flip then consists of replacing this diagonal\textemdash which connects two opposite vertices in the $2k-2$-gon\textemdash with one of the $k - 2$ other such diagonals. 

We generalize the associahedron graph $K_n$ as follows:
\begin{definition}
\label{def:kangflipgraph}
Define the \emph{$k$-angulation flip graph} $K_{k,(k-2)n+2}$ as the graph whose vertices represent the $k$-angulations of $P_{(k-2)n+2}$, and whose edges represent the flips between $k$-angulations.
\end{definition}

\begin{definition}
\label{def:kangflipchain}
Define the \emph{$k$-angulation flip walk} as the natural Markov chain whose state space is $K_{k,(k-2)n+2}$.
\end{definition}

\subsection{(Generalized) Catalan numbers}
The usual notation for Catalan numbers is simply $C_n$; we will now consider a generalization:
\begin{definition}\cite{Raman2019,klarner,hiltonpedersen}
\label{def:fcat}
Let $C_{k,n} = \frac{1}{(k-2)n+1}\binom{(k-1)n}{n}.$
\end{definition}
These numbers, which generalize Catalan numbers, are similar but not identical to the \emph{Fuss-Catalan} numbers.

We will use the following fact in proving that the random walk on $k$-angulations mixes in quasipolynomial time:
\begin{lemma}~\cite{klarner,hiltonpedersen}
\label{lem:kangcount}
The number of $k$-angulations of the convex $(k-2)n+2$-gon is counted by~$C_{k,n}.$
\end{lemma}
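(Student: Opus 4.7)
The plan is to count the $k$-angulations via a classical bijection with $(k-1)$-ary plane trees on $n$ internal nodes, and then count those trees with Lagrange inversion (equivalently, the cycle lemma).

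First I would establish the bijection. Fix a distinguished boundary edge $e^*$ of $P_{(k-2)n+2}$. In any $k$-angulation, exactly one $k$-gon $F$ contains $e^*$; the remaining $k-1$ sides of $F$ each either coincide with a boundary edge of the polygon or form a diagonal bounding a smaller convex $((k-2)m+2)$-gon (for some $m \geq 1$) that is itself $k$-angulated. Reading these $k-1$ sides in the cyclic order inherited from $F$ and recursing on each attached sub-$k$-angulation produces a $(k-1)$-ary plane tree whose $n$ internal nodes correspond to the $k$-gons of the $k$-angulation. The process reverses by attaching $k-1$ ordered children (each a smaller $k$-angulation or an empty slot representing a boundary edge) to a root $k$-gon; the identity $(k-2) + \sum_i (k-2)m_i = (k-2)n$ when $\sum_i m_i = n-1$ confirms that polygon sizes match up, so the map is a bijection.

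Next I would count the $(k-1)$-ary plane trees. Let $t_n$ denote this count with $t_0 = 1$. The recursive structure immediately yields
$$T(x) = 1 + x\, T(x)^{k-1}, \qquad T(x) = \sum_{n \geq 0} t_n x^n.$$
Substituting $W = T - 1$ gives $W = x(1+W)^{k-1}$, which is in standard form for Lagrange inversion with $\phi(u) = (1+u)^{k-1}$. Applying the Lagrange inversion formula yields, for $n \geq 1$,
$$t_n = \frac{1}{n}[u^{n-1}](1+u)^{(k-1)n} = \frac{1}{n}\binom{(k-1)n}{n-1} = \frac{1}{(k-2)n+1}\binom{(k-1)n}{n} = C_{k,n},$$
with the trivial base case $t_0 = 1 = C_{k,0}$. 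Combined with the bijection, this is the claim.

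The only mildly subtle point is setting up the bijection with a consistent orientation convention, so that the output is a \emph{plane} (ordered) $(k-1)$-ary tree rather than an abstract one, and so that empty slots corresponding to boundary edges are tracked as explicit leaf positions. Everything else reduces to routine generating-function manipulation, and no step poses a real obstacle; one could equally substitute a direct cycle-lemma argument on Łukasiewicz words of content $(1, -(k-2))$ for the Lagrange inversion step without affecting the structure of the proof.
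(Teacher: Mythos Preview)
Your proof is correct. Note, however, that the paper does not supply its own proof of this lemma: it is stated with citations to Klarner and to Hilton--Pedersen and treated as a known result (indeed, the paper separately remarks that the bijection between $k$-angulations of $P_{(k-2)n+2}$ and $(k-1)$-ary plane trees with $n$ internal nodes is due to \cite{hiltonpedersen}). Your argument---the rooted bijection followed by Lagrange inversion on $T=1+xT^{k-1}$---is precisely the classical route found in those references, so there is nothing to compare; you have simply filled in what the paper delegates to the literature.
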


One can show using Stirling's formula, and in particular a result by Robbins~\cite{robbinsapprox}, that:
\begin{lemma}
\label{lem:fusscatstirling}
For all $k \geq 3$ and $n \geq 1$, $e^{-1/6}\frac{k-2}{k-1}f(k, n) \leq C_{k,n} \leq e^{1/12}\cdot f(k,n)$, where
$$f(k,n) = \frac{\sqrt{k-1}}{\sqrt{2\pi}((k-2)n)^{3/2}}\cdot \frac{(k-1)^{(k-1)n}}{(k-2)^{(k-2)n}}.$$
\end{lemma}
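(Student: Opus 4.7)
The plan is to derive Lemma~\ref{lem:fusscatstirling} as a direct calculation from Robbins' sharp form of Stirling's approximation, namely $\sqrt{2\pi m}(m/e)^m e^{1/(12m+1)} \leq m! \leq \sqrt{2\pi m}(m/e)^m e^{1/(12m)}$ for all $m\geq 1$. The first step is to expand the binomial coefficient in Definition~\ref{def:fcat} as $\binom{(k-1)n}{n} = ((k-1)n)!/(n!\,((k-2)n)!)$ and apply Robbins' inequalities: the upper Robbins bound on $((k-1)n)!$ together with the lower Robbins bounds on $n!$ and $((k-2)n)!$ yield an upper estimate, and swapping which side of Robbins is used in the numerator vs.\ denominator yields the lower estimate.

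The second step is the algebraic simplification of the main term. The $(1/e)^{(k-1)n}$ factor in the numerator exactly cancels the product $(1/e)^n (1/e)^{(k-2)n}$ in the denominator, and the powers of $n$ coming from $((k-1)n)^{(k-1)n}/(n^n ((k-2)n)^{(k-2)n})$ cancel because $(k-1)n = n + (k-2)n$, leaving the clean ratio $(k-1)^{(k-1)n}/(k-2)^{(k-2)n}$. The square-root prefactors combine as $\sqrt{2\pi(k-1)n}/(\sqrt{2\pi n}\sqrt{2\pi(k-2)n}) = \sqrt{k-1}/(\sqrt{2\pi n}\sqrt{k-2})$. Dividing by $(k-2)n+1$ and writing $(k-2)n+1 = (k-2)n\cdot(1+o(1))$ reproduces the target quantity $f(k,n)$ up to error factors and up to the discrepancy between $1/((k-2)n+1)$ and $1/((k-2)n)$.

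The third step is to account carefully for these error factors. For the upper bound, I use $1/((k-2)n+1) \leq 1/((k-2)n)$ and the single Robbins exponential $e^{1/(12(k-1)n)} \leq e^{1/12}$ (the denominator exponentials only help), which gives $C_{k,n} \leq e^{1/12} f(k,n)$. For the lower bound, the key observation is that since $n\geq 1$ we have $(k-2)n+1 \leq (k-1)n$, hence $1/((k-2)n+1) \geq \tfrac{k-2}{k-1}\cdot 1/((k-2)n)$; this yields the $(k-2)/(k-1)$ prefactor in the statement. The remaining Robbins exponentials to bound are $e^{-1/(12n)-1/(12(k-2)n)}$, and since $k\geq 3, n\geq 1$ this is minimized at $k=3,n=1$, giving precisely $e^{-1/6}$ (the numerator term $e^{1/(12(k-1)n+1)} \geq 1$ is discarded).

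There is no real obstacle here; the only things to be careful about are (i) getting the cancellations in the main term right, especially that the factors of $n$ balance because $(k-1) = 1 + (k-2)$, and (ii) the bookkeeping of the Robbins error factors so that the worst-case values of $k$ and $n$ line up to produce the stated constants $e^{1/12}$ and $e^{-1/6}$. Everything reduces to elementary manipulation once Robbins' inequality is in hand.
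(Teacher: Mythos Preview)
Your proposal is correct and follows exactly the approach the paper indicates (it only cites Robbins' sharp Stirling bounds and states the result without spelling out the computation). Your bookkeeping of the main-term cancellations and of the error factors is accurate; in particular, the observation that $(k-2)n+1 \leq (k-1)n$ for $n\geq 1$ is precisely what produces the $(k-2)/(k-1)$ factor, and the worst case $k=3$, $n=1$ indeed gives $e^{-1/6}$ on the lower side.
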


We will prove the following:
\begin{restatable}{lemma}{lemkangfw}
\label{lem:kangfw}
The flip graph~$K_{k,(k-2)n+2}$, along with the partition~$\mathcal{S}_{k,(k-2)n+2}$, satisfies Lemma~\ref{lem:fwquasi}.
\end{restatable}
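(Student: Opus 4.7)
The plan is to define $\mathcal{S}_{k,(k-2)n+2}$ as the central-$k$-gon partition, in direct analogy with Definition~\ref{def:centralpart}: for each convex $k$-gon $F$ whose vertices lie on $P_{(k-2)n+2}$ and whose interior contains the polygon's center, let $\classt{}{F}$ be the set of $k$-angulations that contain $F$; perturb the center slightly (as in the triangulation case) to break ties. The task is then to verify the four relaxed conditions of Lemma~\ref{lem:fwquasi} in turn.

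For Condition~\ref{multcondcart}, each central $k$-gon $F$ partitions $P_{(k-2)n+2}\setminus F$ into $k$ smaller convex polygons $P_1,\dots,P_k$, with $P_j$ a $((k-2)m_j+2)$-gon and $m_1+\cdots+m_k=n-1$. Since a $k$-angulation containing $F$ is determined by independent $k$-angulations of the $P_j$, and every flip fixing $F$ acts within a single $P_j$, we obtain $\classt{}{F}\cong K_{k,(k-2)m_1+2}\,\Box\,\cdots\,\Box\, K_{k,(k-2)m_k+2}$. The bound $m_j\le n/2$ comes from centrality: each side of $F$ is a chord, and for $F$ to contain the center, the outer arc bounding $P_j$ must carry fewer than half the polygon vertices; that outer arc spans $(k-2)m_j+1$ edges, giving $m_j\le n/2$. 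Condition~\ref{multcondnum} is immediate since the number of central $k$-gons is at most $\binom{(k-2)n+2}{k}=O(n^k)$, which is $n^{O(1)}$ for fixed $k$. For Condition~\ref{multcondsize}, Lemma~\ref{lem:fusscatstirling} gives $C_{k,m}=\Theta(\alpha_k^m/m^{3/2})$ for $m\ge 1$ (with $\alpha_k=(k-1)^{k-1}/(k-2)^{k-2}$) and $C_{k,0}=1$; thus every class $|\classt{}{F}|=\prod_j C_{k,m_j}$ carries the common exponential weight $\alpha_k^{n-1}$, and the ratio between any two class sizes is controlled by the polynomial factor $\prod_{m_j\ge 1}m_j^{-3/2}$, which fluctuates by at most $n^{O(k)}=n^{O(1)}$.

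The heart of the proof is Condition~\ref{multcondmatch}. I first characterize the matching between classes $\classt{}{F}$ and $\classt{}{F'}$ that share any edge: such a pair must be joined by a flip inside a $(2k-2)$-gon $Q$ whose vertex set is $F\cup F'$; inside $Q$, $F$ has exactly one side $d$ that is an interior diagonal of $Q$, and the $k$-gon $G$ opposite $F$ across $d$ is uniquely determined by $(F,F')$. A $k$-angulation $t\in\classt{}{F}$ admits a flip into $\classt{}{F'}$ iff $t$ also contains $G$, and when it does the flip is unique, so $|\edget{}{F}{F'}|$ equals the number of $k$-angulations of $P_{(k-2)n+2}$ containing both $F$ and $G$. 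Because $G$ lies inside one of the outer polygons $P_j$ and subdivides it into $k-1$ sub-polygons with total $m$-parameter $m_j-1$, the count collapses to
\[
\frac{|\edget{}{F}{F'}|}{|\classt{}{F}|} \;=\; \frac{\prod_{l=1}^{k-1}C_{k,p_l}}{C_{k,m_j}}, \qquad \textstyle\sum_l p_l=m_j-1.
\]
Applying Lemma~\ref{lem:fusscatstirling} to numerator and denominator, the exponential factors in $\alpha_k$ cancel up to one extra factor of $\alpha_k^{-1}$, leaving a ratio of order $m_j^{3/2}/\prod_{p_l\ge 1}p_l^{3/2}$; since each $p_l\le m_j$, this is $\Omega(1/m_j^{3(k-2)/2})=\Omega(1/n^{O(k)})$, which is $\Omega(n^{-O(1)})$ for fixed $k$. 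The main obstacle is bookkeeping: one must pin down the adjacency characterization in corner configurations (for example when $F$ and $F'$ overlap unusually with $Q$), handle the degenerate $p_l=0$ cases in the Fuss--Catalan asymptotics (where $C_{k,0}=1$ breaks the Stirling formula), and check that the same counting identity survives after the perturbation used to define the partition. These issues are strictly more intricate than their $k=3$ analogues in Lemma~\ref{lem:matchingscard}, but are not conceptually new.
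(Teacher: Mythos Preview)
Your proposal is correct and follows essentially the same approach as the paper: define the central-$k$-gon partition, verify the Cartesian product structure with each factor of size at most $n/2$, count the classes by $\binom{(k-2)n+2}{k}$, and use the Fuss--Catalan asymptotics (Lemma~\ref{lem:fusscatstirling}) to bound class sizes and edge-set sizes polynomially. Your write-up is in fact more explicit than the paper's own proof, which defers the structural facts to Lemmas~\ref{lem:tdclasstype}--\ref{lem:tdsetype} and Corollary~\ref{cor:tdclasssize} and then simply cites them.

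One small caution: your claim that the auxiliary $k$-gon $G$ (and hence the $(2k-2)$-gon $Q$) is \emph{uniquely} determined by the pair $(F,F')$ is stronger than what the paper asserts; the paper explicitly allows for several such $Q$'s and only uses the count as a lower bound. This overclaim is harmless here, since Condition~\ref{multcondmatch} only requires $|\edget{}{F}{F'}|\ge \Omega(n^{-O(1)})\cdot|\classt{}{F}|$, and fixing any single admissible $G$ already gives that lower bound---but you should phrase the ``iff'' as ``if'' and the ``equals'' as ``is at least'' to be safe.
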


Theorem~\ref{thm:kangmix}, as we will show in Appendix~\ref{sec:quadmix}, will follow from tracing the particular quasipolynomial factors in the proof of Lemma~\ref{lem:kangfw}.

To prove Lemma~\ref{lem:kangfw}, we will partition $K_{k,(k-2)n+2}$ into a set of classes $\mathcal{S}_k$ in a suitable fashion. We will define a partition that generalizes one by Molloy, Reed, and Steiger~\cite{molloylb}. In order to define $\mathcal{S}_k$, we need some observations about the structure of the graph $K_{k,(k-2)n+2}$.

\subsection{Partition into classes}
\label{sec:mg2}

\begin{figure}
\includegraphics[width=15em]{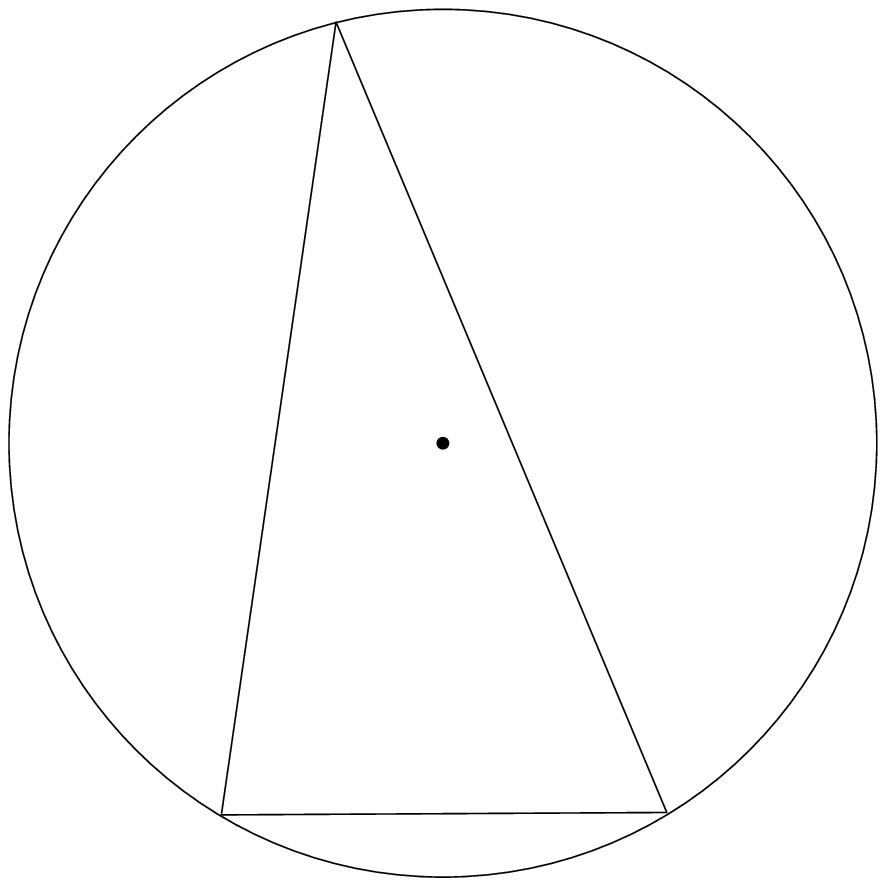}
\hspace*{1.5em}
\includegraphics[width=15em]{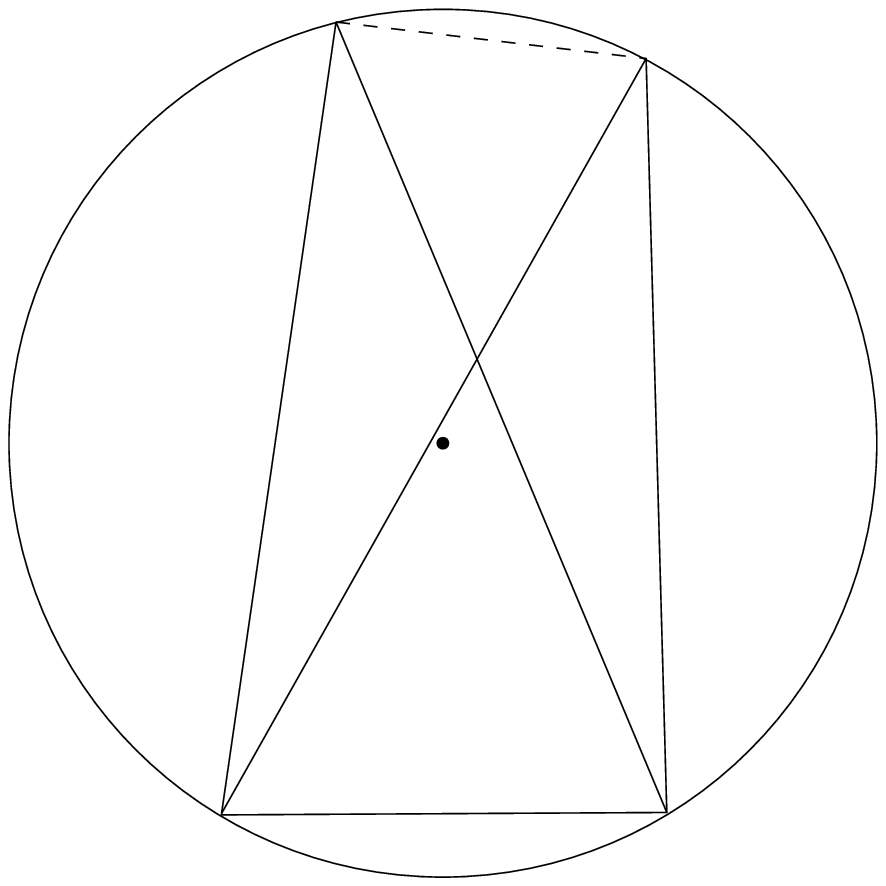}
\caption{Left: a class $\classt{}{T}$ in $K_{3,n+2}$. Each triangulation in $\classt{}{T}$ contains the central triangle depicted. We depict the polygon $P_{n+2}$ as a circle for simplicity. Right: the set of edges $\edget{}{T}{T'}$ (which form a matching) between two classes.}
\label{fig:tclass}
\end{figure}

\begin{definition}
\label{def:kpart}
Given a $k$-gon $T$ containing the center of the regular $(k-2)n+2$-gon $P_{(k-2)n+2}$ and sharing all of its vertices with $P_{(k-2)n+2}$, identify $T$ with the class $\classt{}{T}$ of $k$-angulations $v \in V(K_{k,(k-2)n+2})$ such that $T$ forms one of the $k$-gons in the $k$-angulation $v$. Let $\mathcal{S}_{k,(k-2)n+2}$ be the set of all such $\classt{}{T}$ classes. 
\end{definition}
(If $P_{n+2}$ has an even number of edges, we perturb the center slightly so that every triangulation lies in some class.) 

\begin{remark}
The set $\mathcal{S}_{k,(k-2)n+2}$ is a partition of $V(K_{k,(k-2)n+2})$, because no pair of $k$-gons whose endpoints are polygon vertices can contain the origin without crossing.
\end{remark}

(This generalizes the partition of Molloy, Reed, and Steiger~\cite{molloylb}.)

\begin{definition}
\label{def:tdse}
Given classes $\classt{}{T}, \classt{}{T'} \in \mathcal{S}_{k,(k-2)n+2}$, let $\edget{}{T}{T'}$ be the set of edges between with one endpoint in $\classt{}{T}$ and one endpoint in $\classt{}{T'}$. Let $\bdryt{}{T}{T'}$ denote the set of vertices in $\classt{}{T}$ that have at least one neighbor in $\classt{}{T'}$.
\end{definition}
See Figure~\ref{fig:tclass}. 

\begin{remark}
\label{rmk:tdsepartition}
The set of edge sets of the form $\edget{}{T}{T'}$ is a partition of all edges between pairs of vertices in different classes.
\end{remark}

\subsubsection{Cardinalities of classes and of edge sets}
We make some observations about the nature and cardinalities of the classes in $\mathcal{S}_{k,(k-2)n+2}$, and of the sets and numbers of edges between the classes.

\begin{lemma}
\label{lem:tdclasstype}
Each $k$-gonal class in $\mathcal{S}_{k,(k-2)n+2}$ induces a subgraph of $K_{k,(k-2)n+2}$ that is isomorphic to the Cartesian product $K_{k,(k-2)i_1+2}\Box K_{k,(k-2)i_2+2} \Box \cdots K_{k,(k-2)i_k+2},$ for some $1 \leq i_1 \leq \cdots \leq i_k \leq n/2$, $i_1 + \cdots + i_k = n-1.$ 
\end{lemma}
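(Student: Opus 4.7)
The plan is to generalize the triangle-case decomposition of Lemma~\ref{lem:projcart} to arbitrary $k$-gons. The key observation is that the central $k$-gon $T$ that defines the class $\classt{}{T}$ subdivides $P_{(k-2)n+2}$ into $T$ together with $k$ smaller convex polygons $P^{(1)}, \ldots, P^{(k)}$, one lying on the outside of each side of $T$; each $P^{(j)}$ is bounded by the corresponding side of $T$ and by the arc of $P_{(k-2)n+2}$ joining its two endpoints.

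First I would establish the vertex-level bijection: a $k$-angulation $t \in \classt{}{T}$ restricts, on each sub-polygon $P^{(j)}$, to a $k$-angulation $t_j$ of $P^{(j)}$, and conversely any tuple $(t_1,\ldots,t_k)$ of such $k$-angulations, assembled with $T$, reconstitutes a $k$-angulation in $\classt{}{T}$. Next I would analyze flips: since $T$ is common to both endpoints of any edge in the subgraph induced by $\classt{}{T}$, no flip can remove a side of $T$, and so every such flip lies entirely inside a single $P^{(j)}$ and is exactly a flip in the $k$-angulation flip graph of $P^{(j)}$. Conversely, every flip inside some $P^{(j)}$ extends to a flip in $K_{k,(k-2)n+2}$ preserving $T$. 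Together these facts give the Cartesian product isomorphism $\classt{}{T} \cong \Box_{j=1}^{k} K_{k,|P^{(j)}|}$.

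For the parameters, I would write $|P^{(j)}| = (k-2)i_j + 2$; the required divisibility of $|P^{(j)}|-2$ by $k-2$ comes automatically from the fact that each $P^{(j)}$ is itself $k$-angulatable (its $k$-angulations are nonempty, being obtained by restriction), and a convex polygon admits a $k$-angulation iff its vertex count is $\equiv 2 \pmod{k-2}$. Counting vertices, each vertex of $T$ belongs to exactly two sub-polygons and each non-$T$ vertex of $P_{(k-2)n+2}$ belongs to exactly one, which gives
\[
\sum_{j=1}^{k} |P^{(j)}| \;=\; \bigl((k-2)n + 2 - k\bigr) + 2k \;=\; (k-2)n + k + 2,
\]
so $\sum_j i_j = n-1$ after rearranging.

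Finally, since $T$ contains the center of $P_{(k-2)n+2}$, each side of $T$ separates the center from its opposite sub-polygon, which forces the arc defining each $P^{(j)}$ to contain at most half the perimeter and hence $i_j \leq n/2$; reordering the $i_j$ in nondecreasing order yields the stated form. No step here is really hard—the trickiest bookkeeping is just the vertex-count identity and the modular condition—so I do not expect a major obstacle; the lemma is essentially a direct $k$-gon analogue of Lemma~\ref{lem:projcart}.
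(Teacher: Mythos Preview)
Your proposal is correct and follows essentially the same approach as the paper's proof: the central $k$-gon $T$ partitions the ambient polygon into $k$ smaller polygons, yielding the bijection on vertices, and every flip within the class lies entirely inside one of those sub-polygons, yielding the Cartesian product on edges. You have simply filled in more of the bookkeeping (the divisibility of $|P^{(j)}|-2$ by $k-2$, the vertex-count identity giving $\sum i_j = n-1$, and the centrality argument for $i_j \le n/2$) that the paper leaves implicit in its three-sentence proof.
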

\begin{proof}
Each $k$-gon $T$ partitions the regular $(k-2)n+2$-gon into smaller convex polygons with side lengths $(k-2)i_1+2, (k-2)i_2+2, \dots, (k-2)i_k+2$. Thus each $k$-angulation in $\classt{k}{T}$ can be identified with a tuple of $k$-angulations of these smaller polygons. The Cartesian product structure then follows from the fact that every flip between two $k$-angulations in $\classt{k}{T}$ can be identified with a flip in one of the smaller polygons.
\end{proof}

\begin{lemma}
\label{lem:bdrytype}
For each pair of classes $\classt{}{T}$ and $\classt{}{T'}$, the boundary set $\bdryt{}{T}{T'}$ induces a subgraph of $\classt{}{T}$ isomoprhic to a union of Cartesian products of the form $K_{(k-2)i_1+2}\Box K_{(k-2)i_2+2} \Box \cdots \Box K_{(k-2)i_{2k-2}+2}$, for some $i_1 \leq \cdots \leq i_{2k-2} \leq n/2,$ $i_1+\cdots+i_{2k-2}=n-2$.
\end{lemma}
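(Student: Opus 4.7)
The plan is to adapt the proof of Lemma~\ref{lem:projggbdry} from $k=3$ to general $k$, with one new feature: for $k \geq 4$, a pair of central $k$-gons $(T, T')$ related by a flip can embed into several distinct $(2k-2)$-gons $Q$, so the boundary decomposes as a \emph{union} of Cartesian products rather than a single one. First I will characterize when a $k$-angulation $v \in \classt{}{T}$ lies in $\bdryt{}{T}{T'}$: this happens iff $v$ contains some $k$-gon $S$ sharing a diagonal $D$ with $T$ such that the $(2k-2)$-gon $Q := T \cup S$ also admits a partition into $T'$ and a second $k$-gon $S' = Q \setminus T'$ along a different diagonal $D'$ of $Q$. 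Equivalently, $v \in \bdryt{}{T}{T'}$ iff $v$ contains both $T$ and $S_Q := Q \setminus T$ for at least one valid $Q$, where ``valid'' means $Q$ is a $(2k-2)$-gon inscribed in $P_{(k-2)n+2}$ containing both $T$ and $T'$ as sub-$k$-gons differing by one diagonal. Since $T \cup T'$ already uses $k+1$ of the $2k-2$ vertices of $Q$, the remaining $k-3$ vertices are free choices among the vertices of $P_{(k-2)n+2}$ outside $T \cup T'$, subject to each resulting sub-polygon being $k$-angulatable; for $k = 3$ we have $k - 3 = 0$, recovering the uniqueness of $Q$ built into Lemma~\ref{lem:projggbdry}.

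For each valid $Q$, let $B_Q \subseteq \bdryt{}{T}{T'}$ be the set of $k$-angulations in $\classt{}{T}$ that also contain $S_Q$. Then $Q$ partitions $P_{(k-2)n+2}$ into $Q$ itself (already $k$-angulated as $\{T, S_Q\}$) and $2k-2$ sub-polygons $P_1, \ldots, P_{2k-2}$, one attached to each side of $Q$. Sides of $Q$ that happen to coincide with a side of $P_{(k-2)n+2}$ produce a trivial 2-vertex sub-polygon whose flip graph is a single vertex (i.e.\ $i_j = 0$). Since every $P_j$ is $k$-angulatable, $|P_j| = (k-2)i_j + 2$ for a unique integer $i_j \geq 0$. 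Exactly as in Lemma~\ref{lem:tdclasstype}, every element of $B_Q$ is specified by independent $k$-angulations of each $P_j$, and the flips of $v \in B_Q$ that remain in $B_Q$ (those not touching any edge of $T$ or $S_Q$) are exactly the flips internal to some single $P_j$. Hence the subgraph of $\classt{}{T}$ induced by $B_Q$ is isomorphic to $K_{k,(k-2)i_1+2} \Box \cdots \Box K_{k,(k-2)i_{2k-2}+2}$.

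Two easy counts finish the lemma. First, $\sum_j i_j = n-2$: we have $\sum_j |P_j| = (k-2)n + 2k$ (each vertex of $P_{(k-2)n+2}$ is counted once, plus each of the $2k-2$ vertices of $Q$ is counted one extra time for its second incident $P_j$), so $\sum_j [(k-2) i_j + 2] = (k-2)n + 2k$ and $\sum_j i_j = n-2$. Second, $i_j \leq n/2$: since $Q$ contains the center of $P_{(k-2)n+2}$, each $P_j$ lies strictly on one side of a chord separating it from the center, so $|P_j| < |P_{(k-2)n+2}|/2 + 1$, which forces $i_j < n/2$. Taking $\bdryt{}{T}{T'} = \bigcup_Q B_Q$ over all valid $Q$ then gives the claimed union. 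The main new obstacle, relative to Lemma~\ref{lem:projggbdry}, is handling the multiple valid $Q$'s when $k \geq 4$ and verifying $k$-angulatability of the resulting sub-polygons; the Cartesian-product structure of each individual $B_Q$ is immediate from a word-for-word rerun of the triangulation argument.
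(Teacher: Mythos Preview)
Your proof is correct and follows the same approach as the paper's: for each admissible $(2k-2)$-gon $Q$ containing both $T$ and $T'$ as diagonal-halves, the set $B_Q$ of $k$-angulations containing $T$ and $S_Q$ is a Cartesian product of smaller flip graphs, and the boundary is the union over all such $Q$; you supply more detail than the paper does, in particular the verifications $\sum_j i_j = n-2$ and $i_j \leq n/2$. One minor slip that does not affect the argument: the count $|T\cup T'| = k+1$ is only a lower bound in general (for $k\geq 4$ the central $k$-gons $T,T'$ can share as few as two vertices of $Q$), so the number of free vertices of $Q$ is \emph{at most} $k-3$ rather than exactly $k-3$.
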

\begin{proof}
Each flip between $k$-angulations in adjacent classes $\classt{k}{T}$ involves flipping a diagonal of the $k$-gon $T$ to transform $k$-angulation $t \in \classt{k}{T}$ into $k$-angulation $t' \in \classt{k}{T'}$. Whenever this is possible, there must exist a $2k-2$-gon $Q$, sharing $k-1$ sides with $T$ (the $k-1$ sides that are not flipped), such that both $t$ and $t'$ contain $Q$. Furthermore, every $t \in \classt{k}{T}$ containing $Q$ has a flip to a distinct~$t' \in \classt{k}{T'}$. The set of all such boundary vertices $t \in \classt{k}{T}$ can be identified with the Cartesian product described because $Q$ partitions $P_{(k-2)n+2}$ into a collection of smaller polygons, so that each $k$-angulation in $\bdryt{}{T}{T'}$ consists of a tuple of $k$-angulations in each of these smaller polygons, and such that every flip between $k$-angulations in $\bdryt{}{T}{T'}$ consists of a flip in one of these smaller polygons. (There may be many such~$2k-2$-gons for a given pair of classes, but the claim holds as a lower bound.)
\end{proof}

\begin{lemma}
\label{lem:tdsetype}
Each set of edges between classes in $\mathcal{S}_{k,(k-2)n+2}$ is in bijection with the vertices of a union of Cartesian products of the form $K_{(k-2)i_1+2}\Box K_{(k-2)i_2+2} \Box \cdots \Box K_{(k-2)i_{2k-2}+2}$, for $i_1 \leq \cdots \leq i_{2k-2} \leq n/2,$ $i_1+\cdots+i_{2k-2}=n-2$. Furthermore, no two edges in any such edge set share a vertex, i.e. the edge set is a matching. 
\end{lemma}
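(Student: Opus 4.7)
The plan is to deduce the lemma directly from Lemma~\ref{lem:bdrytype}, together with the observation that, for every pair of adjacent classes $\classt{}{T}, \classt{}{T'} \in \mathcal{S}_{k,(k-2)n+2}$, any flip between a $k$-angulation in $\classt{}{T}$ and one in $\classt{}{T'}$ is ``witnessed'' by a $2k-2$-gon $Q$ sharing $k-1$ sides with each of $T$ and $T'$. First I would observe that if $t \in \classt{}{T}$ and $t' \in \classt{}{T'}$ are adjacent in $K_{k,(k-2)n+2}$, then the single flip transforming $t$ into $t'$ must remove a diagonal of $T$ and replace it with a diagonal of $T'$: since both $T$ and $T'$ are central $k$-gons and $t, t'$ differ by exactly one flipped diagonal, this is the only way $T$ can disappear from $t$ while $T'$ simultaneously appears in $t'$. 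Consequently $T$ and $T'$ share $k-1$ sides, their union is a $2k-2$-gon $Q$, and both $t$ and $t'$ contain $Q$ as a sub-region (in different internal configurations).

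Next I would argue that, conditioned on such a witness $Q$, the flip between $t \in \bdryt{}{T}{T'}$ and its counterpart $t' \in \bdryt{}{T'}{T}$ is uniquely determined: the removed diagonal is the one splitting $Q$ into $T$ and its partner $k$-gon, and the added diagonal is the one splitting $Q$ into $T'$ and its partner. Hence each boundary vertex on one side has exactly one neighbor on the other side, so $\edget{}{T}{T'}$ is a matching. The bijection with the vertex set of a union of Cartesian products of the stated form then follows by applying Lemma~\ref{lem:bdrytype} to $\bdryt{}{T}{T'}$: the Cartesian factors enumerate the independent $k$-angulations of the smaller polygons cut off by $Q$ from $P_{(k-2)n+2}$, the side lengths sum to the value claimed (since $Q$ consumes $2k-2$ vertices and shares $k$ of them with $T \cup T'$), and the union accounts for distinct witnesses $Q$.

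Finally, I would confirm that the matchings arising from distinct witnesses $Q$ are edge-disjoint: each edge of $K_{k,(k-2)n+2}$ corresponds to a unique flipped diagonal, and distinct $Q$'s force distinct diagonals to be flipped. This disjointness is what permits the overall edge set to be described as a single matching in bijection with the disjoint union of Cartesian products, rather than a multi-matching. I do not foresee any substantive technical obstacle; the lemma is essentially the $k$-angulation analogue of Lemma~\ref{lem:projggmatching}, and the only step requiring care is bookkeeping the union over witnesses $Q$ — which is why Lemma~\ref{lem:bdrytype} was stated in the same union-of-products form, allowing this proof to be a near-immediate corollary.
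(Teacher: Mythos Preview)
Your approach matches the paper's: the proof there simply says ``The claim follows from the reasoning in Lemma~\ref{lem:bdrytype},'' and you have correctly spelled out why the matching property and the bijection follow from that lemma. One minor geometric slip worth fixing: for $k > 3$, the central $k$-gons $T$ and $T'$ need not share $k-1$ sides, and $T \cup T'$ is generally not the witnessing $2k{-}2$-gon $Q$; rather $Q = T \cup U$ where $U$ is the $k$-gon in $t$ across the flipped side of $T$, and $T'$ is one of the two $k$-gons into which the new diagonal splits $Q$---but your uniqueness and bijection arguments go through unchanged once $Q$ is described this way, since they ultimately rest on Lemma~\ref{lem:bdrytype}.
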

\begin{proof}
The claim follows from the reasoning in Lemma~\ref{lem:bdrytype}.
\end{proof}

\begin{corollary}
\label{cor:tdclasssize}
Each $k$-gonal class in $\mathcal{S}_{k,(k-2)n+2}$ has cardinality $C_{k,i_1}C_{k,i_2}\cdots C_{k,i_k}$, and each edge set between classes has cardinality at least $C_{k,i_1}C_{k,i_2}\cdots C_{k,i_{2k-2}}$. Here, $i_1, \dots, i_{2k-2}$ are as in Lemmas~\ref{lem:tdclasstype} and \ref{lem:tdsetype}.
\end{corollary}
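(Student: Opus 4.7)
The corollary follows essentially by combining the structural descriptions in Lemmas~\ref{lem:tdclasstype} and \ref{lem:tdsetype} with the counting formula in Lemma~\ref{lem:kangcount}. The plan is to invoke these three results and apply the elementary fact that $|V(G_1 \Box G_2 \Box \cdots \Box G_m)| = \prod_{j} |V(G_j)|$, which follows immediately from Definition~\ref{def:cartprod}.

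First I would handle the class cardinality. By Lemma~\ref{lem:tdclasstype}, each class $\classt{}{T}\in\mathcal{S}_{k,(k-2)n+2}$ is isomorphic (as an induced subgraph) to $K_{k,(k-2)i_1+2}\Box \cdots \Box K_{k,(k-2)i_k+2}$ for some $i_1,\dots,i_k$ summing to $n-1$. Taking vertex counts on both sides, and applying Lemma~\ref{lem:kangcount} to each factor (which tells us that $|V(K_{k,(k-2)i_j+2})| = C_{k,i_j}$), yields $|\classt{}{T}| = \prod_{j=1}^{k} C_{k,i_j}$, exactly as claimed.

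For the edge set lower bound, I would apply Lemma~\ref{lem:tdsetype}: $\edget{}{T}{T'}$ is in bijection with the vertex set of a union of Cartesian products of the form $K_{k,(k-2)i_1+2}\Box\cdots\Box K_{k,(k-2)i_{2k-2}+2}$, where each summand corresponds to a choice of the $2k-2$-gon $Q$ shared by $T$ and $T'$ in the relevant flip (as in the proof of Lemma~\ref{lem:bdrytype}). Since a union has cardinality at least that of any single element, it suffices to lower-bound the size of one such product. Applying the same Cartesian-product vertex count together with Lemma~\ref{lem:kangcount} to one such summand gives $|\edget{}{T}{T'}| \geq \prod_{j=1}^{2k-2}C_{k,i_j}$, as required.

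There is no real obstacle here: the whole content of the corollary is compiling the three preceding results, and the only care to take is in the edge-set bound, where the phrasing ``at least'' is what lets us discard all but one term in the union of Cartesian products (the other terms, if any, only help). I would note explicitly that the indices $i_1,\dots,i_{2k-2}$ summing to $n-2$ are forced by the requirement that $Q$ partitions $P_{(k-2)n+2}$ into polygons whose total side lengths reconstruct the original polygon, matching the index constraints stated in Lemma~\ref{lem:tdsetype}.
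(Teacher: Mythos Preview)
Your proposal is correct and matches the paper's intended argument. The paper gives no explicit proof of this corollary, treating it as immediate from Lemmas~\ref{lem:tdclasstype}, \ref{lem:tdsetype}, and \ref{lem:kangcount} together with the multiplicativity of vertex counts under Cartesian products; you have simply spelled out those steps, including the observation that the ``at least'' in the edge-set bound lets you keep just one summand from the union.
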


\subsection{Applying the framework}
We are almost ready to prove that $K_{k,(k-2)n+2}$ satisfies the conditions of Lemma~\ref{lem:fwquasi}, but first we note the following known result:
\begin{lemma}
\label{lem:quadconn}
$K_{k,(k-2)n+2}$ is connected.
\end{lemma}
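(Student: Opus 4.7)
The plan is to directly exhibit a flip path from any $k$-angulation to a common ``fan'' $k$-angulation, which immediately implies connectivity. Fix any polygon vertex $v$ of $P_{(k-2)n+2}$ and let $A^*_v$ denote the unique fan $k$-angulation whose $n$ $k$-gons all contain $v$ (so all $n-1$ diagonals of $A^*_v$ are incident to $v$). I will show that any $k$-angulation $A$ can be flipped into $A^*_v$; connectivity of $K_{k,(k-2)n+2}$ then follows via the concatenation $A \to A^*_v \to B$ for arbitrary $A, B$.

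Define the potential $\phi(A) = \#\{\text{diagonals of }A\text{ not incident to }v\}$. It suffices to show that whenever $\phi(A) > 0$ there is a valid flip strictly decreasing $\phi$; since $\phi$ is a non-negative integer bounded by $n-1$, at most $n-1$ such flips then produce $A^*_v$.

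Suppose $\phi(A) > 0$. I first observe that if two $k$-gons of $A$ both contain $v$, then their shared diagonal must be incident to $v$: since $v$ is a polygon vertex it must appear as a vertex of each $k$-gon, hence as a common vertex, hence as an endpoint of their shared side. The contrapositive gives a $k$-gon of $A$ not containing $v$ as soon as $\phi(A)>0$. Combining this with the connectedness of the dual tree of $A$ (which has both $v$-containing and non-$v$-containing nodes) yields adjacent $k$-gons $T' \ni v$ and $T \not\ni v$ whose shared diagonal $d$ is, by the same observation, not incident to $v$. The union $Q = T \cup T'$ is a convex $(2k-2)$-gon with $v$ among its vertices; since $2k-2$ is even, each vertex of $Q$ has a unique antipodal partner and so exactly one of the $k-1$ long diagonals of $Q$ is incident to $v$, call it $d^\circ$. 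Because $d \neq d^\circ$, the flip $d \to d^\circ$ is one of the $k-2$ valid replacements of $d$ in $Q$, and the resulting $k$-angulation has $\phi$ smaller by exactly one.

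The main obstacle, which is minor, is verifying the two structural claims: that whenever $\phi(A)>0$ some $k$-gon of $A$ fails to contain $v$ (as above), and that exactly one long diagonal of $Q$ is incident to $v$ (immediate from cyclic antipodality in a $(2k-2)$-gon). No induction on $n$ is needed, and as a by-product the argument yields the diameter bound $\mathrm{diam}(K_{k,(k-2)n+2}) \leq 2(n-1)$.
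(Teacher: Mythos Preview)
Your argument is correct. The potential $\phi$ is well-defined, the claim that two $v$-containing $k$-gons can only meet along a $v$-incident diagonal is sound (their only shared vertices are the endpoints of the shared side), and the dual-tree argument cleanly produces an adjacent pair $T'\ni v$, $T\not\ni v$. The identification of the admissible flip diagonals in the $(2k-2)$-gon $Q$ as the $k-1$ ``long'' (antipodal) diagonals is exactly the paper's definition of a flip, and since $v\in V(Q)$ exactly one such diagonal is $v$-incident while $d$ is not, so the flip $d\to d^\circ$ strictly decreases $\phi$.

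Your route and the paper's are really the same idea seen through two lenses. The paper invokes the Hilton--Pedersen bijection between $k$-angulations and $(k{-}1)$-ary plane trees, under which flips become rotations, and then uses the standard ``rotate to a left spine'' argument. Your fan $A^*_v$ is precisely the image of the spine under that bijection, and your $\phi$-decreasing flip is the geometric incarnation of a left rotation toward the spine. What your version buys is self-containment: you never need to state or verify the tree bijection, and you get the $2(n-1)$ diameter bound directly in the $k$-angulation language. What the paper's version buys is brevity by citation, since the tree rotation fact is classical.
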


One way to prove Lemma~\ref{lem:quadconn} is via the isomorphism~\cite{hiltonpedersen} between flips on $k$-angulations and rotations on $k-1$-ary plane trees. One can prove that the rotation graph on $k-1$-ary plane trees is connected as follows: find a path from any given tree to a ``spine,'' where all internal nodes belong to a simple path via left children from the root to the leftmost leaf~\cite{culik1982note}. (This path consists of repeated left rotations.) Every non-spine tree has some internal node at which a left rotation can be performed. Furthermore, when no such operation is still possible, one has a spine.

Nakamoto, Kawatani, Matsumoto, and Urrutia~\cite{nakamoto} also gave a proof of connectedness for the special case~$k=4$. Sleator, Tarjan, and Thurston proved~\cite{sleator1988rotation} that the diameter of $K_{3,n+2}$ is at most $2n-6$ for $n\geq 11$. 

We now prove Lemma~\ref{lem:kangfw}:
\lemkangfw*
\begin{proof}
By Lemma~\ref{lem:tdclasstype} and the observation that there are at most~$\binom{(k-2)n+2}{k}$ classes, the partition~$\mathcal{S}_{k,(k-2)n+2}$ meets Conditions 1 and 5 of the framework, with the modification to Condition~1 that the~$O(1)$ term is replaced with~$O(n^{O(1)})$, and Condition~6 easily follows from the identification of each class with a~$k$-gon containing the center of the~$(k-2)n+2$-gon.

Corollary~\ref{cor:tdclasssize} gives a formula for the size of each class and each edge set between classes. Lemma~\ref{lem:fusscatstirling} then easily gives a polynomial bound on the ratio of $N = |V(K_{k,(k-2)n+2})|$ to the size of the smallest class (similarly the smallest edge set). Conditions~2, 3, and~4 follow, with the modification that the~$O(1)$ terms are replaced with~$O(n^{O(1)})$ terms. 
\end{proof}

To derive the specific quasipolynomial bound in Theorem~\ref{thm:kangmix}, we first observe the following:
\begin{remark}
\label{rmk:edgesets}
The smallest edge set between classes in $\mathcal{S}_{k,(k-2)n+2}$ has size at least
$$C_{k,i_1}\cdots C_{k,i_{2k-2}} \geq N\cdot \frac{f(k,i_1)\cdots f(k, i_{2k-2})}{e^{(2k-2)/6+1/12}((k-1)/(k-2))^{2k-2}f(k,n)}$$
$$\geq Ne^{(3-4k)/12}\cdot\frac{(k-2)^{k-3/2}}{(k-1)^{3k-5/2}}\cdot \frac{1}{(2\pi)^{k-3/2}}\cdot\frac{1}{n^{3k}}.$$
\end{remark}

The next fact we need comes from Lemma~\ref{lem:nonhierexact}:
\lemnonhierexact*
Applying Lemma~\ref{lem:nonhierexact}, and using the fact that $K_{k,(k-2)n+2}$ is a $\leq(k-2)n$-regular graph with $\log N \leq (k-1)n\log(k-1)$, gives
$$O((2N/\mathcal{E}_{\min})^{2\log n}(k-1)^3 (\log(k-1))n^3)$$
$$= O((k-1)^3(\log(k-1)) n^3(2e^{(4k-3)/12}\cdot \frac{(k-1)^{3k-5/2}}{(k-2)^{k-3/2}}\cdot (2\pi)^{k-3/2}\cdot n^{3k})^{2\log n})$$
$$= O((k-1)^3(\log(k-1)) n^3(2e^{(4k)/12}\cdot (k-1)^{3k}\cdot (2\pi)^{k} \cdot n^{3k})^{2\log n})$$
$$= O((k-1)^3(\log(k-1))\cdot n^{2(3k\log(k-1)+k(1+\log\pi)+3k\log n + k)+5}).$$

Here we have implicitly used Lemma~\ref{lem:expmixing} to pass from the expansion bound given by Lemma~\ref{lem:nonhierexact} to a mixing bound. Actually, we can do better using the following standard lemma, which allows for passing from congestion to mixing without a quadratic loss:
\begin{lemma}~\cite{diacstroock, sinclair_1992}
\label{lem:flowmixing}
Suppose a uniform multicommodity flow~$f$ exists in a graph~$G = (V, E)$ with congestion~$\rho$, in which for every~$s,t\in V$, 
$$\max_{P\in \Gamma_{st}}|P| \leq l,$$
for some~$l > 0$, where $\Gamma_{st}$ s the set of (simple) paths in~$G$ from~$s$ to~$t$, and where we use the shorthand~$f_{st}(P)$ to denote the fraction of the~$s,t$ commodity that~$f$ sends along the path~$P$. Then the mixing time of the uniform random walk on~$G$ is
$$O\left(\frac{\rho l}{d} \log(|V(G)|)\right),$$
where~$d$ is the maximum degree of~$G$.
\end{lemma}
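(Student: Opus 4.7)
The plan is to pass through the spectral gap of the uniform random walk on $G$ and invoke Sinclair's Poincar\'e inequality for multicommodity flows. The stated bound sits one step closer to the spectral gap than the Cheeger-based Lemma~\ref{lem:expmixing}, so the improvement comes precisely from replacing one factor of (expansion)$^{-1}$ with the path-length bound $l$.

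First, I would rescale $f$ into Sinclair's standard normalization, in which each ordered pair $(s,t)$ routes $\pi(s)\pi(t)$ units rather than one unit. Since the uniform random walk on $G$ has stationary distribution $\pi(s) = 1/|V|$, every single-commodity flow $f_{st}$ scales by $1/|V|^2$. Combined with Definition~\ref{def:mcflow} (in which $\rho$ is the already-$|V|$-normalized maximum of $\sum_{s,t} f_{st}(u,v)$), this implies that the \emph{weighted} flow $\phi(u,v) = \sum_{s,t}\pi(s)\pi(t) f_{st}(u,v)$ across any arc is at most $\rho/|V|$.

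Second, I would apply the multicommodity-flow form of the Poincar\'e inequality due to Sinclair~\cite{sinclair_1992} (and in the canonical-paths form due to Diaconis and Stroock~\cite{diacstroock}): for any reversible chain with edge weights $Q(u,v) = \pi(u)P(u,v)$, a flow with demands $\pi(s)\pi(t)$ supported on paths of length at most $l$ certifies the spectral gap bound
\[
\lambda \;\geq\; \frac{1}{l \cdot \max_{(u,v)} \phi(u,v)/Q(u,v)}.
\]
For the uniform walk on a $d$-regular graph, $Q(u,v) = 1/(d|V|)$ on every arc of $G$, so inserting the bound $\phi(u,v) \leq \rho/|V|$ yields a spectral-gap bound of the form $\lambda \geq c(d,|V|)/(\rho\, l)$, with the $d$-dependence coming entirely from the ratio $\rho/|V|$ to $1/(d|V|)$.

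Third, I would convert the spectral-gap estimate into a total-variation mixing-time bound via the standard inequality
\[
\tau(\varepsilon) \;\leq\; \lambda^{-1}\bigl(\log(1/\pi_{\min}) + \log(1/\varepsilon)\bigr),
\]
noting that $\pi_{\min} = 1/|V|$ and $\log(1/\varepsilon) = O(1)$ for the standard choice $\varepsilon = 1/4$. Combining the three ingredients yields the stated $O\!\left(\tfrac{\rho l}{d}\log|V(G)|\right)$ bound. The only real obstacle is the careful bookkeeping needed to reconcile the paper's congestion normalization (Definition~\ref{def:mcflow}, with unit demands and $\rho$ divided by $|V|$) against Sinclair's (demands $\pi(s)\pi(t)$, congestion divided by $Q(u,v)$); once that translation is performed correctly, everything else is a mechanical application of the Poincar\'e inequality and of the standard spectral-gap-to-mixing conversion.
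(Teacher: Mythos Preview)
The paper does not prove this lemma; it is stated with a citation to \cite{diacstroock, sinclair_1992} and used as a black box. Your outline---rescale to demands $\pi(s)\pi(t)$, apply Sinclair's Poincar\'e/comparison inequality to bound the spectral gap, then convert to mixing time via $\tau \le \lambda^{-1}\log(1/\pi_{\min})$---is exactly the standard derivation from those references, so there is nothing to compare.

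One point you glossed over is worth completing. You write that the $d$-dependence ``comes entirely from the ratio $\rho/|V|$ to $1/(d|V|)$'' but never evaluate it. If you do, you get $\max_{(u,v)}\phi(u,v)/Q(u,v)\le \Theta(d\rho)$, hence $\lambda \ge \Theta(1/(d\rho l))$ and $\tau = O(d\rho l\log|V|)$, with $d$ in the \emph{numerator}, not the denominator as the lemma is typeset. This is consistent with how the paper actually applies the lemma in Appendix~\ref{sec:quadmix}: there the bound used is $O(\rho\cdot l\cdot (k-1)^2(\log(k-1))n^2)$, which matches $O(d\rho l\log|V|)$ with $d\approx (k-1)n$ and $\log|V|\approx (k-1)n\log(k-1)$, and would be off by a factor of $d^2$ from $O(\rho l/d\cdot\log|V|)$. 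So your argument is right; the $1/d$ in the displayed statement appears to be a typo for $d$.
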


Then we obtain mixing time
$$O((2N/\mathcal{E}_{\min})^{\log n}(k-1)^2 (\log(k-1))n^2\cdot l),$$
where~$l$ is the maximum length of a path in the flow construction. It is not difficult to see that since the diameter of the projection graph is at most~$k$, we obtain a recurrence 
$$l = T(n) = k + 2k T(n/2) = O(n^{\log_2 k + 1}),$$
giving total mixing
$$O((2N/\mathcal{E}_{\min})^{\log n}(k-1)^2 (\log(k-1))n^2 \cdot n^{\log_2 k + 1}),$$
Notice that, as discussed in Remark~\ref{rmk:deletegamma}, we did not incur a term~$\gamma/\Delta$ in this calculation. Furthermore, it is not clear how one could avoid this~$\gamma/\Delta = k$ factor using the spectral decomposition technique (Theorem~\ref{thm:specprojres}). That technique would give a mixing bound of
$$O(((3N/\mathcal{E}_{\min})\cdot k\cdot k)^{\log n} \cdot (k-2)n).$$
(Here we have ignored an additional~$\log|\Omega|$ term, as one might be able to reduce this term via, for instance, the log-Sobolev version of the Jerrum/Son/Tetali/Vigoda decomposition.)
Comparing the two expressions above shows that our technique gives an improvement of
$$\Omega\left(\frac{n^{\log_2 (k^2)}}{n^{\log_2 k + 1}\cdot kn\log k}\right) = \Omega(n^{\log_2 k - 2} /(k\log k)).$$
%$$\Theta\left(\frac{2^{2\log_2 (k) \log_2 n}}{2^{(\log_2 k + 1)\log_2 n + \log_2 k + \log_2 n + \log_2 \log k}}\right)$$
%$$= \Theta(2^{\log_2 n\cdot\log_2 k - 2\log_2 n - \log_2 k - \log_2 \log k}) = \Theta(n^{\log_2 k - 2} /(k\log k))$$

\section{Proof that the conditions of Lemma~\ref{lem:fw} imply rapid mixing}
\label{sec:fwpf}
In this section we prove Theorem~\ref{thm:flowprojres}. Lemma~\ref{lem:fw} will then follow by way of Lemma~\ref{lem:nonhierexact}. 
\begin{figure}[h]
\includegraphics[height=10em]{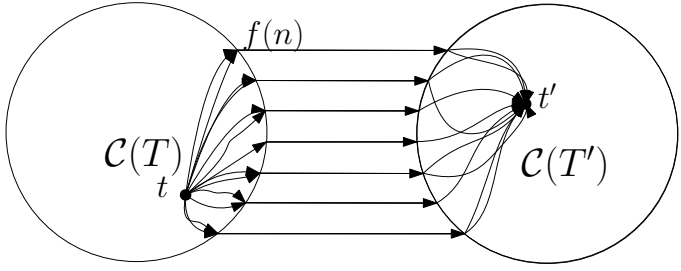}
\caption{In the flow construction we use for quasipolynomial mixing (Theorem~\ref{thm:kangmix}), we first find a flow in $\classt{n}{T}$ (similarly $\classt{n}{T'}$) and bound its congestion. (Actually, we assume such a flow exists, for the inductive hypothesis.) We then reuse the paths from this flow in routing the flow between $t$ and $t'$. Reusing these paths results in compounding the amount of flow across each path by $f(n)$, where $f(n)$ is the amount of flow across an edge between the two classes.}
\label{fig:assocquasiedge}
\end{figure}

\thmflowprojres*
Here, instead of a uniform flow in which each pair of states exchanges a single unit, it will be convenient to use a specification of demands and definition of congestion that are closer to standard in the analysis of Markov chains: the demands are~$D(z, w) = \pi(z)\pi(w)$, and the congestion across an edge~$(x, y)$ produced by a multicommodity flow~$f$ that satisfies the demands~$\{D(z, w) | z,w\in\Omega\}$ is~$\rho(x, y) = f(x, y)/(\Delta \cdot Q(x, y))$, where~$Q(x, y) = \pi(x)P(x, y) = \pi(y)P(y, x)$ (by reversibility).
One can check that in the uniform case, this definition is equivalent to our definition in Section~\ref{sec:prelim}. Furthermore, Lemma~\ref{lem:flowexp} and Lemma~\ref{lem:expmixing} are known~\cite{sinclair_1992} to work for the weighted case with this adjusted definition of congestion. 

The proof of Theorem~\ref{thm:flowprojres} is in fact not difficult to describe intuitively: if one finds a flow (collection of fractional paths) through the projection graph between every pair of classes (restriction chains), this flow induces a subproblem in each class~$\Omega_i$, in which each ``boundary vertex''\textemdash each vertex (state)~$z\in\Omega_i$ that brings in flow from a neighbor~$w\in\Omega_j$\textemdash must route the flow it receives throughout~$\Omega_i$. The state~$z$ may bring in an amount of flow up to~$\bar\rho\gamma\Delta$ from such neighbors, and~$z$ must route this flow (which we will show in the proof is at most~$\bar\rho\gamma\Delta\pi(z)$ throughout~$\Omega_i$. By assumption, it is possible for~$z$ to route~$\pi(z)$ flow throughout~$\Omega_i$ with congestion at most~$\rho_{\max}$, and therefore~$z$ can route the~$\bar\rho\gamma\Delta\pi(z)$ flow throughout~$\Omega_i$ with congestion at most~$\bar\rho\gamma\Delta\rho_{\max}$. The factor of~2 in the term~$1+2\bar\rho\gamma\Delta$ comes from applying the above reasoning twice: once for ``inbound flow'' that~$z$ brings into~$\Omega_i$, and once for ``outbound flow'' that~$z$ must route from~$\Omega_i$ to other classes. Finally, the factor of~1 comes from routing flow between pairs of states within~$\Omega_i$.

We now make this reasoning precise:
\begin{proof}{(Proof of Theorem~\ref{thm:flowprojres})}
Let~$\{f_i, i = 1, \dots, k\}$ be a collection of flow functions over the restriction chains with congestion~$\rho_i \leq \rho_{max}$, as supposed in the theorem statement. Suppose we have a flow~$\bar f$ with congestion~$\bar\rho$ in the projection chain. 

We construct a multicommodity flow~$f$ in the overall chain~$\mathcal{M}$ as follows: for every edge~$e = (x, y), x \in \Omega_i, y \in \Omega_j, i \neq j$ between restriction state spaces, let~$f_{xy} = \bar f(i, j)Q(x, y)/\bar Q(i, j).$ For pairs of states~$x, y \in \Omega_i$, simply use the same (fractional) paths to send flow as in~$f_i$. Now, for \emph{non-adjacent} pairs of states~$x \in \Omega_i, y \in \Omega_j$, $i \neq j$, we will use the flow~$\bar f$ to route the $x-y$ flow, perhaps through one or more intermediate restriction spaces. We need to consider how to route the flow \emph{through} each intermediate restriction space. This induces a collection of subproblems over each restriction space~$\Omega_i$ in which each state~$z\in \Omega_i$ ``brings in'' and similarly ``sends out'' at most~$\sum_{j\neq i}\sum_{w\in \Omega_j}\bar\rho\bar{Q}(i,j)Q(z,w)/\bar{Q}(i,j) \leq \bar\rho\pi(z)\gamma\Delta$ units of flow. We reuse the (fractional) paths that produce the flow with congestion~$\rho_{max}$, scaling the resulting congestion by~$\bar\rho\gamma\Delta$. More precisely, if~$e = (x, y)$ is an edge internal to a restriction space~$\Omega_i$, let
$$\hat f_{zu}(x, y) = (D_{in}(z, u)+D_{out}(u, z))(f_{i,zu}(x, y)\cdot \frac{\bar\pi(i)}{\pi(z)\pi(u)})$$ and denote
$$\hat f(x, y) = \sum_{zu}\hat f_{zu}(x, y) = \sum_{z,u\in\Omega_i}(D_{in}(z, u)+D_{out}(u, z))(f_{i,{zu}}(x, y)\cdot \frac{\bar\pi(i)}{\pi(z)\pi(u)})$$
and~$\hat \rho(x, y) = \frac{\hat f(x, y)}{Q(x, y)}$,
where
$$D_{in}(z, u) = (\sum_{j\neq i}\sum_{w\in\Omega_j}\bar f(i, j)Q(z, w)/\bar Q(i, j))\cdot \frac{\pi(u)}{\bar \pi(i)} \leq \bar \rho \frac{\pi(z)\pi(u)}{\bar \pi(i)} \gamma \Delta$$
is the share of the demand brought in by~$z$ to~$\Omega_i$ that must be sent to~$u$, and~$D_{out}(u, z)$ is similar.
This definition~$\hat f_{zu}(x, y)$ indeed satisfies the demands~$D_{in}(z, u)$ and~$D_{out}(u, z)$: $f_{i,zu}$ is defined as sending~$\frac{\pi(z)\pi(u)}{\bar \pi(i)}$ units of flow along a set of fractional paths from~$z$ to~$u$, so the function
$$\hat f_{zu}(x, y) = (D_{in}(z, u)+D_{out}(u, z))(f_{i,{zu}}(x, y)\cdot \frac{\bar\pi(i)}{\pi(z)\pi(u)})$$ sends~$D_{in}(z, u) + D_{out}(u, z)$ units of flow along the same set of fractional paths.

Now, we know by the definition of the congestion~$\rho_i$ produced by~$f_i$ that
$$\sum_{z,u\in\Omega_i} f_{i,zu}(x,y) \leq \rho_i Q_i(x, y),$$
where~$Q_i(x, y) = \frac{\pi(x)P(x, y)}{\bar\pi(i)}.$
Therefore
$$\hat\rho(x, y) = \frac{\hat f(x, y)}{Q(x, y)} = \frac{1}{Q(x, y)}\sum_{z,u\in\Omega_i}(D_{in}(z, u)+D_{out})(f_{i,{zu}}(x, y)\cdot \frac{\bar\pi(i)}{\pi(z)\pi(u)})$$
$$\leq \frac{1}{Q(x, y)}\sum_{z, u\in\Omega_i}2\bar\rho\gamma\Delta \frac{\pi(z)\pi(u)}{\bar\pi(i)}(f_{i,zu}(x,y)\cdot \frac{\bar\pi(i)}{\pi(z)\pi(u)}) = 2\bar\rho\gamma\Delta\sum_{z\in\Omega_i}\frac{f_{i,zu}(x, y)}{Q(x, y)} \leq 2\bar\rho\gamma\Delta\rho_{max}.$$

Now, for~$x,y\in \Omega_i$ and for~$u \in \Omega_i, v \in \Omega_j \neq \Omega_i$, we let
$$f_{vu}(x, y) = \sum_{z \in \Omega_i}\hat f_{v,zu}(x, y),$$
where
$$\hat f_{v,zu}(x, y) = (D_{v,in}(z, u) + D_{v, out}(u, z))(f_{i,zu}(x,y)\cdot \frac{\bar\pi(i)}{\pi(z)\pi(u)},$$
where $$D_{v,in}(z, u) = \frac{\pi(u)}{\bar\pi(i)} \sum_{k: \exists w\in\Omega_k, w \sim z} \bar f_{j,i}(k, i)\cdot \frac{Q(\Omega_k, z)}{\bar Q(k, i)}\cdot \frac{\pi(v)}{\bar\pi(j)}$$
and~$D_{v,out}(u, z)$ is symmetric.
It is easy to see that~$f_{vu}$ indeed is a valid flow sending~$\pi(u)\pi(v)$ units from~$v$ to $u$, and also that
$$\sum_{j\neq i} \sum_{v\in\Omega_j}\hat f_{v,zu} = \hat f_{zu}.$$
Therefore
$$\rho(x,y) = \sum_{u\in\Omega_i}\sum_{v \notin \Omega_i} \frac{f_{vu}(x, y)}{Q(x, y)} = \sum_{u,z\in \Omega_i}\sum_{v}\frac{\hat f_{v,zu}(x,y)}{Q(x,y)} = \sum_{u,z\in \Omega_i}\frac{\hat f_{zu}(x,y)}{Q(x,y)} = \hat\rho(x, y) \leq 2\bar \rho\gamma\Delta\rho_{max}.$$

Finally, in the term~$\rho(x,y)$ we have only considered~$u,v$ flow where~$u,v$ lie in different classes. Adding the congestion~$\rho_i \leq \rho_{max}$ produced by reusing the flow~$f_i$ for pairs~$u,v\in\Omega_i$ justifies the expression
$$(1+2\bar\rho\gamma\Delta)\rho_{max}.$$
%D_{v,in}(z, u) + D_{v,out}(u, z)$$
\end{proof}

Lemma~\ref{lem:fw} and Lema~\ref{lem:nonhierexact} are now immediate, as is Lemma~\ref{lem:fwquasi}.

\section{Integer lattice triangulation flip graphs}
\label{sec:latticetri}
\subsection{Definition}
The integer lattice triangulation flip graph, studied extensively in prior work (\cite{anclin, sinclairlambda, phaseshift, kzlimit, Stauffer2015ALF}), is analogous to the associahedron and is defined as follows:

\begin{definition}
\label{def:latticefg}
Let the \emph{integer lattice triangulation flip graph} be the graph $F_{n}$ whose vertices are the triangulations of the $n \times n$ integer lattice point set (integer grid), and whose edges are the pairs of triangulations that differ by exactly one diagonal.
\end{definition}
\begin{figure}
\includegraphics[height=10em]{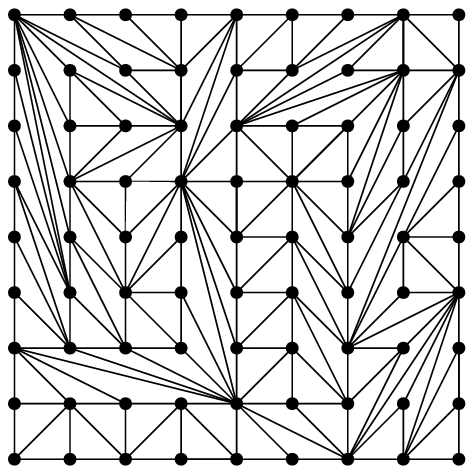}
\hspace*{1.5em}
\includegraphics[height=10em]{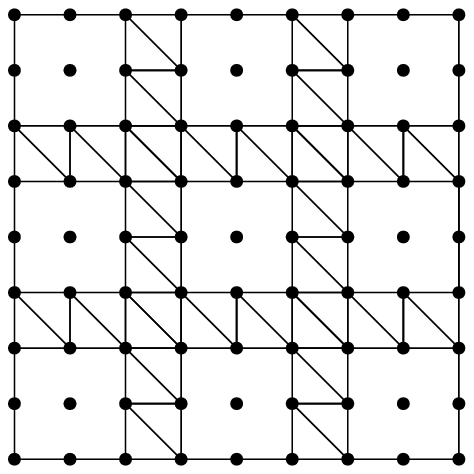}
\hspace*{1.5em}
\includegraphics[height=10em]{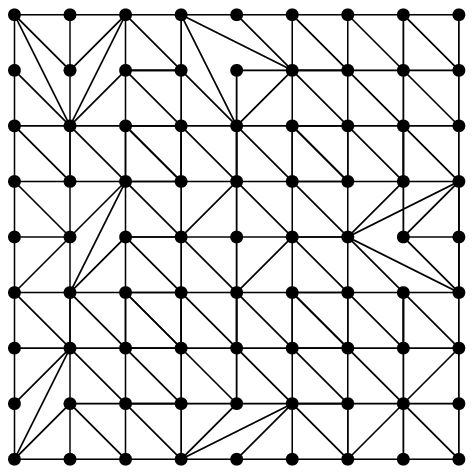}
\caption{Left: a triangulation of the 9x9 integer lattice. Center: a division of the lattice into 9 3x3 sections, as described in the proof of Theorem~\ref{thm:latticetw}. Right: a full triangulation compatible with the division of the lattice.}
\label{fig:lattice}
\end{figure}

It will be useful to define notation for the number of triangulations in this graph:
\begin{definition}
Let $g(n)$ be the number of triangulations of the $n \times n$ integer lattice point set.
\end{definition}

In fact, $g(n)$ is unknown in general, though much progress has been made on upper and lower bounds, including the following result of Kaibel and Ziegler \cite{kzlimit}:
\begin{lemma}
\label{lem:kzlim}
For $n \geq 1$,
$$h(n) = \Theta(2^{cn^2}),$$
for some constant $c$.
\end{lemma}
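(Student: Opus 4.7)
The plan is to establish the two-sided bound $2^{\Omega(n^2)} \le g(n) \le 2^{O(n^2)}$, which yields $\log_2 g(n) = \Theta(n^2)$ and hence $g(n) = \Theta(2^{cn^2})$ for any constant $c$ in the implied range. (I read the lemma's $h(n)$ as the function $g(n)$ defined just above; the two exponential constants in the upper and lower bounds need not coincide, and the $\Theta$ in the statement absorbs this gap.)

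For the lower bound, I would use the unit-square construction. Partition the $n \times n$ lattice into its $(n-1)^2$ axis-aligned unit squares. Each unit square admits exactly two triangulations, one for each of its two diagonals, and the two local choices in distinct squares do not interfere with each other, since a diagonal within a unit square only creates edges between the four corners of that square. Any assignment of a diagonal to each unit square therefore extends to a valid triangulation of the full lattice, and distinct assignments give distinct triangulations. This immediately gives $g(n) \ge 2^{(n-1)^2}$, which is $2^{\Omega(n^2)}$.

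For the upper bound, the naive bound ``choose $O(n^2)$ diagonals out of $\binom{n^2}{2} = O(n^4)$ candidate pairs'' only yields $2^{O(n^2 \log n)}$, which is too weak. To shave the logarithmic factor I would adopt an Anclin-style column sweep. Process the columns $1, 2, \ldots, n$ from left to right, and encode a triangulation by recording, for each transition from column $j$ to column $j+1$, the sequence of diagonals lying in the vertical strip between them. I would argue that given any fixed triangulation of the region in columns $1, \dots, j$, the number of extensions across the strip is at most $C^{n}$ for some absolute constant $C$, by showing that at each of the $n-1$ horizontal levels within the strip only a bounded number of local configurations can occur. Multiplying over the $n-1$ column transitions gives $g(n) \le C^{(n-1)^2} = 2^{O(n^2)}$.

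The main obstacle will be making the column-sweep argument precise, because a triangulation can contain long diagonals that cross many columns, so the extension from column $j$ to column $j+1$ is not simply a local choice but must be consistent with diagonals passing through the boundary. I would address this by augmenting the state at each column boundary to record the ``staircase'' of diagonals piercing that boundary, and then show that both the number of possible staircases and the number of valid transitions between consecutive staircases are of order $\exp(O(n))$. The key quantitative input is Euler's formula, which bounds the number of edges in any triangulation of the $n^2$ lattice points by $3n^2 - O(n)$, forcing the boundary state to admit an $O(n)$-bit description. Once this is established the telescoping product over $n-1$ column transitions immediately yields the $2^{O(n^2)}$ upper bound, matching the lower bound to within a multiplicative constant in the exponent and proving the lemma.
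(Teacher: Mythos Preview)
The paper does not prove this lemma: it is stated as a known result of Kaibel and Ziegler and simply cited. So there is no ``paper's own proof'' to compare against; any self-contained argument you give is already going beyond what the paper does.

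That said, there is a genuine gap between what you sketch and what the lemma actually asserts. Your plan establishes $2^{c_1 n^2} \le g(n) \le 2^{c_2 n^2}$ for two (possibly different) constants $c_1 < c_2$; in other words, $\log_2 g(n) = \Theta(n^2)$. You then claim this yields $g(n) = \Theta(2^{cn^2})$ ``for any constant $c$ in the implied range,'' but that is not how $\Theta$ works: the statement $g(n) = \Theta(2^{cn^2})$ with a \emph{single} $c$ means $g(n)/2^{cn^2}$ is bounded above and below by positive constants, which is strictly stronger than having matching upper and lower exponents up to constant factors. The lemma as written, and crucially the way it is used downstream in the proof of Theorem~\ref{thm:latticetw} (where the paper computes $g(n)/g(\sqrt n) = \Omega(2^{cn^2 - cn})$ with the \emph{same} $c$ on both terms), requires the single-constant version.

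The missing ingredient is the subadditivity argument that actually underlies the Kaibel--Ziegler result: one shows that $\log g(n)$ is (essentially) superadditive in a way that allows Fekete's lemma to apply, so that the limit $c = \lim_{n\to\infty} (\log_2 g(n))/n^2$ exists. Your unit-square lower bound and Anclin-style upper bound are correct ingredients for showing this limit is finite and positive, but they do not by themselves pin down a single $c$. Without the Fekete step, your conclusion is the weaker $g(n) = 2^{\Theta(n^2)}$, which would not suffice for the paper's application.
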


As discussed in the introduction, in recent years progress has been made (\cite{sinclairlambda, phaseshift, Stauffer2015ALF}) in studying the mixing properties of the natural flip walk on the integer lattice triangulation flip graph. However, this work has focused on biased versions of the flip walk, in which a real parameter $\lambda > 0$ induces a weight function on the triangulations of the lattice, and in which the random walk is biased in favor of triangulations with larger weights. The case of $\lambda = 1$ is the unbiased version of the walk. It is now known \cite{phaseshift} that when $\lambda > 1$, the walk does not mix rapidly, but that rapid mixing does occur for certain values of $\lambda$ smaller than one. However, the question is open for the biased version.

We do not settle the question\textemdash which would equate to showing that the integer lattice triangulation flip graph has expansion at least $1/p(n)$ for some polynomial function $p$, but we do show a weaker result in a similar spirit: that the flip graph has large subgraphs with large expansion. Note that expansion measures the extent to which bottlenecks exist in a graph: large expansion corresponds to a graph that does not have bottlenecks, roughly speaking. Thus, even if bottlenecks exist in $F_n$\textemdash that is, if rapid mixing does \emph{not} occur, i.e. if the expansion is too small\textemdash then there still exist regions of the graph that are not prone to bottlenecks, and thus internally induce rapidly mixing walks. Although far from clear evidence of large expansion in $F_n$ itself, one might hope that if bottlenecks exist, this result may suggest places to look for them.

\subsection{Additional preliminaries: treewidth, separators, and vertex expansion}
\label{sec:tw}
The \emph{treewidth} of a graph $G$ is a different density parameter from expansion. There are many equivalent definitions of treewidth; one of the standard definitions is in terms of a so-called \emph{tree decomposition}. 

Closely related to treewidth are \emph{vertex separators}:
\begin{definition}
\label{def:vsep}
A \emph{vertex separator} for a graph $G$ is a subset $X \subseteq V(G)$ of the vertices of $G$ such that $G\setminus X$ is disconnected. $X$ is a \emph{balanced separator} if $G \setminus X$ consists of two subgraphs, $A$ and $B$, such that no edge exists between $A$ and $B$, and such that $|V(G)|/3 \leq |V(A)| \leq |V(B)| \leq 2|V(G)|/3$. We also say, if $|X| \leq s$ for a given $s \geq 1$, that $X$ is an \emph{$s$-separator}.
\end{definition}

\begin{definition}
\label{def:rvsep}
With respect to an integer $s \geq 1$, a graph $G$ is \emph{recursively $s$-separable} if either $|V(G)| \leq 1$, or $G$ has a balanced $s$-separator $X$ such that the two mutually disconnected subgraphs induced by removing $X$ from $G$ are both recursively $s$-separable.
\end{definition}
The following is known \cite{ericksontw}:
\begin{lemma}
\label{lem:vseptw}
For every $t \geq 1$, every graph with treewidth at most $t$ is recursively $t + 1$-separable.
\end{lemma}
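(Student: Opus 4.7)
The plan is to exploit the standard correspondence between bounded treewidth and tree decompositions: $\operatorname{tw}(G)\le t$ is equivalent to $G$ admitting a tree decomposition $(T,\{B_v\}_{v\in V(T)})$ in which every bag satisfies $|B_v|\le t+1$. Assuming this equivalence, I will induct on $|V(G)|$. The base case $|V(G)|\le 1$ is immediate from the definition of recursive $(t+1)$-separability.

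For the inductive step, my first subgoal will be to locate a single bag whose deletion yields a vertex-balanced separation of $G$. To achieve this I will run the classical ``centroid bag'' argument on $T$: root $T$ arbitrarily and, starting from any node $v$, repeatedly move $v$ to a neighbor $u$ whenever the vertices of $G$ lying in bags on the $u$-side of $v$ (and not in $B_v$) number more than $2|V(G)|/3$. Each such move strictly increases the mass on the side from which we just moved, so the walk must terminate at some node $v^\star$ where every connected component of $G\setminus B_{v^\star}$ has size at most $2|V(G)|/3$. Because $|B_{v^\star}|\le t+1$, this bag is the candidate $(t+1)$-separator.

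Next I will aggregate the components of $G\setminus B_{v^\star}$ into two vertex groups $A$ and $B$ satisfying the definitional balance $|V(G)|/3 \le |A|\le |B|\le 2|V(G)|/3$. Since no individual component exceeds $2|V(G)|/3$, a greedy packing that always adds the next component to the currently smaller group will achieve this, provided $|V(G)|$ is sufficiently large relative to $t$. Finally, induced subgraphs inherit tree decompositions of no larger width by restricting each bag to the vertex set of the subgraph, so $G[A]$ and $G[B]$ each have treewidth at most $t$; the inductive hypothesis then gives their recursive $(t+1)$-separability, establishing the same property for $G$.

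The main obstacle I anticipate is reconciling the strict lower bound $|V(G)|/3\le|A|$ in the definition with small instances in which the separator of size up to $t+1$ already consumes most of $V(G)$, so that no partition of the remaining vertices lies in $[|V(G)|/3,\,2|V(G)|/3]$. I will address this by enlarging the base case to cover every graph whose order is at most some constant $f(t)$, since such graphs are vacuously or trivially recursively $(t+1)$-separable; above this threshold the greedy aggregation always succeeds, and the inductive machinery goes through cleanly.
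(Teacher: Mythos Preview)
The paper does not prove this lemma; it states it as a known fact with a citation. Your approach---find a centroid bag in a width-$t$ tree decomposition, delete it as a separator of size at most $t+1$, aggregate the remaining components into two balanced halves, and recurse using monotonicity of treewidth under induced subgraphs---is the standard textbook argument for this classical result, and is essentially correct.

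There is one genuine gap in your handling of small instances. You assert that graphs with $|V(G)|\le f(t)$ are ``vacuously or trivially recursively $(t{+}1)$-separable,'' but the paper's Definition~\ref{def:rvsep} gives exactly two options: $|V(G)|\le 1$, or the existence of a balanced separator $X$ with $|V(G)|/3 \le |A| \le |B| \le 2|V(G)|/3$. A single edge ($|V(G)|=2$, treewidth $1$) already fails both: any nonempty $X$ leaves at most one vertex (so no partition into two parts each of size $\ge 2/3$), and $X=\emptyset$ leaves a connected graph. Thus under the paper's literal two-sided balance condition the statement is not true for all graphs, and your ``enlarge the base case'' patch cannot repair it. This is a quirk of the definition rather than of your strategy: under the standard one-sided condition $\max(|A|,|B|)\le 2|V(G)|/3$, the centroid-bag argument goes through cleanly with no small-case patch needed, and that is the version the cited source proves.
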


Treewidth in general is of interest in large part because many NP-hard problems become tractable on graphs of bounded treewidth. For a survey of this phenomenon, known as \emph{fixed-parameter tractability}, see \cite{FPTsurvey}. Our interest in treewidth, however, is mainly in its role as a density parameter, in particular for Theorem~\ref{thm:latticetw}.

Treewidth, as a density parameter, is weaker than vertex expansion, in the sense that a high vertex expansion implies a high treewidth, but not vice versa. This is easy to see in the following corollary to Lemma~\ref{lem:vseptw}:
\begin{corollary}
\label{cor:twexp}
If the vertex expansion of a family of graphs $G(N)$ on $N$ vertices is at least $h_v(N),$ then the treewidth $t(N)$ of the family is $\Omega(N\cdot h_v(N))$.
\end{corollary}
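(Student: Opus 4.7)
The plan is to derive the bound directly from the small balanced separator guaranteed by Lemma~\ref{lem:vseptw}. Given any $G$ on $N$ vertices in the family with treewidth $t(N)$, that lemma produces a balanced separator $X$ of size at most $t(N)+1$ such that $G \setminus X = A \sqcup B$ has no edge between $A$ and $B$, and $N/3 \leq |A| \leq |B| \leq 2N/3$.

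First I would observe that at least one of the two sides has cardinality at most $N/2$: if both exceeded $N/2$, then $|A|+|B| > N$, contradicting $|A|+|B|+|X| = N$. Relabel so that $|A| \leq N/2$; note that $|A| \geq N/3$ still holds because $A$ is the smaller part of a $2{:}1$-balanced partition. Next, I would note that the vertex boundary $N(A) \setminus A$ in $G$ is contained in $X$, since every edge leaving $A$ either stays in $A$ or crosses into $X$ (no edge goes directly to $B$). Plugging $A$ into the vertex-expansion hypothesis (which is applicable because $|A| \leq N/2$) gives $|X| \geq |N(A)\setminus A| \geq h_v(N) \cdot |A| \geq h_v(N) \cdot N/3$. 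Combining this with $|X| \leq t(N)+1$ yields $t(N) = \Omega(N \cdot h_v(N))$, as desired.

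There is no real obstacle: the corollary is essentially a one-line consequence of Lemma~\ref{lem:vseptw}, and the only delicate point is matching the conventional definition of vertex expansion (the minimum of $|N(S)\setminus S|/|S|$ over subsets $S$ with $|S| \leq N/2$) to the size range $[N/3, N/2]$ in which the smaller piece of a $2{:}1$ balanced partition necessarily lives. Should a different convention (e.g.\ a $1/3$ threshold, or vertex expansion defined via closed neighborhoods) be in force, only the constant factor in front of $N \cdot h_v(N)$ would be affected, not the asymptotic statement.
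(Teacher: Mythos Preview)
Your proof is correct and follows essentially the same route as the paper: lower-bound the size of any balanced separator by applying the vertex-expansion hypothesis to the smaller side (which has size in $[N/3, N/2]$), then invoke Lemma~\ref{lem:vseptw} to translate this into a treewidth lower bound. The paper's own proof is terser and leaves implicit the details you spell out (that one side has size at most $N/2$, that its vertex boundary lies in $X$, and the final combination with $|X|\le t(N)+1$), but the argument is the same.
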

\begin{proof}
Suppose $G(N)$ has vertex expansion at least $h_v(N)$. Then every balanced separator $X$ is of size at least
$$|X| \geq h_v(N) \cdot N/3,$$
by the definition of a balanced separator and the definition of vertex expansion.
\end{proof}
\label{sec:latticetw}
In this section we prove Theorem~\ref{thm:latticetw}. 
\thmlatticetw*
\begin{proof}
We will show that $F_n$ has a large induced subgraph with large expansion, which will imply large treewidth. Partition the points of the $n \times n$ grid into $n$ grids of size $\sqrt{n} \times \sqrt{n}$. (If $n$ is not a perfect square, we can take $\sqrt{\lfloor{n}\rfloor}$.) That is, fill in a partial triangulation as follows: let each point in the grid have coordinates $(i, j)$, where $1 \leq i \leq n$ and $1 \leq j \leq n$. Fill in all vertical edges connecting two consecutive points with the same $j$ coordinate whenever $j \equiv 0 \pmod{\sqrt{n}}$ or $j \equiv 1 \pmod{\sqrt{n}}$, and fill in all horizontal edges connecting two consecutive points with the same $i$ coordinate whenever $i \equiv 0 \pmod{\sqrt{n}}$ or $i \equiv 1 \pmod{\sqrt{n}}$.

Fill in also all unit horizontal and unit vertical edges connecting vertices in adjacent $\sqrt{n} \times \sqrt{n}$ sub-grids, and fill in all unit diagonals with negative slope inside the resulting squares. See Figure~\ref{fig:lattice}, center. (The choice of these edges and diagonals to fill in between subgrids is arbitrary, but must be consistent.)

Now consider the subgraph $H_n$ of $F_n$ induced by restricting $V(F_n)$ to the triangulations that extend this partial triangulation. That is, the vertices of $H_n$ are the triangulations that consist of separately triangulating each of the $\sqrt{n} \times \sqrt{n}$ grids. $H_n$ is the Cartesian product of $n$ graphs that are each isomorphic to $F_{\sqrt{n}}$. See Figure~\ref{fig:lattice}, right, for an example of such a triangulation.

Assuming the smallest possible expansion for a graph on $g(\sqrt{n})$ vertices, $F_{\sqrt{n}}$ graph has expansion $\Omega(1/g(\sqrt{n}))$. The degree of~$F_n$ is~$O(n^2)$. Now, by Lemma~\ref{lem:cartexp}, $H_n$ has expansion $\Omega(1/g(\sqrt{n}))$.

Therefore, by Corollary~\ref{cor:twexp}, $H_n$ has treewidth $\Omega(g(n)/(n^2 g(\sqrt{n})))$.

Now, by Lemma~\ref{lem:kzlim}, $H_n$ has treewidth
$$\Omega(g(n)/(n^2 g(\sqrt{n}))) = \Omega(2^{c\cdot n^2 - cn-2\log n}) = \Omega(2^{cn^2(1 - o(1))}) = \Omega(N^{1-o(1)}),$$ proving the theorem.
\end{proof}

\section{Missing proofs from previous sections}
\label{sec:missingproofs}
\subsection{Missing details from $k$-angulation flip walk mixing proofs}
\label{sec:factors}

\lemmatchingscard*
\begin{proof}
$\classtt{}{T}$ and $\classtt{}{T'}$ are Cartesian products of the form $\classtt{}{T} = K_i \Box K_{j+k}$ and $T' = K_{i+j} \Box K_k$, where $|\edgett{}{T}{T'}| = K_i \Box K_j \Box K_k$. Therefore, $|\classtt{}{T}| = C_{i-1}C_{j+k-1}$, $|\classtt{}{T'}| = C_{i+j-1}C_{k-1}$, and $|\edgett{}{T}{T'}| = C_{i-1}C_{j-1}C_{k-1}$. Thus we have
$$\frac{|\classtt{}{T}||\classtt{}{T'}|}{|\edgett{}{T}{T'}|C_{n-1}} \leq \frac{C_{j+k-1}C_{i+j-1}}{C_{j-1}C_{n-1}}.$$
This ratio increases as $j$ increases, for any fixed $i$ (similarly, for any fixed $k$). This is because, if $i$ is fixed, maximizing the ratio is equivalent to maximizing
$$\frac{C_{i+j-1}}{C_{j-1}}.$$
It suffices to show that $C_{i+j-1}/C_{j-1}$ increases whenever $j$ increases by one, i.e.
$$\frac{C_{i+j}/C_{j}}{C_{i+j-1}/C_{j-1}} > 1.$$
I.e., it suffices to show that 
$$\frac{C_{i+j}}{C_{i+j-1}} > \frac{C_j}{C_{j-1}},$$
i.e.
$$\frac{i+j}{i+j+1}\frac{(2(i+j))!(i+j-1)!^2}{(2(i+j-1))!(i+j)!^2} > \frac{j}{j+1}\frac{(2j)!(j-1)!^2}{(2(j-1))!j!^2},$$
i.e.
$$\frac{2(i+j)-1}{i+j+1} > \frac{2j-1}{j+1}.$$
The latter inequality clearly holds for all $i \geq 1$.

Therefore, the ratio in the lemma statement is maximized when $j$ is maximized, i.e. $j = n - 2$ and $i = k = 1$. Thus we have
$$\frac{|\classtt{}{T}||\classtt{}{T'}|}{|\edgett{}{T}{T'}|C_{n-1}} \leq \frac{C_{n-2}C_{n-2}}{C_{n-3}C_{n-1}}.$$ It is immediate from Definition~\ref{def:catalan} that $C_{n-1}/C_{n-2} \geq C_{n-2}/C_{n-3}$, so this ratio is at most one, and the claim follows.
\end{proof}

%\section{Proof of Lemma~\ref{lem:cartflow}}
%\label{sec:cartflowproof}
\lemcartflow*
\begin{proof}
Let $g$ and $h$ be as stated; we construct $f$ as follows:
\begin{enumerate}
\item Within each copy of $H$ in $J$, construct the flow internally according to $h$. Similarly, use $g$ internal to each $G$ copy for each pair of vertices within the $G$ copy.
\item Order the copies of $H$ arbitrarily $H_1, \dots, H_{|V(G)|}$. For each pair of $H$ copies $H_r$ and $H_s$, $s < r$, and for each vertex $h_r \in H_r, h_s \in H_s$, let the flow from $h_r$ to $h_s$ go through (i) the $h$ flow in $H_r$ from $h_r$ to the counterpart vertex $u \in H_r$ of $h_s$, then through (ii) the $g$ flow that goes from $u$ to $h_s$ (in the $G$ copy that $h_s$ and $u$ both belong to).
\end{enumerate}

Part 1 generates no additional flow.
Part 2 generates at most $|V(H)|$ extra flow through each existing $g$ flow, and at most $|V(G)|$ extra flow through each existing $h$ flow. This results in scaling the amount of $g$ flow using any given edge in a $G$ copy by a factor of $|V(H)|$\textemdash while replacing the~$\frac{1}{|V(G)|}$ term in the congestion definition by $\frac{1}{|V(J)|} = \frac{1}{|V(G)||V(H)|}$\textemdash and similarly scaling the amount of $h$ flow using an edge in an $H$ copy by $|V(G)|$. The result follows.
\end{proof}
\bibliography{references}

\end{document}